\documentclass[11pt]{amsart}
\usepackage{amssymb}
\usepackage{amsmath}
\usepackage{amsthm}
\usepackage{verbatim}
\usepackage[all]{xy}
\usepackage{url}
\usepackage{mathtools}
\usepackage{dsfont}
\usepackage{pdflscape}
\usepackage[normalem]{ulem}
\usepackage{extarrows}

\usepackage[mathscr]{euscript}
\usepackage{calligra}
\usepackage[T1]{fontenc}
\usepackage{tikz-cd}
\usetikzlibrary{calc}
\usepackage[top=2cm, bottom=2.1cm, left=2.5cm, right=2.7cm]{geometry}
\usepackage%[pagebackref=true]
{hyperref}

\DeclareFontEncoding{OT2}{\ha}{\ha} % to enable usage of cyrillic fonts
\DeclareTextFontCommand{\textcyr}{\fontencoding{OT2}
    \fontfamily{wncyr}\fontseries{m}\fontshape{n}\selectfont}

\DeclareSymbolFont{rsfs}{U}{rsfs}{m}{n}
\DeclareSymbolFontAlphabet{\mathrsfs}{rsfs}

\usepackage{color}

\theoremstyle{plain}
\newtheorem{theorem}{Theorem}[section]

\newtheorem{proposition}[theorem]{Proposition}
\newtheorem{lemma}[theorem]{Lemma}
\newtheorem{corollary}[theorem]{Corollary}

\newtheorem{fact}[theorem]{Fact}

\theoremstyle{definition}

\newtheorem{example}[theorem]{Example}
\newtheorem{examples}[theorem]{Examples}
\newtheorem{remark}[theorem]{Remark}
\newtheorem{remarks}[theorem]{Remarks}
\newtheorem{definition}[theorem]{Definition}
\newtheorem{construction}[theorem]{Construction}

\newtheorem*{notation*}{Notation}

\renewcommand{\mathbb}{\mathds}

\newcommand{\Z}{{\mathds Z}}
\newcommand{\Q}{{\mathds Q}}
\newcommand{\R}{{\mathds R}}
\newcommand{\C}{{\mathds C}}

\newcommand{\kbar}{{\bar k}}

\newcommand{\ssc}{{\rm sc}}
\newcommand{\sscp}{{(\ssc)}}
\newcommand{\sss}{{\rm ss}}
\newcommand{\sssp}{{(\sss)}}

\newcommand{\im}{{\rm im}}
\newcommand{\ab}{{\rm ab}}
\newcommand{\coker}{{\rm coker\hs}}
\newcommand{\loc}{{\rm loc}}
\newcommand{\re}{{\rm re}}

\newcommand{\Gal}{{\rm Gal}}
\newcommand{\Hom}{{\rm Hom}}

\newcommand{\cZ}{{\mathcal Z}}
\newcommand{\X}{{\sf X}}
\newcommand{\V}{{\mathcal V}}

\newcommand{\into}{{\,\hookrightarrow\,}}
\newcommand{\onto}{{\,\twoheadrightarrow\,}}
\newcommand{\isoto}{{\,\overset\sim\longrightarrow\,}}

\newcommand{\lra}{\longrightarrow}

\newcommand{\Maps}{{\rm Maps}}

\newcommand{\emm}{\bfseries}

\newcommand{\cok}{{\rm cok}}
\newcommand{\fppf}{{\rm fppf}}

\newcommand{\qt}{{\rm qt}}

\newcommand{\labelt}[1]{\xrightarrow{\makebox[0.8em]{\scriptsize ${#1}$}}}
\newcommand{\labelto}[1]{\xrightarrow{\makebox[1.3em]{\scriptsize ${#1}$}}}

\newcommand{\functor}{\rightsquigarrow}

\newcommand{\hs}{\kern 0.8pt}
\newcommand{\hssh}{\kern 1.2pt}
\newcommand{\hshs}{\kern 1.6pt}
\newcommand{\hssss}{\kern 2.0pt}
\newcommand{\ha}{{\kern 1pt}}

\newcommand{\hm}{\kern -0.8pt}
\newcommand{\hmm}{\kern -1.2pt}

\newcommand{\vk}{{\varkappa}}

\newcommand{\triv}{{\rm triv}}

\newcommand{\SmallMatrix}[1]{\text{{%
\Small
\arraycolsep=0.4\arraycolsep\ensuremath
    {\begin{pmatrix}#1\end{pmatrix}}}}}

\newcommand{\mH}{{\scriptscriptstyle{H}}}
\newcommand{\mG}{{\scriptscriptstyle{G}}}
\newcommand{\mZ}{{\scriptscriptstyle{Z}}}

\newcommand{\tdash}{\text{-}}
\newcommand{\br}{\{\tdash\hs,\hm\tdash\}}

\newcommand{\inn}{{\rm inn}}
\newcommand{\Aut}{{\rm Aut}}
\newcommand{\Out}{{\rm Out}}
\newcommand{\GL}{{\rm GL}}
\newcommand{\Lie}{{\rm Lie}}
\newcommand{\crr}{{\rm cr}}
\newcommand{\Red}{{\mathcal{R}ed}}
\newcommand{\CrMod}{{\mathcal{C}r\hm\mathcal{M}od}}

\newcommand{\cG}{{\mathcal G}}
\newcommand{\cA}{{\mathcal A}}
\newcommand{\Abar}{{\bar A}}

\newcommand{\SAut}{{\rm SAut}}
\newcommand{\SOut}{{\rm SOut}}
\newcommand{\Inn}{{\rm Inn}}

\newcommand{\SO}{{\rm SO}}
\newcommand{\OO}{{\rm O}}

\newcommand{\Grp}{{\mathcal{G}rp}}
\newcommand{\Alg}{{\mathcal{A}lg}}
\newcommand{\Set}{{\mathcal{S}et}}

\newcommand{\PCrM}{{\rm PCrM}}
\newcommand{\CrM}{{\rm CrM}}

\newcommand{\ul}{\underline}
\newcommand{\ol}{\overline}

\newcommand{\upsig}{{\sigma\!}}
\newcommand{\upst}{{\sigma\tau\!}}
\newcommand{\upg}{{g\!}}

\renewcommand{\nu}{\upsilon}

\newcommand{\psitil}{{\tilde\psi}}

\begin{document}

\title[Quasi-connected reductive groups]
{Abelian Galois cohomology of
quasi-connected\\ reductive groups}

\author{Mikhail Borovoi and Taeyeoup Kang}

\address{Raymond and Beverly Sackler School of Mathematical Sciences,
Tel Aviv University, 6997801 Tel Aviv, Israel}
\email{borovoi@tauex.tau.ac.il}
\email{taeyeoupkang@gmail.com}

\thanks{This research was supported
by the Israel Science Foundation (grant 1030/22).}

\keywords{Abelian Galois cohomology, quasi-connected reductive group,
principal homomorphism, Picard crossed module,
local field,  global  field}

\subjclass{
  11E72% Galois cohomology of linear algebraic groups
, 14L15% Group schemes
, 20G10% Cohomology theory for linear algebraic groups
, 20G15% Linear algebraic groups over arbitrary fields
, 20G20% Linear algebraic groups over the reals, the complexes, the quaternions
, 20G25% Linear algebraic groups over local fields and their integers
, 20G30% Linear algebraic groups over  global  fields and their integers
, 20G35% Linear algebraic groups over adèles and other rings and schemes
%  11E72, 14L15, 20G10, 20G15, 20G20, 20G25, 20G30, 20G35
}

\date{\today}

\begin{abstract}
In 1999 Labesse introduced quasi-connected reductive groups
and investigated their abelian Galois cohomology
over local and global fields of characteristic 0.
We (1) generalize some of the constructions of Labesse from quasi-connected reductive groups
to arbitrary reductive groups, not necessarily connected or quasi-connected;
(2) generalize results of Labesse on the abelian Galois cohomology of quasi-connected reductive groups
to the case of  local and global fields of arbitrary characteristic;
and (3) investigate the functoriality properties of the abelian Galois cohomology.
In particular, we introduce the notion of a principal homomorphism of quasi-connected reductive groups,
and show that if $G$ is a quasi-connected reductive group
over a local or global field $k$ of  {\em positive} characteristic,
then the first Galois cohomology set $H^1(k,G)$ has a canonical abelian group structure,
which is functorial with respect to  {\em principal} homomorphisms.
\end{abstract}

\begin{comment} Abstract for arXiv

In 1999 Labesse introduced quasi-connected reductive groups
and investigated their abelian Galois cohomology
over local and global fields of characteristic 0.
We (1) generalize some of the constructions of Labesse from quasi-connected reductive groups
to arbitrary reductive groups, not necessarily connected or quasi-connected;
(2) generalize results of Labesse on the abelian Galois cohomology of quasi-connected reductive groups
to the case of  local and global fields of arbitrary characteristic;
and (3) investigate the functoriality properties of the abelian Galois cohomology.
In particular, we introduce the notion of a principal homomorphism of quasi-connected reductive groups,
and show that if G is a quasi-connected reductive group
over a local or global field $k$ of  *positive* characteristic,
then the first Galois cohomology set H^1(k,G) has a canonical structure of abelian group,
which is functorial with respect to *principal* homomorphisms.

\end{comment}

\maketitle

%\tableofcontents
%\setcounter{section}{-1}

\section{Introduction}
\label{s:Intro}

Let $k$ be a field. We write $\kbar$ for a fixed algebraic closure of $k$,
$k_s$ for the separable closure of $k$ in $\kbar$, and $\Gamma=\Gal(k_s/k)$.

By a {\em $k$-group} we mean a linear algebraic group over $k$,
that is, an affine group scheme of finite type over $k$,
not necessarily smooth.
By a {\em $k$-algebra} we mean a commutative associative unital algebra over $k$,
possibly containing nilpotents.

\subsection*{A crossed module from an algebraic group}
Let $G$ be a reductive $k$-group.
By this we mean that $G$ is smooth and that its identity component $G^0$ is reductive.
Unlike \cite{SGA3}, we do not assume that $G$ is connected.
Let $G^\sss\coloneqq[G^0,G^0]$ denote the derived subgroup
of the connected reductive group $G^0$,
and let $\pi\colon G^\ssc\onto G^\sss$ denote the universal covering of $G^\sss$.
The $k$-group $G^\sss$ is semisimple, and $G^\ssc$ is (semisimple) simply connected,
which explains our notations.
We consider the composite homomorphism
\[\rho\colon G^\ssc\onto G^\sss\into G^0\into G.\]
The $k$-groups $G^0$, $G^\sss$, and $G^\ssc$ depend functorially on $G$;
see Proposition \ref{p:GA} for the functoriality of $G^\ssc$.
Therefore, the left action of $G$ on itself by conjugation induces a left action of $G$ on $G^\ssc$.
Thus we obtain an action morphism of $k$-varieties
\[\theta\colon G\times_k G^\ssc\to G^\ssc.\]
We check that $\rho$ and $\theta$ are compatible, that is, that $\CrM(G)\coloneqq (G^\ssc,G,\rho,\theta)$
is a {\em crossed module of $k$-groups};
see Proposition \ref{p:crossed-module}.
We sometimes  write
\[ \CrM(G)=(G^\ssc\labelt\rho G,\theta)\]
to emphasize that we regard $\CrM(G)$ as a {\em complex} of $k$-groups (in degrees $-1$ and $0$).

We define the {\em crossed cohomology pointed set}
\[ H^1_\crr(k,G)\coloneqq H^1\big(k,\CrM(G)\big)\coloneqq H^1\Big(\Gal(k_s/k),\,\big(G^\ssc(k_s)\labelt\rho G(k_s),\theta\big)\hs\Big) \]
where the group (hyper-)cohomology with coefficients in a crossed module was defined in \cite[Section 3]{Borovoi-Memoir};
see also Section \ref{s:coboundary} below.
The inclusion morphism of crossed modules
\[ (1\to G)\,\into\, (G^\ssc\labelt\rho G,\theta)\]
induces the {\em crossing map}
\begin{equation}\label{e:crossing-map}
\crr^1\colon\, H^1(k,G)=H^1(k,1\to G)\,\lra H^1\big(k,\CrM(G)\big)\eqqcolon H^1_\crr(k,G).
\end{equation}

\begin{theorem}[Theorems \ref{t:cr-surjective} and \ref{t:cr-bijective}]
\label{t:local-global-crossing}
Let $G$ be a reductive $k$-group such that the  center $Z(G^\ssc)$ is \'etale, that is, smooth. Then:
\begin{enumerate}
\item[\rm (i)] When $k$ is a local field or a global field, the crossing map \eqref{e:crossing-map} is surjective.
\item[\rm (ii)] Moreover, when $k$ is a {\emm non-archimedean} local field
or a global field {\emm without real places}
(that is, a global function field or a totally imaginary number field),
the crossing map \eqref{e:crossing-map} is bijective.
\end{enumerate}
\end{theorem}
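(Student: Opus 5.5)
The plan is to exploit the long exact sequence attached to the crossed module $\CrM(G)=(G^\ssc\labelt\rho G,\theta)$. The short sequence of complexes $(1\to G)\to(G^\ssc\to G)\to(G^\ssc\to 1)$ gives, by \cite[Section 3]{Borovoi-Memoir}, an exact sequence of pointed sets
\[ H^1(k,G^\ssc)\labelt{\rho_*} H^1(k,G)\labelt{\crr^1} H^1_\crr(k,G)\lra H^2(k,G^\ssc)\]
in a suitable non-abelian sense. Here the last term must be read carefully: it is really the ``$H^2$ with coefficients in the band'' of $G^\ssc$, so one works with $Z(G^\ssc)$ and uses twisting.

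For surjectivity (i), take a class in $H^1_\crr(k,G)$, represented by a pair $(u,z)$ with $u\colon\Gamma^2\to G^\ssc(k_s)$ and $z\colon\Gamma\to G(k_s)$ satisfying the hypercocycle conditions. Twisting $G^\ssc$ by $z$ (acting through $\theta$) produces a $k$-form ${}_zG^\ssc$ together with a $2$-cocycle-type datum. This defines an element of the nonabelian $H^2(k,L)$, where $L$ is the $k$-kernel (band) of ${}_zG^\ssc$. The class $(u,z)$ lies in the image of $\crr^1$ exactly when this $H^2$ class is neutral.

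The key input is Douai's theorem, together with its positive-characteristic extension. It says that over a local or global field, every class in $H^2(k,L)$ for a band $L$ of a semisimple simply connected group is neutral. This is where étaleness of $Z(G^\ssc)$ is used: it keeps the center a smooth group, so Galois (rather than fppf) nonabelian $H^2$ makes sense and the neutrality results apply. Granting this, we modify $(u,z)$ by a coboundary so that $u=1$. Then $z$ is an honest cocycle in $Z^1(k,G)$ mapping to the given class, which gives surjectivity.

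For injectivity (ii), suppose $\crr^1[z]=\crr^1[z']$. By twisting, replacing $G$ with ${}_zG$, which is again reductive with étale $Z(G^\ssc)$, we reduce to the case where $z'$ is trivial. Exactness then says that the fiber of $\crr^1$ over the neutral class is the image of $H^1(k,G^\ssc)$ modulo the action of $H^0_\crr$. We show this image is trivial:
\begin{itemize}
\item For $k$ non-archimedean local, $H^1(k,G^\ssc)=1$ by Kneser and Bruhat–Tits, in any characteristic.
\item For $k$ global without real places, $H^1(k,G^\ssc)=1$ by the Hasse principle (Kneser, Harder, Chernousov) combined with local vanishing at all places; there are no real places, and complex places contribute trivially.
\end{itemize}
The same applies to every twisted form ${}_zG^\ssc$, which is needed because the fibers over other classes are described via twisting. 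Hence every fiber of $\crr^1$ is a singleton.

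The main obstacle is step (i): making the nonabelian exact sequence precise at the $H^2$ level. One must check that the obstruction attached to a crossed hypercocycle really is a class in nonabelian $H^2$ of the band of ${}_zG^\ssc$. One must also check that neutrality of that class allows adjusting $z$ compatibly, i.e.\ by an element coming from $G^\ssc(k_s)$ acting through $\rho$, which uses the crossed-module axioms. I would first try to mimic \cite[Section 3]{Borovoi-Memoir} directly. For positive characteristic I would invoke the known neutrality results for global function fields, which is the point where smoothness of $Z(G^\ssc)$ is indispensable.
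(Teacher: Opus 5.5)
Your proposal is correct and follows the paper's proof. The obstruction to lifting $[u,z]$ is the class $[u,f]$, with $f_\sigma=\theta_{z_\sigma}\circ\sigma$, in the nonabelian $H^2$ of the $k$-band of $G^\ssc$ (a band, not a $k$-form, since $z$ is not a cocycle); this class is neutral exactly when $(u,z)$ can be moved to some $(1,z')$, all such classes are neutral over local and global fields, and (ii) follows from $H^1(k,G^\ssc)=1$ for every twisted form together with twisting.

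One correction concerns where the neutrality input comes from. Douai's theorems cover only non-archimedean local fields and global fields without real places, so for (i) at real places you need Borovoi's characteristic-$0$ result. Alternatively, as the paper does in Theorem~\ref{t:Douai}, you can deduce neutrality over any field of Galois--Douai type from the surjectivity of $\delta^1\colon H^1(k,G/Z)\to H^2(k,Z)$ for simply connected $G$ with \'etale center $Z$.
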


We show in Sections \ref{s:crossed} and \ref{s:cr-cohomology}
that a homomorphism of reductive $k$-groups
$\vk\colon G_1\to G_2$
induces a morphism of crossed modules
\begin{equation} \label{e:vk*}
\vk_*\colon \CrM(G_1)\to\CrM(G_2)
\end{equation}
which in turn induces a morphism of pointed sets
\begin{equation}\label{e:vk-crr}
\vk_\crr\colon H^1_\crr(k,G_1)\to H^1_\crr(k,G_2).
\end{equation}

\subsection*{Quasi-connected reductive groups}
Labesse \cite{Labesse} introduced the notion of quasi-connected reductive groups,
which appear in the Arthur-Selberg trace formula as the stabilizers of outer automorphisms of the
connected reductive group under consideration.

\begin{definition}
\label{d:Labesse}
A {\em quasi-connected reductive $k$-group}
is a {\em smooth} linear algebraic group $G$ over $k$ that is isomorphic to the kernel
of a surjective homomorphism $H\to S$
where $H$ is a {\em connected} reductive $k$-group and $S$ is a $k$-torus.
\end{definition}

This definition is the same as the definition of Labesse \cite[Definition 1.3.1]{Labesse},
except that Labesse does not require $G$ to be smooth.
Our working definition is Definition \ref{d:q-c-r} below, which is equivalent to Definition \ref{d:Labesse}.

Alternatively, a linear algebraic $k$-group is quasi-connected reductive
if and only if it is isomorphic to the quotient by a central finite $k$-subgroup
of the product of a {\em $k$-quasi-torus}
and a connected reductive $k$-group; see \cite[Theorem 2.13]{BGR}.
Here, following \cite[Section 3.2.3]{OV}, we use the short term ``$k$-quasi-torus''
instead of the somewhat clumsy term ``smooth $k$-group of multiplicative type''.

For a {\em quasi-connected} reductive $k$-group $G$,
we define, following an idea of Deligne \cite[\S2.0.2]{Deligne},
a canonical {\em Picard braiding} of $\CrM(G)$, that is, a canonical morphism of $k$-varieties
\[ \br\colon G\times_k G\to G^\ssc\quad\
\text{with the property}\quad\
\rho\big(\{g_1,g_2\}\big)=[g_1,g_2]\coloneqq g_1 g_2 g_1^{-1} g_2^{-1}\]
and having other good properties; see Construction \ref{cons-braiding} and Proposition \ref{p:br} below.
Thus we obtain a Picard crossed module
\[\PCrM(G)=\big(\CrM(G),\br\big)= \big(G^\ssc\labelt\rho G,\theta,\br\big).\]
Following an idea of Breen \cite{Breen},
we use Deligne's Picard braiding $\br$
to define in Appendix \ref{app:gr-coh-Picard} a canonical abelian group structure
on the pointed set $H^1_\crr(k,G)$.
Thus we obtain an abelian group $H^1_\ab(k,G)$.

For a quasi-connected reductive $k$-group $G$, we consider the crossing map \eqref{e:crossing-map},
which now takes values in the abelian group $H^1_\ab(k,G)$,
and so we call it the {\em abelianization map} and denote it by $\ab^1$:
\begin{equation}\label{e:abel-map}
\ab^1=\crr^1\colon\, H^1(k,G)\,\lra\,H^1_\crr(k,G)\eqqcolon H^1_\ab(k,G).
\end{equation}

\begin{corollary}\label{c:local-global-abel}
Let $G$ be a quasi-connected reductive $k$-group
such that the center $Z(G^\ssc)$ is \'etale. Then:
\begin{enumerate}
\item[\rm (i)] When $k$ is a local or global field, the abelianization map \eqref{e:abel-map} is surjective.
\item[\rm (ii)] Moreover, when $k$ is a {\emm non-archimedean} local field
or a global field {\emm without real places}
(that is, a global function field or a totally imaginary number field),
the abelianization map \eqref{e:abel-map} is bijective.
\end{enumerate}
\end{corollary}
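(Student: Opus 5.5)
This corollary follows immediately from Theorem \ref{t:local-global-crossing}; the plan is to check that its hypotheses hold and that the maps involved coincide. First, a quasi-connected reductive $k$-group $G$ is in particular a reductive $k$-group in our sense. By Definition \ref{d:Labesse} it is smooth. Its identity component $G^0$ is a connected smooth subgroup of the connected reductive group $H$. It is normal in $H$, being the identity component of the normal subgroup $G=\ker(H\to S)$. Hence $G^0$ is reductive: its unipotent radical is a connected unipotent normal subgroup of $H$, so it lies in $R_u(H)=1$ after base change to $\kbar$. The hypothesis that $Z(G^\ssc)$ is \'etale is assumed verbatim.

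Second, I need to identify the maps. By construction \eqref{e:abel-map}, the abelianization map $\ab^1$ is by definition the crossing map $\crr^1$ of \eqref{e:crossing-map}. Its target $H^1_\ab(k,G)$ is the pointed set $H^1_\crr(k,G)$ endowed with the abelian group structure coming from Deligne's braiding $\br$ (Construction \ref{cons-braiding}, Appendix \ref{app:gr-coh-Picard}). Surjectivity and bijectivity are properties of the underlying map of sets, so they do not depend on this extra group structure. Applying Theorem \ref{t:local-global-crossing}(i) then gives assertion (i), and Theorem \ref{t:local-global-crossing}(ii) gives assertion (ii).

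The only point that needs any care, and so the main obstacle (a mild one), is confirming that $G^0$ is reductive. If one prefers to avoid the argument above, one can instead use the structure theorem \cite[Theorem 2.13]{BGR}: $G$ is a quotient of $Q\times G'$ by a central finite subgroup, with $Q$ a $k$-quasi-torus and $G'$ connected reductive. Then $G^0$ is the image of $Q^0\times G'$, a quotient of a connected reductive group, hence reductive. Either way the corollary is a formal consequence of the theorem.
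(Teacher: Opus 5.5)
Your proposal is correct and matches the paper. The paper also deduces the corollary immediately from Theorem \ref{t:local-global-crossing}, since $\ab^1$ is by definition the crossing map $\crr^1$, and surjectivity and bijectivity depend only on the underlying map of sets. Your separate check that $G^0$ is reductive is sound but unnecessary, because the paper's working Definition \ref{d:q-c-r}(1) already requires $G$ to be smooth with reductive identity component.
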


The corollary follows immediately from Theorem \ref{t:local-global-crossing}.
This corollary was earlier proved by Labesse \cite[Proposition 1.6.7]{Labesse}
in the case when $k$ is a local or global field {\em of characteristic} 0.
Note that when ${\rm char}(k)\ne 2$,
this corollary applies to the non-connected quasi-connected reductive $k$-group
$G=\OO_{n,F}$ of Example \ref{ex:OOnF} below (the orthogonal group of the quadratic form
given by a non-degenerate symmetric $n\times n$-matrix $F$ for {\em odd} $n\ge 3$).
Here $Z(G^\ssc)\cong\mu_2$, and so $Z(G^\ssc)$ is \'etale when ${\rm char}(k)\ne 2$.

By Proposition \ref{p:q-ab} below, when $G$ is a quasi-connected reductive $k$-group,
the crossed module $\CrM(G)=(G^\ssc\labelt\rho G,\theta)$ is a {\em quasi-abelian}
crossed module in the sense of \cite{GA-12}; see Proposition \ref{p:q-ab} below for the definition.
Therefore, our Corollary \ref{c:local-global-abel}
is a direct analog of \cite[Corollary 4.5(i) and Theorem 5.8(i)]{GA-12}
in the context of Springer's Galois non-abelian cohomology of \cite{Springer}.
Note that, since we work with {\em Galois} cohomology, we have to assume that $Z(G^\ssc)$ is \'etale, whereas Gonz\'alez-Avil\'es \cite{GA-12},
who works with Giraud-Douai flat non-abelian cohomology,
does not assume this.
On the other hand, in \cite[Theorem 5.8(i)]{GA-12} the reductive $k$-group $G$ is assumed to be connected,
whereas we consider the more general case of a quasi-connected reductive $k$-group.

In the case where $G$ is a quasi-connected reductive $k$-group with $k$ as in Corollary
\ref{c:local-global-abel}(ii), the bijection \eqref{e:abel-map}
induces a canonical abelian group structure on the pointed set $H^1(k,G)$.
This abelian group structure is functorial in $G$ with respect to
{\em principal} homomorphisms $G\to G'$; see Definition \ref{d:principal-intro} below.

\subsection*{Functoriality}
We investigate the subtle question of the functoriality of the assignment
$G\rightsquigarrow  H^1_\ab(k,G)$.
Let $\vk\colon G_1\to G_2$ be a homomorphism
of quasi-connected reductive $k$-groups.
Then we have the induced morphism of crossed modules \eqref{e:vk*}
and the induced morphism of pointed sets \eqref{e:vk-crr}.
One might expect \eqref{e:vk*} to preserve the canonical braiding
and \eqref{e:vk-crr} to preserve the group structure.
However, in general these expected properties do not hold.

\begin{example}\label{ex:intro}
Let $k=\R$ and let $B$ and $G$ be quasi-connected
reductive $\R$-groups where $B=\mu_2\times \mu_2$
and $G={\rm SU}_2/\mu_2$.
Let  $\iota\colon B\into G$ be a certain injective homomorphism
whose image is not contained in any maximal torus of $G$;
see Example \ref{ex:PU2} and Remark \ref{r:crr-not-hom} below.
One checks that the map
$\iota_\crr\colon  H^1_\ab(k,B)\to H^1_\ab(k,G)$
sends the three non-trivial elements of the group $H^1_\ab(\R,B)=B(\R)$ of order 4
to the non-trivial element $-1\in H^1_\ab(\R,G)=H^2(\R,\mu_2)=\{\pm1\}$.
Thus $\iota_\crr$ is not a group homomorphism.
\end{example}

We introduce a class of morphisms of quasi-connected reductive $k$-groups
that do preserve the canonical braiding of $\CrM(G)$
and the abelian group structure on $H^1_\crr(k,G)$.

For a quasi-connected reductive $k$-group $G$, let $T^\sscp\subset G^\ssc$ be a maximal torus,
and write $T=\cZ_G(\rho(T^\sscp))$, the centralizer in $G$ of $\rho(T^\sscp)$.
We show that $T$ is a quasi-torus, that $T=Z(G)\cdot \rho(T^\sscp)$, and that $T^\sscp=\rho^{-1}(T)$.
We say that $T$ is a {\em principal quasi-torus in $G$}.
When $G$ is connected, a principal quasi-torus in $G$ is the same as a maximal torus.

\begin{definition}\label{d:principal-intro}
A homomorphism of quasi-connected reductive $k$-groups
$\vk\colon G_1\to G_2$ is called {\em principal} if for some principal quasi-torus $T_1\subseteq G_1$
there exists a principal quasi-torus $T_2\subseteq G_2$ such that $\vk(T_1)\subseteq T_2$.
\end{definition}

Note that in Definition \ref{d:principal-intro}, instead of ``for some principal quasi-torus'',
we may write ``for every principal quasi-torus''; see Lemma \ref{l:some-any}.

When $G_1$ is connected, every homomorphism $\vk\colon G_1\to G_2$ is principal, whereas
the homomorphism $\iota\colon B\into G$ of Example \ref{ex:intro} (with non-connected $B$) is not principal.

\begin{theorem}[Corollary \ref{c:principal-induces}]
\label{t:principal-intro}
A {\emm principal} homomorphism $\vk\colon G_1\to G_2$ of quasi-connected reductive $k$-groups
preserves the canonical braiding, that is, for every $k$-algebra $R$ and
for all  $g,g'\in G_1(R)$  we have
\[ \big\{\vk(g),\vk(g')\big\}_{G_2} = \vk^\ssc\big(\big\{g,g'\big\}_{G_1}\big). \]
\end{theorem}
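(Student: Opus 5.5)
We need to recall how Deligne's braiding is constructed for a quasi-connected $G$, and then argue via principal quasi-tori.

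The construction we expect (Construction \ref{cons-braiding}) works étale-locally or fpqc-locally as follows. Write $g_1 = z_1\rho(t_1)$ and $g_2 = z_2\rho(t_2)$ with $z_i$ central in $G$ and $t_i\in T^\sscp$; this decomposition exists because $T = Z(G)\cdot\rho(T^\sscp)$, but only for points of a principal quasi-torus. More generally, $\{g_1,g_2\}$ is the unique element of $G^\ssc$ lifting $[g_1,g_2]$ that is given by the formula
\[\{g_1,g_2\} = \upg{}_1\tilde g_2\cdot \tilde g_2^{-1}\]
for a lift $\tilde g_2\in G^\ssc$ of the "semisimple part" of $g_2$. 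In other words, $G = Z(G)\cdot\rho(G^\ssc)$ locally, and we set $\{z\rho(x),z'\rho(x')\}=\theta(z\rho(x))(x')\cdot x'^{-1}$ or similar, showing independence. The key properties we would use from Proposition \ref{p:br} are these:
\begin{itemize}
\item[(a)] $\br$ is a morphism of varieties, so it suffices to check the identity on a schematically dense subset of $G_1\times G_1$, or after base change to $\kbar$.
\item[(b)] $\{g_1,g_2\}$ restricted to $T\times T$, for a principal quasi-torus $T$, is computed as follows. Since $T$ is commutative, $\rho(\{t,t'\})=1$, so $\{t,t'\}\in Z(G^\ssc)$. Moreover, $\{\rho(x),g\}=x\cdot\theta(g)(x)^{-1}$ for $x\in G^\ssc$.
\item[(c)] The bimultiplicativity-type identities $\{g_1g_2,g_3\}=\theta(g_1)(\{g_2,g_3\})\{g_1,g_3\}$, and similarly in the other variable.
\end{itemize}

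Plan. Base change to $\kbar$; this is harmless, since both sides are morphisms of varieties $G_1\times G_1\to G_2^\ssc$ and $G_1$ is smooth, so equality of $\kbar$-points suffices. By Lemma \ref{l:some-any}, choose principal quasi-tori $T_1\subseteq G_1$ and $T_2\subseteq G_2$ with $\vk(T_1)\subseteq T_2$. Since $G_1(\kbar) = T_1(\kbar)\cdot\rho_1(G_1^\ssc(\kbar))$, because $G_1 = Z(G_1)\rho_1(G_1^\ssc)\cdot T_1$, the identities (c) reduce the claim to two cases.
\begin{itemize}
\item[(1)] One argument lies in $\rho_1(G_1^\ssc)$. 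Here both sides are given by the explicit formula $x\cdot\theta(g)(x)^{-1}$. The identity then follows from the fact that $\vk_*$ is a morphism of crossed modules, i.e.\ $\vk^\ssc$ is $\vk$-equivariant and $\rho_2\circ\vk^\ssc=\vk\circ\rho_1$ (Sections \ref{s:crossed} and \ref{s:cr-cohomology}).
\item[(2)] Both arguments lie in $T_1$. This is the essential case and where principality enters. We write $t = z\rho_1(s)$ with $z\in Z(G_1)$ and $s\in T_1^\sscp$. Using (c) and case (1), we reduce to pairs of central elements. For these, $\{z,z'\}_{G_1}$ is a central element of $G_1^\ssc$ determined by Deligne's construction via $T_1$. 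Because $\vk(T_1)\subseteq T_2$, the images $\vk(z),\vk(z')$ lie in $T_2$, and the braiding on $T_2$ is computed via $T_2^\sscp=\rho_2^{-1}(T_2)$. The map $\vk^\ssc$ carries $T_1^\sscp$ into $T_2^\sscp$; this follows from $\vk(\rho_1(T_1^\sscp))\subseteq T_2$ and $T_2^\sscp=\rho_2^{-1}(T_2)$. Hence the defining formula on tori is transported correctly.
\end{itemize}

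The main obstacle is case (2). Deligne's braiding on $T\times T$ is defined by lifting to $T^\sscp$-valued data (e.g.\ via the cocharacter/isogeny description $\rho\colon T^\sscp\to T$ and taking a commutator in a suitable extension), and we must verify that this recipe is natural for the map of pairs $(T_1^\sscp\to T_1)\to(T_2^\sscp\to T_2)$. We would first try to prove functoriality of the braiding for arbitrary morphisms of "toric" crossed modules $T^\sscp\to T$ with $T$ commutative, using the explicit formula from Construction \ref{cons-braiding}. We would then deduce the general case by the density/decomposition argument above. Example \ref{ex:intro} shows that this step genuinely needs $\vk(T_1)\subseteq T_2$.
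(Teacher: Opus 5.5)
\textbf{Gap at case (2).} Your reduction via \eqref{e:Br2}--\eqref{e:Br5} and the crossed-module property of $(\vk^\ssc,\vk)$ is sound. However, case (2), which you call the essential case, is never actually proved, and the plan you sketch for it is misdirected. There is no separate ``recipe'' for the braiding on $T\times T$ involving cocharacters or extensions. The value $\{t,t'\}$ for $t,t'\in T$ is just the restriction of the braiding of Construction~\ref{cons-braiding}, so ``functoriality for morphisms of toric crossed modules'' is not a meaningful target. Your observation in (b) that $\{t,t'\}\in Z(G^\ssc)$ is also too weak to conclude anything. In Example~\ref{ex:PU2} the elements $b_1,b_2$ commute in $G$, yet $\{b_1,b_2\}_G=-1$.

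\textbf{The missing idea.} The braiding vanishes identically on a principal quasi-torus. Since $T=Z(G)\cdot T^\sssp$ (Lemma~\ref{l:t-product}(ii)) and $T^\sscp\to T^\sssp$ is faithfully flat, any $t,t'\in T(R)$ can be written, after a faithfully flat extension $R\into R'$, as $t=z\rho(s)$ and $t'=z'\rho(s')$ with $z,z'\in Z(G)(R')$ and $s,s'\in T^\sscp(R')$. Then \eqref{e:br-br} gives $\{t,t'\}=[s,s']=1$, because $T^\sscp$ is a torus, and Fact~\ref{f:W-inj} descends this to $R$. With this, case (2) is immediate:
\begin{itemize}
\item $\{t,t'\}_{G_1}=1$;
\item $\{\vk(t),\vk(t')\}_{G_2}=1$, because $\vk(T_1)\subseteq T_2$.
\end{itemize}
You can even bypass $T_1$. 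Decompose $g=z\rho_1(s)$ with $z\in Z(G_1)$; then all you need is $\{\vk(z),\vk(z')\}_{G_2}=1$ for $z,z'\in Z(G_1)\subseteq T_1$. This is the only place where principality enters, and it plays the role of the paper's Lemma~\ref{l:*}. The difference from Example~\ref{ex:PU2} is exactly that $b_1,b_2$ admit no commuting lifts to $G^\ssc$.

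\textbf{Comparison with the paper.} Once this is filled in, your argument is a correct proof along a genuinely different route. The paper does not compute with the braiding identities at all. It first proves Theorem~\ref{t:t-ext-principal}: a principal $\vk$ admits a $t$-extension $H_3=H_1\times_{G_2}H_2\to H_2$. The real work there is showing that $H_3$ is quasi-connected reductive, where principality makes $\pi_3^{-1}(Z(G_1))$ commutative. Then Proposition~\ref{p:principal-induces} gives preservation of the braiding for free, since $\{g,g'\}=[h,h']$ for lifts $h,h'$ to a $t$-extension and $\lambda$ is a homomorphism. Your route is more elementary: it uses only \eqref{e:Br2}--\eqref{e:Br5}, Proposition~\ref{p:functor-CrM}, and $G_1=Z(G_1)\cdot G_1^\sss$. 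The paper's route yields in addition the structural fact that principal homomorphisms lift to $t$-extensions.
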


\begin{corollary}[Theorem \ref{t:functor}]
\label{c:principal-intro}
For a {\emm principal} homomorphism $\vk\colon G_1\to G_2$\hs,
the induced map
\[\vk_\ab\coloneqq\vk_\crr\colon\, H^1_\ab(k,G_1)\to  H^1_\ab(k,G_2)\]
is a homomorphism of abelian groups.
\end{corollary}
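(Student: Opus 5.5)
The strategy is to reduce the statement to a compatibility of morphisms of crossed modules with braidings, and then feed that compatibility into the functoriality of the group structure constructed in Appendix \ref{app:gr-coh-Picard}.

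\textbf{Step 1: the induced morphism preserves the braiding.} By Theorem \ref{t:principal-intro}, the morphism of crossed modules $\vk_*=(\vk^\ssc,\vk)\colon \CrM(G_1)\to\CrM(G_2)$ of \eqref{e:vk*} satisfies
\[ \vk^\ssc\big(\{g,g'\}_{G_1}\big)=\{\vk(g),\vk(g')\}_{G_2} \]
for all $g,g'\in G_1(R)$. Taking $R=k_s$, this means $\vk_*$ is a morphism of Picard crossed modules $\PCrM(G_1)\to\PCrM(G_2)$ of $\Gamma$-modules. The identity holds on $k_s$-points and $\vk$ is defined over $k$, so it is $\Gamma$-equivariant.

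\textbf{Step 2: a morphism of Picard crossed modules is a group homomorphism on $H^1$.} I would recall the explicit description of the group law on $H^1$ from the appendix, following Breen. A 1-hypercocycle is a pair $(u,z)$ with $u_\sigma\in G^\ssc(k_s)$ and $z_{\sigma,\tau}$ satisfying the cocycle conditions of \cite{Borovoi-Memoir}. The product $(u,z)\cdot(u',z')$ is obtained by multiplying componentwise and correcting by braiding terms of the form $\{\,\cdot\,,\,\cdot\,\}$ evaluated on the $G$-components. Since $\vk^\ssc$ and $\vk$ are group homomorphisms commuting with $\rho$ and $\theta$ (they give a morphism of crossed modules), and since by Step 1 they commute with $\br$, applying $\vk_*$ to each term of the product formula gives the product formula in $G_2$. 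Hence
\[ \vk_*\big((u,z)\cdot(u',z')\big)=\vk_*(u,z)\cdot\vk_*(u',z'). \]
Passing to classes, $\vk_\crr$ respects the group law, and it sends the neutral class to the neutral class.

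\textbf{Main obstacle.} The substantive difficulty is Step 1, but it is already established in Theorem \ref{t:principal-intro}. What remains delicate is Step 2: I must make sure that the group law in the appendix really is given by a formula built only from $\rho$, $\theta$, the group multiplications and $\br$, with no auxiliary choices. If it were defined through a choice such as a principal quasi-torus, one would additionally need to check compatibility of those choices under $\vk$. If the appendix already contains a lemma saying that morphisms of Picard crossed modules induce homomorphisms on $H^1$, I would simply invoke it.
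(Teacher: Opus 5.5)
Your proof is correct and follows essentially the paper's route: the paper deduces the statement from Corollary \ref{c:principal-induces} (a principal homomorphism preserves the Picard braiding, proved via $t$-extensions), combined with the fact that the group law on $Z^1$ in Appendix \ref{app:gr-coh-Picard} is given by explicit formulas in $\rho$, $\theta$, $\br$ and the $\Gamma$-action, so that the induced map on $H^1\cong\coker\bar d$ is a homomorphism. One minor slip in your notation: in the paper's convention a $1$-hypercocycle is $(u,\psi)$ with $u_{\sigma,\tau}\in G^\ssc(k_s)$ and $\psi_\sigma\in G(k_s)$, not the other way round.
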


We see that the assignment $G\rightsquigarrow H^1_\ab(k,G)$ naturally extends to a functor
from the category of quasi-connected reductive $k$-groups with {\em principal} homomorphisms
to the category of abelian groups.

\begin{corollary}[from Corollaries \ref{c:local-global-abel}(ii) and \ref{c:principal-intro}]
For a quasi-connected reductive group $G$ over a field $k$ as in Corollary \ref{c:local-global-abel}(ii),
the first Galois cohomology set $H^1(k,G)$ has a canonical abelian group structure,
which is functorial with respect to principal homomorphisms.
\end{corollary}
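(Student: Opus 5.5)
The statement is a direct combination of the two earlier corollaries, so I would prove it by transport of structure. Let $k$ be a non-archimedean local field or a global field without real places, and let $G$ be quasi-connected reductive with $Z(G^\ssc)$ étale (the hypothesis implicit in Corollary \ref{c:local-global-abel}(ii)). By Corollary \ref{c:local-global-abel}(ii), the abelianization map $\ab^1\colon H^1(k,G)\to H^1_\ab(k,G)$ is a bijection. The target $H^1_\ab(k,G)$ carries the canonical abelian group structure built from Deligne's braiding $\br$ (Appendix \ref{app:gr-coh-Picard}). Transporting it along $\ab^1$ gives an abelian group structure on $H^1(k,G)$. This structure is canonical because both $\ab^1$ and $\br$ are canonical: $\br$ depends only on $G$, and $\ab^1$ is induced by the inclusion $(1\to G)\into\CrM(G)$. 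In particular, no choice of principal quasi-torus or cocycle representative is involved.

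For functoriality, let $\vk\colon G_1\to G_2$ be a principal homomorphism with both groups satisfying the hypotheses. I would show that the square formed by $\vk_*\colon H^1(k,G_1)\to H^1(k,G_2)$, the two maps $\ab^1$, and $\vk_\ab\colon H^1_\ab(k,G_1)\to H^1_\ab(k,G_2)$ commutes. This holds because $\vk$ induces a morphism of crossed modules $\vk_*\colon\CrM(G_1)\to\CrM(G_2)$, and its restriction to the degree-$0$ parts is $\vk$ itself. Hence it is compatible with the inclusions $(1\to G_i)\into\CrM(G_i)$, and functoriality of hypercohomology with coefficients in crossed modules gives commutativity. Since the horizontal maps $\ab^1$ are bijections, $\vk_*=(\ab^1)^{-1}\circ\vk_\ab\circ\ab^1$. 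By Corollary \ref{c:principal-intro}, $\vk_\ab$ is a group homomorphism, so $\vk_*$ is a homomorphism for the transported structures. Compatibility with composition and identities follows from the same property of $\vk_*$ on crossed modules, using that a composite of principal homomorphisms is principal. That closure property follows from Definition \ref{d:principal-intro} together with Lemma \ref{l:some-any}: $T_1$ maps into some $T_2$, and this $T_2$ in turn maps into some $T_3$.

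The only point needing care is the commutativity of the square, that is, that the crossing map is natural in $G$ with respect to the morphism $\CrM(G_1)\to\CrM(G_2)$ of Sections \ref{s:crossed} and \ref{s:cr-cohomology}. I expect this to be a routine check at the level of cocycles: a cocycle $c$ with values in $G_1(k_s)$ is sent to the pair $(1,c)$, and $\vk_*$ sends this pair to $(1,\vk\circ c)$.
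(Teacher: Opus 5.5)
Your proposal is correct and follows the same route as the paper. You transport the Picard-braided group structure of $H^1_\ab(k,G)$ along the bijection $\ab^1$ of Corollary \ref{c:local-global-abel}(ii), and you deduce functoriality from the commutative square \eqref{e:commut-cr} together with Corollary \ref{c:principal-intro} (Theorem \ref{t:functor}). Your additional check, via Lemma \ref{l:some-any}, that principal homomorphisms are closed under composition is a detail the paper leaves implicit.
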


\subsection*{Using principal quasi-tori}
For a quasi-connected reductive group $G$, let $T^\sscp\subset G^\ssc$ be a maximal torus,
and let $T=Z(G)\cdot\rho(T^\sscp)\subseteq G$
be the corresponding principal quasi-torus.
We consider the complex of quasi-tori $(T^\sscp\labelt\rho T)$ in degrees $-1$ and $0$
and its first hyper-cohomology group $H^1(k,T^\sscp\labelt\rho T)$.
Moreover, we consider the natural morphism of Picard crossed modules
\begin{equation*}
(T^\sscp\labelt\rho T, \theta_\triv, \br_\triv)\,\lra\, (G^\ssc\labelt\rho G, \theta,\br)
\end{equation*}
(where the action $\theta_\triv$ and the braiding $\br_\triv$ are trivial),
which induces a homomorphism of abelian groups
\begin{equation}\label{e:H1-T-ab}
H^1(k,T^\sscp\labelt\rho T)\to H^1_\ab(k,G).
\end{equation}
We prove the following theorem:

\begin{theorem}[Theorem \ref{t:f-iso}]
\label{t:f-iso-intro}
For a quasi-connected reductive $k$-group $G$, a maximal torus $T^\sscp\subset G^\ssc$,
and the principal quasi-torus $T=Z(G)\cdot\rho(T^\sscp)$,
the homomorphism \eqref{e:H1-T-ab} is an isomorphism of abelian groups,
which is functorial with respect to {\em principal} homomorphisms $G_1\to G_2$.
\end{theorem}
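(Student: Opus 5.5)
Our strategy is to prove that \eqref{e:H1-T-ab} is bijective through a comparison of exact sequences, and to deduce functoriality from the fact that principal homomorphisms take principal quasi-tori into principal quasi-tori. The morphism of crossed modules $\iota\colon(T^\sscp\labelt\rho T)\to(G^\ssc\labelt\rho G)$ is a quasi-isomorphism, meaning it induces isomorphisms on $\ker\rho$ and on $\coker\rho$:
\begin{itemize}
\item[] On kernels, since $T^\sscp=\rho^{-1}(T)$, we get $\ker[T^\sscp\to T]=\ker\rho=Z(G^\ssc)\cap\ker\rho$.
\item[] On cokernels, since $T=Z(G)\cdot\rho(T^\sscp)$, the map $T/\rho(T^\sscp)\to G/\rho(G^\ssc)$ is injective. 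Indeed, $T\cap\rho(G^\ssc)=\rho(T^\sscp)$, because $\rho^{-1}(T)=T^\sscp$.
\end{itemize}
Surjectivity on cokernels needs $G=T\cdot\rho(G^\ssc)$. This is where quasi-connectedness enters, via the structure result (earlier in the paper, cf.\ \cite[Theorem 2.13]{BGR}) that $G=Z(G)\cdot G^\sss$, together with $G^\sss=\rho(G^\ssc)$. We use fppf sheaf quotients throughout, since $\ker\rho$ may be non-smooth.

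For a crossed module $(L\labelt\rho G)$ whose kernel $A=\ker\rho$ is central, there are exact sequences of pointed sets
\[
H^1(k,L)\to H^1(k,G)\to H^1(k,L\to G)\to H^2(k,L)
\]
and, in the other direction,
\[
H^1(k,A)\to H^1(k,L\to G)\to H^1(k,\coker),
\]
with compatible twisting actions. One cannot simply apply a five lemma, because the cokernel is non-commutative and non-smooth, and Galois cohomology of such objects is awkward. We therefore argue directly with cocycles in the explicit description of $H^1$ of a crossed module from Section \ref{s:coboundary}. A $1$-cocycle is a pair $(u,g)$ with $u_{\sigma,\tau}\in G^\ssc(k_s)$ and $g_\sigma\in G(k_s)$, subject to the usual relations.

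\emph{Surjectivity.} Given a cocycle $(u,g)$, we want to conjugate it by an element of $G^\ssc(k_s)$, modifying $u$ along the way, so that all $g_\sigma$ land in $T(k_s)$ and all $u_{\sigma,\tau}$ land in $T^\sscp(k_s)$. The plan is first to reduce to the case where $g$ normalizes a twisted form of $T^\sscp$. The twisted action of $\Gamma$ via $g$ on $G^\ssc$ defines an inner form ${}_gG^\ssc$ over $k$, and this form has a maximal $k$-torus $T'$. Since all maximal tori of $G^\ssc_{k_s}$ are $G^\ssc(k_s)$-conjugate, we may conjugate so that $T'=T^\sscp_{k_s}$. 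Then each $g_\sigma$ normalizes $\rho(T^\sscp)$. Next we use the structure of $G$:
\begin{itemize}
\item[] Write $g_\sigma=z_\sigma\rho(s_\sigma)$ with $z_\sigma\in Z(G)$, using $G=Z(G)G^\sss$ and lifting to $G^\ssc$ (possible over $k_s$ because $\rho$ is smooth onto its image when the center is étale, or by using the crossed-module cocycle flexibility).
\item[] Then $s_\sigma$ normalizes $T^\sscp$.
\item[] Modify $s_\sigma$ by elements of $N(T^\sscp)$, and adjust by the Weyl group as in Kottwitz/Borovoi's argument \cite{Borovoi-Memoir} that every cocycle in $N(T)$ of a connected group can be brought into $T$ after choosing $T'$ appropriately.
\end{itemize}
This last point is the main obstacle. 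The clean path is to choose $T'$ in the inner form so that the twisted Galois action on $T^\sscp$ agrees with the action coming from $g$. Then $g_\sigma\in T$ up to the crossed-module coboundary, and $u_{\sigma,\tau}$ automatically lies in $\rho^{-1}(T)=T^\sscp$.

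\emph{Injectivity.} Take two $T$-cocycles that become cohomologous in $(G^\ssc\to G)$. The coboundary data consists of $h\in G(k_s)$ together with a $1$-cochain in $G^\ssc$. We apply the same torus-conjugation trick to the inner form defined by one of the cocycles, in order to force $h$ into $N_G(T)$ and then into $T$ modulo $\rho(G^\ssc)$ corrections. Alternatively, and perhaps more cleanly, we compare the long exact sequences
\[
H^1(k,T^\sscp)\to H^1(k,T)\to H^1(k,T^\sscp\to T)\to H^2(k,T^\sscp)\to H^2(k,T)
\]
and the analogous sequence for $G$. We would then use Kneser/Bruhat–Tits–Harder vanishing where available. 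However, since the theorem is over arbitrary $k$, this comparison is not sufficient on its own, so the direct cocycle argument is primary.

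\emph{Group homomorphism and functoriality.} The map respects the group structure because it comes from a morphism of Picard crossed modules. This requires checking that $\br$ restricted to $T\times T$ is trivial, which holds since $T$ is commutative and $\{t,t'\}\in\ker\rho\cap T^\sscp$ is a bimultiplicative map from a quasi-torus; by Proposition \ref{p:br} it vanishes. For functoriality, let $\vk$ be principal. By Lemma \ref{l:some-any}, $\vk(T_1)\subseteq T_2$ for some principal $T_2$, so $\vk^\ssc(T_1^\sscp)\subseteq\rho_2^{-1}(T_2)=T_2^\sscp$. The resulting square of Picard crossed modules commutes, using Theorem \ref{t:principal-intro} for the braidings. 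Finally, independence of the choice of $T^\sscp$ follows by applying this functoriality argument to the identity map of $G$.
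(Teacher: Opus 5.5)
Bijectivity of \eqref{e:H1-T-ab} is the real content of the theorem, and your proposal does not prove it. What is needed is the quasi-isomorphism \emph{on $k_s$-points}: $T^\sscp=\rho^{-1}(T)$ together with $G(k_s)=T(k_s)\cdot\rho\big(G^\ssc(k_s)\big)$. This is Proposition~\ref{t:Zev}(i).

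You only check surjectivity on cokernels for fppf sheaves. That does not give the $k_s$-point statement, because $\ker\rho$ need not be smooth. The paper obtains it from the fppf cohomology sequences over $k_s$, using that $H^1_\fppf(k_s,Z\times_k T^\sscp)\to H^1_\fppf(k_s,Z\times_k G^\ssc)$ is an isomorphism since $T^\sscp$ and $G^\ssc$ are smooth. Once you have this, your objection to a ``five lemma'' disappears. The map $\big(T^\sscp(k_s)\to T(k_s)\big)\to\big(G^\ssc(k_s)\to G(k_s)\big)$ is a quasi-isomorphism of crossed modules of discrete $\Gamma$-groups, and such a morphism induces a bijection on $H^1$ \cite[Theorem 3.3]{Borovoi-Preprint}, \cite[Proposition 5.6]{Noohi}. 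Smoothness plays no role at that level, and this is exactly the paper's argument. Your second exact sequence is also misindexed: $\ker\rho$ sits in degree $-1$, so the term should be $H^2(k,\ker\rho)$.

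Even by hand the argument is short.
\begin{itemize}
\item[] Surjectivity: given $(u,\psi)\in Z^1(k,G^\ssc\to G)$, write $\psi_\sigma=\rho(a_\sigma)t_\sigma$ with $t_\sigma\in T(k_s)$, and act by $(\varphi,1)$ with $\varphi_\sigma=a_\sigma^{-1}$. The new $\psi'$ lies in $T(k_s)$. By \eqref{eq:def1_1-cocycle}, $\rho(u')$ then lies in $T(k_s)$, so $u'$ lies in $T^\sscp(k_s)$, and $\theta$ is trivial on $T\times T^\sscp$.
\item[] Injectivity: one checks that $(u,\psi)*(\varphi^{a},\rho(a))=(u,\psi)$ for $\varphi^{a}_\sigma=a\cdot{}^{\psi_\sigma\sigma}a^{-1}$. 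Writing $g=\rho(a)t$ therefore reduces you to $g=t\in T(k_s)$. Then $\psi'_\sigma=t^{-1}\rho(\varphi_\sigma)\psi_\sigma{}^\sigma t$ forces $\varphi_\sigma\in T^\sscp(k_s)$.
\end{itemize}

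Your torus-conjugation substitute contains errors, not just an open step.
\begin{itemize}
\item[] The map $\sigma\mapsto\theta_{\psi_\sigma}\circ\sigma$ coming from the $G$-part of a hypercocycle is multiplicative only modulo $\inn(u_{\sigma,\tau})$. So it defines a $k$-band as in Section~\ref{s:Galois}, not a $k$-form ${}_\psi G^\ssc$ with a maximal $k$-torus you could conjugate.
\item[] Even granting such a torus, conjugating it onto $T^\sscp_{k_s}$ only puts $\psi_\sigma$ in the normalizer. Conjugation alone cannot move a class into a \emph{fixed} $T$; already $H^1(k,T)\to H^1(k,G)$ is not surjective in general. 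What removes the Weyl-group discrepancy is absorbing the $\rho\big(G^\ssc(k_s)\big)$-component of $\psi_\sigma$ into the $G^\ssc$-part of the cochain, as above. That makes the conjugation step unnecessary.
\item[] Your decomposition $\psi_\sigma=z_\sigma\rho(s_\sigma)$ with $z_\sigma\in Z(G)(k_s)$ needs $Z^\sscp$ \'etale, which is not assumed. For $G=\mathrm{PGL}_p$ over an imperfect field of characteristic $p$, one has $Z(G)=1$ but $G(k_s)/\rho\big(\mathrm{SL}_p(k_s)\big)\cong k_s^\times/k_s^{\times p}\neq1$.
\end{itemize}

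Two smaller points. Triviality of $\br$ on $T\times T$ does not follow from bimultiplicativity and \eqref{e:Pic} on a quasi-torus: the non-principal $B\subset G$ of Remark~\ref{r:not-braiding} carries a nontrivial pairing of exactly that kind. The actual reason is $T=Z(G)\cdot\rho(T^\sscp)$ together with \eqref{e:br-br}, which gives $\{t,t'\}=[s,s']=1$. Your closing claim of independence of $T^\sscp$ ``via the identity map'' also fails, since functoriality requires $T_1\subseteq T_2$.
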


Here functoriality means the following. Let $\vk\colon G_1\to G_2$ be a principal homomorphism
of quasi-connected reductive $k$-groups.
Let $T_1\subseteq G_1$ be a principal quasi-torus,
and let $T_2\subseteq G_2$ be a principal quasi-torus such that $\vk(T_1)\subseteq T_2$
(then $\vk(T_1^\sscp)\subseteq T_2^\sscp$).
Then we have a commutative diagram of abelian groups
\begin{equation*}
\begin{aligned}
\xymatrix@C=15mm{
H^1(k, T_1^\sscp\to T_1)\ar[r]^-{\vk_*}\ar[d]_-\sim   &H^1(k, T_2^\sscp\to T_2)\ar[d]^-\sim\\
H^1_\ab (k,G_1)\ar[r]^-{\vk_\ab}             &H^1_\ab(k,G_2).
}
\end{aligned}
\end{equation*}
in which the vertical arrows are isomorphisms.

\subsection*{Idea of proof of Theorem \ref{t:principal-intro}}
Following \cite{BGA}, for a quasi-connected reductive $k$-group $G$,
we introduce the notion of  {\em $t$-extension
of $G$}  to be a short exact sequence
\[ 1\to S\to H\to G\to 1\]
where $S$ is a $k$-torus and $H$ is a quasi-connected reductive $k$-group
for which $H^\sss$ is simply connected.
It follows easily from \cite[Proposition 2.2]{BGA}
that every quasi-connected reductive $k$-group admits a $t$-extension.
Moreover, following \cite{BGA},
for  a homomorphism of quasi-connected reductive $k$-groups
$\vk\colon G\to G'$,
we introduce the notion of {\em $t$-extension of $\vk$}
to be a commutative diagram
\begin{equation*}
\begin{aligned}
\xymatrix@R=5mm{
H\ar[d]\ar[r] & H'\ar[d]\\
G\ar[r]^-\vk   & G'
}
\end{aligned}
\end{equation*}
in which the vertical arrows $H\to G$ and $H'\to G'$ are $t$-extensions.
In general, a homomorphism $\vk\colon G\to G'$ may not admit a $t$-extension.
For instance, the homomorphism $B\into G$ of Example \ref{ex:intro} does not admit a $t$-extension;
see Remark \ref{r:not-admit-t}.
In Section \ref{s:t-ext-principal} we prove:

\begin{theorem}[Theorem \ref{t:t-ext-principal}, not easy]
\label{t:t-ext-principal-intro}
Every {\emm principal} homomorphism of quasi-connected reductive $k$-groups  admits a $t$-extension.
\end{theorem}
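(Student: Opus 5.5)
We will construct the $t$-extension of $\vk\colon G\to G'$ in two stages: first choose an arbitrary $t$-extension $1\to S'\to H'\to G'\to 1$ of the target, then build a $t$-extension of $G$ that maps to $H'$ compatibly with $\vk$. Both $G$ and $G'$ admit $t$-extensions by \cite[Proposition 2.2]{BGA}. The naive candidate is the fibre product $G\times_{G'}H'$. It is an extension of $G$ by the torus $S'$, but its semisimple part need not be simply connected, because $\vk^\ssc$ need not be injective. So we will fix this by taking a product with a $t$-extension of $G$ and then adjusting.

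\emph{Step 1.} Let $1\to S_0\to H_0\to G\to 1$ be any $t$-extension of $G$, and put
\[ H\coloneqq H_0\times_G (G\times_{G'} H') = H_0\times_{G'} H'. \]
This is an extension of $G$ by the torus $S_0\times S'$, with a projection $H\to H'$ lying over $\vk$. The identity component $H^0$ is the fibre product of $H_0^0$ with $H'$ over $G'$. Its derived group sits in $H_0^\sss\times (H')^\sss$ as the graph-like subgroup $\{(x,y): \vk(\bar x)=\bar y\}$. Since $H_0^\sss=G^\ssc$ and $(H')^\sss=(G')^\ssc$, and the map to the second factor is induced by $\vk^\ssc$, this derived group is isomorphic to $G^\ssc$. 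Hence $H^\sss$ is simply connected. Smoothness of $H$ follows because it is an extension of a smooth group by a torus.

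\emph{Step 2.} It remains to show that $H$ is quasi-connected reductive. This is where principality enters, and I expect it to be the main obstacle. I would use the working Definition \ref{d:q-c-r}, which I expect says roughly that $G=Z(G)\cdot G^0$, or equivalently that $G=Z(G)\cdot\rho(G^\ssc)$ with $Z(G)$ a quasi-torus. The task is then to show that $H$ is generated by its center together with $H^\sss$.

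Let $T\subseteq G$ be a principal quasi-torus and $T'\supseteq \vk(T)$ a principal quasi-torus of $G'$. Then $G=T\cdot\rho(G^\ssc)$, so it suffices to lift $T$ into $Z(H)\cdot H^\sss$. Let $T_{H_0}$ and $T_{H'}$ be the preimages of $T$ and $T'$. These are extensions of quasi-tori by tori, and they are commutative: in a $t$-extension, the preimage of a principal quasi-torus is again a principal quasi-torus, since it centralizes the preimage of the maximal torus of $G^\ssc$. Then $T_H\coloneqq T_{H_0}\times_{G'}T_{H'}$ is a commutative quasi-torus in $H$ that centralizes the maximal torus $T^\sscp$ of $H^\sss$, embedded diagonally via $\vk^\ssc$ (using $\vk(T^\sscp)\subseteq T'^\sscp$).

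From this one gets $H=T_H\cdot H^\sss$ with $T_H=\cZ_H(T^\sscp)$. Using the criterion that quasi-connectedness is equivalent to every principal quasi-torus being of the form $Z\cdot\rho(T^\sscp)$, we conclude $T_H=Z(H)\cdot T^\sscp$. The key point is that elements of $T_H$ act on $H^\sss\cong G^\ssc$ through the action of $T$, which is inner by an element of $T^\sscp$ modulo the center, because $G$ is quasi-connected. Without principality, $T$ might not map into a commutative preimage in $H'$, and the fibre product could fail to be quasi-connected; this matches Remark \ref{r:not-admit-t}.

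\emph{Step 3.} Verify that the projections $H\to G$ and $H\to H'$ form the required commutative square. This holds by construction, which completes the proof.
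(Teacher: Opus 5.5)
Your proposal is correct and is essentially the paper's proof. The paper also takes the fibre product $H_3=H_1\times_{G_2}H_2$, and it uses principality exactly as you do: the preimage in $H_2$ of $\vk(Z(G_1))\subseteq T_2$ is commutative by Lemma~\ref{l:short-exact}(ii). It gets centrality from the same uniqueness-of-lifts-to-the-universal-cover argument (Lemma~\ref{l:commutes}); the only variation is that you work with $\pi^{-1}(T)$, whereas the paper works with $Z_3=\pi_3^{-1}(Z(G_1))$ and shows directly that $Z_3=Z(H_3)$.

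Your appeal to a ``criterion'' about principal quasi-tori is circular as phrased, since principal quasi-tori presuppose quasi-connectedness. What actually proves $\pi^{-1}(T)=Z(H)\cdot T_H^\sscp$ is your key point, combined with the commutativity of $\pi^{-1}(T)$ and $H=\pi^{-1}(T)\cdot H^\sss$. The inclusion $Z(H)\subseteq\cZ_H(T_H^\sscp)=\pi^{-1}(T)$ then also shows that $Z(H)$ is of multiplicative type, which Definition~\ref{d:q-c-r} requires.
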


Using Theorem \ref{t:t-ext-principal-intro}, in Section \ref{s:Picard}
we easily prove Theorem \ref{t:principal-intro} and Corollary \ref{c:principal-intro}.

\subsection*{Plan of the paper}
The plan of the rest of the paper is as follows.
In Section \ref{s:crossed} we construct the  crossed module $\CrM(G)$ from a reductive $k$-group $G$.
In Section \ref{s:coboundary},
using second non-abelian cohomology,
we  extend a hyper-cohomology exact sequence
corresponding to a crossed module of $\Gamma$-groups where $\Gamma$ is a profinite group,
and thus generalize results of Borel and Serre in \cite[Proposition 41 in Section I.5.6]{Serre}
and of Springer \cite[Proposition 1.28]{Springer}.
In Section \ref{s:Galois} we consider a version of these results for a crossed module of $k$-groups.
In Section \ref{s:Douai} we introduce the notion of a field of Galois-Douai type
and prove that for a simply connected semisimple group $G$ over such  a field $k$,
all elements of $H^2(k,G)$ are neutral.
In Section \ref{s:cr-cohomology} we define the crossed cohomology $H^1_\crr(k,G)$
and prove Theorems \ref{t:cr-surjective} and \ref{t:cr-bijective},
which together constitute Theorem \ref{t:local-global-crossing}.
Starting from Section \ref{s:quasi-connected}, we consider quasi-connected reductive $k$-groups
and principal homomorphisms.
Since $Z(G)$ may be non-smooth, we work with $R$-points rather than with $\kbar$-points,
where $R$ runs over commutative unital $k$-algebras.
In Section \ref{s:Picard} we construct the canonical Picard braiding on
the crossed module $\CrM(G)$ for a quasi-connected reductive $k$-group $G$,
and prove modulo Theorem \ref{t:t-ext-principal-intro}
that a principal homomorphism preserves the canonical braiding.
In Section \ref{s:t-ext-principal} we prove Theorem \ref{t:t-ext-principal-intro}.
In Section \ref{s:Picard-to-abelian} we consider the abelian Galois cohomology group $H^1_\ab(k,G)$
and prove Theorem \ref{t:f-iso-intro}.
We also give an example of computing $H^1(k,G)$
over a non-archimedean local field $k$ of positive characteristic,
using Theorem \ref{t:f-iso-intro}.

The paper contains two appendices.
In Appendix \ref{app:universal} we gather results concerning
the functoriality of the universal covering of a semisimple $S$-group scheme
over a non-empty base scheme $S$.
In Appendix \ref{app:gr-coh-Picard}, following \cite{Noohi}, we construct the abelian group structure
on the first group cohomology with coefficients in a Picard braided crossed module.

{\sc Acknowledgements.}
We thank  Remy van Dobben de Bruyn, Cristian D. Gonz\'alez-Avil\'es, and Derek Holt
for answering the first-named author's questions
in MathOverflow and Mathematics Stack Exchange.
We are grateful to Cristian D. Gonz\'alez-Avil\'es,  Jean-Pierre Labesse,
Nguyêñ Quôć Thǎńg, and Zev Rosengarten
for helpful email correspondence.
We thank the anonymous referee for careful reading and most inspiring comments,
which helped us to improve the paper.
The first-named author worked on this paper during his visit
to the Max Planck Institute for Mathematics, Bonn, in January 2026,
and he thanks the institute for hospitality, support, and excellent working conditions.
\bigskip\bigskip

\section{A crossed module from a non-connected reductive group}
\label{s:crossed}

In this section, $G$ is a (smooth) reductive group, {\em not necessarily connected}, over a field $k$.
We use the notations \,$G^0$, \,$G^\sss=[G^0,G^0]$, \,$\pi\colon G^\ssc\onto G^\sss$, \,and $\rho\colon G^\ssc\to G$ of the Introduction.

Let $\Set$ denote the category of sets, $\Grp$ denote the category of groups,
and $\Alg_R$ denote the category of
commutative unital $R$-algebras.
In particular, by  $\Alg_k$ we denote the category of
commutative unital $k$-algebras.
We denote by $\ul G$ the group functor represented by $G$:
\[\ul G\colon \Alg_k\to \Grp,\quad\ R\functor G(R)\quad\ \text{where $R$ is a $k$-algebra.}\]
For a $k$-group $H$, we consider the automorphism group functor
\[\ul\Aut(H)\colon \Alg_k\to \Grp,\quad\ R\functor \Aut_R(H_R),\]
where $H_R=H\times_k R$ denotes the $R$-group scheme obtained from the $k$-group $H$ by base change.
Observe that we have a homomorphism
\[ \ul\Aut(H)(R)=\Aut_R(H_R)\,\lra\, \Aut\,H_R(R)=\Aut\, H(R),\quad \ul a \mapsto a\ \ \text{for $\ul a\in\ul\Aut(H)(R)$,}\]
whence we obtain an action
\[\ul\Aut(H)(R)\times H(R)\to H(R), \quad\ (\ul a,h)\mapsto  a(h).\]

\begin{construction}\label{cons:theta}
We define an action $\theta^\ssc \colon G \times_k G^\ssc \to G^\ssc$.
It suffices to construct a homomorphism of group functors on the category $\Alg_k$ of commutative unital $k$-algebras
\[
\eta \colon \ul G \to \underline{\Aut}(G^\ssc).
\]
We construct $\eta$ by composing several homomorphisms of group functors.
For a $k$-algebra $R$, let $G_R^\sss=G^\sss\times_k R$ and $G_R^\ssc=G^\ssc\times_k R$
be the associated semisimple and simply connected group schemes over $R$, respectively;
see Definition \ref{d:red_ss_S-grp} and Definition \ref{d:sc} for these notions over $R$.
Furthermore, the induced morphism $G^\ssc_{R} \to G^\sss_{R}$ is a universal covering over $R$;
see Definition \ref{d:universal} and Lemma \ref{l:uni_cov_bc}.
By Proposition \ref{p:GA}(i), there is a homomorphism of groups
\[
{\Aut(G_R^\sss)} \to {\Aut(G_R^\ssc)}
\]
which is compatible with any base change $R \to R'$.
This shows that we have a homomorphism of group functors
\[
\underline{\Aut}(G^\sss) \xrightarrow{} \underline{\Aut}(G^\ssc).
\]
By \cite[Proposition 1.52, Corollary 6.19(e)]{Milne-AG},
the algebraic $k$-subgroup $G^\sss$ of $G$ is characteristic,
that is, $G^\sss_R$ is preserved by every automorphism of $G_R$ for every $k$-algebra $R$;
see \cite[Definition 1.51(b)]{Milne-AG}.
This yields a restriction homomorphism of (abstract) groups
\[
{\Aut_R(G_R)} \to {\Aut_R(G_R^\sss)}
\]
which is clearly compatible with any base change $R \to R'$.
Thus we obtain the restriction homomorphism of group functors
\[
\ul\Aut(G) \to \ul\Aut(G^\sss).
\]

By \cite[Expos\'e XXIV, \S 1.1]{SGA3}, we have a homomorphism of group functors
\[
\ul G \to \underline{\Aut}(G), \quad\ g\in G(R)=G_R(R)\, \longmapsto\ \inn(g)\in \Aut_R(G_R)=\ul\Aut(G)(R).
\]
Here $\inn(g)$ acts on the functor $\Alg_R\to \Grp$ represented by $G_R$ by
\[g'\mapsto g\hs g'g^{-1}\colon\, G_R(R')\to G_R(R') \quad\ \text{for every $R$-algebra $R'$ and every element $g'\in G_R(R')$}.\]
By composing all these homomorphisms, we obtain the desired homomorphism of group functors
\[
\ul \eta \colon \ul G \to \underline{\Aut}(G) \to \underline{\Aut}(G^\sss) \to \underline{\Aut}(G^\ssc),
\]
and thus a morphism of functors  $\Alg_k\to \Set$:
\[ \ul\theta^\ssc\colon \ul G\times \ul G^\ssc\to \ul G^\ssc,\quad(g,s)\mapsto \eta(g)(s)
    \quad\text{for}\ \, g\in G(R),\, s\in G^\ssc(R).\]
Applying the Yoneda lemma, we obtain the desired morphism of affine $k$-varieties
\[ \theta^\ssc\colon  G\times_k G^\ssc\to G^\ssc.\]
We  denote $\theta^{\ssc}_g (s)\coloneqq \theta^\ssc(g,s)$. We write
\[^g\hm s=\theta^\ssc_g(s)=\theta^\ssc(g,s).\]
\end{construction}

\begin{proposition}\label{p:crossed-module}
The quadruple $(G^\ssc,G,\rho,\theta^\ssc)$ is a {\emm crossed module of $k$-groups},
that is, for every $k$-algebra $R$ and for all $s,s'\in G^\ssc(R)$, $g\in G(R)$, we have
\begin{align}
^{\rho(s)}\hm s'  &= s\hs s' s^{-1},\tag{\sf CM1}\label{e:CM1}\\
\rho(\hs^g\hm s') &= g\rho(s') g^{-1}. \tag{\sf CM2}\label{e:CM2}
\end{align}
\end{proposition}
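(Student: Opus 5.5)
We must prove (CM1) and (CM2) for the action $\theta^\ssc$ of Construction \ref{cons:theta}. The plan is to use the functoriality of the universal covering from Proposition \ref{p:GA}(i). Write $\alpha\mapsto\alpha^\ssc$ for the homomorphism $\Aut(G^\sss_R)\to\Aut(G^\ssc_R)$. We will use the compatibility $\pi\circ\alpha^\ssc=\alpha\circ\pi$, which is the defining property of the lift in Proposition \ref{p:GA}(i); this is exactly what makes the universal covering functorial. Since everything is compatible with base change $R\to R'$, it suffices to check both identities on $R$-points for an arbitrary $k$-algebra $R$. Indeed, the two sides of each identity are morphisms of functors, and equality on all $R$-points implies equality by Yoneda.

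\textbf{(CM2).} Let $g\in G(R)$ and let $\alpha=\inn(g)|_{G^\sss_R}\in\Aut(G^\sss_R)$. Then $\eta(g)=\alpha^\ssc$, and we compute
\[\rho(\hs^g\hm s')=\pi(\alpha^\ssc(s'))=\alpha(\pi(s'))=g\,\pi(s')\,g^{-1}=g\rho(s')g^{-1}.\]
So (CM2) is immediate from the compatibility of the lift with $\pi$.

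\textbf{(CM1).} Let $s\in G^\ssc(R)$ and $g=\rho(s)$. Then $\eta(g)$ is the lift $\beta^\ssc$ of $\beta=\inn(\pi(s))|_{G^\sss_R}$. On the other hand, $\inn(s)\in\Aut(G^\ssc_R)$ satisfies $\pi\circ\inn(s)=\inn(\pi(s))\circ\pi=\beta\circ\pi$, so it is also a lift of $\beta$. It therefore remains to show that $\beta$ has a \emph{unique} lift to $G^\ssc_R$, which forces $\beta^\ssc=\inn(s)$ and gives $^{\rho(s)}\hm s'=ss's^{-1}$.

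The uniqueness of lifts is the main point. I would take it from the uniqueness part of Proposition \ref{p:GA} and Appendix \ref{app:universal}: an endomorphism of $G^\ssc_R$ lifting the identity of $G^\sss_R$ is the identity. If this is not stated explicitly, I would argue as follows. Let $\gamma,\gamma'$ be two lifts of $\beta$. The map $x\mapsto \gamma(x)\gamma'(x)^{-1}$ is a morphism $G^\ssc_R\to\ker\pi$, and $\ker\pi$ is central and finite of multiplicative type. Since $G^\ssc_R$ has connected geometric fibers, this morphism is constant, with value equal to its value at $1$, which is $1$; hence $\gamma=\gamma'$. This argument needs care only over a non-reduced or disconnected base $R$. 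In that case I would use rigidity of maps from a scheme with connected fibers to a finite group of multiplicative type, or reduce via base change to the universal case $R=k[G]\otimes k[G^\ssc]$. The existence of the lift is then purely formal from Proposition \ref{p:GA}.
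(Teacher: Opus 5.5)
Your argument is correct and is the paper's proof. \eqref{e:CM2} is the right-hand diagram: the lift commutes with $\pi$. \eqref{e:CM1} is the left-hand diagram, whose top square commutes ``by the definition of $\theta^\ssc$'' precisely because $\inn(s)$ is a lift of $\inn(\pi(s))$ and the lift in Proposition~\ref{p:GA}(i) is unique, as stated there explicitly.

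So your fallback uniqueness argument is not needed. If you did want it, the clean version is to note that $x\mapsto\gamma(x)\gamma'(x)^{-1}$ is a homomorphism into the central subgroup $\ker\pi$, hence trivial, as in Step~3 of the proof of Proposition~\ref{p:Conrad-existence}. Connectedness of the geometric fibers does not by itself force a morphism of schemes into a non-\'etale $\ker\pi$ to be constant over a non-reduced $R$.
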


\begin{proof}
We must prove the equalities
\begin{align*}
\theta^\ssc\circ\rho&=\inn\colon\,  \ul G^\ssc\to \ul\Aut(G^\ssc), \tag{\sf CM1'}\label{e:CM1'}\\
\rho\circ\theta_g^\ssc&=\inn_g\circ\rho\colon\, G_R^\ssc\to  G_R\hs.  \tag{\sf CM2'}\label{e:CM2'}
\end{align*}
They follow from the commutativity of the following two diagrams:
\[
\xymatrix@C=20mm@R=7mm{
\ul G^\ssc\ar[r]^-{\inn}\ar@{->>}[d]_(.42)\pi  &\ul\Aut(G^\ssc)\ar@{=}[d]
      &G_R^\ssc\ar[r]^-{\theta^\ssc_g}\ar@{->>}[d]_(.42)\pi &G_R^\ssc\ar@{->>}[d]^(.42)\pi \\
\ul G_{\phantom{R}}^\sss\ar[r]^-{\theta^\ssc}\ar@{_(->}[d]  &\ul\Aut(G^\ssc)\ar@{=}[d]
      &G_R^\sss\ar[r]^-{\inn(g)}  \ar@{_(->}[d] &G_R^\sss\ar@{_(->}[d]\\
\ul G\ar[r]^-{\theta^\ssc}  &\ul\Aut(G^\ssc)      &G_R\ar[r]^-{\inn(g)} &G_R.
}
\]
In each of these two diagrams, the commutativity of the bottom rectangle is obvious,
and the commutativity of the top rectangle follows from the definition of the map $\theta^\ssc$.
\end{proof}

\begin{definition}
We write
$\CrM(G) = (G^\ssc,G, \rho,\theta)$ where we write $\theta$ for $\theta^\ssc$,
and we say that $\CrM(G)$ is
the {\em crossed module obtained from $G$}.
\end{definition}

Let $\varphi\colon G_1\to G_2$ be a homomorphism
of (not necessarily connected) reductive $k$-groups.
Using Proposition \ref{p:GA}(i), we obtain a commutative diagram
\[
\xymatrix@C=18mm{
G_1^\ssc\ar[r]^-{\varphi^\ssc}\ar[d]_-{\rho_1}  &G_2^\ssc\ar[d]^-{\rho_2} \\
G_1\ar[r]^-{\varphi}                            &G_2\, .
}
\]
\begin{proposition}\label{p:functor-CrM}
The pair
\[\CrM(\varphi)\coloneqq (\varphi^\ssc,\varphi)\colon\hs \CrM(G_1)\to \CrM(G_2)\]
is a morphism of crossed modules, that is, for every $k$-algebra $R$ we have
\begin{equation}\label{e:morphism-cr}
\varphi^\ssc(\hs^g\hm s)=\hs^{\varphi(g)}\hm \varphi^\ssc(s)\quad\ \text{for all}\ \,g\in G(R),\ s\in G^\ssc(R).
\end{equation}
\end{proposition}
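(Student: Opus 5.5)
The plan is to reduce \eqref{e:morphism-cr} to an equality of homomorphisms of group schemes over $R$, namely
\[
\varphi^\ssc_R\circ\theta_g^{(1)}=\theta^{(2)}_{\varphi(g)}\circ\varphi^\ssc_R\colon\ G^\ssc_{1,R}\to G^\ssc_{2,R},
\]
and to prove it by descending to the semisimple groups. There we will use the uniqueness part of the functoriality of universal coverings (Proposition \ref{p:GA}).

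Fix a $k$-algebra $R$ and $g\in G_1(R)$. First I will note that $\varphi$ maps $G_1^0$ into $G_2^0$. Since $G_1^\sss=[G_1^0,G_1^0]$, it also maps $G_1^\sss$ into $G_2^\sss$; this gives a homomorphism $\varphi^\sss\colon G^\sss_{1}\to G^\sss_{2}$. By Proposition \ref{p:GA}(i), $\varphi^\ssc$ is the unique lift of $\varphi^\sss$ compatible with $\pi_1,\pi_2$, and this remains true after base change to $R$ (Lemma \ref{l:uni_cov_bc}).

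On $G^\sss_{1,R}$ we have the identity $\varphi\circ\inn(g)=\inn(\varphi(g))\circ\varphi$, since $\varphi$ is a homomorphism. By Construction \ref{cons:theta}, $\theta^{(1)}_g$ is the lift to $G_1^\ssc$ of $\inn(g)|_{G^\sss_{1,R}}$, and $\theta^{(2)}_{\varphi(g)}$ is the lift of $\inn(\varphi(g))|_{G^\sss_{2,R}}$. Hence both composites
\[
\varphi^\ssc\circ\theta^{(1)}_g \quad\text{and}\quad \theta^{(2)}_{\varphi(g)}\circ\varphi^\ssc
\]
are homomorphisms $G^\ssc_{1,R}\to G^\ssc_{2,R}$ lifting the same map $\varphi^\sss\circ\inn(g)=\inn(\varphi(g))\circ\varphi^\sss\colon G^\sss_{1,R}\to G^\sss_{2,R}$. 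Each composite of lifts is a lift, so this is immediate. The uniqueness statement in Proposition \ref{p:GA} will then force the two composites to coincide. Evaluating at $s\in G_1^\ssc(R)$ gives \eqref{e:morphism-cr}.

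I expect the main obstacle to be making sure that the uniqueness of lifts in Proposition \ref{p:GA} applies to homomorphisms $G^\ssc_{1,R}\to G^\ssc_{2,R}$ between universal coverings over an arbitrary base $R$, and not merely to automorphisms. This uniqueness is precisely what functoriality of $G\mapsto G^\ssc$ encodes, since functoriality requires $(\psi\circ\psi')^\ssc=\psi^\ssc\circ\psi'^\ssc$.

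If Proposition \ref{p:GA} is phrased only as functoriality, I would argue as follows. Apply it to the two composites $\varphi^\sss\circ\inn(g)$ and $\inn(\varphi(g))\circ\varphi^\sss$ of homomorphisms of semisimple $R$-groups. These are the same homomorphism, so their induced maps on $\ssc$-covers are equal. Functoriality identifies those maps with $\varphi^\ssc\circ\theta_g^{(1)}$ and $\theta^{(2)}_{\varphi(g)}\circ\varphi^\ssc$ respectively.
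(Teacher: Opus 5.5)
Your proposal is correct and is essentially the paper's proof. The paper likewise observes that $\varphi^\sss\circ\inn(g)=\inn(\varphi(g))\circ\varphi^\sss$ on $G^\sss_{1}(R)$, and it deduces from Proposition \ref{p:GA}(ii) that the square formed by $\theta^\ssc_g$, $\theta^\ssc_{\varphi(g)}$ and $\varphi^\ssc$ commutes, which is exactly your fallback argument; your primary route through the uniqueness in Proposition \ref{p:GA}(i) over $\operatorname{Spec} R$ is the same idea, since the composition rule (ii) is itself a consequence of that uniqueness.
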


\begin{proof}
Since $\varphi$ is a homomorphism, for every $g\in G_1(R)$ we have  commutative diagrams
\[
\xymatrix@C=20mm{
 G_1(R)\ar[r]^-{\inn(g)} \ar[d]_-\varphi & G_1(R)\ar[d]^-\varphi
    & G_1^\sss(R)\ar[r]^-{\inn(g)} \ar[d]_-{\varphi^\sss} & G_1^\sss(R)\ar[d]^-{\varphi^\sss} \\
 G_2(R)\ar[r]^-{\inn\,\varphi(g)}        & G_2(R)
    & G_2^\sss(R)\ar[r]^-{\inn(\varphi(g))}
    & G_2^\sss(R)
}
\]
where $\varphi^\sss$ is the restriction of $\varphi$ from $G_1$ to $G_1^\sss$.
By Proposition \ref{p:GA}(ii), we obtain from the diagram at right above that the following diagram commutes:
\begin{equation}\label{e:theta-varphi(g)}
\begin{aligned}
\xymatrix@C=18mm{
 G_1^\ssc(R)\ar[r]^-{\theta^\ssc_g} \ar[d]_-{\varphi^\ssc} & G_1^\ssc(R)\ar[d]^-{\varphi^\ssc} \\
 G_2^\ssc(R)\ar[r]^-{\theta^\ssc_{\varphi(g)}}             & G_2^\ssc(R)
}
\end{aligned}
\end{equation}
Diagram \eqref{e:theta-varphi(g)} means that \eqref{e:morphism-cr} holds, as desired.
\end{proof}

\begin{proposition}
Consider homomorphisms of (smooth) reductive $k$-groups, not necessarily connected:
\[ G_1\labelto{\varphi_{12}} G_2\labelto{\varphi_{23}} G_3\hs.\]
Then
\[\CrM(\varphi_{23}\circ\varphi_{12}) =
   \CrM(\varphi_{23})\circ\CrM(\varphi_{12})\hs\colon\, \CrM(G_1)\to \CrM(G_3) . \]
\end{proposition}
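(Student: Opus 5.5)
Since $\CrM(\varphi)=(\varphi^\ssc,\varphi)$ by definition, and composition of morphisms of crossed modules is componentwise, $\CrM(\varphi_{23})\circ\CrM(\varphi_{12})=(\varphi_{23}^\ssc\circ\varphi_{12}^\ssc,\ \varphi_{23}\circ\varphi_{12})$. The second components agree trivially. So the statement reduces to the identity
\[
(\varphi_{23}\circ\varphi_{12})^\ssc=\varphi_{23}^\ssc\circ\varphi_{12}^\ssc\colon\, G_1^\ssc\to G_3^\ssc .
\]
This is the functoriality of the universal covering, which I would derive from the uniqueness part of Proposition \ref{p:GA}.

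First I will recall how $\varphi^\ssc$ is obtained. The homomorphism $\varphi$ maps $G_1^0$ into $G_2^0$, because $G_1^0$ is connected. Hence it maps $G_1^\sss=[G_1^0,G_1^0]$ into $G_2^\sss$, giving the restriction $\varphi^\sss\colon G_1^\sss\to G_2^\sss$. Proposition \ref{p:GA} then provides a homomorphism $\varphi^\ssc\colon G_1^\ssc\to G_2^\ssc$ with $\pi_2\circ\varphi^\ssc=\varphi^\sss\circ\pi_1$. This $\varphi^\ssc$ is the \emph{unique} homomorphism with that property, and equivalently the unique one with $\rho_2\circ\varphi^\ssc=\varphi\circ\rho_1$, since $G_2^\sss\into G_2$ is a monomorphism.

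Next, restriction to the derived subgroups is obviously compatible with composition: $(\varphi_{23}\circ\varphi_{12})^\sss=\varphi_{23}^\sss\circ\varphi_{12}^\sss$. The composite $\varphi_{23}^\ssc\circ\varphi_{12}^\ssc$ then satisfies
\[
\pi_3\circ\varphi_{23}^\ssc\circ\varphi_{12}^\ssc=\varphi_{23}^\sss\circ\pi_2\circ\varphi_{12}^\ssc=\varphi_{23}^\sss\circ\varphi_{12}^\sss\circ\pi_1=(\varphi_{23}\circ\varphi_{12})^\sss\circ\pi_1 .
\]
So it is a lift of $(\varphi_{23}\circ\varphi_{12})^\sss$ through the universal coverings. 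By the uniqueness in Proposition \ref{p:GA}, it coincides with $(\varphi_{23}\circ\varphi_{12})^\ssc$, which proves the claim.

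The only potential obstacle is making sure that Proposition \ref{p:GA} (in Appendix \ref{app:universal}) really asserts uniqueness of the lift. I expect it does, since the existence of the homomorphism $\Aut(G^\sss_R)\to\Aut(G^\ssc_R)$ used in Construction \ref{cons:theta} requires it. If only existence were stated, I would prove uniqueness directly. Two lifts $f,f'$ differ pointwise by elements of $\ker\pi_3$, which is central and finite of multiplicative type. Hence $s\mapsto f(s)f'(s)^{-1}$ is a homomorphism $G_1^\ssc\to\ker\pi_3$. It is trivial because $G_1^\ssc$ is connected and equal to its own derived group, so it admits no nontrivial homomorphism to a commutative group scheme.
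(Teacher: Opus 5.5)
Your proof is correct and is essentially the paper's: the paper simply reduces the claim to the identity $(\varphi_{23}\circ\varphi_{12})^\ssc=\varphi_{23}^\ssc\circ\varphi_{12}^\ssc$ of Proposition~\ref{p:GA}(ii), and you re-derive that identity from the uniqueness in Proposition~\ref{p:GA}(i). That uniqueness is explicitly stated there, so your fallback argument (which mirrors Step~3 of the proof of Proposition~\ref{p:Conrad-existence}) is not needed.
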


\begin{proof}
The proposition can be shown using Proposition \ref{p:GA}(ii).
\end{proof}

\section{Coboundary of a hyper-cocycle for a crossed module of \texorpdfstring{$\Gamma$}{Gamma}-groups}
\label{s:coboundary}

This section contains extensive calculations.
We recommend skipping most of it on a first reading.

In this section we work with crossed modules endowed with an action of a profinite group.
Throughout the section, we denote by $\operatorname{Maps}(-, -)$
the set of locally constant maps from a profinite group to a group.

Let $\Gamma$ be a profinite group, and
    let $(A\labelt{\rho} G,\theta)$ be a crossed module of $\Gamma$-groups, where $\theta \colon G \to \Aut(A)$ corresponds to the $G$-action on $A$.
    Consider the set of $0$-(hyper-)cochains
    $$C^0(\Gamma,A \to G) \coloneqq  \operatorname{Maps}(\Gamma,A) \times G.$$
    Following \cite[Section 3.3.1]{Borovoi-Memoir}, it has a group structure defined by
    \begin{equation}\label{e:grp_C0_Bor}
    (\varphi^1,g_1) \cdot (\varphi^2,g_2)\coloneqq  (\varphi',g_1 g_2)
    \quad \textit{where} \quad \varphi'_\sigma = {\ha}^{g_1} \varphi^2_\sigma \cdot \varphi^1_\sigma
    \end{equation}
    for $(\varphi^1,g_1), (\varphi^2,g_2) \in C^0(\Gamma,A \to G)$.
    Consider the set of 1-(hyper-)cochains
    \[ C^1(\Gamma,A \to G) \coloneqq  \operatorname{Maps}(\Gamma \times \Gamma,A) \times \operatorname{Maps}(\Gamma,G).\]
    The set of 1-cocycles $Z^1(\Gamma,A \to G)$ is the subset of $C^1(\Gamma,A \to G)$
    consisting of the elements $(u,\psi) \in \operatorname{Maps}(\Gamma \times \Gamma,A) \times \operatorname{Maps}(\Gamma,G)$ satisfying
    \begin{align}
        \rho(u_{\sigma,\tau}) \cdot \psi_\sigma \cdot {\ha}^\upsig \psi_\tau
        &=
        \psi_{\sigma\tau}\label{eq:def1_1-cocycle} \\
        u_{\sigma,\tau \nu} \cdot {\ha}^{\psi_\sigma \sigma}u_{\tau,\nu}
        &= u_{\sigma\tau,\nu} \cdot u_{\sigma,\tau}\label{eq:def2_1-cocycle}
    \end{align}
    for all $\sigma, \tau, \nu \in \Gamma$.
    We define a right action of $C^0(\Gamma,A \to G)$ on $Z^1(\Gamma,A \to G)$ by
    \begin{align*}
        (u,\psi) * (\varphi,g) \coloneqq  (u',\psi')
    \end{align*}
    where
    \begin{align}
        u'_{\sigma,\tau}
        &= {\ha}^{g^{-1}}(\varphi_{\sigma \tau} \cdot u_{\sigma,\tau} \cdot {\ha}^{\psi_\sigma \sigma} \varphi_\tau^{-1} \cdot \varphi_\sigma^{-1}) \label{eq:action_C0-1}, \\
        \psi'_\sigma &= g^{-1} \cdot \rho(\varphi_\sigma)
        \cdot
        \psi_\sigma
        \cdot
        {\ha}^\upsig\hmm g \label{eq:action_C0-2}.
    \end{align}
    This action is well defined; in particular, the pair $(u',\psi')$ lies in $Z^1(\Gamma, A\to G)$.
\begin{definition}[{\cite[Section 3.3]{Borovoi-Memoir}}]
    The pointed set $H^1(\Gamma,A \to G)$ is the set of orbits of $Z^1(\Gamma,A \to G)$ under this $C^0(\Gamma,A \to G)$-action,
    with the distinguished unit element $[1,1]$.
\end{definition}

\begin{remark}
For a crossed module of $\Gamma$-groups $(A\to G)$ as above,
we can write an element of $C^0(\Gamma, A\to G)$ as $(\tilde\varphi, g)$ with $\tilde\varphi_\sigma=\varphi_\sigma^{-1}$.
Then formula \eqref{e:grp_C0_Bor} changes to a nicer formula
\begin{equation*}
    (\tilde\varphi^1,g_1) \cdot (\tilde\varphi^2,g_2)\coloneqq  (\tilde\varphi',g_1 g_2)
    \quad \textit{where} \quad \tilde\varphi'_\sigma = \tilde\varphi^1_\sigma  \cdot{\ha}^{g_1}\hm\tilde \varphi^2_\sigma\hs,
\end{equation*}
and formulas \eqref{eq:action_C0-1} and \eqref{eq:action_C0-2} will change to
\begin{align*}
        u'_{\sigma,\tau}
        &= {\ha}^{g^{-1}}(\tilde\varphi_{\sigma \tau}^{-1} \cdot u_{\sigma,\tau} \cdot {\ha}^{\psi_\sigma \sigma} \tilde\varphi_\tau \cdot \tilde\varphi_\sigma) \\
        \psi'_\sigma &= g^{-1} \cdot \rho(\tilde\varphi_\sigma^{-1})
        \cdot
        \psi_\sigma
        \cdot
        {\ha}^\upsig\hmm g \label{eq:action_C0-2}.
    \end{align*}
Now it follows from the formulas of \cite[Appendix A]{Borovoi-16-arXiv} that indeed $(u',\psi')\in Z^1(\Gamma, A\to G)$.
\end{remark}

    Consider the short exact sequence of complexes of $\Gamma$-groups
    \[
    1\to(1 \to G) \labelt{i} (A \labelt{\rho} G) \labelt{ } (A \to 1)\to 1
    \]
    (where $i$ is a morphism of crossed modules), and let
    \[
        H^1(\Gamma,A) \labelto{\rho_*} H^1(\Gamma,G) \labelto{i_*} H^1(\Gamma, A \to G)
    \]
    be the induced hyper-cohomology exact sequence; see \cite[Corollary 3.4.3]{Borovoi-Memoir}.
    We extend this exact sequence to the right by introducing non-abelian $H^2$;
    see Corollary \ref{c:extend_ext_seq}(2) below.

    We say that a hyper-cocycle $(u,\psi)\in Z^1(\Gamma, A\to G)$ is {\em $2$-neutral} if $u=1$.
    We say that a class $[u,\psi]\in H^1(\Gamma, A\to G)$ is $2$-neutral
    if it is represented by a $2$-neutral hyper-cocycle.

\begin{lemma}\label{l:2-neutral-1}
A class $[u,\psi]\in H^1(\Gamma, A\to G)$ is $2$-neutral if and only if
there exists $w\in {\rm Maps}(\Gamma, A)$ such that
\begin{equation}\label{e:2-neutral-1}
w_{\sigma \tau} \cdot u_{\sigma,\tau} \cdot {\ha}^{\psi_\sigma \sigma} w_\tau^{-1} \cdot w_\sigma^{-1}=1
   \quad\ \text{for all}\ \,\sigma,\tau\in \Gamma.
\end{equation}
\end{lemma}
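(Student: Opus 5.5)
The statement is essentially an unwinding of the definition of the right action of $C^0(\Gamma,A\to G)$ on $Z^1(\Gamma,A\to G)$, formulas \eqref{eq:action_C0-1} and \eqref{eq:action_C0-2}. By definition, $[u,\psi]$ is $2$-neutral if and only if some hyper-cocycle in the $C^0$-orbit of $(u,\psi)$ has trivial first component. That is, there exists $(\varphi,g)\in C^0(\Gamma,A\to G)$ with
\[
{\ha}^{g^{-1}}\bigl(\varphi_{\sigma\tau}\cdot u_{\sigma,\tau}\cdot {\ha}^{\psi_\sigma\sigma}\varphi_\tau^{-1}\cdot\varphi_\sigma^{-1}\bigr)=1
\quad\text{for all }\sigma,\tau\in\Gamma .
\]
The plan is to show this condition is equivalent to \eqref{e:2-neutral-1}.

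For the direction ``if'', suppose $w\in\Maps(\Gamma,A)$ satisfies \eqref{e:2-neutral-1}. Take the $0$-cochain $(\varphi,g)=(w,1)$. With $g=1$, formula \eqref{eq:action_C0-1} gives $u'_{\sigma,\tau}=w_{\sigma\tau}\cdot u_{\sigma,\tau}\cdot{\ha}^{\psi_\sigma\sigma}w_\tau^{-1}\cdot w_\sigma^{-1}$, which equals $1$ by hypothesis. Since the action is well defined, $(u',\psi')=(1,\psi')$ lies in $Z^1(\Gamma,A\to G)$ and represents the same class. Hence the class is $2$-neutral.

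For the direction ``only if'', suppose $(u,\psi)*(\varphi,g)=(1,\psi')$ for some $(\varphi,g)$. The action of $g^{-1}$ on $A$ is through the automorphism $\theta(g^{-1})$, so it is injective. Therefore the vanishing of ${\ha}^{g^{-1}}(\cdots)$ forces the bracketed expression itself to equal $1$. This is exactly \eqref{e:2-neutral-1} with $w=\varphi$, and $\varphi$ is locally constant, being part of a $0$-cochain.

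One small point needs care. The statement says ``$[u,\psi]$'', so the criterion should not depend on the chosen representative; in the argument above it is read for the given representative $(u,\psi)$. Representative-independence is automatic, because the left-hand side ``$[u,\psi]$ is $2$-neutral'' depends only on the class. I expect no genuine obstacle: the whole proof is a direct reading of the action formulas, plus the remark that $g$ plays no role because twisting by ${\ha}^{g^{-1}}$ is an automorphism.
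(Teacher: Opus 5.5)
Your proof is correct and follows the paper's argument: for the ``only if'' direction you take $(\varphi,g)$ with $(u,\psi)*(\varphi,g)=(1,\psi')$, unwind \eqref{eq:action_C0-1}, and cancel the automorphism ${}^{g^{-1}}$; for the ``if'' direction you act by $(w,1)$. Your remark that $\theta_{g^{-1}}$ is injective, and your note on independence of the representative, spell out steps the paper leaves implicit.
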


\begin{proof}
If $[u,\psi]$ is 2-neutral, then there exists a pair $(w,g)\in {\rm Maps}(\Gamma,A) \times G$
such that $(u,\psi) * (w,g) = (1,\psi')$ for some $\psi'\in {\rm Maps}(\Gamma, G)$.
Then by \eqref{eq:action_C0-1} we obtain that
\[{\ha}^{g^{-1}}\big(w_{\sigma \tau} \cdot u_{\sigma,\tau} \cdot
   {\ha}^{\psi_\sigma \sigma} w_\tau^{-1} \cdot w_\sigma^{-1}\big)
   =1\quad\ \text{for all}\ \,\sigma,\tau\in \Gamma,\]
and hence  \eqref{e:2-neutral-1} holds.
Conversely, assume that there exists $w\in {\rm Maps}(\Gamma, A)$
such that \eqref{e:2-neutral-1} holds.
Then
\begin{equation*}
(u,\psi)*(w,1) =(1,\psi')\quad\ \text{for some}\ \, \psi'\in {\rm Maps}(\Gamma, G),
\end{equation*}
and hence $[u,\psi]=[1,\psi']$ is 2-neutral, as desired.
\end{proof}

\begin{lemma}\label{l:2-neutral-2}
A class $[u,\psi]\in H^1(\Gamma, A\to G)$ is $2$-neutral if and only if
it lies in the image of $i_*\colon H^1(\Gamma, G)\to H^1(\Gamma, A\to G)$.
\end{lemma}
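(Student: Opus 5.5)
We need to show: a class $[u,\psi]$ is $2$-neutral if and only if it lies in the image of $i_*\colon H^1(\Gamma,G)\to H^1(\Gamma,A\to G)$. The plan is to unwind the definition of $i_*$ on cocycles and compare it with the definition of $2$-neutrality. Since $i$ is the inclusion $(1\to G)\into(A\to G)$, the induced map on hyper-cocycles sends a $1$-cocycle $\psi\in Z^1(\Gamma,G)$ to the pair $(1,\psi)\in Z^1(\Gamma,A\to G)$. Hence the image of $i_*$ consists exactly of the classes of the form $[1,\psi]$ with $\psi\in Z^1(\Gamma,G)$.

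For the forward direction, let $[u,\psi]\in i_*(H^1(\Gamma,G))$. Then $[u,\psi]=[1,\psi']$ for some $\psi'\in Z^1(\Gamma,G)$, and the representative $(1,\psi')$ is $2$-neutral by definition, so $[u,\psi]$ is $2$-neutral.

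For the converse, suppose $[u,\psi]$ is $2$-neutral, so $[u,\psi]=[1,\psi']$ for some hyper-cocycle $(1,\psi')\in Z^1(\Gamma,A\to G)$. We must check that $\psi'$ is an honest $1$-cocycle in $Z^1(\Gamma,G)$. This follows from the cocycle condition \eqref{eq:def1_1-cocycle} with $u=1$, which reads
\[
\psi'_\sigma\cdot{}^\sigma\psi'_\tau=\psi'_{\sigma\tau}.
\]
Condition \eqref{eq:def2_1-cocycle} is then trivially satisfied. Therefore $[1,\psi']=i_*[\psi']$ lies in the image of $i_*$.

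The only point needing care is the explicit cocycle-level description of $i_*$: a morphism of crossed modules acts componentwise on hyper-cochains, so the $\operatorname{Maps}(\Gamma\times\Gamma,A)$-component of $i_*(\psi)$ is the image of the trivial map into $1$, namely $u=1$. Lemma \ref{l:2-neutral-1} is not needed here. It merely gives an equivalent, more concrete test for $2$-neutrality, presumably to be used later together with the present lemma.
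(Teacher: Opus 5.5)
Your proof is correct and follows the paper's argument essentially verbatim. The forward direction uses $i_*[c]=[1,c]$; for the converse, the cocycle condition \eqref{eq:def1_1-cocycle} with $u=1$ shows $\psi'\in Z^1(\Gamma,G)$, so $[u,\psi]=[1,\psi']=i_*[\psi']$, and, as you note, Lemma \ref{l:2-neutral-1} is not needed.
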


\begin{proof}
Let $[c]\in H^1(\Gamma, G)$, where $c\in Z^1(\Gamma, G)$.
Then we have $i_*[c]=[1,c]$, which is clearly $2$-neutral.
Conversely, let $[u,\psi]\in H^1(\Gamma, A\to G)$ be a $2$-neutral class.
Then we have  $[u,\psi]=[1,\psi']$ for some $\psi'\in {\rm Maps}(\Gamma, G)$,
and since $(1,\psi')\in Z^1(\Gamma, A\to G)$, it follows from
\eqref{eq:def1_1-cocycle} that $\psi'$ is a 1-cocycle.
We obtain that $[u,\psi]=[1,\psi']=i_*[\psi']$, which completes the proof.
\end{proof}

Let $\Gamma$ be a profinite group and let $A$ be an abstract group (endowed with the discrete topology, without given $\Gamma$-action).
Consider a $\Gamma$-{\em band} ($\Gamma$-kernel), that is, a locally constant homomorphism
$\beta: \Gamma \to  \operatorname{Out}(A)$
where $\operatorname{Out}(A) \coloneqq  \operatorname{Aut}(A)/\operatorname{Inn}(A)$.
We say that a $\Gamma$-band $\beta \colon \Gamma \to \Out(A)$ is {\em trivial} if it lifts to a locally constant homomorphism $\Gamma \to \Aut(A)$.
Following \cite[Section 1.14]{Springer}, \cite[Section 1.5]{Borovoi-Duke}, and \cite[Section (1.17)]{FSS},
the set of 2-cocycles $Z^2(\Gamma,A,\beta)$ is the set of pairs $(u,f)$,
where $u \in \operatorname{Maps}(\Gamma\times \Gamma,A)$ and $f \in \operatorname{Maps}(\Gamma,\operatorname{Aut}(A))$
satisfying the following conditions:
\begin{align}
    \operatorname{inn}(u_{\sigma,\tau}) \circ f_\sigma \circ f_\tau
    &= f_{\sigma \tau}, \label{eq:def1_2-cocycle} \\
    u_{\sigma,\tau \nu} \cdot f_\sigma(u_{\tau,\nu})
    &= u_{\sigma \tau,\nu} \cdot u_{\sigma,\tau}, \label{eq:def2_2-cocycle} \\
    f_\sigma \!\!\!\! \mod \operatorname{Inn}(A)
    &= \beta_\sigma. \label{eq:def3_2-cocycle}
\end{align}
The group $\operatorname{Maps}(\Gamma,A)$ acts on $Z^2(\Gamma,A,\beta)$ on the left as follows:
\[
    w * (u,f) = (u',f')\quad\ \text{for}\ \,w\in{\rm Maps}(\Gamma, A),
\]
where
\begin{equation}\label{eq:coboundary_2-cycle}
    u'_{\sigma,\tau} = w_{\sigma \tau} \cdot u_{\sigma,\tau} \cdot f_{\sigma}(w_\tau)^{-1} \cdot w_{\sigma}^{-1}, \quad
    f'_\sigma = \operatorname{inn}(w_\sigma) \circ f_\sigma
\end{equation}
for all $\sigma, \tau \in \Gamma$.
This action is well defined; in particular, $(u',f')\in Z^2(\Gamma, A,\beta)$.

\begin{definition}\label{d:nonab-H2}
With the above assumptions and notation,
\[H^2(\Gamma,A,\beta) \coloneqq \operatorname{Maps}(\Gamma,A) \backslash Z^2(\Gamma,A,\beta).\]
\end{definition}

The second cohomology set $H^2(\Gamma,A,\beta)$ has a distinguished subset of {\em neutral elements}.
A 2-cocycle $(u,f)\in Z^2(\Gamma,A,\beta)$ is called neutral if $u=1$.
Then it follows from \eqref{eq:def1_2-cocycle} that $f$ is a homomorphism.
A cohomology class $[u,f]\in H^2(\Gamma, A,\beta)$ is called neutral if it is the class of a neutral 2-cocycle.
By \eqref{eq:coboundary_2-cycle},
a class $[u,f]\in H^2(\Gamma, A,\beta)$ is neutral if and only if there exists
$w\in \Maps(\Gamma, A)$ satisfying
\begin{equation}\label{e:B-neutral}
w_{\sigma \tau} \cdot u_{\sigma,\tau} \cdot f_{\sigma}(w_\tau)^{-1} \cdot w_{\sigma}^{-1}=1.
\end{equation}

    Let $(A \labelt{\rho} G,\theta)$ be a crossed module with $\Gamma$-action. Then
    for all $a \in A$, $g \in G$, and $\sigma \in \Gamma$
    we have
    \begin{align}
        \theta_{\rho(a)} &= \operatorname{inn}(a) \in \operatorname{Aut}(A), \label{eq:prop1_crossed_mod} \\
        \sigma \circ \theta_g &= \theta_{{\ha}^\upsig\hm g} \circ \sigma \in \operatorname{Aut}(A), \label{eq:prop2_crossed_mod}\\
        \rho({\ha}^\upsig\! a) &= {\ha}^\upsig(\rho(a)) \in G, \label{eq:prop3_crossed_mod} \\
        \rho({\ha}^\upg a) &= g \cdot \rho(a) \cdot g^{-1} \in G, \label{eq:prop4_crossed_mod}
    \end{align}
    where, by abuse of notation, $\sigma$ in
    \eqref{eq:prop2_crossed_mod} is identified with its image in $\Aut(A)$ under the $\Gamma$-action on $A$.
    See \cite[Section 3.2]{Borovoi-Memoir}.

    In Construction \ref{construction: H2} below we shall introduce the {\em relative cohomology set}
     $H^2(\Gamma,A \mathrel{\mathrm{rel}} G)$.
     For this end, we now consider certain $\Gamma$-bands associated with 1-cocycles in $Z^1(\Gamma,A \to G)$.

    The natural morphism of crossed modules $(A \labelto{\rho} G) \to (1 \to \coker{\rho})$ induces a map of 1-cocycles $Z^1(\Gamma, A \to G) \to Z^1(\Gamma, \coker \rho)$.
    Since by \eqref{eq:prop1_crossed_mod}, the subgroup $\rho(A)\subseteq G$, when acting on $A$ via $\theta$,
    acts on $A$ by inner automorphisms,  we have a well-defined homomorphism $\bar{\theta} \colon \coker \rho \to \Out(A)$ making the following diagram  with exact rows commutative:
    \begin{equation}\label{e:bar_theta}
        \begin{tikzcd}
         1 \ar[r]    &A \ar[r,"\rho"] \ar[d,"\inn"] &G \ar[d,"\theta"] \ar[r] &\coker \rho \ar[r] \ar[d,"\bar{\theta}"] &1
         \\
         1 \ar[r] &\Inn(A) \ar[r] &\Aut(A) \ar[r] &\Out(A) \ar[r] &1
        \end{tikzcd}.
    \end{equation}
    If we consider the $\Gamma$-action on $\Aut(A)$ defined by $\sigma \cdot \phi \coloneqq \sigma \circ \phi \circ \sigma^{-1}$
    for $\sigma\in\Gamma$, then by \eqref{eq:prop2_crossed_mod}
    the homomorphism $\theta$ in diagram \eqref{e:bar_theta} is $\Gamma$-equivariant.
    Furthermore, the induced $\Gamma$-action on $\Inn(A)$ by restriction is well defined,
    and  by \eqref{eq:prop1_crossed_mod} and \eqref{eq:prop3_crossed_mod},
    the homomorphism $\inn$ in  diagram \eqref{e:bar_theta} is $\Gamma$-equivariant.
    This implies that $\bar{\theta}$ is $\Gamma$-equivariant as well, that is,
    \begin{equation}\label{e:bar_theta_equivar}
        \sigma \circ \bar{\theta}_{\bar{g}} \circ \sigma^{-1} = \bar{\theta}_{{\ha}^\upsig\bar{g}}
        \textit{ in } \Out(A)
    \end{equation}
    for all $\bar{g} \in \coker \rho$ and $\sigma \in \Gamma$.

    For $\bar\psi \in Z^1(\Gamma,\coker \rho)$, the map $\beta \colon \Gamma \to \Out(A)$ defined by $\beta_\sigma \coloneqq \bar{\theta}_{\bar{\psi}_\sigma} \circ \sigma$ is a locally constant homomorphism, where by abuse of notation the rightmost $\sigma$ is identified with its image in $\Out(A)$.
    Indeed, we have
    \[
    \beta_{\sigma} \circ \beta_\tau
            = \bar{\theta}_{\bar{\psi}_\sigma} \circ \sigma \circ \bar{\theta}_{\bar{\psi}_\tau} \circ \tau
            = \bar{\theta}_{\bar{\psi}_\sigma} \circ \bar{\theta}_{{\ha}^\upsig\hm\bar{\psi}_\tau} \circ \sigma \circ \tau
            = \bar{\theta}_{\bar{\psi}_{\sigma\tau}} \circ \sigma \circ \tau
            =\beta_{\sigma \tau},
    \]
    where the second equality follows from \eqref{e:bar_theta_equivar} and the third equality follows from the fact that $\bar{\psi}$ is a 1-cocycle.
    This allows us to define a map
    \begin{equation}\label{e:def_bar_theta}
    \bar{\theta}_* \colon Z^1(\Gamma, \coker \rho)\, \to\, \Hom_{\hs\mathit{l.c.}}(\Gamma,\Out(A)), \quad\
    \bar{\psi} \mapsto (\sigma \mapsto \bar{\theta}_{\bar{\psi}_\sigma} \circ \sigma)
    \end{equation}
    where $\Hom_{\hs\mathit{l.c.}}(\Gamma,\Out(A))$ denotes the set of locally constant homomorphisms, which are precisely the $\Gamma$-bands for $A$.

    Let $\Phi$ denote the image of the composite map
    \begin{equation}\label{e:image_Z1}
        Z^1(\Gamma, A \to G) \to Z^1(\Gamma, \coker \rho) \labelto{\bar{\theta}_*} \Hom_{\hs\mathit{l.c.}}(\Gamma, \Out(A)).
    \end{equation}

    The following lemma gives an explicit description of the elements of $\Phi$.

    \begin{lemma}\label{l:form_band_Phi}
        A $\Gamma$-band $\beta\in \Hom_{\hs\mathit{l.c.}}(\Gamma,\Out(A))$
        is the image of $(u, \psi) \in Z^1(\Gamma, A \to G)$ under the composite map \eqref{e:image_Z1}
        if and only if $\beta = f_*$ for
    \begin{align}
    f&\colon \Gamma \to \Aut(A),
    \ \, \sigma \mapsto \theta_{\psi_\sigma} \circ \sigma,  \label{e:1-to-2-f}\\
     f_* &\colon \Gamma \labelto{f} \Aut(A) \to \Out(A),
     \label{e:1-to-2-beta}
    \end{align}
    where by abuse of notation the rightmost $\sigma$ in \eqref{e:1-to-2-f} is identified with its image in $\Aut(A)$.
    \end{lemma}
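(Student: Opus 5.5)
The plan is to unwind the two maps in \eqref{e:image_Z1} and compare them with the quotient map $\Aut(A)\to\Out(A)$ using the commutative diagram \eqref{e:bar_theta}.

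First, the composite \eqref{e:image_Z1} sends a hyper-cocycle $(u,\psi)\in Z^1(\Gamma,A\to G)$ to the band $\sigma\mapsto\bar\theta_{\bar\psi_\sigma}\circ\sigma$. Here $\bar\psi_\sigma$ is the image of $\psi_\sigma$ in $\coker\rho$. One should check that $\bar\psi$ is indeed a 1-cocycle. This follows by reducing \eqref{eq:def1_1-cocycle} modulo $\rho(A)$: the factor $\rho(u_{\sigma,\tau})$ becomes trivial, and we obtain $\bar\psi_\sigma\cdot{}^\sigma\bar\psi_\tau=\bar\psi_{\sigma\tau}$.

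Second, we use the commutativity of the right-hand square of \eqref{e:bar_theta}. It gives $\bar\theta_{\bar g}=\theta_g \bmod \Inn(A)$ for every $g\in G$. Since the projection $\Aut(A)\to\Out(A)$ is a group homomorphism, we get
\[
\bar\theta_{\bar\psi_\sigma}\circ\sigma \;=\; \big(\theta_{\psi_\sigma}\circ\sigma\big)\bmod\Inn(A) \;=\; f_*(\sigma).
\]
Here $\sigma$ on the left denotes the image of $\sigma\in\Aut(A)$ in $\Out(A)$. Hence the image of $(u,\psi)$ under \eqref{e:image_Z1} is exactly $f_*$, with $f$ defined by \eqref{e:1-to-2-f}.

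The two directions of the "if and only if" now follow at once. If $\beta$ is the image of $(u,\psi)$, then $\beta=f_*$ by the computation above. Conversely, if $\beta=f_*$ for the $f$ built from $(u,\psi)$, then $\beta$ is the image of $(u,\psi)$.

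We might also note that $f$ is locally constant, because $\psi$ is locally constant and the $\Gamma$-action on the discrete group $A$ is continuous. This is not needed for the equality, though. The only point requiring any care is the identification of $\sigma$ with its images in $\Aut(A)$ and $\Out(A)$ and the compatibility of these identifications with the projection. This is routine, so I expect no real obstacle.
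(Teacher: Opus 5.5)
Your proof is correct and is the same as the paper's: both read off the image of $(u,\psi)$ under \eqref{e:image_Z1} as $\sigma\mapsto\bar\theta_{\bar\psi_\sigma}\circ\sigma$, then use the commutativity of the right-hand square of \eqref{e:bar_theta} to identify $\bar\theta_{\bar\psi_\sigma}$ with the image of $\theta_{\psi_\sigma}$ in $\Out(A)$, so that the band is $f_*$. One caveat on your side remark: continuity of the $\Gamma$-action on the discrete group $A$ only makes the stabilizers of individual elements open, so $\sigma\mapsto\sigma\in\Aut(A)$, and hence $f$, need not be locally constant (e.g.\ infinite $\Gamma=\Gal(k_s/k)$ acting on $A=k_s^\times$, with $\psi=1$), but as you say, this plays no role in the argument.
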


    \begin{proof}
        Let $(u, \psi) \in Z^1(\Gamma, A \to G)$ be an arbitrary 1-cocycle.
        The first map in the composition \eqref{e:image_Z1} sends $(u,\psi)$ to $\bar{\psi} \in Z^1(\Gamma, \coker \rho)$, where $\bar{\psi}_\sigma$ denotes the image of $\psi_\sigma$ in $\coker \rho$.
        The second map $\bar{\theta}_*$ then sends $\bar{\psi}$ to the $\Gamma$-band $\beta \in \Hom_{\hs\mathit{l.c.}}(\Gamma, \Out(A))$ where $\beta_\sigma = \bar{\theta}_{\bar{\psi}_\sigma} \circ \sigma$.
        By the commutativity of diagram \eqref{e:bar_theta}, $\bar{\theta}_{\bar{\psi}_\sigma}$ is precisely the image of $\theta_{\psi_\sigma}$ in $\Out(A)$. Therefore, we obtain
        \[
        \beta_\sigma = \bar{\theta}_{\bar{\psi}_\sigma} \circ \sigma
        = (\theta_{\psi_\sigma} \circ \sigma)\cdot\Inn(A)\in\Out(A).
        \]
        Setting $f_\sigma = \theta_{\psi_\sigma} \circ \sigma \,\in \Aut(A)$, we see that $\beta_\sigma$ is exactly the image of $f_\sigma$ in $\Out(A)$,  that is, $\beta = f_*$.
        This completes the proof, since $\Phi$ is defined to be the image of the composite map \eqref{e:image_Z1}.
    \end{proof}

    We wish to show that the composite map \eqref{e:image_Z1} induces a map on cohomology.
    Recall that $Z^1(\Gamma,\coker \rho)$ admits a right action of $\coker \rho$, defined by
    \begin{equation}\label{e:1-cocycle_action}
        (\bar{\psi} \cdot \bar{g})_\sigma = \bar{g}^{-1} \cdot \bar{\psi}_\sigma \cdot {\ha}^\upsig\bar{g}
        \quad\ \text{for}\ \, \bar{g} \in \coker \rho.
    \end{equation}
    The natural map
    \[
    Z^1(\Gamma,A\to G) \to Z^1(\Gamma,\coker\rho),
    \quad
    (u,\psi)\mapsto\bar\psi,
    \]
    is equivariant with respect to the action of $C^0(\Gamma,A\to G)$ on
    $Z^1(\Gamma,A\to G)$ (defined in \eqref{eq:action_C0-1} and
    \eqref{eq:action_C0-2}) and the action of $\coker\rho$ on
    $Z^1(\Gamma,\coker\rho)$ (defined in \eqref{e:1-cocycle_action}), via the homomorphism
    \[
    C^0(\Gamma,A\to G)\to\coker\rho,
    \quad
    (\varphi,g)\mapsto\bar g.
    \]
    Hence it descends to a well-defined map
    \[
    H^1(\Gamma,A\to G)\to H^1(\Gamma,\coker\rho).
    \]

    There is also a natural right action of $\coker\rho$ on the set of
    $\Gamma$-bands
    $\Hom_{\hs\mathit{l.c.}}(\Gamma,\Out(A))$,
    defined by
    \begin{equation}\label{e:act_on_bands}
    (\beta\cdot\bar g)_\sigma
    \coloneqq
    \bar\theta_{\bar g^{-1}}
    \circ
    \beta_\sigma
    \circ
    \bar\theta_{\bar g}
    \quad\text{in }\Out(A),
    \qquad
    \bar g\in\coker\rho.
    \end{equation}

    \begin{lemma}
        The map $\bar{\theta}_* \colon Z^1(\Gamma,\coker \rho) \to \Hom_{\hs\mathit{l.c.}}(\Gamma,\Out(A))$ is $\coker \rho$\,-equivariant.
    \end{lemma}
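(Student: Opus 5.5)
We will verify directly that for every $\bar\psi\in Z^1(\Gamma,\coker\rho)$ and every $\bar g\in\coker\rho$ we have
\[
\bar\theta_*(\bar\psi\cdot\bar g)=\bar\theta_*(\bar\psi)\cdot\bar g
\]
in $\Hom_{\hs\mathit{l.c.}}(\Gamma,\Out(A))$. This identity holds pointwise in $\sigma\in\Gamma$, and all computations take place in $\Out(A)$. The only inputs we need are two facts about $\bar\theta$: it is a group homomorphism $\coker\rho\to\Out(A)$, by the commutativity of diagram \eqref{e:bar_theta}, and it is $\Gamma$-equivariant in the sense of \eqref{e:bar_theta_equivar}.

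First we compute the left-hand side. By \eqref{e:1-cocycle_action} and the definition \eqref{e:def_bar_theta}, for $\sigma\in\Gamma$ we have
\[
\bar\theta_*(\bar\psi\cdot\bar g)_\sigma=\bar\theta_{\bar g^{-1}\bar\psi_\sigma\,{}^\sigma\bar g}\circ\sigma .
\]
Since $\bar\theta$ is a homomorphism, this equals
\[
\bar\theta_{\bar g^{-1}}\circ\bar\theta_{\bar\psi_\sigma}\circ\bar\theta_{{}^\sigma\bar g}\circ\sigma .
\]

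Next we move $\sigma$ past the last factor. Rewriting \eqref{e:bar_theta_equivar} as $\bar\theta_{{}^\sigma\bar g}\circ\sigma=\sigma\circ\bar\theta_{\bar g}$, the expression becomes
\[
\bar\theta_{\bar g^{-1}}\circ\bigl(\bar\theta_{\bar\psi_\sigma}\circ\sigma\bigr)\circ\bar\theta_{\bar g}
=\bar\theta_{\bar g^{-1}}\circ\bar\theta_*(\bar\psi)_\sigma\circ\bar\theta_{\bar g}.
\]
By \eqref{e:act_on_bands}, this is exactly $(\bar\theta_*(\bar\psi)\cdot\bar g)_\sigma$, which proves the claimed identity.

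The only point requiring care is bookkeeping. Here $\sigma$ denotes the image in $\Out(A)$ of the automorphism of $A$ given by the $\Gamma$-action. Equation \eqref{e:bar_theta_equivar} is an identity between compositions with this image, which is legitimate because $\Inn(A)$ is $\Gamma$-stable, as noted before \eqref{e:bar_theta_equivar}. No further obstacle is expected. For completeness, one can also observe that \eqref{e:act_on_bands} defines a right action, because $\bar\theta$ is a homomorphism.
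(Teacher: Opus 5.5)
Your proposal is correct and follows exactly the same route as the paper's proof. Both expand $\bar\theta_*(\bar\psi\cdot\bar g)_\sigma$ using the homomorphism property of $\bar\theta$, move $\sigma$ past $\bar\theta_{{}^\sigma\bar g}$ via \eqref{e:bar_theta_equivar}, and recognize the result as $(\bar\theta_*(\bar\psi)\cdot\bar g)_\sigma$ by \eqref{e:act_on_bands}.
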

\begin{proof}
    Let $\bar{\psi} \in Z^1(\Gamma, \coker \rho)$ and $\bar{g} \in \coker \rho$.
    Evaluating $\bar{\theta}_*(\bar{\psi} \cdot \bar{g})$ at $\sigma \in \Gamma$, we obtain
    \begin{alignat*}{3}
        \bar{\theta}_*(\bar{\psi} \cdot \bar{g})_\sigma
        &= \bar{\theta}_{(\bar{\psi} \cdot \bar{g})_\sigma} \circ \sigma
        &&\quad \text{in } \Out(A)
        &&\quad \textit{by } \eqref{e:def_bar_theta} \\
        &= \bar{\theta}_{\bar{g}^{-1}} \circ \bar{\theta}_{\bar{\psi}_\sigma} \circ \bar{\theta}_{{\ha}^\upsig  \bar{g}} \circ \sigma
        &&\quad \text{in } \Out(A)
        &&\quad \textit{by } \eqref{e:1-cocycle_action} \\
        &= \bar{\theta}_{\bar{g}^{-1}} \circ \bar{\theta}_{\bar{\psi}_\sigma} \circ \sigma \circ \bar{\theta}_{\bar{g}}
        &&\quad \text{in } \Out(A)
        &&\quad \textit{by } \eqref{e:bar_theta_equivar} \\
        &= \bar{\theta}_{\bar{g}^{-1}} \circ \bar{\theta}_*(\bar{\psi})_\sigma \circ \bar{\theta}_{\bar{g}}
        &&\quad \text{in } \Out(A)
        &&\quad \textit{by } \eqref{e:def_bar_theta} \\
        &= (\bar{\theta}_*(\bar{\psi}) \cdot \bar{g})_\sigma
        &&\quad \text{in } \Out(A)
        &&\quad \textit{by } \eqref{e:act_on_bands},
    \end{alignat*}
    which completes the proof.
\end{proof}

We see that the composite map \eqref{e:image_Z1} induces a composite map
\begin{equation}\label{e:image_H1}
H^1(\Gamma, A \to G) \to H^1(\Gamma, \coker \rho) \labelto{\bar{\theta}_*} \Hom_{\hs\mathit{l.c.}}(\Gamma,\Out(A))/\coker \rho,
\end{equation}
where $\Hom_{\hs\mathit{l.c.}}(\Gamma,\Out(A))/\coker \rho$ denotes the set of $\coker \rho$\,-orbits in $\Hom_{\hs\mathit{l.c.}}(\Gamma,\Out(A))$.
Consequently, the image of the composite map \eqref{e:image_H1} is canonically identified with the set of $\coker \rho$\,-orbits in $\Phi$.

\begin{lemma} \label{l:Z2(Gamma-A-beta)}
Assume that $(u,\psi) \in Z^1(\Gamma,A \to G)$.
Let $\beta \in \Phi$ be the image of $(u,\psi)$ under the composite map \eqref{e:image_Z1}, with explicit description
\begin{gather*}
    f \colon \Gamma \to \Aut(A),
    \quad \sigma \mapsto \theta_{\psi_\sigma} \circ \sigma,  \\
     \beta = f_* \colon \Gamma \labelto{f} \Aut(A) \to \Out(A)
\end{gather*}
due to Lemma \ref{l:form_band_Phi}, where by abuse of notation the rightmost $\sigma$ is identified with its image in $\Aut(A)$.
Then we have $(u,f) \in Z^2(\Gamma,A,\beta)$.
\end{lemma}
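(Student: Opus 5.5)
We have to check the three conditions \eqref{eq:def1_2-cocycle}, \eqref{eq:def2_2-cocycle}, \eqref{eq:def3_2-cocycle} for the pair $(u,f)$ with $f_\sigma=\theta_{\psi_\sigma}\circ\sigma$. Condition \eqref{eq:def3_2-cocycle} holds by construction: by Lemma \ref{l:form_band_Phi}, $\beta=f_*$, which says exactly that $f_\sigma \bmod \Inn(A)=\beta_\sigma$. Local constancy of $f$ follows from that of $\psi$ (and continuity of the $\Gamma$-action on $A$), and $u$ is locally constant by assumption. So the content is in the first two identities, which we verify directly from the hyper-cocycle conditions \eqref{eq:def1_1-cocycle}, \eqref{eq:def2_1-cocycle} and the crossed-module identities \eqref{eq:prop1_crossed_mod}--\eqref{eq:prop4_crossed_mod}.

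For \eqref{eq:def2_2-cocycle}, observe that $f_\sigma(u_{\tau,\nu})=\theta_{\psi_\sigma}(\hs^\upsig u_{\tau,\nu})$, which is precisely what the notation ${}^{\psi_\sigma\sigma}u_{\tau,\nu}$ in \eqref{eq:def2_1-cocycle} means (first apply $\sigma$, then the action of $\psi_\sigma$). Hence \eqref{eq:def2_2-cocycle} is literally \eqref{eq:def2_1-cocycle}, and nothing more is needed beyond making this identification of notation explicit.

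For \eqref{eq:def1_2-cocycle}, we compute in $\Aut(A)$:
\[
f_\sigma\circ f_\tau=\theta_{\psi_\sigma}\circ\sigma\circ\theta_{\psi_\tau}\circ\tau
=\theta_{\psi_\sigma}\circ\theta_{{}^\upsig\psi_\tau}\circ\sigma\circ\tau
=\theta_{\psi_\sigma\cdot{}^\upsig\psi_\tau}\circ\sigma\tau,
\]
using \eqref{eq:prop2_crossed_mod} and that $\theta$ is a homomorphism and the $\Gamma$-action on $A$ is an action. Then by \eqref{eq:def1_1-cocycle}, $\psi_\sigma\cdot{}^\upsig\psi_\tau=\rho(u_{\sigma,\tau})^{-1}\psi_{\sigma\tau}$, so
\[
f_\sigma\circ f_\tau=\theta_{\rho(u_{\sigma,\tau})}^{-1}\circ\theta_{\psi_{\sigma\tau}}\circ\sigma\tau=\inn(u_{\sigma,\tau})^{-1}\circ f_{\sigma\tau}
\]
by \eqref{eq:prop1_crossed_mod}, which rearranges to \eqref{eq:def1_2-cocycle}.

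There is no real obstacle; the only point requiring care is bookkeeping of conventions — that $\sigma$ acting on $A$ composed with $\theta_g$ obeys \eqref{eq:prop2_crossed_mod} in the stated order, and that the superscript ${}^{\psi_\sigma\sigma}$ in \eqref{eq:def2_1-cocycle} agrees with $f_\sigma$ rather than $\sigma\circ\theta_{\psi_\sigma}$. I would fix these conventions from \cite[Section 3.2]{Borovoi-Memoir} at the outset and then the verification is the two short computations above.
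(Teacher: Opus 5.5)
Your proof is correct and follows the paper's: \eqref{eq:def3_2-cocycle} is the definition of $\beta$, and \eqref{eq:def2_2-cocycle} is \eqref{eq:def2_1-cocycle} once $f_\sigma(u_{\tau,\nu})$ is read as ${}^{\psi_\sigma\sigma}u_{\tau,\nu}$; for \eqref{eq:def1_2-cocycle} the paper uses the same identities \eqref{eq:prop1_crossed_mod}, \eqref{eq:prop2_crossed_mod}, \eqref{eq:def1_1-cocycle}, but computes forward from $\inn(u_{\sigma,\tau})\circ f_\sigma\circ f_\tau$ instead of solving for $\psi_\sigma\cdot{}^{\sigma}\psi_\tau$. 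One caution about your aside: for a general discrete $\Gamma$-group $A$, local constancy of $f$ does not follow from that of $\psi$ and continuity of the $\Gamma$-action (e.g.\ $\sigma\mapsto\sigma|_{k_s^\times}$ is not locally constant into the discrete group $\Aut(k_s^\times)$ when $\Gamma$ is infinite), and the paper's proof does not address this point at all, so that remark should be dropped rather than justified this way.
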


\begin{proof}
    We must show that $f$ satisfies the three conditions \eqref{eq:def1_2-cocycle}--\eqref{eq:def3_2-cocycle}.
    Condition \eqref{eq:def1_2-cocycle} is satisfied as follows:
    \begin{align*}
        \operatorname{inn}(u_{\sigma,\tau}) \circ f_\sigma \circ f_\tau
        &= \theta_{\rho(u_{\sigma,\tau})} \circ f_\sigma \circ f_\tau  && \textit{by \eqref{eq:prop1_crossed_mod}}\\
        &= \theta_{\rho(u_{\sigma,\tau})} \circ \theta_{\psi_{\sigma}} \circ (\sigma \circ \theta_{\psi_{\tau}}) \circ \tau &&\textit{by \eqref{e:1-to-2-f}}\\
        &= \big(\theta_{\rho(u_{\sigma,\tau})} \circ \theta_{\psi_{\sigma}} \circ \theta_{{\ha}^\upsig \psi_{\tau}}\big) \circ \sigma \circ \tau &&\textit{by \eqref{eq:prop2_crossed_mod}} \\
        &= \theta_{\psi_{\sigma  \tau}} \circ \sigma \circ \tau &&\textit{by \eqref{eq:def1_1-cocycle}} \\
        &= f_{\sigma\tau}\hs.
    \end{align*}
    Condition \eqref{eq:def2_2-cocycle} follows from \eqref{eq:def2_1-cocycle} directly:
    \begin{align*}
        u_{\sigma,\tau \nu} \cdot f_{\sigma}(u_{\tau,\nu})
        = u_{\sigma,\tau \nu} \cdot {\ha}^{\psi_{\sigma} \sigma}u_{\tau,\nu}
        = u_{\sigma \tau,\nu} \cdot u_{\sigma,\tau}\hs.
    \end{align*}
    Condition \eqref{eq:def3_2-cocycle} follows from \eqref{e:1-to-2-beta}.
\end{proof}

Let $\beta$ be a $\Gamma$-band.
For $g \in G$ and a class $[u, f] \in H^2(\Gamma, A, \beta)$, we define
\begin{equation*}
    u'_{\sigma, \tau} \coloneqq {\ha}^{g^{-1}}\!u_{\sigma, \tau} \quad\ \text{and} \quad\ f'_\sigma \coloneqq \theta_{g^{-1}} \circ f_\sigma \circ \theta_g.
\end{equation*}
Then the band $\beta' \coloneqq f'_*\colon \Gamma \labelto{f'} \Aut(A) \to  \Out(A)$ is conjugate to $\beta = f_*$ by $\bar{\theta}_{\bar{g}^{-1}} \in \Out(A)$,
hence $\beta'$ and $\beta$ lie in the same $\coker \rho$\,-orbit under the action \eqref{e:act_on_bands}.
Consequently, there is a bijection
\begin{equation}\label{e:act_H2}
    H^2(\Gamma,A,\beta) \isoto H^2(\Gamma,A,\beta'), \quad
    [u,f]  \mapsto [u',f'].
\end{equation}
It is clear that this bijection preserves the subsets of neutral elements.
Furthermore, if $\beta$ is contained in $\Phi$ (the image of the map in \eqref{e:image_Z1}), then so is $\beta'$, since $\Phi$ is closed under the $\coker \rho$\,-action (defined in \eqref{e:act_on_bands}).

\begin{construction}\label{construction: H2}
    By pulling back the $\coker \rho$\,-action on $\Phi$ to $G$ via the natural projection $G \to \coker \rho$,
    the bijection \eqref{e:act_H2} yields a right $G$-action on $\bigsqcup_{\beta \in \Phi} H^2(\Gamma,A,\beta)$ by $[u,f] \cdot g \coloneqq [u',f']$.
    We define
    \[
    H^2(\Gamma,A \mathrel{\mathrm{rel}} G) \coloneqq \bigsqcup_{\beta \in \Phi} H^2(\Gamma,A,\beta)/G,
    \]
    the set of $G$-orbits in $\bigsqcup_{\beta \in \Phi} H^2(\Gamma,A,\beta)$ under the given action.
    For $[u,f] \in H^2(\Gamma,A,\beta)$ with $\beta \in \Phi$, we denote by $[u,f]_G$ its image in $H^2(\Gamma,A \mathrel{\mathrm{rel}} G)$, that is, the $G$-orbit of $[u,f]$.

    We define a \textit{neutral element} in $H^2(\Gamma,A \mathrel{\mathrm{rel}} G)$ as a $G$-orbit containing a neutral element of $H^2(\Gamma,A,\beta)$ for some $\beta \in \Phi$.
    Since the bijection \eqref{e:act_H2} preserves the neutral elements, if an orbit $[u,f]_G$ is neutral, then every representative in this orbit is a neutral element in $H^2(\Gamma,A,\beta')$ for a suitable choice of $\beta'$.
\end{construction}

Note that Construction \ref{construction: H2} is similar to that of  \cite[Section 1.20]{Springer}.
For $(u,\psi) \in Z^1(\Gamma,A \to G)$, we set $\Delta(u,\psi) \coloneqq [u,f]\in H^2(\Gamma,A,\beta)$ for $(u,f)$ constructed in Lemma \ref{l:Z2(Gamma-A-beta)}.

\begin{lemma}\label{l:Delta_H2}
    The map
    \[
    \Delta \colon H^1(\Gamma,A \to G) \to H^2(\Gamma,A \mathrel{\mathrm{rel}} G), \quad
    [u,\psi] \mapsto \Delta(u,\psi)_G
    \]
    is well defined.
\end{lemma}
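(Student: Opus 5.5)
We must show that if $(u,\psi)$ and $(u',\psi')=(u,\psi)*(\varphi,g)$ are two hyper-cocycles in the same $C^0(\Gamma,A\to G)$-orbit, then $\Delta(u,\psi)$ and $\Delta(u',\psi')$ lie in the same $G$-orbit in $\bigsqcup_{\beta\in\Phi}H^2(\Gamma,A,\beta)$. Since the action is a group action, the element $(\varphi,g)$ factors in $C^0(\Gamma,A\to G)$ via \eqref{e:grp_C0_Bor} as $(\varphi,1)\cdot(1,g)$: indeed $(\varphi,1)\cdot(1,g)=(\varphi',g)$ with $\varphi'_\sigma={}^{1}1\cdot\varphi_\sigma=\varphi_\sigma$. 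So it suffices to treat the two special cases $(\varphi,1)$ and $(1,g)$ separately.

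\emph{Case $(1,g)$.} By \eqref{eq:action_C0-1}, \eqref{eq:action_C0-2} we get $u'_{\sigma,\tau}={}^{g^{-1}}u_{\sigma,\tau}$ and $\psi'_\sigma=g^{-1}\psi_\sigma\,{}^{\sigma}g$. Then, using \eqref{eq:prop2_crossed_mod},
\[
f'_\sigma=\theta_{\psi'_\sigma}\circ\sigma
=\theta_{g^{-1}}\circ\theta_{\psi_\sigma}\circ\theta_{{}^\sigma g}\circ\sigma
=\theta_{g^{-1}}\circ\theta_{\psi_\sigma}\circ\sigma\circ\theta_g
=\theta_{g^{-1}}\circ f_\sigma\circ\theta_g .
\]
So $(u',f')$ is exactly the image of $(u,f)$ under the bijection \eqref{e:act_H2}, i.e.\ $\Delta(u',\psi')=\Delta(u,\psi)\cdot g$. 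Hence the $G$-orbits coincide.

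\emph{Case $(\varphi,1)$.} Now $u'_{\sigma,\tau}=\varphi_{\sigma\tau}\,u_{\sigma,\tau}\,{}^{\psi_\sigma\sigma}\varphi_\tau^{-1}\,\varphi_\sigma^{-1}$ and $\psi'_\sigma=\rho(\varphi_\sigma)\psi_\sigma$. By \eqref{eq:prop1_crossed_mod},
\[
f'_\sigma=\theta_{\rho(\varphi_\sigma)}\circ\theta_{\psi_\sigma}\circ\sigma=\inn(\varphi_\sigma)\circ f_\sigma,
\]
and $f_\sigma(\varphi_\tau)={}^{\psi_\sigma\sigma}\varphi_\tau$. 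Comparing with \eqref{eq:coboundary_2-cycle} for $w=\varphi$, we get $(u',f')=\varphi*(u,f)$. In particular the band is unchanged, $\beta'=\beta$, and $[u',f']=[u,f]$ in $H^2(\Gamma,A,\beta)$ itself.

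Combining the two cases gives independence of the representative. One should also check that $\Delta(u,\psi)$ genuinely lands in $\bigsqcup_{\beta\in\Phi}H^2(\Gamma,A,\beta)$; this follows from Lemma~\ref{l:Z2(Gamma-A-beta)}, since $\beta=f_*\in\Phi$ by Lemma~\ref{l:form_band_Phi}.

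The main obstacle is purely bookkeeping. One must confirm that the order of the decomposition $(\varphi,g)=(\varphi,1)(1,g)$ matches the right-action convention, so that $(u,\psi)*(\varphi,g)=((u,\psi)*(\varphi,1))*(1,g)$. If instead the factorization $(1,g)(\varphi'',1)$ with $\varphi''={}^{g^{-1}}\varphi$ is the one that fits, the same two cases still apply. Either way, the only real identity used is the equivariance \eqref{eq:prop2_crossed_mod}, rewritten as $\theta_{{}^\sigma g}\circ\sigma=\sigma\circ\theta_g$.
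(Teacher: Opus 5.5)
Your proposal is correct and follows the paper's proof essentially step for step. You use the same factorization $(\varphi,g)=(\varphi,1)\cdot(1,g)$ from \eqref{e:grp_C0_Bor}, identify the $(\varphi,1)$-case with the coboundary action $\varphi*(u,f)$ via \eqref{eq:prop1_crossed_mod}, and identify the $(1,g)$-case with $[u,f]\cdot g$ via \eqref{eq:prop2_crossed_mod}. Your hedge about the order is unnecessary: $*$ is a right action of $C^0(\Gamma,A\to G)$ for the product \eqref{e:grp_C0_Bor}, so $(u,\psi)*(\varphi,g)=\big((u,\psi)*(\varphi,1)\big)*(1,g)$ exactly as you wrote.
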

\begin{proof}
    We must show that $\Delta(u,\psi)_G$ is independent of the choice
    of a representative cocycle $(u,\psi)\in Z^1(\Gamma, A\to G)$
    for a given class $[u,\psi]\in H^1(\Gamma, A \to G)$.
    Since by \eqref{e:grp_C0_Bor} every $(\varphi, g) \in C^0(\Gamma, A \to G)$
    can be decomposed as $(\varphi, g) = (\varphi, 1) \cdot (1, g)$, it suffices to examine the action of $(\varphi, 1)$ and $(1, g)$ separately.

    Let $(u',\psi') = (u,\psi) * (\varphi,1)$ and let $[u,f] = \Delta(u,\psi) \in H^2(\Gamma,A,\beta)$ for $(u,f)$ in Lemma \ref{l:Z2(Gamma-A-beta)}.
    By \eqref{eq:action_C0-1}, we have
    \[
    u'_{\sigma, \tau} = \varphi_{\sigma \tau} \cdot u_{\sigma, \tau} \cdot f_\sigma(\varphi_\tau)^{-1} \cdot \varphi_\sigma^{-1}
    \]
    since $f_\sigma = \theta_{\psi_\sigma} \circ \sigma$ as defined in Lemma~\ref{l:Z2(Gamma-A-beta)}.
    From \eqref{eq:action_C0-2} and \eqref{eq:prop1_crossed_mod}, it follows that
    \[
    f'_\sigma = \theta_{\psi'_\sigma} \circ \sigma = \theta_{\rho(\varphi_\sigma)} \circ \theta_{\psi_\sigma} \circ \sigma = \inn(\varphi_\sigma) \circ f_\sigma.
    \]
    By \eqref{eq:coboundary_2-cycle}, we see that $(u', f')$ is a 2-cocycle with the same band $\beta$, and moreover, $[u', f'] = [u, f]$ in $H^2(\Gamma, A, \beta)$, hence in $H^2(\Gamma,A \mathrel{\mathrm{rel}} G)$ as well.

    Let $(u'',\psi'') = (u,\psi) * (1,g)$.
    By \eqref{eq:action_C0-1} and \eqref{eq:action_C0-2}, we have
    \[
    u''_{\sigma, \tau} = {\ha}^{g^{-1}}\!u_{\sigma, \tau} \quad\ \text{and} \quad\ \psi''_\sigma = \theta_{g^{-1}} \circ \psi_\sigma \circ \theta_{{\ha}^\upsig\hm g}.
    \]
    Take  $f''_\sigma = \theta_{\psi''_\sigma} \circ \sigma$. From \eqref{eq:prop2_crossed_mod} we obtain
    \[ f''_\sigma = \theta_{g^{-1}} \circ \theta_{\psi_\sigma} \circ \theta_{{\ha}^\upsig\hm g} \circ \sigma
    = \theta_{g^{-1}} \circ \theta_{\psi_\sigma} \circ \sigma \circ \theta_g
    = \theta_{g^{-1}} \circ f_\sigma \circ \theta_g\hs. \]
    This implies that $\Delta(u'', \psi'') = [u'', f'']$ lies in $H^2(\Gamma, A, \beta'')$ where $\beta'' = f''_*$ with the notation of Lemma \ref{l:Z2(Gamma-A-beta)},
    and satisfies $[u'',f''] = [u,f] \cdot g$. Hence, $[u'',f'']_G = [u,f]_G$ in $H^2(\Gamma,A \mathrel{\mathrm{rel}} G)$.
\end{proof}

\begin{proposition}\label{p:kang}
A class $[u,\psi] \in H^1(\Gamma,A \to G)$ lies in the image of $H^1(\Gamma,G)$
if and only if the class $\Delta(u,\psi) = [u,f] \in H^2(\Gamma,A,\beta)$ is neutral.
\end{proposition}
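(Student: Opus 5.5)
The plan is to reduce the proposition to Lemmas \ref{l:2-neutral-1} and \ref{l:2-neutral-2}, by comparing the criterion for neutrality in $H^2(\Gamma,A,\beta)$ with the criterion for $2$-neutrality in $H^1(\Gamma,A\to G)$. By Lemma \ref{l:2-neutral-2}, the class $[u,\psi]$ lies in the image of $i_*\colon H^1(\Gamma,G)\to H^1(\Gamma,A\to G)$ if and only if it is $2$-neutral. So it suffices to show that $[u,\psi]$ is $2$-neutral if and only if $\Delta(u,\psi)=[u,f]$ is neutral in $H^2(\Gamma,A,\beta)$, where $f_\sigma=\theta_{\psi_\sigma}\circ\sigma$ as in Lemma \ref{l:Z2(Gamma-A-beta)}.

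First I will note that, since $f_\sigma=\theta_{\psi_\sigma}\circ\sigma$, we have $f_\sigma(w_\tau)={}^{\psi_\sigma\sigma}w_\tau$ for every $w\in\Maps(\Gamma,A)$. Hence the expression
\[ w_{\sigma\tau}\cdot u_{\sigma,\tau}\cdot f_\sigma(w_\tau)^{-1}\cdot w_\sigma^{-1} \]
in \eqref{e:B-neutral} coincides term by term with the expression
\[ w_{\sigma\tau}\cdot u_{\sigma,\tau}\cdot {}^{\psi_\sigma\sigma}w_\tau^{-1}\cdot w_\sigma^{-1} \]
in \eqref{e:2-neutral-1}. Here I use that ${}^{\psi_\sigma\sigma}(w_\tau^{-1})=({}^{\psi_\sigma\sigma}w_\tau)^{-1}$, because $\theta_{\psi_\sigma}\circ\sigma$ is an automorphism of $A$.

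Next I combine the two criteria. By Lemma \ref{l:2-neutral-1}, $[u,\psi]$ is $2$-neutral if and only if some $w$ satisfies \eqref{e:2-neutral-1}. By the observation just before Definition \ref{d:nonab-H2} (formula \eqref{e:B-neutral}), $[u,f]$ is neutral if and only if some $w$ satisfies \eqref{e:B-neutral}. Since the two equations are identical, the two conditions are equivalent, and the proposition follows.

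The only point needing care is well-definedness: the statement refers to the class $\Delta(u,\psi)$ attached to a chosen representative $(u,\psi)$. Lemma \ref{l:Delta_H2} shows that changing the representative by $(\varphi,1)$ leaves $[u,f]$ unchanged in $H^2(\Gamma,A,\beta)$. Changing it by $(1,g)$ moves $[u,f]$ by the bijection \eqref{e:act_H2}, and this bijection preserves neutral elements. So neutrality is independent of the representative. Moreover, Lemmas \ref{l:2-neutral-1} and \ref{l:2-neutral-2} are statements about classes, so using an arbitrary representative in the comparison above is legitimate. I do not expect any real obstacle beyond this bookkeeping.
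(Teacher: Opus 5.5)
Your proposal is correct and follows the paper's proof essentially verbatim. You reduce to $2$-neutrality via Lemma \ref{l:2-neutral-2}, apply Lemma \ref{l:2-neutral-1}, and observe that under $f_\sigma=\theta_{\psi_\sigma}\circ\sigma$ condition \eqref{e:2-neutral-1} is literally condition \eqref{e:B-neutral}. The extra well-definedness paragraph is harmless but not needed, since the equivalence you prove holds for every representative $(u,\psi)$.
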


\begin{proof}
By Lemma \ref{l:2-neutral-2}, the class $[u,\psi]$ lies in the image of $H^1(\Gamma,G)$ if and only if it is $2$-neutral.
By Lemma \ref{l:2-neutral-1}, this is equivalent to the existence of $w\in\operatorname{Maps}(\Gamma,A)$
satisfying \eqref{e:2-neutral-1}.
Under the identification
$f_\sigma=\theta_{\psi_\sigma}\circ\sigma$
given by \eqref{e:1-to-2-f},
condition \eqref{e:2-neutral-1} is exactly  condition \eqref{e:B-neutral},
which completes the proof.
\end{proof}

Since every representative of a neutral orbit $\Delta(u,\psi)_G$ is itself neutral (see Construction \ref{construction: H2}), Proposition \ref{p:kang} immediately implies the following corollary:

\begin{corollary}\label{c:extend_ext_seq} \ \nopagebreak[4]
\begin{enumerate}
    \item A class $[u,\psi] \in H^1(\Gamma,A \to G)$ lies in the image of $H^1(\Gamma,G)$
    if and only if $\Delta(u,\psi)_G  \in H^2(\Gamma,A \mathrel{\mathrm{rel}} G)$ is neutral.
    \item There is an exact sequence
    \[
    H^1(\Gamma,A) \labelto{\rho_*} H^1(\Gamma,G) \labelto{i_*} H^1(\Gamma, A \to G) \labelto{\Delta} H^2(\Gamma,A \mathrel{\mathrm{rel}} G),
    \]
    where exactness at $H^1(\Gamma, A \to G)$ means that the image of $i_*$ coincides with the preimage of the subset of neutral elements in $H^2(\Gamma,A \mathrel{\mathrm{rel}} G)$ under $\Delta$.
\end{enumerate}
\end{corollary}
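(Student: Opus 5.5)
The plan is to deduce both parts directly from Proposition \ref{p:kang}, using Lemma \ref{l:2-neutral-2} and Construction \ref{construction: H2}. The key point is the compatibility between neutrality of the class $\Delta(u,\psi)=[u,f]\in H^2(\Gamma,A,\beta)$ and neutrality of its $G$-orbit $\Delta(u,\psi)_G$ in $H^2(\Gamma,A \mathrel{\mathrm{rel}} G)$.

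For (1), one direction is immediate. If $[u,\psi]$ lies in the image of $H^1(\Gamma,G)$, then by Proposition \ref{p:kang} the class $[u,f]$ is neutral in $H^2(\Gamma,A,\beta)$. Its orbit $[u,f]_G$ then contains a neutral element, so it is neutral by definition.

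For the converse, suppose $[u,f]_G$ is neutral. Then the orbit contains a neutral class $[u_1,f_1]\in H^2(\Gamma,A,\beta_1)$, where $[u_1,f_1]=[u,f]\cdot g$ for some $g\in G$. The action of $g$ is given by the bijection \eqref{e:act_H2}, which preserves neutral elements. Its inverse is the action of $g^{-1}$, of the same form, so it also preserves neutral elements. Explicitly, $(u',f')\mapsto ({}^{g}u', \theta_g\circ f'\circ\theta_{g^{-1}})$, and this sends $u'=1$ to $1$ and respects the coboundary relation \eqref{eq:coboundary_2-cycle}, because $\theta_g$ is an automorphism of $A$. Hence $[u,f]=[u_1,f_1]\cdot g^{-1}$ is neutral in $H^2(\Gamma,A,\beta)$. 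One small check is needed here: the class $[u,f]$ must be the representative in $H^2(\Gamma,A,\beta)$ with $\beta=f_*$, which holds because the action is well defined on classes. Proposition \ref{p:kang} then gives that $[u,\psi]$ lies in the image of $i_*$.

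For (2), exactness at $H^1(\Gamma,G)$ is the previously known part of the hyper-cohomology exact sequence \cite[Corollary 3.4.3]{Borovoi-Memoir}. The map $\Delta$ is well defined by Lemma \ref{l:Delta_H2}. Exactness at $H^1(\Gamma,A\to G)$, in the stated sense, is exactly (1).

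The only point needing any care is the invertibility and neutral-preservation of the $G$-action in the converse of (1). Everything else is bookkeeping.
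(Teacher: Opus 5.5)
Your proposal is correct and follows the paper's own argument. The paper deduces both parts directly from Proposition \ref{p:kang}, together with the remark in Construction \ref{construction: H2} that every representative of a neutral $G$-orbit is itself neutral; your check that the action of $g^{-1}$ inverts the action of $g$ and also preserves neutral classes simply spells out that remark.
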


\begin{remark}
    The exact sequence in Corollary \ref{c:extend_ext_seq} extends those found in \cite[Corollary 3.4.3]{Borovoi-Memoir} and \cite[Proposition 11.5]{Noohi} by appending the term $H^2(\Gamma,A \mathrel{\mathrm{rel}} G)$ to the right, yielding the following long exact sequence:
\[
\begin{tikzcd}[row sep=1.0em, column sep=1.2em]
1 \arrow[r] & H^{-1}(\Gamma, A \to G) \arrow[r, ""] & H^0(\Gamma,A) \arrow[r, ""] & H^0(\Gamma,G) \arrow[r, ""] & H^0(\Gamma, A \to G) \arrow[dll, rounded corners=8pt, overlay, to path={
    -- ([xshift=2.5ex]\tikztostart.east)
    |- ($([xshift=2.5ex]\tikztostart.east)!0.5!([xshift=-2.5ex]\tikztotarget.west)$)
    -| ([xshift=-2.5ex]\tikztotarget.west)
    \tikztonodes -- (\tikztotarget)
}] \\
& & H^1(\Gamma,A) \arrow[r, "\rho_*"] & H^1(\Gamma,G) \arrow[r, "i_*"] & H^1(\Gamma, A \to G) \arrow[r, "\Delta"] & H^2(\Gamma,A \mathrel{\mathrm{rel}} G).
\end{tikzcd}
\]
Compare with the exact sequence in \cite[Theorem 4.2]{GA-12},
in which $(A\to G)$ is a {\em quasi-abelian} crossed module
(see Proposition \ref{p:q-ab} below for the definition), and the analog of our $H^1(\Gamma, A\to G)$ is an abelian group.
\end{remark}

\begin{remarks} \

\begin{enumerate}
\item[\rm (a)]Assume in Lemma \ref{l:Z2(Gamma-A-beta)} that  $\rho$ is injective.
We identify $A$ with $\rho(A)\subseteq G$. Then it follows from \eqref{eq:prop4_crossed_mod}
that $A$ is a normal subgroup of $G$.
Writing $C=G/A$, we have a short exact sequence of $\Gamma$-groups
\[ 1\to A\to G\to C\to 1,\]
and we identify $H^1(\Gamma, A\into G)$ with $H^1(\Gamma, C)$;
see \cite[Example 3.3.4(3)]{Borovoi-Memoir}.
Every $1$-cocycle $c\in Z^1(\Gamma, C)$ can be lifted
to a $1$-hyper-cocycle $(u,\psi)\in Z^1(\Gamma, A\into G)$,
and by Proposition \ref{p:kang} the cohomology class $[c]\in H^1(\Gamma, C)$
comes from $H^1(\Gamma,G)$ if and only if $\Delta(u,\psi)\in H^2(\Gamma, A,\beta)$ is neutral.
This result was used in the GAP program of \cite{BdG} computing $H^1(\R,G)$ for a real linear algebraic group $G$
(not necessarily connected or reductive).

\item[\rm (b)] Moreover, assume that the normal subgroup $A\subseteq G$ is abelian.
Then $c$ defines a twisted form $_c A=(A,\beta)$ of the $\Gamma$-group $A$,
and the second non-abelian cohomology set $H^2(\Gamma, A,\beta)$ in this case
can be identified with the second {\em abelian} cohomology group $H^2(\Gamma, \,_c A)$.
Furthermore, then $H^2(\Gamma, A,\beta)$  has exactly one neutral element,
which corresponds to $1\in H^2(\Gamma, \,_c A)$.
Under this identification, our  $\Delta(u,\psi)$ corresponds to the class inverse to
$\Delta(c)\in H^2(\Gamma, \,_c A)$ in Serre's book \cite[Section I.5.6]{Serre},
and our $\Delta(u,\psi)$ is neutral if and only if $\Delta(c)=1$.
Thus our Proposition \ref{p:kang} generalizes  \cite[Proposition 41 in Section I.5.6]{Serre},
which says that the class $[c]\in H^1(\Gamma, C)$ comes from $H^1(\Gamma,G)$
if and only if $\Delta(c)=1\in H^2(\Gamma, \,_c A)$.
\end{enumerate}
\end{remarks}

\section{Coboundary of a Galois hyper-cocycle for a crossed module of \texorpdfstring{$k$}{k}-groups}
\label{s:Galois}

In this section we apply the results of the previous section to crossed modules of $k$-groups.
We change our notation and denote by  $(A\labelt\rho G,\theta)$
a crossed module of smooth linear algebraic groups over a field $k$.
Let $k_s$ be a fixed separable closure of $k$,
and let $\Gamma=\Gal(k_s/k)$.
We consider the crossed module of $\Gamma$-groups
\[ (\cA\labelt{ } \cG)\coloneqq \big(A(k_s)\labelt{\rho}G(k_s)\big).\]
Write $\Abar\coloneqq A\times_k k_s$.
Let $\Aut(\Abar)$ denote the group of automorphisms of $\Abar$ and
let $\Inn(\Abar)$ denote the group of inner automorphisms of $\Abar$.
Let $\SAut(\Abar)$ denote the group of $k_s$-semilinear automorphisms of $\Abar$;
see \cite[Section 1.2]{Borovoi-Duke} or \cite[Section (1.2)]{FSS}.
We set $\Out(\Abar)\coloneqq\Aut(\Abar)/\Inn(\Abar)$ and $\SOut(\Abar)\coloneqq\SAut(\Abar)/\Inn(\Abar)$.
There are exact sequences
\begin{align}
    1 \to \Aut(\Abar) \to \SAut(\Abar) \to \Gamma, \label{e:SAut_exact_seq} \\
    1 \to \Out(\Abar) \to \SOut(\Abar) \to \Gamma. \label{e:SOut_exact_seq}
\end{align}
See \cite[Sections (1.2) and (1.5)]{FSS}.
Given the (homomorphic) splitting
$s \colon \Gamma \to \SAut(\Abar)$ of the extension \eqref{e:SAut_exact_seq}
corresponding to a $k$-form $A$ (see \cite[Section (1.4)]{FSS}), we note that
\begin{equation}\label{e:k-form_section_action}
^\upsig\! \phi = s_\sigma \circ \phi \circ s_\sigma^{-1}
\end{equation}
for every $\sigma \in \Gamma$ and every $\phi \in \Aut(\bar A)$.
We say that a set-theoretic section $f \colon \Gamma \to \SAut(\Abar)$ of \eqref{e:SAut_exact_seq}
is \textit{continuous} if, for every $\sigma \in \Gamma$, the map
\[
\Gamma \to \Aut(\Abar), \quad \tau \mapsto s_\tau^{-1} \circ f_{\sigma}^{-1} \circ f_{\sigma \tau}
\]
is locally constant; see \cite[Definition (1.10) and Proposition (1.7)(ii)]{FSS}.
A $k$-\textit{band} for $\Abar$ is defined to be a group homomorphism $\beta \colon \Gamma \to \SOut(\Abar)$ that splits \eqref{e:SOut_exact_seq} and lifts to a continuous map $f \colon \Gamma \to \SAut(\Abar)$; see \cite[Definition (1.11)]{FSS}.

Let
\[(u,\psi)\in Z^1(k,A\to G)\coloneqq Z^1(\Gamma, \cA\to\cG)\]
be a 1-cocycle.
Write
\[\tilde f_\sigma= \theta_{\psi_\sigma}\circ\sigma\in \SAut(\Abar). \]
Then $\tilde f_\sigma$ is induced from a $\sigma$-semilinear automorphism $f_\sigma \coloneqq \theta_{\psi_\sigma} \circ s_\sigma$
in the sense of \cite[Section (1.2)]{FSS}, and the map $f\colon \Gamma \to \SAut(\Abar)$ is a set-theoretic section of \eqref{e:SAut_exact_seq}.
We show that $f$ is continuous.
Indeed, we must prove that the map
$\Gamma \to \Aut(\Abar)$, $\tau \mapsto s_\tau^{-1} \circ f_\sigma^{-1} \circ f_{\sigma \tau}$ is locally constant.
By the construction of $f$ and the fact that $s$ is a splitting homomorphism, we have
\begin{align*}
    s_\tau^{-1} \circ f_\sigma^{-1} \circ f_{\sigma \tau}
    = s_\tau^{-1} \circ s_\sigma^{-1} \circ \theta_{\psi_\sigma}^{-1} \circ \theta_{\psi_{\sigma \tau}} \circ s_{\sigma \tau}
    = s_{\sigma\tau}^{-1} \circ \theta_{\psi_\sigma}^{-1} \circ \theta_{\psi_{\sigma \tau}} \circ s_{\sigma \tau}
    ={}^{\tau^{-1} \sigma^{-1}}\!\big(\theta_{\psi_\sigma}^{-1} \circ \theta_{\psi_{\sigma \tau}}\big),
\end{align*}
where the last equality follows from \eqref{e:k-form_section_action}.
Since $\psi\colon\Gamma\to G(k_s)$ is locally constant, so is the map
$\tau\mapsto\theta_{\psi_\sigma}^{-1}\circ\theta_{\psi_{\sigma\tau}}$.
As the $\Gamma$-action on $\Aut(\Abar)$ is continuous, it follows that
$\tau\mapsto
{}^{\tau^{-1} \sigma^{-1}}\!\big(\theta_{\psi_\sigma}^{-1}\!\circ\theta_{\psi_{\sigma\tau}}\big)$
is also locally constant, hence $f$ is continuous.

We set
\begin{equation}\label{l:k-band}
    \beta_\sigma=f_\sigma\cdot \Inn(\Abar)\in \SOut(\Abar).
\end{equation}
It follows from \eqref{eq:def1_1-cocycle} that $\beta\colon \Gamma \to \SOut(\Abar)$
is a homomorphism, and since $f$ is continuous, $\beta$ is a $k$-band.
We define a map
\begin{equation}\label{e:image_Z1'}
Z^1(k,A \to G) \to \operatorname{Band}(k,\Abar), \quad (u,\psi) \mapsto \beta,
\end{equation}
where $\beta$ is a $k$-band given by \eqref{l:k-band}, and $\operatorname{Band}(k,\Abar)$ denotes the set of $k$-bands for $\Abar$.
This map is analogous to \eqref{e:image_Z1} (see Lemma \ref{l:form_band_Phi}), and we denote its image by $\Phi$.

Following \cite[Section (1.17)]{FSS}, for a given $k$-band $\beta$,
the set of 2-cocycles $Z^2(k,\Abar,\beta)$ is the set of pairs $(u,f)$,
where $u \colon \Gamma\times \Gamma \to \Abar(k_s)$ is a locally constant map
and $f \colon \Gamma \to \SAut(\Abar)$ is a continuous set-theoretic section of \eqref{e:SAut_exact_seq}
such that
\begin{align*}
    \operatorname{inn}(u_{\sigma,\tau}) \circ f_\sigma \circ f_\tau
    &= f_{\sigma \tau}, \\
    u_{\sigma,\tau \nu} \cdot f_\sigma(u_{\tau,\nu})
    &= u_{\sigma \tau,\nu} \cdot u_{\sigma,\tau},  \\
    f_\sigma \!\!\!\! \mod \operatorname{Inn}(\Abar)
    &= \beta_\sigma.
\end{align*}
Here the term $f_\sigma(u_{\tau,\nu})$ is defined via the natural action of $\SAut(\Abar)$ on $\Abar(k_s)$;
see \cite[Section (1.3)]{FSS}.
The group $\operatorname{Maps}(\Gamma,\Abar(k_s))$ acts on $Z^2(k,\Abar,\beta)$ on the left as follows:
\[
    w * (u,f) = (u',f')\quad\ \text{for}\ \,w\in{\rm Maps}(\Gamma, \Abar(k_s)),
\]
where
\begin{equation*}
    u'_{\sigma,\tau} = w_{\sigma \tau} \cdot u_{\sigma,\tau} \cdot f_{\sigma}(w_\tau)^{-1} \cdot w_{\sigma}^{-1}, \quad
    f'_\sigma = \operatorname{inn}(w_\sigma) \circ f_\sigma
\end{equation*}
for all $\sigma, \tau \in \Gamma$.

\begin{definition}\label{d:H2-k}
We define
$H^2(k,\Abar,\beta)\coloneqq Z^2(k,\Abar,\beta)/\operatorname{Maps}\big(\Gamma,\bar A(k_s)\big)$.
A 2-cocycle $(u,f)\in Z^2(\Gamma,A,\beta)$ is called neutral if $u=1$, and a cohomology class $[u,f]\in H^2(\Gamma, A,\beta)$ is called neutral if it is the class of a neutral 2-cocycle.
\end{definition}

Let $\beta$ be a $k$-band.
If $f\colon\Gamma\to\SAut(\bar A)$ is a continuous lift of $\beta$, then the composition $f_* \colon \Gamma \xrightarrow{f} \SAut(\bar A) \to \Aut(A(k_s))$
is locally constant,
where the second homomorphism is constructed in \cite[Section (1.3)]{FSS}.
Hence composing $f_*$ with the natural homomorphism $\Aut(A(k_s)) \to \Out(A(k_s))$ yields an abstract $\Gamma$-band $\beta_*$.

By \cite[Lemma (1.19)]{FSS} and
\cite[Proposition 1.15]{Springer},
both $H^2(k,\bar A,\beta)$ and
$H^2(\Gamma,A(k_s),\beta_*)$
are naturally identified with the set of equivalence classes of extensions of topological groups
\[
1\to A(k_s)\xrightarrow{i} E \xrightarrow{p}\Gamma \to 1
\]
where $A(k_s)$ (resp. $\Gamma$) is equipped with the discrete (resp. profinite) topology,  $i$ and $p$ are open onto their respective images, and the associated band is $\beta_*$.
Hence there is a canonical bijection $H^2(k,\bar A,\beta) \simeq H^2(\Gamma,A(k_s),\beta_*)$.
Consequently, we may apply the results of Section \ref{s:coboundary} in the present setting.

Let $\beta$ be an arbitrary $k$-band.
For $g \in G(k_s)$ and a class $[u, f] \in H^2(k, \Abar, \beta)$, we consider
\begin{equation*}
    u'_{\sigma, \tau} \coloneqq {\ha}^{g^{-1}}\!u_{\sigma, \tau} \quad\ \text{and} \quad\ f'_\sigma \coloneqq \theta_{g^{-1}} \circ f_\sigma \circ \theta_g.
\end{equation*}
Then $\beta' \colon \Gal(k_s/k) \labelto{f'} \SAut(\Abar) \to  \SOut(\Abar)$ is a $k$-band, and there is a bijection
\begin{equation*}
    H^2(k,\Abar,\beta) \isoto H^2(k,\Abar,\beta'), \quad
    [u,f]  \mapsto [u',f'],
\end{equation*}
which preserves the subsets of neutral elements.
Furthermore, if $\beta$ is contained in $\Phi$ (the image of the map in \eqref{e:image_Z1'}), then so is $\beta'$.
Thus this bijection yields a right $G(k_s)$-action on $\bigsqcup_{\beta \in \Phi} H^2(k,\Abar,\beta)$ by $[u,f] \cdot g \coloneqq [u',f']$.
Similarly to Construction \ref{construction: H2}, we define
\begin{equation*}
H^2(k,A \mathrel{\mathrm{rel}} G) \coloneqq \bigsqcup_{\beta \in \Phi} H^2(k,\Abar,\beta)/G(k_s).
\end{equation*}
For $[u,f] \in H^2(k,\Abar,\beta)$ with $\beta \in \Phi$, we denote its image in $H^2(k,A \mathrel{\mathrm{rel}} G)$ by $[u,f]_G$, that is the $G(k_s)$-orbit of $[u,f]$.
A \textit{neutral element} in $H^2(k,A \mathrel{\mathrm{rel}} G)$ is defined as a $G(k_s)$-orbit containing a neutral element of $H^2(k,\Abar,\beta)$ for some $\beta \in \Phi$.
If an orbit $[u,f]_G$ is neutral, then every representative in this orbit is a neutral element in $H^2(k,\Abar,\beta')$ for a suitable choice of $\beta'$.

By Lemma \ref{l:Z2(Gamma-A-beta)} we have $(u,f)\in Z^2(k,\Abar,\beta)$.
We set $\Delta(u,\psi)=[u,f]\in H^2(k,\Abar,\beta)$.
By Lemma \ref{l:Delta_H2}, we have a well-defined map
\[
\Delta \colon H^1(k,A \to G) \to H^2(k,A \mathrel{\mathrm{rel}} G), \quad
[u,\psi] \mapsto \Delta(u,\psi)_G.
\]

\begin{proposition}\label{c:kang}
Let $A\labelt\rho G$ be a crossed module of smooth $k$-groups.
Then for a 1-cocycle $(u,\psi) \in Z^1(k, A \to G)$, its
class $[u,\psi] \in H^1(k,A \to G)$ lies in the image of $H^1(k,G)$
if and only if the second cohomology class $\Delta(u,\psi)\in H^2(k, \Abar,\beta)$ is neutral.
\end{proposition}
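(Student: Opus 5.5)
The plan is to reduce Proposition \ref{c:kang} to the abstract statement, Proposition \ref{p:kang}, applied to the crossed module of $\Gamma$-groups $(\cA\to\cG)=\big(A(k_s)\labelt\rho G(k_s)\big)$ with $\Gamma=\Gal(k_s/k)$. By definition $H^1(k,A\to G)=H^1(\Gamma,\cA\to\cG)$ and $H^1(k,G)=H^1(\Gamma,\cG)$, and the crossing map is the map $i_*$ of Section \ref{s:coboundary}. Proposition \ref{p:kang} therefore says that $[u,\psi]$ lies in the image of $H^1(k,G)$ if and only if the abstract class $[u,f_*]\in H^2(\Gamma,A(k_s),\beta_*)$ is neutral. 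Here $f_*$ is the map $\sigma\mapsto\theta_{\psi_\sigma}\circ\sigma\in\Aut(A(k_s))$ from Lemma \ref{l:Z2(Gamma-A-beta)}.

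It then remains to compare neutrality in $H^2(k,\Abar,\beta)$ with neutrality in $H^2(\Gamma,A(k_s),\beta_*)$. The cleanest route avoids the extension-theoretic identification and works directly with cocycles. By Definition \ref{d:H2-k}, the class $[u,f]\in H^2(k,\Abar,\beta)$ is neutral if and only if some $w\in\Maps(\Gamma,A(k_s))$ satisfies
\[
w_{\sigma\tau}\cdot u_{\sigma,\tau}\cdot f_\sigma(w_\tau)^{-1}\cdot w_\sigma^{-1}=1 .
\]
The reason is that the acting group $\Maps(\Gamma,\Abar(k_s))$ and the formula for $u'$ coincide with those in the abstract setting. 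In this formula $f_\sigma(w_\tau)$ is computed through the natural map $\SAut(\Abar)\to\Aut(A(k_s))$ of \cite[Section (1.3)]{FSS}. We have $f_\sigma=\theta_{\psi_\sigma}\circ s_\sigma$, and $s_\sigma$ acts on $A(k_s)$ as the Galois action $\sigma$. Hence $f_\sigma(w_\tau)=\theta_{\psi_\sigma}(\,^\sigma w_\tau)={}^{\psi_\sigma\sigma}w_\tau$. The displayed condition is then literally \eqref{e:2-neutral-1}, equivalently \eqref{e:B-neutral} for $f_*$. Combining this with Lemmas \ref{l:2-neutral-1} and \ref{l:2-neutral-2} gives the claim.

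Concretely, the steps are as follows.
\begin{enumerate}
\item[(1)] Verify that $(u,f)$ lies in $Z^2(k,\Abar,\beta)$. The identities follow as in Lemma \ref{l:Z2(Gamma-A-beta)}, now read in $\SAut(\Abar)$ using \eqref{e:k-form_section_action}. The continuity of $f$ was already checked above.
\item[(2)] Identify the action of $\SAut(\Abar)$ on $A(k_s)$ for the specific lifts $f_\sigma=\theta_{\psi_\sigma}\circ s_\sigma$.
\item[(3)] Apply Lemmas \ref{l:2-neutral-1} and \ref{l:2-neutral-2}, as in the proof of Proposition \ref{p:kang}.
\end{enumerate}

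The only real point, and what I expect to be the main obstacle, is step (2). It requires checking carefully that the semilinear automorphism $s_\sigma$ attached to the $k$-form $A$ induces on $k_s$-points exactly the Galois action used in $Z^1(\Gamma,\cA\to\cG)$. It also requires checking that $\theta_{\psi_\sigma}$, as an automorphism of $\Abar$, induces the map $\theta_{\psi_\sigma}$ on $A(k_s)$. Both facts are formal consequences of the definitions in \cite{FSS}. With them in hand, neutrality on the two sides is the same equation in $w$, and no comparison of extension classes is needed.
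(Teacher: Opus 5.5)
Your argument is correct. Its skeleton is the same as the paper's: reduce to Lemmas \ref{l:2-neutral-1} and \ref{l:2-neutral-2}, that is, to Proposition \ref{p:kang}. The difference lies in how $H^2(k,\Abar,\beta)$ is compared with the abstract set $H^2(\Gamma,A(k_s),\beta_*)$.

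The paper identifies both sets with equivalence classes of topological group extensions of $\Gamma$ by $A(k_s)$ with band $\beta_*$, via \cite[Lemma (1.19)]{FSS} and \cite[Proposition 1.15]{Springer}. This gives a canonical bijection, and the paper then declares the proposition a particular case of Proposition \ref{p:kang}. You instead note that $\Maps(\Gamma,A(k_s))$ acts on $2$-cocycles by the same formula on both sides. Since $f_\sigma=\theta_{\psi_\sigma}\circ s_\sigma$ gives $f_\sigma(w_\tau)={}^{\psi_\sigma\sigma}w_\tau$, neutrality in $H^2(k,\Abar,\beta)$ becomes literally the condition \eqref{e:2-neutral-1}.

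The paper's route yields a full bijection of the $H^2$ sets. That is what is needed to transport all of Section \ref{s:coboundary}: the relative set $H^2(k,A \mathrel{\mathrm{rel}} G)$, Lemma \ref{l:Delta_H2}, and Proposition \ref{c:ext-to-H2}. However, it tacitly uses two facts it does not spell out:
\begin{enumerate}
\item the bijection sends $[u,f]$ to $[u,f_*]$;
\item it matches neutral classes, i.e.\ split extensions, on the two sides.
\end{enumerate}
Your cocycle-level check makes exactly this compatibility explicit and is self-contained for the statement at hand.

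The step you flag as the main obstacle is in fact immediate. By the very definition of the splitting attached to the $k$-form, $s_\sigma$ acts on $k_s$-points by the Galois action. Likewise, $\theta_{\psi_\sigma}$ is a $k_s$-automorphism of $\Abar$ whose effect on $A(k_s)$ is the given one.

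For step (1), the identity you need in $\SAut(\Abar)$ is $s_\sigma\circ\theta_g\circ s_\sigma^{-1}=\theta_{{}^\sigma g}$. This follows from $\theta$ being a morphism defined over $k$, not from \eqref{e:k-form_section_action} alone.
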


\begin{proof}
This is a particular case of Proposition \ref{p:kang}.
\end{proof}

\begin{proposition}\label{c:ext-to-H2} \
\begin{enumerate}
    \item A class $[u,\psi] \in H^1(k,A \to G)$ lies in the image of $H^1(k,G)$
    if and only if $\Delta(u,\psi)_G  \in H^2(k,A \mathrel{\mathrm{rel}} G)$ is neutral.
    \item There is an exact sequence
    \[
    H^1(k,A) \labelto{\rho_*} H^1(k,G) \labelto{i_*} H^1(k, A \to G) \labelto{\Delta} H^2(k,A \mathrel{\mathrm{rel}} G),
    \]
    where exactness at $H^1(k, A \to G)$ means that the image of $i_*$ coincides with the preimage of the subset of neutral elements in $H^2(k,A \mathrel{\mathrm{rel}} G)$ under $\Delta$.
\end{enumerate}
\end{proposition}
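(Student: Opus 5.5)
Part (1) is a reformulation of Proposition \ref{c:kang}, which lets us pass from the class $\Delta(u,\psi)\in H^2(k,\Abar,\beta)$ to its orbit $\Delta(u,\psi)_G\in H^2(k,A \mathrel{\mathrm{rel}} G)$. Part (2) is then a formal consequence, combined with the exactness at $H^1(k,G)$ already known from \cite[Corollary 3.4.3]{Borovoi-Memoir}.

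For (1), suppose first that $[u,\psi]$ lies in the image of $i_*\colon H^1(k,G)\to H^1(k,A\to G)$. By Proposition \ref{c:kang}, the class $\Delta(u,\psi)=[u,f]$ is neutral in $H^2(k,\Abar,\beta)$. Its $G(k_s)$-orbit therefore contains a neutral element, so $\Delta(u,\psi)_G$ is neutral by definition.

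Conversely, suppose $\Delta(u,\psi)_G$ is neutral. By the remark preceding the statement, the bijections $H^2(k,\Abar,\beta)\isoto H^2(k,\Abar,\beta')$ given by $[u,f]\mapsto[u',f']$ preserve neutral elements. Hence every representative of a neutral orbit is neutral, and in particular $[u,f]=\Delta(u,\psi)$ itself is neutral. Proposition \ref{c:kang} then shows that $[u,\psi]\in\im(i_*)$.

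The only point to verify is the claim that neutral elements are preserved. If $(u,f)$ is equivalent via $w$ to a neutral cocycle $(1,f^0)$, then transporting by $g$ sends $(1,f^0)$ to $(1,\theta_{g^{-1}}\circ f^0\circ\theta_g)$. This pair is again neutral. It is equivalent to $(u',f')$ via ${}^{g^{-1}}w$, because $\theta_{g^{-1}}$ is a group automorphism of $\Abar$ and intertwines the coboundary formula. I expect this check to be the main (mild) obstacle. It needs the transport map to be well defined on classes, which follows from the same identity.

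For (2), exactness at $H^1(k,A\to G)$ is precisely statement (1), given the definition of exactness used there. Exactness at $H^1(k,G)$ is the hypercohomology exact sequence of \cite[Corollary 3.4.3]{Borovoi-Memoir}, applied to the crossed module of $\Gamma$-groups $(A(k_s)\to G(k_s))$. Finally, well-definedness of $\Delta$ on classes is Lemma \ref{l:Delta_H2}, transported to the $k$-band setting through the identification $H^2(k,\Abar,\beta)\simeq H^2(\Gamma,A(k_s),\beta_*)$. Equivalently, all of this is Corollary \ref{c:extend_ext_seq} applied to $\Gamma=\Gal(k_s/k)$.
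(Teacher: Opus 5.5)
Your proof is correct and follows the paper's route. The paper deduces this statement as the special case $\Gamma=\Gal(k_s/k)$ of Corollary \ref{c:extend_ext_seq}, which is Proposition \ref{p:kang} combined with the fact that every representative of a neutral orbit is neutral, exactly as you argue. Your explicit verification that $w*(u,f)=(1,f^0)$ implies ${}^{g^{-1}}w*(u',f')=(1,\theta_{g^{-1}}\circ f^0\circ\theta_g)$ simply fills in the step the paper treats as clear.
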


\begin{proof}
This is a particular case of Corollary \ref{c:extend_ext_seq}.
\end{proof}

\section[Non-abelian H\textsuperscript{2} over fields of Galois--Douai type]{Non-abelian \texorpdfstring{$H^2$}{H2} over fields of Galois--Douai type}
\label{s:Douai}

\begin{definition}\label{d:Douai}
Following an idea of Gonz\'alez-Avil\'es \cite[Definition 5.2]{GA-12},
we say that a field $k$ is {\em of Galois--Douai type} if
for every {\em simply connected semisimple} $k$-group $G$ with étale center $Z$,
the coboundary map for Galois cohomology
\begin{equation}\label{e:Galois}
\delta^1\colon H^1(k,G/Z)\to H^2(k, Z)
\end{equation}
is surjective.
\end{definition}

\begin{remark}\label{r:smooth}
In our case of an étale center, the map \eqref{e:Galois} is surjective
if and only if the coboundary map in flat cohomology
\begin{equation*}
\delta^1_\fppf\colon H^1_\fppf(k,G/Z)\to H^2_\fppf(k, Z)
\end{equation*}
is surjective;  see \cite{GA-MO}. For details, see  \cite[Theorem 11.7 and Remark 11.8(3)]{Grothendieck}
or \cite[Theorem III.3.9, Corollary III.4.7, and Remark III.4.8(a)]{Milne-EC}.
\end{remark}

\begin{examples} \
\begin{enumerate}
\item[\rm (i)] All local and global fields of characteristic 0 are of Galois--Douai type;
see the references in the proof of \cite[Lemma  5.7]{Borovoi-Duke}.

\item[\rm (ii)]
All local and global fields of positive characteristic are of Galois--Douai type;
see Remark \ref{r:smooth} and
 \cite[Theorem A]{Thang}, or \cite[Proposition 4.5]{Rosengarten}.

\item[\rm (iii)] The geometric fields of types (gl), (ll), and (sl) of \cite{CTGP} are of Galois--Douai type;
see \cite[Examples 5.4(vi)]{GA-12}.

\end{enumerate}

\end{examples}

\begin{theorem}
\label{t:Douai}
Consider the second non-abelian Galois cohomology set
$H^2(k, \ol G,\beta)$ of Definition \ref{d:H2-k},
where $\ol G$ is a simply connected semisimple $k_s$-group and
$\beta$ is any $k$-band.
We assume that $Z(\ol G)$ is \'etale.
If $k$ is a {\em field of Galois--Douai type},
then all elements of $H^2(k,\ol G,\beta)$ are neutral.
\end{theorem}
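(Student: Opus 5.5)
This is the Douai-type argument; see Douai's thesis, Borovoi's Duke paper (Lemma 5.7), and Flicker--Scheiderer--Sujatha. The approach is to reduce the non-abelian $H^2$ to abelian $H^2$ of the center, and then use the Galois--Douai surjectivity of $\delta^1$ to produce a neutral class.

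The first step is to find one neutral class in $H^2(k,\ol G,\beta)$. The band $\beta$ determines a homomorphism $\Gamma\to\SOut(\ol G)$. Since $\ol G$ is semisimple, $\Out(\ol G)$ is identified with the automorphism group of the based root datum (Dynkin diagram). Choose a pinning of $\ol G$ over $k_s$, and let $f\colon\Gamma\to\SAut(\ol G)$ send $\sigma$ to the unique semilinear automorphism preserving the pinning and lifting $\beta_\sigma$. This $f$ is a homomorphism and is continuous, so it defines a $k$-form $G_0$ of $\ol G$, which is quasi-split. Then $(1,f)$ is a neutral 2-cocycle, so $H^2(k,\ol G,\beta)$ contains a neutral element. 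For the center $Z=Z(\ol G)$, which is abelian and étale, the band $\beta$ induces a genuine $k$-form, namely $Z(G_0)$.

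The second step uses the standard torsor structure (Springer, Proposition 1.27; FSS): when a neutral element exists, the abelian group $H^2(k,Z(G_0))$ acts simply transitively on $H^2(k,\ol G,\beta)$, via $[u,f]\mapsto[zu,f]$. Moreover, the neutral elements are exactly the orbit of the given neutral class under the image of the coboundary maps $\delta^1\colon H^1(k,G_0'/Z)\to H^2(k,Z)$, where $G_0'$ runs over the inner forms $_cG_0$ given by the neutral classes (Springer, Proposition 1.28). Concretely, take an arbitrary class $[zu_0,f]$ with $z\in Z^2(k,Z(G_0))$. It is neutral if and only if there is $w\colon\Gamma\to\ol G(k_s)$ with $w_{\sigma\tau}\,z_{\sigma,\tau}\,f_\sigma(w_\tau)^{-1}w_\sigma^{-1}=1$. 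Projecting $w$ to $\ol G/Z$, this says exactly that $\bar w$ is a 1-cocycle of $G_0/Z$ whose coboundary is $[z]^{\pm1}$.

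The third step applies the hypothesis. Since $k$ is of Galois--Douai type and $G_0$ is a simply connected semisimple $k$-group with étale center, $\delta^1\colon H^1(k,G_0/Z)\to H^2(k,Z(G_0))$ is surjective. So every $[z]$ equals $\delta^1[\bar w]$ for some $\bar w$. Lifting $\bar w$ to $w\in\Maps(\Gamma,\ol G(k_s))$, the defining identity of $\delta^1$ gives the neutrality equation above, possibly after replacing $z$ by $z^{-1}$, which is harmless because $\delta^1$ is surjective. Hence every element of $H^2(k,\ol G,\beta)$ is neutral.

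I expect the main obstacle to be the second step. Transitivity of the $H^2(k,Z)$-action on the non-abelian $H^2$ with a fixed band must be checked in the semilinear $k$-band setting of Definition \ref{d:H2-k}. This amounts to showing that two 2-cocycles with the same band differ, after a coboundary adjustment making their $f$'s equal, by a central 2-cocycle. The adjustment is possible because $f'_\sigma f_\sigma^{-1}$ is inner, say $\inn(w_\sigma)$ with $w$ locally constant. Here $Z$ étale is used, so that $\ol G(k_s)\to(\ol G/Z)(k_s)$ is surjective, continuous lifts $w$ exist, and $u'u^{-1}$ lands in $Z(k_s)$. I will pass through the identification $H^2(k,\ol G,\beta)\simeq H^2(\Gamma,\ol G(k_s),\beta_*)$ so that the abstract computations of Section \ref{s:coboundary} apply.
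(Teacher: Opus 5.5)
Your proposal is correct and is essentially the paper's proof: fix a neutral class $[1,f]$ defining a $k$-form $G_0$, use Springer's result that $[z]\mapsto[z,f]$ maps $H^2(k,Z)$ bijectively onto $H^2(k,\ol G,\beta)$, and observe that $[z,f]$ is neutral whenever $[z]$ lies in the image of $\delta^1\colon H^1(k,G_0/Z)\to H^2(k,Z)$, which is all of $H^2(k,Z)$ by the Galois--Douai hypothesis; the only difference is that the paper cites Douai and Borovoi for the existence of the neutral class, whereas you sketch the pinned, quasi-split construction. One caution: your aside that the neutral classes form the orbit of $[1,f]$ under the images of $\delta^1$ for all inner forms $G_0'$ is not correct as stated, because the image of $\delta^1$ for $G_0'$ describes neutrality relative to the base point $[1,f']$ and is not a subgroup in general; nothing depends on this aside, since your concrete argument uses only $\delta^1$ for $G_0$ itself.
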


\begin{remark}\label{r:B-Duke}
In \cite[Corollary 5.6]{Borovoi-Duke}, Theorem \ref{t:Douai} was proved
over local and global fields of characteristic 0.
\end{remark}

\begin{remark}\label{r:DLA}
Let $L=L_{\rm Gir}(\ol G,\beta)$ denote the band (lien)
of Giraud \cite{Giraud} corresponding to $\ol G$ and $\beta$; see \cite[Subsection 1.2.1]{DLA}.
Let $H^2_{\rm Gir}(k,L)$ denote the second non-abelian cohomology of Giraud \cite{Giraud}; see \cite[Subsection 2.2.2]{DLA}.
There is a canonical bijection $H^2(k,\ol G,\beta)\cong H^2_{\rm Gir}(k,L)$
preserving the sets of neutral elements; see \cite[Proposition 2.1]{DLA}.
(In \cite{DLA} the authors assume that the field $k$ is perfect, but they do not use this assumption in
the proof of their Proposition 2.1.)
In \cite[Theorem 1.1]{Douai-local} and \cite[Proposition 4.1 and Theorem 5.1]{Douai-global},
it was proved that, when $G$ is simply connected semisimple and $k$ is a non-archimedean local field or a global field admitting no real embeddings,  all elements of $H^2_{\rm Gir}(k,L)$ are neutral.
See also \cite[Theorems VI.1.4, VI.1.5, and VIII.1.4]{Douai-thesis}.
This gives Theorem \ref{t:Douai} for such fields (even without the assumption that $Z(\ol G)$ is \'etale).
\end{remark}

\begin{proof}[Proof of Theorem \ref{t:Douai}\hs]
Since our simply connected semisimple group $\ol G$ is connected and reductive,
the cohomology set $H^2(k,\ol G,\beta)$ contains a neutral element $(1,f)$;
see \cite[Lemma 1.1]{Douai-local}, or \cite[Proposition V-3.2]{Douai-thesis}, or \cite[Proposition 3.1]{Borovoi-Duke}.
Then the splitting homomorphism $f\colon \Gal(k_s/k)\to \SAut(\ol G)$
of \eqref{e:SAut_exact_seq} defines, by descent, a $k$-form $G$ of $\ol G$; see \cite[Remark (1.15)]{FSS}.
We write $H^2(k,G)$ for $H^2(k,\ol G,\beta)$.

We consider the composite map
\begin{equation}\label{e:Delta-i*}
\chi\colon H^1(k,G/Z)\labelt{\delta^1} H^2(k,Z)\labelt{i_*} H^2(k,G),
\end{equation}
where $\delta^1$ is the coboundary map and where the map
\[i_*\colon H^2(k,Z)\to H^2(k,G)\]
comes from the canonical  action of $H^2(k,Z)$ on $H^2(k,G)$
and is given by
\[ [z]\mapsto [z,f]\quad\ \text{where}\ \,z\in Z^1(k,Z).\]
Here $f=f_G\colon \Gal(k_s/k)\to\SAut(\ol G)$ is as above.
See \cite[Proposition 1.17]{Springer};
by this proposition the map $i_*$ is bijective, hence surjective.
On the other hand, the map $\delta^1$ sends the class $[\psi]$
of $\psi\in Z^1(k,G/Z)$ to the class of
\[(\sigma,\tau)\mapsto \psitil_\sigma \cdot f_\sigma(\psitil_\tau)\cdot \psitil_{\sigma\tau}^{-1}\]
where $\psitil\colon\Gal(k_s/k)\to G(k_s)$ is a locally constant lifting of $\psi$.
There exists such a lifting by the surjectivity of the map $G(k_s)\to (G/Z)(k_s)$,
which holds because $H^1(k_s,Z)=1$ by the smoothness of $Z$.
We note that $\delta^1$ is independent of the choice of a lifting.
Then
\[\chi[\psi]=[u,f]\quad\ \text{where}\ \,u_{\sigma,\tau}
=\psitil_\sigma \cdot f_\sigma(\psitil_\tau)\cdot \psitil_{\sigma\tau}^{-1}.\]
Note that $u_{\sigma,\tau} \in Z(k_s)$ by construction.
Let $w=\psitil^{-1}\colon \Gal(k_s/k)\to G(k_s)$ and define
\[
(u',f)= w * (1,f')\quad\ \text{where} \ \, f'_\sigma=\inn(\psitil_\sigma)\cdot f_\sigma
\]
using the notation of \eqref{eq:coboundary_2-cycle}.
Then we have
\[
u'_{\sigma,\tau}
= \psitil_{\sigma\tau}^{-1} \cdot \psitil_\sigma \cdot f_\sigma(\psitil_\tau)
= \psitil_{\sigma\tau}^{-1} \cdot (\psitil_\sigma \cdot f_\sigma(\psitil_\tau) \cdot \psitil_{\sigma\tau}^{-1}) \cdot \psitil_{\sigma\tau}
= \psitil_{\sigma\tau}^{-1} \cdot u_{\sigma,\tau} \cdot \psitil_{\sigma\tau}.
\]
Since $u_{\sigma,\tau} \in Z(k_s)$, it commutes with $\psitil_{\sigma\tau}$, which yields $u'_{\sigma,\tau} = u_{\sigma,\tau}$.
This implies that $(u,f) = w * (1,f')$, hence $\chi[\psi]$ is a neutral element.

Now for a field $k$ of Galois--Douai type,
the coboundary map $\delta^1$ is surjective.
Thus the composite map $\chi$ of  \eqref{e:Delta-i*} is surjective,
and all elements in its image are neutral.
We conclude that all elements of $H^2(k,G)$ are neutral, as desired.
\end{proof}

\section{Crossed cohomology and the crossing map}
\label{s:cr-cohomology}

For a field $k$, let $\Red_k$ denote the category of smooth reductive $k$-groups,
not necessarily connected, with  $k$-homomorphisms.
Let $\CrMod_k$ denote the category of crossed modules of $k$-groups.
In Section \ref{s:crossed} we defined the functor
\begin{equation}\label{e:functor-cr}
G\rightsquigarrow \CrM(G) = (G^\ssc,G,\rho,\theta)
\end{equation}
from $\Red_k$ to $\CrMod_k$\hs.
For simplicity, we also write $(G^\ssc\to G)$ for $\CrM(G)$.
We regard $\CrM(G)$ as a complex in degrees $-1$ and $0$.

In \cite[Sections 3.3 and 3.6]{Borovoi-Memoir}, in the case of characteristic 0, the first-named author
constructed functors of Galois hyper-cohomology $H^0$ and $H^1$
\[ (A\to G)\rightsquigarrow H^i(k,A\to G)\coloneqq
    H^i\big(\Gal(k_s/k), A(k_s)\to G(k_s)\big)\quad\ \text{for}\ \, i=0,1\]
 where $(A\to G)$ is a crossed module of $k$-groups.
Here $H^0(k, -)$ is a functor from $\CrMod_k$ to the category of groups,
and $H^1(k,-)$ is a functor from $\CrMod_k$ to the category of pointed sets.
These constructions can be naturally generalized to the case of
a crossed module of smooth $k$-groups over a field $k$ of arbitrary characteristic.

Combining the functors $H^i$ with the functor $\CrM$ of \eqref{e:functor-cr},
we obtain functors of {\em crossed cohomology}
\[ G\rightsquigarrow  H^i_\crr(k,G)\coloneqq H^i(k, G^\ssc\to G) \quad\ \text{for}\ \, i=0,1.\]
Here $H^0_\crr(k,-)$ is a functor from $\Red_k$ to the category of groups,
and $H^1_\crr(k,-)$ is a functor from $\Red_k$ to the category of pointed sets.
The morphism of crossed modules of reductive $k$-groups
\[(1\to G)\to (G^\ssc\to G)\]
induces the {\em crossing homomorphism}
\[\crr^0\colon H^0(k,G)=H^0(k, 1\to G)\to H^0(k,G^\ssc\to G)\eqqcolon H^0_\crr(k,G)\]
and the {\em crossing map}
\begin{equation}\label{e:cr1}
\crr^1\colon H^1(k,G)=H^1(k, 1\to G)\to H^1(k,G^\ssc\to G)\eqqcolon H^1_\crr(k,G).
\end{equation}

For a  homomorphism of reductive $k$-groups (not necessarily connected)
\[\vk\colon G_1\to G_2\]
we have a commutative diagram of crossed modules of $k$-groups
\[
\xymatrix{
(1\to G_1)\ar[r]\ar[d] &(1\to G_2)\ar[d]\\
\CrM(G_1)\ar[r] &\CrM(G_2),
}
\]
which induces a commutative diagram of pointed sets
\begin{equation}\label{e:commut-cr}
\begin{aligned}
\xymatrix@C=17mm{
H^1(k, G_1)\ar[r]^-{\vk_*}\ar[d] &H^1(k, G_2)\ar[d]\\
H^1_\crr(k,G_1)\ar[r]^-{\vk_\crr} &H^1_\crr(k,G_2).
}
\end{aligned}
\end{equation}

The short exact sequence of complexes of $k$-groups
\[ 1\to (1\to G)\labelt{i} (G^\ssc\to G)\to (G^\ssc\to 1)\to 1\]
(in which the arrow $i$ is a morphism of crossed modules)
gives rise to a hyper-cohomology exact sequence containing the maps $\crr^0$ and $\crr^1$\,:
\begin{multline}\label{e:exact-crossed}
H^0(k,G^\ssc)\labelto{\rho_*} H^0(k,G)\labelto{\crr^0} H^0(k,G^\ssc\to G) \\
    \labelto{\delta} H^1(k,G^\ssc)\labelto{\rho_*} H^1(k,G)\labelto{\crr^1} H^1(k, G^\ssc\to G)
    \labelto\Delta H^2(k, G^\ssc\textrm{\ rel\ }G);
\end{multline}
see  Proposition \ref{c:ext-to-H2} and \cite[Corollary 3.4.3]{Borovoi-Memoir}.

\begin{theorem}
\label{t:cr-surjective}
Let $k$ be a field of Galois--Douai type, for example, a local or global field, and let $G$ be a reductive $k$-group, not necessarily connected or quasi-connected.
Assume that the finite commutative  $k$-group $Z^\sscp\coloneqq Z(G^\ssc)$ is \'etale.
Then the crossing map
\[\crr^1\colon H^1(k,G)\to H^1(k,G^\ssc\to G)\]
is surjective.
\end{theorem}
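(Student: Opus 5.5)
By Proposition \ref{c:ext-to-H2}(1), applied to the crossed module $(A\labelt\rho G)=(G^\ssc\labelt\rho G,\theta)$ of smooth $k$-groups, surjectivity of $\crr^1$ reduces to a statement about second cohomology. A class $[u,\psi]\in H^1(k,G^\ssc\to G)$ lies in the image of $\crr^1=i_*$ if and only if $\Delta(u,\psi)=[u,f]\in H^2(k,\ol{G^\ssc},\beta)$ is neutral. Here $\beta$ is the $k$-band attached to $(u,\psi)$ by \eqref{l:k-band}, and $f_\sigma=\theta_{\psi_\sigma}\circ s_\sigma$. So it suffices to show that for every hyper-cocycle $(u,\psi)$, every element of $H^2(k,\ol{G^\ssc},\beta)$ is neutral.

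This is exactly what Theorem \ref{t:Douai} provides. Its hypotheses are met: $\ol{G^\ssc}=G^\ssc\times_k k_s$ is a simply connected semisimple $k_s$-group; its center $Z(\ol{G^\ssc})=Z^\sscp\times_k k_s$ is \'etale by assumption; $\beta$ is a genuine $k$-band, since we checked in Section \ref{s:Galois} that $f$ is a continuous section and that $\beta$ is a homomorphism; and $k$ is of Galois--Douai type. Theorem \ref{t:Douai} therefore gives that all of $H^2(k,\ol{G^\ssc},\beta)$ is neutral, in particular $\Delta(u,\psi)$. Hence $[u,\psi]\in\im\,\crr^1$, and since $[u,\psi]$ was arbitrary, $\crr^1$ is surjective. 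For the example, local and global fields are of Galois--Douai type by Examples (i)--(ii) following Definition \ref{d:Douai}.

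The argument is short because the real work sits in Proposition \ref{c:ext-to-H2} and Theorem \ref{t:Douai}. The one point that needs care is compatibility: the band $\beta$ produced from $(u,\psi)$ must be a $k$-band on the $k_s$-group $\ol{G^\ssc}$ in the sense of \cite{FSS}, because Theorem \ref{t:Douai} is stated for arbitrary $k$-bands on a simply connected $k_s$-group. The paper constructs $f_\sigma$ as a $\sigma$-semilinear automorphism of $\ol{G^\ssc}$ and proves its continuity, so this holds. Note also that neutrality of $\Delta(u,\psi)$ in the particular set $H^2(k,\ol{G^\ssc},\beta)$ is all that Proposition \ref{c:kang} requires; we never need the passage to $G(k_s)$-orbits in $H^2(k,G^\ssc\ \mathrm{rel}\ G)$.
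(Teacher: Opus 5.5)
Your proof is correct and is essentially the paper's argument. The paper applies Theorem~\ref{t:Douai} to every $k$-band on $\ol G^\ssc$, deduces that all of $H^2(k,G^\ssc\ \mathrm{rel}\ G)$ is neutral, and concludes from exactness of \eqref{e:exact-crossed} at $H^1(k,G^\ssc\to G)$ (Proposition~\ref{c:ext-to-H2}); you use the per-band criterion of Proposition~\ref{c:kang} directly, which is the statement you actually rely on even though you first cite Proposition~\ref{c:ext-to-H2}(1), and this difference is only cosmetic.
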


This was earlier proved by Labesse \cite[Proposition 1.6.7]{Labesse}
in the case of a quasi-connected reductive group over a  local or global field of characteristic 0.

\begin{proof}
For all bands $\beta$ for  $\ol G^\ssc$, it follows from Theorem \ref{t:Douai}
that all elements of  $H^2(k,\ol G^\ssc,\beta)$ are neutral.
It follows that all elements of $H^2(k, G^\ssc\textrm{\ rel\ }G)$ are neutral.
Now the theorem follows from the exactness of the sequence \eqref{e:exact-crossed}
at $H^1(k, G^\ssc\to G)$.
\end{proof}

\begin{theorem}
\label{t:cr-bijective}
In Theorem \ref{t:cr-surjective}, if moreover,
$k$ is a {\em non-archimedean} local field
or a global field {\em without real places}
(that is, a totally imaginary number field or a global function field),
then in \eqref{e:exact-crossed} the crossing homomorphism $\crr^0$ is surjective
and the crossing map  $\crr^1$ is bijective.
\end{theorem}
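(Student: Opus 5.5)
We read the claim off the exact sequence \eqref{e:exact-crossed}, using the vanishing of $H^1(k,G^\ssc)$ for these fields. The key input is Kneser's theorem for non-archimedean local fields, Harder's theorem for global function fields and Kneser--Harder--Chernousov (Hasse principle) for number fields: for a simply connected semisimple group $G^\ssc$ over a non-archimedean local field, $H^1(k,G^\ssc)=1$. For a global field without real places the Hasse principle gives $H^1(k,G^\ssc)\hookrightarrow\prod_v H^1(k_v,G^\ssc)$, which is trivial, since all places are non-archimedean or complex. In positive characteristic, Bruhat--Tits/Harder give this for any simply connected semisimple group, including when $Z(G^\ssc)$ is not smooth; this is not needed here anyway. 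Moreover the vanishing holds for every twisted form ${}_c G^\ssc$ of $G^\ssc$, since these are again simply connected semisimple $k$-groups.

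Surjectivity of $\crr^0$ follows directly. The sequence gives $H^0(k,G)\labelto{\crr^0}H^0(k,G^\ssc\to G)\labelto{\delta}H^1(k,G^\ssc)=1$, and exactness at $H^0(k,G^\ssc\to G)$ means that the image of $\crr^0$ is $\delta^{-1}(1)$, which is everything. Surjectivity of $\crr^1$ is Theorem \ref{t:cr-surjective}, which applies because these fields are of Galois--Douai type and $Z^\sscp$ is étale.

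The real work is injectivity of $\crr^1$. Exactness only describes the fibre over the neutral element, so we use a twisting argument. Take $[c],[c']\in H^1(k,G)$ with $\crr^1[c]=\crr^1[c']$, and twist by $c$: replace $G$ by the inner form ${}_cG$, which is again reductive. Its crossed module is ${}_c\CrM(G)=\CrM({}_cG)$, the twist of $G^\ssc$ via $\theta$ being $({}_cG)^\ssc$ by functoriality (Proposition \ref{p:GA}). We have the standard twisting bijections $H^1(k,G)\to H^1(k,{}_cG)$ and $H^1(k,G^\ssc\to G)\to H^1(k,{}_c G^\ssc\to{}_cG)$ of \cite{Borovoi-Memoir}. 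These send $[c]$ to the neutral class and are compatible with $\crr^1$, since twisting commutes with the morphism $(1\to G)\to(G^\ssc\to G)$. This reduces injectivity to showing that the fibre of $\crr^1_{{}_cG}$ over the neutral class is trivial for every twist. By exactness that fibre is the image of $\rho_*\colon H^1(k,({}_cG)^\ssc)\to H^1(k,{}_cG)$, which is trivial because $H^1(k,({}_cG)^\ssc)=1$ by the vanishing result above.

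The main obstacle is making the twisting step rigorous at the hypercohomology level. One must check, with the explicit cocycle formulas \eqref{eq:def1_1-cocycle}--\eqref{eq:action_C0-2} for 1-cocycles of the form $(1,c)$, that the map $(u,\psi)\mapsto(u,\psi\cdot c^{-1})$, suitably adjusted, is a well-defined bijection of orbit sets compatible with $i_*$. I would verify this directly, or cite the twisting lemma for crossed-module hypercohomology in \cite[Section 3]{Borovoi-Memoir}.
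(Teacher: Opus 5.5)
Your proposal is correct and follows the paper's proof. The vanishing $H^1(k,G^\ssc)=1$, which also holds for all twisted forms, gives surjectivity of $\crr^0$ and triviality of the fibre of $\crr^1$ over the neutral class from \eqref{e:exact-crossed}; twisting upgrades this to injectivity of $\crr^1$, and Theorem \ref{t:cr-surjective} supplies surjectivity. The twisting step, which the paper only asserts, works as you indicate, and in fact $(u,\psi)\mapsto(u,\psi c^{-1})$ needs no adjustment: it is compatible with the $C^0$-action and with $i_*$.
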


\begin{proof}
When  $k$ is a non-archimedean local field
or a global field  without real places,
we have $H^1(k,G^\ssc)=1$ (Kneser, Bruhat and Tits, Harder, Chernousov),
and we see from \eqref{e:exact-crossed} that $\crr^0$ is surjective
and that the kernel of $\crr^1$ is trivial.
Using twisting, we obtain the injectivity of $\crr^1$,
which together with Theorem \ref{t:cr-surjective}
gives the bijectivity of $\crr^1$.
\end{proof}

\begin{construction}
Let $G$ be a reductive group, not necessarily connected, over a {\em number field} $k$.
Let $\V_\infty$ denote the set of infinite (that is, archimedean) places of $k$,
and for $v\in \V_\infty(k)$, let $k_v$ denote the completion of $k$ at $v$.
Consider the commutative diagram
\[
\xymatrix{
H^1(k,G) \ar[r]^-{\crr^1}\ar[d]_-{\loc_\infty}  & H^1_\crr(k,G)\ar[d]^-{\loc_\infty} \\
H^1(k_\infty, G)\ar[r]^-{\crr^1}               & H^1_\crr(k_\infty,G)
}
\]
where we write
\[H^1(k_\infty, G)=\prod_{v\in\V_\infty(k)} H^1(k_v,G)\quad\ \text{and}
     \quad\ H^1_\crr(k_\infty, G)=\prod_{v\in\V_\infty(k)} H^1_\crr(k_v,G).\]
This diagram defines a map
\begin{equation}\label{e:fiber-product}
H^1(k,G)\,\lra\, H^1_\crr(k,G)\underset{H^1_\crr(k_\infty,G)}{\boldsymbol{\times}} H^1(k_\infty, G).
\end{equation}
\end{construction}

\begin{theorem}\label{t:Labesse}
Let $G$ be a reductive group, not necessarily connected, over a {\em number field} $k$. Then:
\begin{enumerate}
\item[\rm (i)] The map \eqref{e:fiber-product} is surjective.
\item[\rm (ii)] When $G$ is connected, the map \eqref{e:fiber-product} is bijective.
\item[\rm (iii)] In general, when $G$ is not connected, the map \eqref{e:fiber-product} may not be injective.
\end{enumerate}
\end{theorem}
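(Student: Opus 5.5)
Let me write $c=\crr^1$ and $\loc_\infty$ for the horizontal and vertical maps in the diagram preceding \eqref{e:fiber-product}. I would prove the three parts separately, in the order (i), (ii), (iii).

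\textbf{Part (i): a Hasse-principle twisting argument.} Take a pair $(\xi,\eta_\infty)$ with $\xi\in H^1_\crr(k,G)$ and $\eta_\infty\in H^1(k_\infty,G)$ such that $\loc_\infty(\xi)=c(\eta_\infty)$.

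First, Theorem~\ref{t:cr-surjective} applies, because over a number field $Z(G^\ssc)$ is \'etale. So there is $\eta\in H^1(k,G)$ with $c(\eta)=\xi$.

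Next, I compare $\loc_\infty(\eta)$ with $\eta_\infty$ place by place. At each real place $v$ both have the same image in $H^1_\crr(k_v,G)$. By exactness of \eqref{e:exact-crossed} over $k_v$, after twisting by a cocycle representing $\eta$, they differ by an element of $H^1(k_v,{}_\eta G^\ssc)$. The precise statement is that the fiber of $c$ through $\loc_v\eta$ is the image of $H^1(k_v,{}_\eta G^\ssc)\to H^1(k_v,{}_\eta G)$. Here the twist of the crossed module by $\eta$ is again a crossed module of the same type, namely $\CrM({}_\eta G)$.

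Then I invoke the real approximation theorem for the simply connected group ${}_\eta G^\ssc$: the map
\[ H^1(k,{}_\eta G^\ssc)\to\prod_{v\in\V_\infty}H^1(k_v,{}_\eta G^\ssc) \]
is surjective (Kneser, Harder, Chernousov). So I lift the family of local classes to a global $\alpha\in H^1(k,{}_\eta G^\ssc)$.

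Finally, I push $\alpha$ into $H^1(k,{}_\eta G)\cong H^1(k,G)$ by the twisting bijection, obtaining some $\eta'$. The class $\eta'$ still maps to $\xi$ under $c$, since it lies in the fiber over $c(\eta)$, and it localizes to $\eta_\infty$. This gives surjectivity. The main technical point is checking that twisting is compatible with the crossed-module hypercohomology sequence: the fibers of $c$ must be exactly the images of twisted $H^1$ of $G^\ssc$. This is standard, following \cite{Borovoi-Memoir}, but it should be stated carefully for the action $\theta$.

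\textbf{Part (ii): injectivity when $G$ is connected.} I follow the same pattern. Suppose $\eta,\eta'\in H^1(k,G)$ have the same image in the fiber product. After twisting by $\eta$, I may assume $\eta=1$, replacing $G$ by an inner-type twist that is still connected. Then $c(\eta')=1$, so $\eta'=\rho_*(\alpha)$ for some $\alpha\in H^1(k,G^\ssc)$.

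Since $\loc_\infty\eta'=1$, each $\loc_v\alpha$ lies in the kernel of $H^1(k_v,G^\ssc)\to H^1(k_v,G)$. By the exact sequence \eqref{e:exact-crossed} over $k_v$, this kernel is the image of $\delta\colon H^0_\crr(k_v,G)\to H^1(k_v,G^\ssc)$.

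For connected $G$, I use two inputs: the group $H^0_\crr$ is abelian, and the map $H^0_\crr(k,G)\to\prod_v H^0_\crr(k_v,G)$ has dense image. This density is the key input, available from \cite{Borovoi-Memoir}. It comes from real approximation for the torus part together with the fact that $\delta$ is locally constant. Using it, I modify $\alpha$ by a global element of $H^0_\crr(k,G)$ so that $\loc_\infty\alpha=1$. The Hasse principle for $G^\ssc$ then gives $\alpha=1$, hence $\eta'=1$. I expect this density step to be the main obstacle in (ii).

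\textbf{Part (iii): a counterexample for non-connected $G$.} I would take a finite group, for instance a constant or twisted finite abelian group $G=B$ over $\Q$. In that case $G^\ssc=1$, so $H^1_\crr(k,B)=H^1(k,B)$ and the map \eqref{e:fiber-product} becomes $H^1(k,B)\to H^1(k,B)$, which is trivially injective. So finite groups do not work; the example needs a genuinely non-connected group with nontrivial $G^\ssc$.

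A natural candidate is $G=\OO_{n,F}$, or a non-connected group $G$ with $G^0=\SO_3$ twisted by a component group. In such an example $H^0_\crr(k,G)$ fails to have dense image in $\prod_v H^0_\crr(k_v,G)$. For example, the component group cuts out a global character that is trivial at all real places but nontrivial locally at real places, which lets a nontrivial class of $H^1(k,G^\ssc)$ survive. I would look for explicit such a $G$ by computing with Galois cohomology of $\mu_2$-extensions.
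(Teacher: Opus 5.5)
\textbf{Parts (i) and (ii).} Your part (i) is the paper's argument: lift $\xi_\crr$ by Theorem~\ref{t:cr-surjective}, twist by a cocycle representing the lift, observe that the archimedean class then comes from $H^1(k_\infty,{}_cG^\ssc)$, lift it globally using Harder's surjectivity of $H^1(k,{}_cG^\ssc)\to H^1(k_\infty,{}_cG^\ssc)$, push forward and untwist. For (ii) the paper only cites Borovoi's Memoir, Theorem 5.11(i). Your twisting-plus-approximation sketch is an outline of that argument and is fine as such.

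\textbf{Part (iii) is not proved.} It is an existence statement. It needs a specific $G$ and a specific pair of classes with the same image in the fiber product, together with a verification; your proposal stops at ``I would look for explicit such a $G$''.

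Moreover, the candidate you name cannot work.
\begin{itemize}
\item For odd $n$ we have $\OO_{n,F}=\mu_2\times\SO_{n,F}$. Its crossed module is the product of $(1\to\mu_2)$ and the crossed module of $\SO_{n,F}$. Hence $H^1$, $H^1_\crr$ and the fiber product all split as products, and the map \eqref{e:fiber-product} is bijective by (ii) and the trivial case of $\mu_2$.
\item There is a general obstruction, which your own argument for (ii) already shows. Suppose two classes have the same image in the fiber product. After twisting one of them becomes trivial. The other then comes from $H^1(k,{}_cG^\ssc)$, so it is trivial at all finite places (since $H^1(k_v,{}_cG^\ssc)=1$ there), and it is trivial at the archimedean places by hypothesis.
\item So a counterexample is precisely a nontrivial element of $\ker\bigl[H^1(k,{}_cG)\to\prod_v H^1(k_v,{}_cG)\bigr]$ lying in the image of $H^1(k,{}_cG^\ssc)$, for some twist ${}_cG$. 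For orthogonal groups of any $n$ no such element exists, by Hasse--Minkowski.
\end{itemize}
A counterexample therefore needs a failure of the Hasse principle for a non-connected group with nontrivial $G^\ssc$.

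\textbf{How the paper builds the example.} Your heuristic that $H^0_\crr(k,G)$ fails to approximate $H^0_\crr(k_\infty,G)$ is the right mechanism, but as written it is self-contradictory (``trivial at all real places but nontrivial locally at real places''), and it has to be realized concretely. The paper does this in Construction~\ref{con:not-injective}.
\begin{itemize}
\item Take $k=\Q$ and $L=L_{\rm im}L_{\rm re}$ with $\Gal(L/\Q)=\{1,\sigma,\tau,\sigma\tau\}$, where $\sigma$ is complex conjugation.
\item Take Holt's module $B=\Z/8$ with $\sigma b=5b$ and $\tau b=-b$. Identify $A=4B\cong\Z/2$ with $Z(G_0)$ for $G_0={\rm SU}(2,L_{\rm im}/\Q)$, and set $G=(G_0\times B)/A$.
\item Then $G^\ssc=G_0$ and $H^1_\crr(k,G)=H^1(k,C)$ with $C=B/A\cong\Z/4$.
\item Let $\xi_0=[\sigma\mapsto-1]\in H^1(k,G_0)$ and $\xi=\iota_*\xi_0$. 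Then $\crr^1(\xi)=1$.
\item $\loc_\infty\xi=1$: $\sigma$ acts trivially on $C$, so $[x]\in C^{\langle\sigma\rangle}$. Its coboundary $\delta_\infty[x]$ is the class of $\sigma\mapsto 4x$, i.e.\ of $\sigma\mapsto-1$, which equals $\loc_\infty\xi_0$.
\item $\xi\ne1$: we have $C^\Gamma=\{0,[2x]\}$. $\delta[2x]$ is trivial at infinity, whereas $\loc_\infty\xi_0\neq1$, so $\xi_0\notin\delta(C^\Gamma)$.
\end{itemize}
The essential feature is a component group on which complex conjugation acts trivially but the global Galois group does not. The extra archimedean element of $H^0_\crr$ then kills $\loc_\infty\xi_0$ without any global counterpart. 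A product-type group such as $\OO_{2m+1,F}$ can never produce this.
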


\begin{proof}
Assertion (ii) was proved in \cite[Theorem 5.11(i)]{Borovoi-Memoir}.
For assertion (iii), see the proof after Construction \ref{con:not-injective} below.

We prove assertion (i), which  is a generalization of \cite[Theorem 1.6.10]{Labesse}, where {\em quasi-connected}
reductive groups were considered.

From the exact sequence \eqref{e:exact-crossed}
we obtain a commutative diagram with exact rows
\begin{equation}\label{e:diag-Gsc-G-cr}
\begin{aligned}
\xymatrix{
H^1(k,G^\ssc)\ar[r]\ar[d] &H^1(k,G)\ar[r]\ar[d] &H^1_\crr(k,G)\ar[d] \\
H^1(k_\infty,G^\ssc)\ar[r] &H^1(k_\infty,G)\ar[r] &H^1_\crr(k_\infty,G)
}
\end{aligned}
\end{equation}
Let $(\xi_\crr,\xi_\infty)\in H^1_\crr(k,G)\times H^1(k_\infty,G)$
be an element of the fiber product in \eqref{e:fiber-product}.
By Theorem \ref{t:cr-surjective}, $\xi_\crr$ is the image of some element $\xi\in H^1(k,G)$.
We choose a 1-cocycle $c\in Z^1(k,G)$ representing $\xi$,
and twist the diagram \eqref{e:diag-Gsc-G-cr} by $c$;
see Serre \cite[Section I.5.3]{Serre} for twisting.
We obtain the following diagram:
\begin{equation}\label{e:c-diag-Gsc-G-cr}
\begin{aligned}
\xymatrix{
H^1(k,\hs{}_cG^\ssc)\ar[r]\ar[d] &H^1(k,\hs{}_cG)\ar[r]\ar[d] &H^1_\crr(k,\hs{}_cG)\ar[d] \\
H^1(k_\infty,\hs{}_cG^\ssc)\ar[r] &H^1(k_\infty,\hs{}_cG)\ar[r] &H^1_\crr(k_\infty,\hs{}_cG)
}
\end{aligned}
\end{equation}

Under the twisting bijection
\[\tau_c\colon H^1(k,\hs_c G)\to H^1(k,G),\quad\ [c']\mapsto [c'c] \]
of \cite[I.5.3, Proposition 35 bis]{Serre},
the cohomology class $\xi=[c]\in H^1(k,G)$ corresponds to $1\in H^1(k,\hs_c G)$.
Moreover, the class $\xi_\infty\in H^1(k_\infty, G)$ corresponds to
$\xi'_\infty =\tau_{\infty,c}^{-1}(\xi_\infty)\in H^1(k_\infty, \hs_c G)$.
Since $\xi'$ corresponds to $1\in H^1(k,\hs_c G)$,
we see from the diagram  \eqref{e:c-diag-Gsc-G-cr}
that the image of $\xi'_\infty$ in $H^1_\crr(k_\infty,\hs_c G)$ is 1.
Thus $\xi'_\infty$ comes from some $\xi_\infty^{\ssc\,\prime}\in H^1(k_\infty,\hs_c G^\ssc)$.

Since $_c G^\ssc$ is a connected $k$-group, by \cite[Satz 5.5.1]{Harder}
the localization map
\[\loc_\infty\colon  H^1(k,\hs_c G^\ssc)\to H^1(k_\infty, \hs_c G^\ssc)\]
is surjective (it is even bijective, but we will not use this).
Let $\xi^{\ssc\,\prime}\in H^1(k,\hs_c G^\ssc)$ be a preimage of $\xi_\infty^{\ssc\,\prime}$.
Let $\xi'\in H^1(k,\hs_c G)$ denote the image of $\xi^{\ssc\,\prime}$.
Then $\crr^1(\xi')=1$
and $\loc_\infty(\xi')=\xi'_\infty$.
Returning to $G$ from $_c G$, we set $\xi''=\tau_c(\xi')\in H^1(k,G)$.
Then $\crr^1(\xi'')=\xi_\crr$ and $\loc_\infty(\xi'')=\xi_\infty$,
which completes the proof of (i).
\end{proof}

\begin{construction}\label{con:not-injective}
We construct a non-connected reductive group $G$ over a number field $k$
for which the map \eqref{e:fiber-product} is not injective.

Let $k=\Q$, let $L_\im/k$ be an imaginary quadratic extension with Galois group $\Gamma_\im\coloneqq \Gal(L_\im/k)=\{1,\sigma\}$,
and let $L_\re/k$ be a real quadratic extension with Galois group $\Gamma_\re\coloneqq \Gal(L_\re/k)=\{1,\tau\}$.
Write $L=L_\im\cdot L_\re\subset \kbar\subset\C$, and write $\Gamma=\Gal(L/k)=\Gamma_\im\times\Gamma_\re=\{1,\sigma,\tau,\sigma\tau\}$.
Consider the simply connected $k$-group $G_0={\rm SU}(2,L_\im/k)$.

Following Holt \cite{DerekHolt}, we consider a short exact sequence of finite  $\Gamma$-modules
\begin{equation}\label{e:ABC}
0\to A\labelt{i} B\labelt{j} C\to 0,
\end{equation}
where $B=\langle x\rangle \cong\Z/8$, $A=\langle4x\rangle\cong \Z/2$, and $C=B/A\cong\Z/4$.
The group $\Gamma=\langle\sigma,\tau\rangle$ acts on $B$ as follows:
\[\sigma(b)=5b,\quad\ \tau(b)=-b\quad\ \text{for}\ \,b\in B.\]
Then $\Gamma$ acts trivially on $A$, and so it naturally acts on $C$.
Note that $\sigma$ acts on $C$ trivially.

We regard \eqref{e:ABC} as a short exact sequence of finite algebraic $k$-groups.
We identify $A$ with the center $\{1,-1\}=Z(G_0)(k)$, and we set $G=(G_0\times_k B)/A$
with $A$ embedded diagonally. We have a short exact sequence
\begin{equation}\label{e:G0GC}
1\to G_0\labelto\iota G\labelto\lambda C\to 1.
\end{equation}
Since $G_0$ is simply connected, we have
\[ H^i_\crr(k,G)\coloneqq H^i(k, G_0\to G)=H^i(k,G/G_0)=H^i(k,C)\quad\ \text{for}\ \, i=0,1.\]

Consider the homomorphism
$z_\im\colon \Gamma_\im=\{1,\sigma\}\to Z(G_0)(k)=\{\pm1\}$ sending $\sigma$ to $-1$, and let
\[z_0\colon \Gal(\kbar/k)\onto \Gal(L_\im/k)\labelto{z_\im} G_0(k)\]
be the composite homomorphism. Then $z_0$ is a 1-cocycle,
$z_0\in Z^1(k, G_0)$. Let $z=\iota\circ z_0\in Z^1(k,G)$.
Set $\xi_0=[z_0]\in H^1(k,G_0)$, $\xi=\iota_*(\xi_0)=[z]\in H^1(k,G)$.
We write $\loc$ for the localization map $\loc_\infty\colon H^1(k,G_0)\to H^1(\R,G_0)$.
Then $\loc(\xi_0)=[\sigma\mapsto -1]\in H^1(\R,G_0)$.
It follows that $\loc(\xi_0)\neq 1$, whence $\xi_0\neq 1$.
\end{construction}

\begin{proof}[Proof of Theorem \ref{t:Labesse}(iii)]
\label{ex:counter-ex-Labesse}
We show that for  $k$, $G$, and $\xi\in H^1(k,G)$ as in Construction \ref{con:not-injective},
the map \eqref{e:fiber-product} is not injective.
Namely, we have
$\crr^1(\xi)=1\in H^1_\crr(k,G)=H^1(k,C)$, $\loc(\xi)=1\in H^1(\R,G)$, but $\xi\neq 1\in H^1(k,G)$.

Indeed, the short exact sequence \eqref{e:G0GC} gives rise to a commutative diagram with exact rows
\[
\xymatrix@C=13mm{
C^\Gamma\ar[r]^-\delta\ar[d]         & H^1(k,G_0)\ar[r]^-{\iota_*}\ar[d] & H^1(k,G)\ar[r]^-{\lambda_*}\ar[d] & H^1(k,C)\ar[d] \\
C^{\Gamma_\im}\ar[r]^-{\delta_\infty} & H^1(\R,G_0)\ar[r]^-{\iota_\infty} & H^1(\R,G)\ar[r]^-{\lambda_\infty} & H^1(\R,C)
}
\]
We have $\crr^1(\xi)=\lambda_*(\xi)=1\in H^1(k,C)$ because $\xi=\iota_*(\xi_0)\in\im\,\iota_*$.

We show that $\loc(\xi)=1$.
Consider $[x]\in C=C^{\Gamma_\im}$. We have $\sigma(x)-x=5x-x=4x\neq 0\in A$.
It follows that $\delta_\infty[x]=[\sigma\mapsto -1]=\loc(\xi_0)\in H^1(\R,G_0)$.
Thus $\loc(\xi_0)\in\im\, \delta_\infty$, whence $\loc(\xi)\in \im(\iota_\infty\circ\delta_\infty)=\{1\}$.

We show that $\xi\neq 1$. It suffices to show that $\xi_0\notin \im\,  \delta$.
We have $C^\Gamma=\{0, [2x]\}$. Clearly, $\delta(0)=1\neq \xi_0$.
We have $\loc[2x]=[2x]\in C=C^{\Gamma_\im}$, and $\delta_\infty[2x]=[\sigma(2x)-2x]=[10x-2x]=[8x]=0$.
Since $\loc(\xi_0)\neq 1$, we see that $\delta_\infty(\loc[2x])=1\neq \loc(\xi_0)$,
whence $\delta[2x]\neq \xi_0$. We see that $\xi_0\notin \im\,\delta$; hence, indeed $\xi\neq 1$.
This completes the proof of Theorem \ref{t:Labesse}(iii).
\end{proof}

\section{Quasi-connected reductive groups and principal homomorphisms}
\label{s:quasi-connected}

\begin{definition}\label{d:q-c-r}
A linear algebraic group $G$ over a field $k$
is called {\em quasi-connected reductive} if
\begin{enumerate}
\item $G$ is smooth and its identity component $G^0$ is reductive;
\item the center $Z(G)$ of $G$ is a $k$-group of multiplicative type (not necessarily smooth);
\item $G=Z(G)\cdot G^\sss$.
\end{enumerate}
\end{definition}
Here, for two $k$-subschemes $X,Y\subseteq G$, we write $G=X\cdot Y$
if for every $k$-algebra $R$ and every $g\in G(R)$, there exist a faithfully flat homomorphism $R\into R'$
and elements $x\in X(R')$, $y\in Y(R')$ such that $g=x y$,
where by abuse of notation we also write $g$ for the image of $g$ in $G(R')$.

\begin{example}\label{ex:OOnF}
Assume that ${\rm char}(k)\neq 2$.
Let  $n\ge 2$, and let $F\in M_{n\times n}(k)$ be a non-degenerate symmetric square matrix.
Consider $G=\OO_{n,F}\subset\GL_{n,k}$,
the orthogonal group of $F$ given by the equation $gFg^t=F$ for $g\in\GL(n,R)$,
$R$ being a $k$-algebra.
Then $G^0=\SO_{n,F}$, whence $G^\ssc=G^\sss=\SO_{n,F}$\hs.
Moreover, $Z(G)=\mu_2=\{\pm 1\}$.
If $n$ is odd, then
\[Z(G)\cdot G^\sss=\mu_2\cdot\SO_{n,F}=\OO_{n,F}=G,\]
and so $\OO_{n,F}$ is quasi-connected reductive.
However, if $n$ is even, then
\[Z(G)\cdot G^\sss=\mu_2\cdot\SO_{n,F}=\SO_{n,F}=G^0\neq G,\]
and so $\OO_{n,F}$ is not quasi-connected reductive.
\end{example}

We compare Definition \ref{d:q-c-r} with the following definition from \cite{BGR}:

\begin{definition}[{\cite[Definition 2.3]{BGR}}]
\label{d:q-c-r-bis}
A linear algebraic group $G$ over a field $k$
is called {\em quasi-connected reductive} if
\begin{enumerate}
\item $G$ is smooth and its identity component $G^0$ is reductive;
\item[$(2')$] $Z(G)(\kbar)$ consists of semisimple elements, where $\kbar$ is an algebraic closure of $k$;
\item[$(3')$] $G(\kbar)=Z(G)(\kbar)\cdot G^\sss(\kbar)$.
\end{enumerate}
\end{definition}

\begin{lemma}\label{l:equivalence}
Definitions \ref{d:q-c-r} and \ref{d:q-c-r-bis} are equivalent.
\end{lemma}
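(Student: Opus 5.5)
We compare condition by condition; condition (1) is common to both definitions.

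For (2) $\Leftrightarrow$ (2$'$), we use that a $k$-group scheme $M$ is of multiplicative type if and only if $M_{\kbar}$ is diagonalizable. First assume (2). Then $Z(G)_{\kbar}$ is diagonalizable, so it embeds in a torus $\mathbb{G}_m^N$. Hence every $\kbar$-point is semisimple, which gives (2$'$). The converse needs more care, since we must pass from points back to the possibly non-reduced scheme $Z(G)$. Assuming (1) and (2$'$), we first show that $Z(G)^0_{\rm red}$ is a torus. Indeed, $Z(G)(\kbar)\cap G^0(\kbar)$ is central in $G^0$, so it lies in $Z(G^0)$, which is of multiplicative type. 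Moreover $Z(G)^0_{\kbar}\subseteq Z(G^0)_{\kbar}$ scheme-theoretically, because $Z(G)^0\subseteq G^0$ and $Z(G)$ centralizes $G^0$. Thus the identity component of $Z(G)$ is a subgroup scheme of a group of multiplicative type, hence is itself of multiplicative type. It remains to treat the component group $\pi_0(Z(G))$, which is finite. Our plan is to show that $Z(G)_{\kbar}$ is commutative and that its reduced subgroup consists of semisimple elements. Then $Z(G)_{\kbar,\rm red}$ is a commutative smooth group whose points are all semisimple, hence diagonalizable, by \cite[Theorem 12.12 / Corollary]{Milne-AG}-type results on commutative groups: its unipotent part $Z_u$ is trivial. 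Finally, $Z(G)_{\kbar}$ is an extension of the finite étale group $\pi_0$ by the multiplicative-type group $Z(G)^0_{\kbar}$, and it is commutative, with its $\kbar$-points semisimple. A commutative extension of a diagonalizable group by a diagonalizable group need not be diagonalizable, but here the group $Z(G)_{\kbar}$ decomposes as $Z_s\times Z_u$, with $Z_u$ unipotent. Since $Z_u(\kbar)=1$, the unipotent part $Z_u$ is an infinitesimal unipotent group. To exclude this, note that $Z_u$ is a central unipotent subgroup, so it lies in $Z(G^0)$ (a normal connected subgroup lies in $G^0$), and $Z(G^0)$ has no nontrivial unipotent subgroups. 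Hence $Z(G)_{\kbar}=Z_s$ is diagonalizable. I expect this scheme-theoretic step, keeping track of non-reduced structure, to be the main obstacle.

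For (3) $\Leftrightarrow$ (3$'$): since $\kbar$ is algebraically closed, fppf-local surjectivity of the multiplication morphism $m\colon Z(G)\times G^\sss\to G$ is equivalent to the following. The morphism $m$ is a homomorphism, because $Z(G)$ is central, and its image is a closed subgroup; the morphism is fppf-surjective if and only if it is surjective on $\kbar$-points, because the quotient $G/\operatorname{im}(m)$ is trivial exactly when the image is all of $G$ as a scheme. That, in turn, holds exactly when the image contains all $\kbar$-points, since $G$ is smooth and hence reduced. Concretely, if (3$'$) holds, then the image subgroup scheme $H=\operatorname{im}(m)$ satisfies $H(\kbar)=G(\kbar)$, so $H\supseteq G_{\rm red}=G$. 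Then $m$ is faithfully flat onto $G$, and this gives (3) by taking $R'=R\otimes_{\mathcal O(G)}\mathcal O(Z(G)\times G^\sss)$. Conversely, (3) with $R=\kbar$ yields a decomposition over some $R'$; taking a $\kbar$-point of $R'$ (Nullstellensatz, after passing to a finitely generated subalgebra) gives (3$'$).
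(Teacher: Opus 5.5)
Your proof is correct. For (3)$\Leftrightarrow$(3$'$) it is essentially the paper's argument. Where the paper cites \cite{vDdB} and \cite[Proposition 1.70]{Milne-AG} to get from surjectivity on $\kbar$-points to faithful flatness of $Z(G)\times_k G^\sss\to G$, you argue directly: the scheme-theoretic image is a closed subgroup containing $G(\kbar)$, hence equal to the reduced group $G$, and a homomorphism is faithfully flat onto its image. Your Nullstellensatz argument for (3)$\Rightarrow$(3$'$) replaces the paper's appeal to Fact \ref{f:W-ff}.

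The genuine difference is in (2$'$)$\Rightarrow$(2). The paper deduces that $Z(G)$ is of multiplicative type from (1), (2$'$) \emph{and} (3$'$), citing \cite[Proposition 2.9]{BGR}. You instead give a self-contained argument that uses only (1) and (2$'$):
\begin{itemize}
\item decompose the commutative group $Z(G)_\kbar$ over the perfect field $\kbar$ as $Z_s\times Z_u$;
\item by (2$'$), $Z_u(\kbar)=1$, so $Z_u$ is infinitesimal;
\item hence $Z_u$ lies in $G^0_\kbar$, and being central it lies in $Z(G^0_\kbar)$, which is of multiplicative type;
\item therefore $Z_u=1$.
\end{itemize}
This buys a slightly stronger fact: (2$'$) upgrades to (2) for any smooth $G$ with reductive $G^0$, without (3$'$). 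The cost is invoking the product decomposition of commutative groups over perfect fields; the paper's version is shorter but outsources the work.

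Two remarks on the write-up. First, your preliminary steps are unnecessary: you need not show that $Z(G)^0$ is of multiplicative type, nor that $Z(G)_{\kbar,{\rm red}}$ is diagonalizable, because the $Z_s\times Z_u$ argument alone finishes the proof. Second, your aside that a commutative extension of a diagonalizable group by a diagonalizable group need not be diagonalizable is false. See \cite[Corollary 12.22]{Milne-AG}, which the paper uses in exactly this form in Lemma \ref{l:t-product}(iv). The real obstruction is that the \'etale group $\pi_0(Z(G))$ need not be of multiplicative type in positive characteristic (think of a constant $\Z/p$), and this is what the semisimplicity hypothesis rules out. Since the aside is not used, it does no harm to the argument.
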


\begin{proof}
Let $G$ be as in Definition \ref{d:q-c-r}.
It follows from (2) that $Z(G)$ is isomorphic
to a $k$-subgroup of a $k$-torus, from which (2$'$) follows.
Moreover, the morphism of $k$-schemes
\begin{equation*}
\psi\colon Z(G)\times_k G^\sss\to G,\quad\ (z,g^\sss)\mapsto z\cdot g^\sss
\end{equation*}
is clearly a homomorphism, and by Fact \ref{f:W-ff}, it follows from (3)
that this homomorphism is faithfully flat, whence we obtain $(3')$.

Conversely, from (1), (2$'$), and $(3')$, it follows that $Z(G)$ is of multiplicative type;
see \cite[Proposition 2.9]{BGR}.
Since by $(3')$ the homomorphism $\psi$ is surjective on $\kbar$-points,
it is surjective by \cite{vDdB}; see also \cite[Tag 02KV]{stacks-project}.
Since, moreover, $G$ is smooth by (1), we conclude
from \cite[Proposition 1.70]{Milne-AG} that $\psi$ is faithfully flat,
whence by Fact \ref{f:W-ff} below we have the equality $G=Z(G)\cdot G^\sss$ of (3).
\end{proof}

Observe that the equivalent Definitions \ref{d:q-c-r} and \ref{d:q-c-r-bis}
are also equivalent to Definition \ref{d:Labesse} (due to Labesse);
see \cite[Theorems 2.14 and 2.18]{BGR}.

\begin{fact}
\label{f:W-ff}
Let $\varphi \colon H\to G$ be a homomorphism of affine $k$-groups.
Then $\varphi$ is faithfully flat if and only if for every $k$-algebra $R$ and element $g\in G(R)$ there exists
a faithfully flat homomorphism of $k$-algebras  $R\into R'$ and an element $h\in H(R')$
such that $g=\varphi(h)$.
\end{fact}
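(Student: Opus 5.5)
We prove both directions by unwinding the definition of faithful flatness for an affine group homomorphism, combined with the standard correspondence between fppf-surjectivity and flatness for group schemes.

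\emph{Forward direction.} Assume $\varphi$ is faithfully flat, and let $g\in G(R)$, viewed as a morphism $\operatorname{Spec}R\to G$. Form the fiber product $P\coloneqq \operatorname{Spec}R\times_G H$, which is an affine scheme $\operatorname{Spec}R'$ because $H$ and $G$ are affine. The projection $P\to\operatorname{Spec}R$ is a base change of $\varphi$, hence faithfully flat. Therefore $R\to R'$ is a faithfully flat homomorphism of $k$-algebras. The second projection $P\to H$ gives an element $h\in H(R')$ with $\varphi(h)=g_{R'}$, as required.

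\emph{Converse.} Assume the lifting property. Apply it to the universal point: take $R=\mathcal O(G)$ and $g=\mathrm{id}_G\in G(R)$. This yields a faithfully flat $\mathcal O(G)\to R'$ and $h\in H(R')$ with $\varphi\circ h$ equal to the structure map $\operatorname{Spec}R'\to G$. Consequently the fppf covering $\operatorname{Spec}R'\to G$ factors through $\varphi$, so $\varphi$ is an epimorphism of fppf sheaves; that is, $\varphi$ is fppf-surjective. It remains to deduce flatness. We use the known theorem that a homomorphism of affine group schemes of finite type over a field which is a quotient map of fppf sheaves is faithfully flat. This is \cite[Proposition 1.70]{Milne-AG} together with the surrounding discussion of quotient maps; see also \cite[Exp.\ VI$_{\rm B}$]{SGA3}.

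If one wants to avoid the sheaf-theoretic citation, argue directly as follows. Factor $\varphi$ as $H\to H/\ker\varphi\into G$. The first map is faithfully flat by the theory of quotients, and the second is a closed immersion. The lifting property shows that the closed immersion $\operatorname{im}\varphi\into G$ admits a section after a faithfully flat extension of $\mathcal O(G)$. By fpqc descent of closed subschemes, this forces $\operatorname{im}\varphi=G$, and then $\varphi$ is faithfully flat.

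\emph{Main obstacle.} The only nontrivial step is the converse, where fppf-surjectivity must be upgraded to flatness. The plan is to rely on the cited result about quotient maps of algebraic groups, or on the image/closed-immersion descent argument given above; everything else is a formal consequence of base change.
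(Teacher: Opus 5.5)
Your proof is correct, but it is organized differently from the paper's. The paper simply chains two citations through the algebraic criterion ``$\varphi^*\colon\mathcal O(G)\to\mathcal O(H)$ is injective'':
\begin{itemize}
\item \cite[Theorem 3.31]{Milne-AG}: for algebraic groups, $\varphi$ is faithfully flat $\Leftrightarrow$ $\varphi^*$ is injective;
\item \cite[Section 15.5, Theorem]{Waterhouse}: $\varphi^*$ is injective $\Leftrightarrow$ the lifting property holds.
\end{itemize}
Your forward direction replaces this by a direct base-change argument. That argument uses no group structure and holds for any faithfully flat morphism of affine schemes. Your converse, via the universal point $\mathrm{id}_G\in G(\mathcal O(G))$, is exactly the easy half of Waterhouse's theorem. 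You then use the group structure only once, to pass from surjectivity to flatness via the factorization $H\onto H/\ker\varphi\into G$. That is a heavier input than Milne's Theorem 3.31, but a standard one.

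Two small corrections.

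First, Milne's Proposition 1.70 is not the right reference. As the paper itself uses it, it requires the target to be smooth. Here $G$ may be non-reduced, and then surjectivity alone does not give flatness; for example, $1\to\mu_p$ in characteristic $p$ is surjective but not flat. Also, the cover $\operatorname{Spec}R'\to G$ you obtain is fpqc, not fppf, since $R'$ need not be finitely presented over $\mathcal O(G)$.

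Second, the cleanest finish is already inside your universal-point step. The composite $\mathcal O(G)\xrightarrow{\varphi^*}\mathcal O(H)\xrightarrow{h^*}R'$ is the faithfully flat, hence injective, structure map. So $\varphi^*$ is injective, and Milne's Theorem 3.31 concludes. The same remark shows that no descent is needed in your ``direct'' argument: the ideal of $\operatorname{im}\varphi$ maps to zero in $R'$, and $\mathcal O(G)\to R'$ is injective.
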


\begin{proof}
We denote the coordinate rings of $H$ and $G$ by $\mathcal{O}_H$ and $\mathcal{O}_G$, respectively.
By \cite[Theorem 3.31]{Milne-AG}, $\varphi$ is faithfully flat if and only if the corresponding ring homomorphism $\mathcal{O}_G \to \mathcal{O}_H$ is injective.
By \cite[Section 15.5, Theorem]{Waterhouse}, this injectivity is equivalent to the desired condition, which concludes the proof.
\end{proof}

\begin{fact}[{\cite[Section 15.6(b)]{Waterhouse}}]
\label{f:W-inj}
Let $G$ be a $k$-group and $R\into R'$ a faithfully flat homomorphism of $k$-algebras.
Then the natural homomorphism $G(R)\to G(R')$ is injective.
\end{fact}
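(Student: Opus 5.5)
The plan is to reduce the claim to the sheaf-theoretic meaning of faithful flatness. Represent $G(R')$ as $\Hom_{k}(\mathcal O_G,R')$, the $k$-algebra homomorphisms from the coordinate ring. The key input is a standard fact from descent theory: for a faithfully flat homomorphism $R\into R'$, the map $R\to R'$ is injective. Faithful flatness implies that $M\to M\otimes_R R'$ is injective for every $R$-module $M$, and we apply this with $M=R$.

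Given this injectivity, let $g_1,g_2\in G(R)$ correspond to $k$-algebra homomorphisms $g_1^*,g_2^*\colon \mathcal O_G\to R$. Their images in $G(R')$ are the composites with $\iota\colon R\into R'$. If $\iota\circ g_1^*=\iota\circ g_2^*$, then injectivity of $\iota$ gives $g_1^*=g_2^*$, so $g_1=g_2$. The group structure plays no role beyond this: the map $G(R)\to G(R')$ is injective as a map of sets, and it is a group homomorphism by functoriality.

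It remains to justify that $\iota$ is injective. For $r\in\ker\iota$, tensoring the inclusion $rR\into R$ with $R'$ gives $rR\otimes_R R'\into R'$ by flatness. The image of this map is $rR'=0$. Hence $rR\otimes_R R'=0$, and faithful flatness forces $rR=0$, i.e.\ $r=0$.

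There is no real obstacle here. The only point needing care is to use faithful flatness, not mere flatness, and the argument above does exactly that. Affineness of $G$ makes the representing object a ring, which is what allows the argument to work.
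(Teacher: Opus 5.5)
Your proof is correct. It is essentially the standard argument behind the paper's citation of Waterhouse; the paper gives no proof of its own. Faithful flatness forces $R\to R'$ to be injective, so composing $k$-algebra homomorphisms $\mathcal O_G\to R$ with it is injective. Note also that the paper's notation $R\into R'$ already builds injectivity into the hypothesis, so your third paragraph is only needed if one drops that convention.
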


\begin{lemma}\label{l:preimage-center}
Let $\pi\colon H\to G$ be a faithfully flat homomorphism of $k$-groups.
Then $\pi\big(Z(H)\big)\subseteq Z(G)$.
\end{lemma}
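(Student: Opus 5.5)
We need to show: for a faithfully flat homomorphism $\pi\colon H\to G$ of $k$-groups, the image $\pi(Z(H))$ lies in $Z(G)$. The plan is to work functorially with $R$-points and reduce to an equality of two morphisms $Z(H)\times_k G\to G$, which can be tested on points after a faithfully flat extension.

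Recall that $Z(H)$ is the subgroup functor whose $R$-points are those $z\in H(R)$ that commute with every element of $H(R')$ for every $R$-algebra $R'$. Likewise $Z(G)(R)$ consists of those $x\in G(R)$ that commute with $G(R')$ for all $R$-algebras $R'$. Saying that $\pi$ maps $Z(H)$ into $Z(G)$ means, by Yoneda, that for every $k$-algebra $R$ and every $z\in Z(H)(R)$ we have $\pi(z)\in Z(G)(R)$.

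So fix $R$ and $z\in Z(H)(R)$, and let $R'$ be an $R$-algebra and $g\in G(R')$. We must show $\pi(z)\,g=g\,\pi(z)$ in $G(R')$.

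1. By Fact~\ref{f:W-ff}, since $\pi$ is faithfully flat, there is a faithfully flat homomorphism $R'\into R''$ and an element $h\in H(R'')$ with $\pi(h)=g$ in $G(R'')$.
2. Because $z$ is central in $H$, its image in $H(R'')$ commutes with $h$. Applying the homomorphism $\pi$ gives $\pi(z)\,g=g\,\pi(z)$ in $G(R'')$.
3. By Fact~\ref{f:W-inj}, the map $G(R')\to G(R'')$ is injective. Hence the equality $\pi(z)\,g=g\,\pi(z)$ already holds in $G(R')$.

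This shows $\pi(z)\in Z(G)(R)$, and the inclusion $\pi(Z(H))\subseteq Z(G)$ follows.

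The one point needing care is that $Z(G)$ is defined scheme-theoretically, so centrality has to be checked against points over all $R$-algebras, not just $\kbar$-points. That is exactly why we work with arbitrary $R'$. It is also why we need the faithfully flat lift from Fact~\ref{f:W-ff} together with the injectivity from Fact~\ref{f:W-inj}, rather than plain surjectivity of $\pi$ on points, which may fail.
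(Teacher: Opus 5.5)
Your proposal is correct and follows essentially the same argument as the paper. Both lift $g\in G(R')$ to $h\in H(R'')$ along a faithfully flat $R'\into R''$ using Fact~\ref{f:W-ff}, use the centrality of $z$ in $H(R'')$, apply $\pi$, and then descend the commutation relation to $G(R')$ by the injectivity in Fact~\ref{f:W-inj}.
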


\begin{proof}
Let $z\in Z(H)(R)$ for some commutative unital $k$-algebra $R$.
Consider $\pi(z)\in G(R)$ and any element $g\in G(R')$ for any $R$-algebra $R'$.
Since the homomorphism $\pi\colon H\to G$ is faithfully flat,
by Fact \ref{f:W-ff}
there exists a faithfully flat homomorphism $R'\into R''$ and an element $h\in H(R'')$ such that $g=\pi(h)$.
Since $z\in Z(H)(R)$, the element $z$ commutes with $h$ in $H(R'')$.
It follows that $\pi(z)$ commutes with $g=\pi(h)$ in $G(R'')$,
and by Fact \ref{f:W-inj} it commutes with $g$ in $G(R')$.
Thus $\pi(z)\in Z(G)(R)$, as desired.
\end{proof}

\begin{lemma}\label{l:AN}
Let
\[ 1\to N\to H\labelt\varphi G\to 1\]
be a short exact sequence of $k$-groups (then $\varphi$ is faithfully flat).
Let $A\subset H$ be a $k$-subgroup.
Then $H=A\cdot N$ if and only if the restriction $\varphi|_A\colon A\to G$ is faithfully flat.
\end{lemma}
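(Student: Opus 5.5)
We prove both implications functorially in $k$-algebras $R$, using Fact \ref{f:W-ff} as the criterion for faithful flatness and Fact \ref{f:W-inj} to pass equalities down along faithfully flat extensions. Recall that $H=A\cdot N$ means: for every $k$-algebra $R$ and every $h\in H(R)$ there are a faithfully flat $R\into R'$ and elements $a\in A(R')$, $n\in N(R')$ with $h=an$ in $H(R')$.

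\emph{($\Rightarrow$)} Assume $H=A\cdot N$. Let $g\in G(R)$. Since $\varphi$ is faithfully flat, Fact \ref{f:W-ff} gives a faithfully flat $R\into R_1$ and $h\in H(R_1)$ with $\varphi(h)=g$. Applying $H=A\cdot N$ to $h$ gives a faithfully flat $R_1\into R_2$ with $h=an$, $a\in A(R_2)$, $n\in N(R_2)$. Then $\varphi(a)=\varphi(h)\varphi(n)^{-1}$. Since $N=\ker\varphi$, we have $\varphi(n)=1$, and hence $\varphi(a)=g$ in $G(R_2)$. The composite $R\into R_2$ is faithfully flat. By Fact \ref{f:W-ff} applied to $\varphi|_A$, the map $\varphi|_A$ is faithfully flat.

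\emph{($\Leftarrow$)} Assume $\varphi|_A$ is faithfully flat, and let $h\in H(R)$. By Fact \ref{f:W-ff} for $\varphi|_A$, applied to $\varphi(h)\in G(R)$, there are a faithfully flat $R\into R'$ and $a\in A(R')$ with $\varphi(a)=\varphi(h)$ in $G(R')$. Set $n\coloneqq a^{-1}h\in H(R')$. Then $\varphi(n)=1$, so $n\in(\ker\varphi)(R')=N(R')$, because exactness means $N$ is the scheme-theoretic kernel, whose $R'$-points are exactly the kernel of $H(R')\to G(R')$. Thus $h=an$, which is the required decomposition, so $H=A\cdot N$.

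There is no serious obstacle here. The only point needing care is to use that $N$ is the functorial kernel at the level of $R'$-points, not merely on $\kbar$-points, and that composites of faithfully flat ring maps are faithfully flat. The parenthetical claim that $\varphi$ is faithfully flat is part of the meaning of a short exact sequence of $k$-groups; alternatively it follows from \cite[Theorem 3.31]{Milne-AG}.
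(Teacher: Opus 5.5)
Your proof is correct and follows essentially the same route as the paper. In both directions you lift points along faithfully flat extensions via Fact \ref{f:W-ff}, use $\varphi(n)=1$ for $n\in N$, and for the converse set $n=a^{-1}h$. You also correctly take $a\in A(R')$, whereas the paper's text has the typo $a\in A(R)$.
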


\begin{proof}
Assume that $H=A\cdot N$.
Let $g\in G(R)$ for some $k$-algebra $R$. Since $\varphi$ is faithfully flat, by Fact \ref{f:W-ff}
there exist a faithfully flat homomorphism $R\into R'$ and an element $h\in H(R')$
such that $g=\varphi(h)$. Since  $H=A\cdot N$, then there exist a faithfully flat homomorphism $R'\into R''$
and elements $a\in A(R'')$, $n\in N(R'')$ with $h=an$. Since $N=\ker \varphi$,
we have $\varphi(n)=1$, whence  $g=\varphi(h)=\varphi(an)=\varphi(a)\varphi(n)=\varphi(a)=\varphi|_A(a)$.
By Fact \ref{f:W-ff} this means that $\varphi|_A$ is faithfully flat.

Conversely, assume that  $\varphi|_A$ is faithfully flat.
Let $h\in H(R)$ for some $k$-algebra $R$, and set $g=\varphi(h)$.
Since  $\varphi|_A$ is faithfully flat, by Fact \ref{f:W-ff}
there exist a faithfully flat homomorphism $R\into R'$
and an element $a\in A(R)$ such that $g=\varphi|_A(a)=\varphi(a)$.
Set $n=a^{-1}h\in H(R')$; then $\varphi(n)=\varphi(a)^{-1}\varphi(h)=1\in G(R')$, whence $n\in N(R')$.
Clearly, we have $h=an$, which by Fact \ref{f:W-ff}  means that $H=A\cdot N$.
\end{proof}

\begin{lemma}\label{l:cZ_G}
Let $G$ be a quasi-connected reductive $k$-group.
Then $Z(G)=\cZ_G(G^\sss)$ where $\cZ_G$ denotes the centralizer in $G$.
\end{lemma}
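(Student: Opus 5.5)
We must show $Z(G)=\cZ_G(G^\sss)$ as subgroup schemes of $G$, and we compare their $R$-points for every $k$-algebra $R$. The inclusion $Z(G)\subseteq \cZ_G(G^\sss)$ is immediate: an element of $Z(G)(R)$ commutes with all points of $G_{R'}$ for all $R$-algebras $R'$. In particular it commutes with all points of $G^\sss_{R'}$.

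For the converse, let $c\in \cZ_G(G^\sss)(R)$, and let $g\in G(R')$ for some $R$-algebra $R'$. By condition (3) of Definition \ref{d:q-c-r}, there exist a faithfully flat homomorphism $R'\into R''$ and elements $z\in Z(G)(R'')$, $s\in G^\sss(R'')$ with $g=zs$ in $G(R'')$. The element $c$ commutes with $z$ because $z$ is central. It commutes with $s$ because $c$ centralizes $G^\sss$ and this property persists after base change to $R''$. Hence $cgc^{-1}g^{-1}=1$ in $G(R'')$. By Fact \ref{f:W-inj}, the map $G(R')\to G(R'')$ is injective, so $c$ commutes with $g$ already in $G(R')$. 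Since $R'$ and $g$ were arbitrary, $c\in Z(G)(R)$.

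The only point needing care is the meaning of $Z(G)$ and $\cZ_G(G^\sss)$ as functors. They are the closed subgroup schemes whose $R$-points are the elements commuting with all points of $G_{R'}$, respectively of $G^\sss_{R'}$, for all $R$-algebras $R'$; see \cite{Milne-AG}. We use exactly these functorial descriptions, so that equality of the functors gives equality of the subschemes. The descent step via faithful flatness is handled by Fact \ref{f:W-inj}, exactly as in the proof of Lemma \ref{l:preimage-center}. I do not anticipate any real obstacle; the argument is a direct application of condition (3) together with fpqc injectivity.
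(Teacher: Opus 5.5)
Your proposal is correct and follows essentially the same argument as the paper. Both prove the nontrivial inclusion by decomposing $g=z\cdot g^\sss$ after a faithfully flat extension $R'\into R''$ using Definition \ref{d:q-c-r}(3), checking that the element commutes with $g$ in $G(R'')$, and then descending to $G(R')$ via Fact \ref{f:W-inj}.
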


\begin{proof}
The inclusion $Z(G) \subseteq \mathcal{Z}_G(G^\sss)$ is trivial.
To prove the reverse inclusion,
suppose that $x \in \mathcal{Z}_G(G^\sss)(R)$.
We must show that for every (commutative unital) $R$-algebra $R'$
and every $g \in G(R')$ we have $x g x^{-1} = g$ in $G(R')$;
see \cite[Proposition 1.92]{Milne-AG}.
Since $G=Z(G)\cdot G^\sss$ by Definition \ref{d:q-c-r}(3),
there exist a faithfully flat homomorphism $R' \into R''$ and elements
$z \in Z(G)(R'')$,  $g^\sss \in G^\sss(R'')$
such that $g = z\cdot g^\sss $ in $G(R'')$.
Since $x \in \mathcal{Z}_G(G^\sss)(R)$ and $z\in Z(G)(R'')$, we have
$$
x g x^{-1} = x(z\cdot g^\sss) x^{-1} = xzx^{-1}\cdot x g^\sss x^{-1}=z \cdot g^\sss = g
$$
in $G(R'')$.
By Fact \ref{f:W-inj} we conclude that $x g x^{-1} = g$ in $G(R')$, as desired.
\end{proof}

\begin{corollary}\label{c:cZ_G}
For a quasi-connected reductive $k$-group $G$, write  $Z^\sssp=Z(G^\sss)$.
Then we have
$Z(G)\cap G^\sss=Z^\sssp$.
\end{corollary}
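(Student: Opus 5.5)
We prove the two inclusions separately, as equalities of subgroup functors on $\Alg_k$. Both $Z(G)$ and $G^\sss$ are closed $k$-subgroups of $G$, so $Z(G)\cap G^\sss$ is the closed subgroup with $(Z(G)\cap G^\sss)(R)=Z(G)(R)\cap G^\sss(R)$ for every $k$-algebra $R$. Likewise $Z^\sssp=Z(G^\sss)$ is defined functorially: $x\in G^\sss(R)$ lies in $Z^\sssp(R)$ if and only if $x$ commutes with every element of $G^\sss(R')$ for every $R$-algebra $R'$ (\cite[Proposition 1.92]{Milne-AG}). Working with $R$-points in this way lets us handle a possibly non-smooth $Z(G)$ without passing to $\kbar$-points.

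The inclusion $Z(G)\cap G^\sss\subseteq Z^\sssp$ is immediate. Take $x\in Z(G)(R)\cap G^\sss(R)$. Then $x$ commutes with all of $G(R')$ for every $R$-algebra $R'$, and in particular with $G^\sss(R')\subseteq G(R')$. Since $x$ lies in $G^\sss(R)$, this says exactly that $x\in Z(G^\sss)(R)$.

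For the reverse inclusion $Z^\sssp\subseteq Z(G)\cap G^\sss$, take $x\in Z^\sssp(R)$. By definition $x\in G^\sss(R)$. Moreover $x$ centralizes $G^\sss_R$, so $x\in\cZ_G(G^\sss)(R)$. By Lemma \ref{l:cZ_G} we have $\cZ_G(G^\sss)=Z(G)$, hence $x\in Z(G)(R)$. Therefore $x\in (Z(G)\cap G^\sss)(R)$.

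There is no real obstacle. All the substantive work, namely using $G=Z(G)\cdot G^\sss$ together with fppf-local decompositions and Fact \ref{f:W-inj}, was already done in Lemma \ref{l:cZ_G}. The only point to handle carefully is using the scheme-theoretic centralizer and center consistently, through their functors of points, rather than their $\kbar$-points.
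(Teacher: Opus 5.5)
Your proof is correct and is essentially the paper's argument. The paper uses Lemma \ref{l:cZ_G} to write $Z(G)\cap G^\sss=\cZ_G(G^\sss)\cap G^\sss=\cZ_{G^\sss}(G^\sss)=Z^\sssp$ in one chain, and your two inclusions are that same chain unpacked on $R$-points.
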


\begin{proof}
By Lemma \ref{l:cZ_G} we have
\[ Z(G)\cap G^\sss=\cZ_G(G^\sss)\cap G^\sss
=\cZ_{G^\sss}(G^\sss)=Z(G^\sss)=Z^\sssp.\qedhere\]
\end{proof}

\begin{proposition}\label{p:q-ab}
For a quasi-connected reductive $k$-group $G$,
the crossed module $\CrM(G)=(G^\ssc\labelt\rho G,\theta)$
is {\emm quasi-abelian} in the sense
of Gonz\'alez-Avil\'es \cite[Definition 3.2]{GA},
that is, the following assertions hold:
\begin{enumerate}
\item[\rm (i)] $Z(G)$, when acting on $G^\ssc$ via $\theta$, acts trivially,
\item[\rm (ii)] $G=(\im\hs\rho)\cdot Z(G)$, and
\item[\rm (iii)] the map $Z(G^\ssc) \to Z(\im\hs\rho)$ is surjective.
\end{enumerate}
\end{proposition}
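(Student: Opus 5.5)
We verify the three assertions separately, working throughout with $R$-points for $k$-algebras $R$ and using Facts \ref{f:W-ff} and \ref{f:W-inj} to pass to faithfully flat extensions, since $Z(G)$ may be non-smooth.

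Assertion (ii) is the easiest. By Definition \ref{d:q-c-r}(3) we have $G=Z(G)\cdot G^\sss$. The universal covering $\pi\colon G^\ssc\to G^\sss$ is faithfully flat, so by Fact \ref{f:W-ff} every element of $G^\sss(R')$ lifts to $G^\ssc(R'')$ for some faithfully flat $R'\into R''$. Composing the two faithfully flat extensions gives $G=\rho(G^\ssc)\cdot Z(G)=(\im\hs\rho)\cdot Z(G)$, because $Z(G)$ is central and the order of the factors does not matter.

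For (i), let $z\in Z(G)(R)$. Then $\inn(z)$ is the identity automorphism of $G_R$, hence its restriction to $G^\sss_R$ is the identity. Construction \ref{cons:theta} defines $\theta_z=\eta(z)$ as the image of $\inn(z)$ under $\ul\Aut(G)\to\ul\Aut(G^\sss)\to\ul\Aut(G^\ssc)$. These maps are group homomorphisms (Proposition \ref{p:GA}(i) for the last one), so they send the identity to the identity. Hence $\theta_z=\mathrm{id}$, and $Z(G)$ acts trivially on $G^\ssc$.

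For (iii), I interpret $\im\hs\rho=G^\sss$, the image of the faithfully flat map $\pi$. The claim is then that $\pi\colon Z(G^\ssc)\to Z(G^\sss)$ is faithfully flat, i.e.\ surjective as a map of fppf sheaves.
\begin{itemize}
\item First, $\pi(Z(G^\ssc))\subseteq Z(G^\sss)$ by Lemma \ref{l:preimage-center}.
\item For surjectivity, take $x\in Z(G^\sss)(R)$ and lift it, after a faithfully flat extension, to $y\in G^\ssc(R')$ with $\pi(y)=x$.
\item For any $s\in G^\ssc(R'')$ we have $\pi(ysy^{-1}s^{-1})=1$, so the commutator $ysy^{-1}s^{-1}$ lies in $\ker\pi$. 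The kernel $\ker\pi$ is central of multiplicative type, hence contained in $Z(G^\ssc)$.
\item The morphism $s\mapsto ysy^{-1}s^{-1}$ therefore maps the connected smooth scheme $G^\ssc_{R'}$ to the finite group scheme $\ker\pi$ of multiplicative type and sends $1$ to $1$.
\end{itemize}

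The main obstacle is to conclude that this commutator morphism is trivial over an arbitrary base $R'$, possibly non-reduced. A morphism from a connected smooth group scheme with connected fibers to a finite multiplicative-type group scheme taking $1\mapsto 1$ is constant. Indeed, after an fppf base change $\ker\pi$ becomes diagonalizable, $\Hom$ into $\mu_n$ is given by characters, and $G^\ssc$ has no nontrivial characters. More precisely, I expect to use rigidity: the map $G^\ssc\times G^\ssc\to\ker\pi$, $(y,s)\mapsto[y,s]$, is a bi-multiplicative pairing in $s$ for fixed central-modulo-kernel $y$, giving a homomorphism $G^\ssc\to\ker\pi$. This homomorphism is trivial because $G^\ssc$ is perfect and $\ker\pi$ is commutative. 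Thus $y\in Z(G^\ssc)(R')$, which proves (iii).
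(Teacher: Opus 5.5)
Your arguments for (i) and (ii) are the paper's. In (i) you apply functoriality of $G\functor G^\ssc$ to $\inn(z)=\mathrm{id}$, and in (ii) you use Definition~\ref{d:q-c-r}(3) together with $\im\rho=G^\sss$. For (iii) the paper only says that $\rho(Z(G^\ssc))=Z(G^\sss)$ is well known, so your argument supplies a proof the paper omits.

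The working core of your argument for (iii) is correct. Take $y\in G^\ssc(R')$ with $\pi(y)\in Z(G^\sss)(R')$. Then $[y,s]\in\ker\pi$ for every point $s$. Since $\ker\pi$ is central, $[y,s_1s_2]=[y,s_1]\cdot s_1[y,s_2]s_1^{-1}=[y,s_1][y,s_2]$. So $s\mapsto[y,s]$ is a homomorphism of $R'$-groups $G^\ssc_{R'}\to(\ker\pi)_{R'}$. A homomorphism from a semisimple group scheme to a commutative one is trivial over any base; this is \cite[Expos\'e~XXII, Theorem~6.2.1]{SGA3}, the fact used in Step~3 of the proof of Proposition~\ref{p:Conrad-existence}. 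Hence $y\in Z(G^\ssc)(R')$. Fact~\ref{f:W-ff} together with Lemma~\ref{l:preimage-center} then gives faithful flatness of $Z(G^\ssc)\to Z(G^\sss)$.

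Two statements in your sketch of (iii) should be removed or corrected.

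First, you claim that every \emph{morphism of schemes} from a connected smooth group scheme to a finite group scheme of multiplicative type sending $1\mapsto 1$ is constant. This is false over non-reduced bases when the target is infinitesimal. Take $k$ of characteristic $2$ and $R'=k[\epsilon]/(\epsilon^2)$. The morphism ${\rm SL}_{2,R'}\to\mu_{2,R'}$, $g\mapsto 1+\epsilon\,g_{12}$, is well defined because $(1+\epsilon a)^2=1$. It sends $1$ to $1$, and it is not constant. Your justification via characters applies only to homomorphisms, which is exactly why the homomorphism property of $s\mapsto[y,s]$ is essential. Drop that sentence and rely on the homomorphism argument.

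Second, ``$G^\ssc$ is perfect'' must mean the scheme-theoretic statement cited above. It cannot mean perfectness of the abstract group $G^\ssc(R')$, which can fail, for instance for ${\rm SL}_2$ over a field with $2$ or $3$ elements.
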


\begin{proof}
(i) Since $Z(G)$ acts trivially on $G$ by conjugation, it acts trivially on $G^\ssc$ via $\theta$
(by the functoriality of the assignment $G\functor G^\ssc$).

(ii) We have $\im\hs\rho=G^\sss$,
and the equality $G=Z(G)\cdot G^\sss$ holds by Definition \ref{d:q-c-r}(3).

(iii) We must show that for the universal cover $G^\ssc\to G^\sss$, we have
$\rho\big(Z(G^\sscp)\big)=Z(G^\sssp)$, which is well known.
\end{proof}

\begin{definition}
A {\em quasi-torus} over a field $k$ is a smooth $k$-group of multiplicative type.
\end{definition}

\begin{lemma}\label{l:qt}
Let $G$ be a quasi-connected reductive $k$-group.
Set $G^\qt=G/G^\sss$ and consider the natural quotient homomorphism
$q\colon G\to G^\qt$.
\begin{enumerate}
\item[\rm (i)] The restriction $q_\mZ\colon Z(G)\to G^\qt$ of $q$ is faithfully flat.
\item[\rm (ii)] $G^\qt$ is a quasi-torus (which explains the notation).
\end{enumerate}
\end{lemma}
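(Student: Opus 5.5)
For (i), I will apply Lemma \ref{l:AN} to the short exact sequence
\[ 1\to G^\sss\to G\labelt{q} G^\qt\to 1, \]
with $A=Z(G)$ and $N=G^\sss$. First I note that $G^\sss$ is a normal (indeed characteristic) $k$-subgroup of $G$, so the quotient $G^\qt=G/G^\sss$ exists as an affine $k$-group and $q$ is faithfully flat with kernel $G^\sss$. By Definition \ref{d:q-c-r}(3) we have $G=Z(G)\cdot G^\sss$. Lemma \ref{l:AN} then says exactly that $q|_{Z(G)}=q_\mZ$ is faithfully flat.

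For (ii), I first observe that $G^\qt$ is smooth, since it is a quotient of the smooth group $G$ by a faithfully flat homomorphism. It remains to see that $G^\qt$ is of multiplicative type. By (i), $G^\qt$ is a quotient of $Z(G)$ by the kernel $Z(G)\cap G^\sss$, which is $Z^\sssp$ by Corollary \ref{c:cZ_G}. By Definition \ref{d:q-c-r}(2), $Z(G)$ is of multiplicative type, and quotients of groups of multiplicative type are of multiplicative type (for instance \cite[Theorem 12.9]{Milne-AG} or the corresponding statement in SGA3). Therefore $G^\qt\cong Z(G)/Z^\sssp$ is of multiplicative type and smooth, that is, a quasi-torus.

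The only mildly delicate point is the identification $G^\qt\cong Z(G)/(Z(G)\cap G^\sss)$. This uses that a faithfully flat homomorphism induces an isomorphism from the quotient by its kernel onto its target. The kernel of $q_\mZ$ is $Z(G)\cap\ker q=Z(G)\cap G^\sss$, so the identification follows. Beyond that, the argument is a direct assembly of earlier results.
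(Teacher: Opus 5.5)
Your proof is correct and is essentially the paper's argument. Part (i) is Lemma \ref{l:AN} applied with $A=Z(G)$ and $N=G^\sss$, using Definition \ref{d:q-c-r}(3), and part (ii) follows because $G^\qt$ is a smooth quotient of the smooth group $G$ and a quotient of the multiplicative-type group $Z(G)$ (the paper cites \cite[Theorem 12.23]{Milne-AG} for the latter fact). Identifying the kernel of $q_\mZ$ with $Z^\sssp$ is harmless but not needed.
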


\begin{proof}
By Lemma \ref{l:AN}, assertion (i) follows from Definition \ref{d:q-c-r}(3).
We see that $G^\qt$ is a quotient of the group of multiplicative type $Z(G)$,
and it follows from \cite[Theorem 12.23]{Milne-AG}
that $G^\qt$ is of multiplicative type itself.
Since $G$ is smooth, by \cite[Proposition 1.62]{Milne-AG}(b) so is $G^\qt$,
and (ii) follows.
\end{proof}

\begin{construction}\label{con:+}
Let $G$ be a quasi-connected reductive $k$-group and let $T^\sssp\subset G^\sss$
be a maximal torus.
Consider the group
\[ T=\cZ_G(T^\sssp).\]
\end{construction}

\begin{lemma}\label{l:t-product}
For $G$ and $T$ as in Construction \ref{con:+}, we have:

\begin{enumerate}
\item[\rm (i)] $G^\sss\cap T=T^\sssp$;
\item[\rm (ii)] $T=Z(G)\cdot T^\sssp$;
\item[\rm (iii)] $T$ fits into a short exact sequence
\,$1\to  T^\sssp\to T\to G^\qt\to 1$;
\item[\rm (iv)] $T$ is a quasi-torus.
\end{enumerate}
\end{lemma}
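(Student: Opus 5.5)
I would prove the four assertions in the order (i), (ii), (iii), (iv), working throughout with $R$-points and using Facts \ref{f:W-ff} and \ref{f:W-inj} to pass to faithfully flat extensions, as in the proofs of Lemmas \ref{l:cZ_G} and \ref{l:AN}.

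For (i), note that $G^\sss\cap T=\cZ_{G^\sss}(T^\sssp)$, the centralizer of a maximal torus in the connected reductive group $G^\sss$. By the standard result on connected reductive groups, this centralizer is $T^\sssp$ itself, scheme-theoretically (see \cite{Milne-AG}, centralizers of maximal tori). This gives (i).

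For (ii), the inclusion $Z(G)\cdot T^\sssp\subseteq T$ is clear, since $Z(G)$ centralizes everything and $T^\sssp$ is commutative. Conversely, let $t\in T(R)$. By Definition \ref{d:q-c-r}(3), after a faithfully flat extension $R\into R'$ we can write $t=z\hs g$ with $z\in Z(G)(R')$ and $g\in G^\sss(R')$. Then $g=z^{-1}t$ centralizes $T^\sssp_{R'}$, so $g\in (G^\sss\cap T)(R')=T^\sssp(R')$ by (i). Hence $T=Z(G)\cdot T^\sssp$ in the sense of Definition \ref{d:q-c-r}.

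For (iii), consider the restriction of $q\colon G\to G^\qt$ to $T$. Its kernel is $T\cap G^\sss=T^\sssp$ by (i). It is faithfully flat because its further restriction to $Z(G)\subseteq T$ is faithfully flat by Lemma \ref{l:qt}(i); lifting through $Z(G)(R')\subseteq T(R')$ and applying Fact \ref{f:W-ff} shows this. This yields the exact sequence $1\to T^\sssp\to T\to G^\qt\to 1$.

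For (iv), I first show that $T$ is commutative. By (ii), every pair of elements can, after a faithfully flat extension, be written as $z_1t_1$ and $z_2t_2$ with $z_i$ central and $t_i\in T^\sssp$; these commute, and Fact \ref{f:W-inj} descends the conclusion to $R$. Next, by (ii) and Fact \ref{f:W-ff}, the multiplication map $Z(G)\times T^\sssp\to T$ is a faithfully flat homomorphism of commutative groups. Its source is of multiplicative type, so $T$, being a quotient of a group of multiplicative type, is of multiplicative type by \cite[Theorem 12.23]{Milne-AG}. It remains to check smoothness. I would deduce it from (iii): $T$ is an extension of the smooth group $G^\qt$ (Lemma \ref{l:qt}(ii)) by the smooth torus $T^\sssp$, and an extension of smooth groups is smooth, since $T\to G^\qt$ is faithfully flat with smooth kernel and hence smooth.

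The main obstacle is expected to be (i), specifically getting the scheme-theoretic equality $\cZ_{G^\sss}(T^\sssp)=T^\sssp$ rather than just equality on $\kbar$-points. This is where the correct reference to the theory of connected reductive groups is needed. The other steps are formal manipulations with fppf-local decompositions.
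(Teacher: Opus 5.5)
Your proposal is correct and follows the paper's proof essentially step by step. You get (i) from $\cZ_{G^\sss}(T^\sssp)=T^\sssp$, (ii) by writing $t=z\cdot g^\sss$ after a faithfully flat extension and applying (i), (iii) from the faithful flatness of $Z(G)\to G^\qt$ with kernel computed by (i), and smoothness in (iv) from the extension in (iii). The only deviation is the multiplicative-type step in (iv). You present $T$ as a quotient of $Z(G)\times T^\sssp$ and invoke Milne's Theorem 12.23, whereas the paper uses that the commutative extension of $G^\qt$ by $T^\sssp$ is of multiplicative type (Milne's Corollary 12.22); both arguments are valid.
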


\begin{proof}
We have $G^\sss\cap T=G^\sss\cap \cZ_G(T^\sssp)=\cZ_{G^\sss}(T^\sssp)=T^\sssp$, which proves (i).

Clearly, $Z(G)\subseteq T$ and $T^\sssp\subseteq T$, whence $Z(G)\cdot T^\sssp\subseteq T$.
Conversely, let $t\in T(R)\subseteq G(R)$ for some $k$-algebra $R$.
Since $G=Z(G)\cdot G^\sss$ by Definition \ref{d:q-c-r}(3), we may write $t= z \cdot g^\sss$
for some faithfully flat homomorphism $R\into R'$ and some $z\in Z(G)(R')$,  $g^\sss\in G^\sss(R')$.
Since $z\in Z(G)(R')\subseteq T(R')$ and $t\in T(R)\subseteq T(R')$, we see that
$g^\sss\in T(R')\cap G^\sss(R')$,
and by (i) $g^\sss\in T^\sssp(R')$,
which proves (ii).

Since $T\supseteq Z(G)$,
it follows from Lemma \ref{l:qt}(i) and Fact \ref{f:W-ff}
that the restriction $q|_T\colon T\to G^\qt$ is faithfully flat.
Using (i), we obtain
$\ker q|_{T} =G^\sss\cap T= T^\sssp$.
Thus (iii) holds.

Since both $T^\sssp$ and $G^\qt$ are smooth, by \cite[Proposition 1.62(a)]{Milne-AG}
it follows from (iii) that $T$ is smooth.
By assertion (ii), the $k$-group $T$ is commutative.
Since both $T^\sssp$ and $G^\qt$ are of multiplicative type, and $T$ is commutative,
by \cite[Corollary 12.22]{Milne-AG} the $k$-group $T$ is of multiplicative type.
Thus $T$ is a quasi-torus, which proves (iv).
\end{proof}

\begin{definition}
A quasi-torus $T\subseteq G$ as in Construction \ref{con:+} is called a
{\em principal quasi-torus in $G$}.
\end{definition}

\begin{remark}
For a {\em connected} reductive $k$-group $G$,
a principal quasi-torus in $G$ is the same as a maximal torus in $G$.
\end{remark}

\begin{lemma}\label{l:l0-short-exact}
Consider a short exact sequence of quasi-connected reductive $k$-groups
\begin{equation}\label{e:SHG}
1\to S\to H\labelt\pi G\to 1
\end{equation}
where $S$ is a $k$-torus. Then $S$ is central in $H$.
\end{lemma}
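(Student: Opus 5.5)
We need to show $S$ is central in $H$, where $H$ is quasi-connected reductive and $S\subseteq H$ is a normal $k$-torus. The plan is to show first that $S$ centralizes $H^0$, and then that it centralizes all of $H$ by using the decomposition $H=Z(H)\cdot H^\sss$ of Definition \ref{d:q-c-r}(3).

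The first step is to check that $S\subseteq H^0$, since $S$ is connected. Then $S$ is a normal torus in the connected reductive group $H^0$, because it is normal in $H$ and hence in $H^0$.

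The second step uses the rigidity of tori. The conjugation action gives a homomorphism $H^0\to\ul\Aut(S)$, and $\ul\Aut(S)$ is an étale (discrete) group scheme. Since $H^0$ is connected, this action is trivial, so $S$ is central in $H^0$; in particular $S$ commutes with $H^\sss\subseteq H^0$. The argument is functorial in $R$, so the conclusion holds scheme-theoretically: $S\subseteq\cZ_H(H^\sss)$.

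The third step applies Lemma \ref{l:cZ_G} to the quasi-connected reductive group $H$, which gives $\cZ_H(H^\sss)=Z(H)$. Hence $S\subseteq Z(H)$, that is, $S$ is central in $H$.

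Alternatively, one can avoid Lemma \ref{l:cZ_G} at the last step by writing $h=z\cdot h^\sss$ after a faithfully flat extension $R\into R'$. Then $h$ commutes with $S$ over $R'$, and Fact \ref{f:W-inj} descends this back to $R$.

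The main obstacle is the rigidity step. One has to make precise that a normal torus in a connected group is central as schemes, not just on $\kbar$-points. I would argue this as follows: over $\kbar$ it is standard that $H^0(\kbar)$ centralizes $S(\kbar)$, and since $H^0$ is smooth, $H^0(\kbar)$ is dense. The centralizer $\cZ_H(S)$ is a closed subgroup scheme containing this dense set of points, so it contains $H^0$ because $H^0$ is reduced. This gives the scheme-theoretic statement.
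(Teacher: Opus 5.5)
Your proposal is correct and essentially the paper's proof: the paper also first gets $S$ central in $H^0$ (via the exact sequence $1\to S\to H^0\to G^0\to 1$ of connected reductive groups) and then extends to all of $H$ using $H=Z(H)\cdot H^\sss$. That last step is exactly your alternative ending, and it is also what Lemma \ref{l:cZ_G} packages; your density argument just makes explicit the scheme-theoretic rigidity step that the paper leaves implicit.
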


\begin{proof}
From \eqref{e:SHG} we obtain a short exact sequence
of {\em connected} reductive $k$-groups
\begin{equation}\label{e:SHG-0}
1\to S\to H^0\labelt\pi G^0\to 1,
\end{equation}
from which we see that $S$ is central in $H^0$.
Hence the subgroup $H^\sss\subseteq H^0$
acts by conjugation on $S$ trivially.
Since $Z(H)$ also acts on $S$ trivially, and $H=Z(H)\cdot H^\sss$ by Definition \ref{d:q-c-r}(3),
$H$ acts by conjugation on $S$ trivially,
and we conclude that $S$ is central, as desired.
\end{proof}

\begin{construction}\label{con:short-exact}
Consider a  short exact sequence of quasi-connected reductive $k$-groups
\eqref{e:SHG} as in Lemma \ref{l:l0-short-exact}.
Then $\pi$ is faithfully flat (by the definition of exactness).
Since $\pi(H^0)=G^0$, we have
\begin{equation}\label{e:Hss-Gss}
\pi(H^\sss)=G^\sss.
\end{equation}
It follows from the exactness of \eqref{e:SHG-0} that $H^\sss\to G^\sss$ is a central isogeny.
We have
\begin{equation}\label{e:ZHss-ZGss}
\pi\big(Z(H^\sss)\big)=Z(G^\sss)\quad\ \text{and}
    \quad\ \pi^{-1}\big(Z(G^\sss)\big)\cap H^\sss = Z(H^\sss).
\end{equation}
Let $T_H^\sssp\subset H^\sss$ be a maximal torus, and consider the corresponding maximal torus $T_G^\sssp\subset G^\sss$.
Then
\begin{equation}\label{e:THss-TGss}
\pi(T_H^\sssp)=T_G^\sssp\quad\ \text{and}\quad\ \pi^{-1}(T_G^\sssp)\cap H^\sss=T_H^\sssp.
\end{equation}
Consider the principal quasi-tori
\[T_H=\cZ_H(T_H^\sssp)=Z(H)\cdot T_H^\sssp\subseteq H,
    \quad\ \  T_G=\cZ_G(T_G^\sssp)=Z(G)\cdot T_G^\sssp\subseteq G\]
(we use Lemma \ref{l:t-product}(ii)\hs).
\end{construction}

\begin{lemma}\label{l:short-exact}
For a short exact sequence of quasi-connected reductive $k$-groups
\eqref{e:SHG}  as in Lemma \ref{l:l0-short-exact}, and
for $T_H$, and $T_G$ as in Construction \ref{con:short-exact}, we have:
\begin{enumerate}
\item[\rm (i)] $\pi\big(Z(H)\big)=Z(G)$
and $\pi^{-1}\big(Z(G)\big)=Z(H)$;
\item[\rm (ii)] $\pi(T_H)=T_G$ and $\pi^{-1}(T_G)=T_H$.
\end{enumerate}
\end{lemma}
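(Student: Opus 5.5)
We first prove (i) and then deduce (ii) from it. Throughout, we work functorially on $R$-points and use faithfully flat descent (Facts \ref{f:W-ff} and \ref{f:W-inj}).

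For the inclusion $\pi(Z(H))\subseteq Z(G)$ in (i), we apply Lemma \ref{l:preimage-center}, since $\pi$ is faithfully flat.

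For the inclusion $\pi^{-1}(Z(G))\subseteq Z(H)$, let $h\in H(R)$ with $\pi(h)\in Z(G)(R)$. Take any $R$-algebra $R'$ and any $h'\in H(R')$. Then $c\coloneqq hh'h^{-1}h'^{-1}$ lies in $S(R')$, because $\pi(c)=1$. By Definition \ref{d:q-c-r}(3) we may write $h'=z\cdot x$ after a faithfully flat extension $R'\into R''$, with $z\in Z(H)(R'')$ and $x\in H^\sss(R'')$. Since $z$ is central, $c=[h,x]$. Now consider the morphism $H^\sss\to S$, $x\mapsto [h,x]$. It lands in $S$, and since $S$ is central in $H$ by Lemma \ref{l:l0-short-exact}, the identity $[h,xy]=[h,x]\cdot x[h,y]x^{-1}$ shows that it is a homomorphism. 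A homomorphism from the semisimple group $H^\sss$, which equals its own derived group, to the commutative group $S$ is trivial. Hence $c=1$ in $H(R'')$, so $c=1$ in $H(R')$ by Fact \ref{f:W-inj}, and therefore $h\in Z(H)(R)$.

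Since $S\subseteq Z(H)$ and $\pi^{-1}(Z(G))=Z(H)$, we have $Z(H)=\pi^{-1}(Z(G))$, and faithful flatness of $\pi$ gives $\pi(Z(H))=Z(G)$ in the fppf sense. Here the image $\pi(Z(H))$ is understood as the fppf image subgroup; with this reading, the equality follows directly from $\pi^{-1}(Z(G))=Z(H)$.

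For (ii), we first show $\pi(T_H)=T_G$. Using $T_H=Z(H)\cdot T_H^\sssp$, together with (i) and \eqref{e:THss-TGss}, we get
\[\pi(T_H)=\pi(Z(H))\cdot\pi(T_H^\sssp)=Z(G)\cdot T_G^\sssp=T_G.\]
Next, $T_H$ contains $S$, since $S\subseteq Z(H)$. It then suffices to show $\pi^{-1}(T_G)\subseteq T_H$. Let $h\in H(R)$ with $\pi(h)\in T_G(R)$. Fppf-locally we have $\pi(h)=\pi(t)$ for some $t\in T_H$, so $h=ts$ with $s\in S\subseteq T_H$, and hence $h\in T_H$ by descent. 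Alternatively, one can argue via centralizers: $\pi(h)$ centralizes $T_G^\sssp$, so the morphism $T_H^\sssp\to S$, $y\mapsto [h,y]$, is a homomorphism from a connected group. Showing that this morphism is trivial is subtler, so the first argument is the one we prefer.

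The main obstacle we anticipate is the triviality of the commutator homomorphism $H^\sss\to S$ in (i). This step needs the centrality of $S$, which is Lemma \ref{l:l0-short-exact}, and the fact that $H^\sss$ is perfect. The remaining steps are routine descent.
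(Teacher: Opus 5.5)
Your proof is correct, but it argues differently from the paper in both parts.

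\textbf{Part (i).} For $\pi^{-1}(Z(G))\subseteq Z(H)$, the paper decomposes the given element itself, $h=z_H\cdot h^\sss$, and works inside the semisimple parts. First $\pi(h^\sss)\in Z(G)\cap G^\sss=Z(G^\sss)$ by Corollary \ref{c:cZ_G}. Then $h^\sss\in\pi^{-1}(Z(G^\sss))\cap H^\sss=Z(H^\sss)\subseteq Z(H)$ by the quoted central-isogeny fact \eqref{e:ZHss-ZGss}. You instead decompose the test element $h'$ and show directly that $x\mapsto[h,x]$ is a homomorphism $H^\sss_R\to S_R$, hence trivial. In effect you reprove the rigidity statement behind \eqref{e:ZHss-ZGss}, applied to $h\in H$ rather than to $h^\sss$. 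As a result your argument for (i) needs neither Corollary \ref{c:cZ_G} nor any property of $G$ beyond $\pi$ being faithfully flat with kernel $S$. The price is that the rigidity step takes place over an arbitrary, possibly non-reduced, $k$-algebra $R$. So ``$H^\sss$ equals its derived group'' should be justified by \cite[Expos\'e XXII, Theorem 6.2.1]{SGA3}, as in Step 3 of the proof of Proposition \ref{p:Conrad-existence}.

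\textbf{Part (ii).} The paper proves $\pi(T_H)\subseteq T_G$ and $\pi^{-1}(T_G)\subseteq T_H$ by element chases through $T_G\cap G^\sss=T_G^\sssp$ and $\pi^{-1}(T_G^\sssp)\cap H^\sss=T_H^\sssp$. It obtains surjectivity of $T_H\to T_G$ only at the end. You reverse the order. You first get fppf-surjectivity $\pi(T_H)=T_G$ from (i) and the decompositions $T_H=Z(H)\cdot T_H^\sssp$ and $T_G=Z(G)\cdot T_G^\sssp$. You then deduce $\pi^{-1}(T_G)=T_H$ formally from $\ker\pi=S\subseteq Z(H)\subseteq T_H$, which is shorter.

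\textbf{Write-up points.}
\begin{itemize}
\item Your sentence deriving $Z(H)=\pi^{-1}(Z(G))$ is circular as written. The missing inclusion $Z(H)\subseteq\pi^{-1}(Z(G))$ comes from $\pi(Z(H))\subseteq Z(G)$, not from $S\subseteq Z(H)$.
\item The descent from $R'$ back to $R$ in (ii) is Lemma \ref{l:elementary}.
\end{itemize}
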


\begin{proof}
Assertion (i) follows from the inclusions
\begin{align}
\pi\big(Z(H)\big)&\subseteq Z(G),\label{e:ZHG}\\
\pi^{-1}\big(Z(G)\big)&\subseteq Z(H).\label{e:ZGH}
\end{align}

Inclusion \eqref{e:ZHG} follows from the assumption that $\pi$ is faithfully flat;
see Lemma \ref{l:preimage-center}.

We prove \eqref{e:ZGH}.
Let $z_\mG\in Z(G)(R)$ for some $k$-algebra $R$, and  assume that   $z_\mG=\pi(h)$ for some $h\in H(R)$.
Since $H=Z(H)\cdot H^\sss$ by Definition \ref{d:q-c-r}(3), we may write $h=z_\mH\cdot h^\sss$,
where $R\into R'$ is some faithfully flat homomorphism,
$z_\mH\in Z(H)(R')$ and $h^\sss\in H^\sss(R')$.
Since $\pi(h)\in Z(G)(R)$ by assumption, and $\pi(z_\mH)\in Z(G)(R')$ by \eqref{e:ZHG}, we have
$\pi(h^\sss)\in Z(G)(R')$, whence
\[\pi(h^\sss)\in Z(G)(R')\cap \pi(H^\sss(R'))\subseteq
   Z(G)(R')\cap G^\sss(R')=Z(G^\sss)(R')\]
(we use \eqref{e:Hss-Gss} and Corollary \ref{c:cZ_G}).
It follows that
\[h^\sss\in\pi^{-1}\big(Z(G^\sss)\big)(R')\cap H^\sss(R')=Z(H^\sss)(R')\subseteq Z(H)(R')\]
(we use \eqref{e:ZHss-ZGss}\hs),
and $h\in Z(H)(R')$, which proves \eqref{e:ZGH} and (i).

Assertion (ii) follows from the inclusions
\begin{align}
\pi(T_H)&\subseteq T_G\hs,\label{e:THG}\\
\pi^{-1}(T_G)&\subseteq T_H\hs.\label{e:TGH}
\end{align}

We prove \eqref{e:THG}. Let $t_\mH\in T_H(R)$ for some $k$-algebra $R$; then by Lemma \ref{l:t-product}(ii)
we have $t_\mH\in Z(H)(R')\cdot T_H^\sssp(R')$
for some faithfully flat homomorphism $R\into R'$.
We have
\[\pi(t_\mH)\in \pi\big(Z(H)(R')\big)\cdot \pi\big(T_H^\sssp(R')\big)
\subseteq Z(G)(R')\cdot T_G^\sssp(R')\subseteq T_G(R'),\]
(we use \eqref{e:ZHG} and \eqref{e:THss-TGss}\hs), whence  $\pi(t_\mH)\in G(R) \cap T_G(R')=T_G(R)$, as desired
(we use  Lemma \ref{l:elementary} below).

We prove \eqref{e:TGH}.
Let $t_G\in T_G(R)$, $t_G=\pi(h)$ with $h\in H(R)$ for some $k$-algebra $R$.
Since $H=Z(G)\cdot H^\sss$ by Definition \ref{d:q-c-r}(3), we may write
\[h=z_\mH\cdot h^\sss\quad\ \text{with}\ \, z_\mH\in Z(H)(R'),\ h^\sss\in H^\sss(R')\]
for some faithfully flat homomorphism $R\into R'$.
By construction we have
\[\pi(h^\sss)=\pi(z_\mH^{-1})\cdot \pi(h)=\pi(z_\mH^{-1})\cdot t_G\in \big(Z(G)(R')\cdot T_G(R)\big)\cap G^\sss(R')
\subseteq T_G(R')\cap G^\sss(R')\]
(we use \eqref{e:ZHG} and  \eqref{e:Hss-Gss}\hs).
By Lemma \ref{l:t-product}(i), we obtain that  $\pi(h^\sss)\in T_G^\sssp(R')$,
whence by \eqref{e:THss-TGss} we have
$h^\sss\in \big(\pi^{-1}(T_G^\sssp)\cap H^\sss\big)(R')= T_H^\sssp(R')$.
Thus $h\in Z(H)(R')\cdot  T_H^\sssp(R')\subseteq T_H(R')$,
whence $h\in T_H(R')\cap H(R)=T_H(R)$
by Lemma \ref{l:elementary} below.
This proves \eqref{e:TGH} and (ii).
\end{proof}

\begin{lemma}[elementary]
\label{l:elementary}
    Let $X$ be an affine $k$-scheme,  $Y\subseteq X$ be a closed $k$-subscheme,
    and  $\phi\colon R \into R'$ be an injective  homomorphism of $k$-algebras. Then
    $Y(R')\cap X(R)=Y(R)$.
\end{lemma}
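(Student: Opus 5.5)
Write $X=\operatorname{Spec} A$ with $A$ a $k$-algebra, and let $Y=\operatorname{Spec}(A/I)$ for an ideal $I\subseteq A$, since $Y$ is a closed subscheme of the affine scheme $X$. Then $X(R)=\Hom_k(A,R)$ and $Y(R)=\{x\in X(R): x(I)=0\}$. The map $X(R)\to X(R')$ is composition with $\phi$, and it is injective because $\phi$ is injective. So the statement $Y(R')\cap X(R)=Y(R)$ is to be read inside $X(R')$, with $X(R)$ and $Y(R)$ regarded as subsets of $X(R')$ via $\phi$.

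The inclusion $Y(R)\subseteq Y(R')\cap X(R)$ is immediate by functoriality. If $x\colon A\to R$ kills $I$, then so does $\phi\circ x$.

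For the reverse inclusion, take $x\in X(R)$ such that $\phi\circ x\in Y(R')$, that is, $\phi(x(f))=0$ for every $f\in I$. Since $\phi$ is injective, $x(f)=0$ for all $f\in I$. Hence $x$ factors through $A/I$, i.e., $x\in Y(R)$.

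No step is a real obstacle. The only care needed is to state the identification of $X(R)$ with a subset of $X(R')$ via the injective $\phi$, and to use the description of a closed subscheme of an affine scheme by an ideal.
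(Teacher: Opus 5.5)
Your proof is correct and is essentially the paper's argument: you describe $Y$ by an ideal $I\subseteq A$ and use the injectivity of $\phi$ to pass from $\phi(x(I))=0$ to $x(I)=0$. Your explicit remark that $X(R)\to X(R')$ is injective, so that the intersection makes sense inside $X(R')$, is a harmless clarification that the paper leaves implicit.
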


\begin{proof} It suffices to show that $Y(R') \cap X(R)\subseteq Y(R)$.
Write $X = \operatorname{Spec} A$ and $Y = \operatorname{Spec} A/I$,
where $A$ is a $k$-algebra and $I\subseteq A$ an ideal.
Then an $R'$-point  $x \in Y(R') \cap X(R)$ is a homomorphism $x \colon A \to R$
such that  $\phi(x(I))= \{0\}\subset R'$.
Since $\phi\colon R \to R'$ is injective, we have $x(I) = \{0\}\subset R$,
that is, $x\in Y(R)$.
\end{proof}

\begin{lemma}\label{l:some-any}
Let $\vk\colon G_1\to G_2$ be a homomorphism of quasi-connected reductive $k$-groups.
Assume that for {\emm some} principal quasi-torus  $T_1\subseteq G_1$,
its image $\vk(T_1)$ is contained in some principal quasi-torus $T_2\subseteq G_2$.
Then this holds for {\emm every} principal quasi-torus in $G_1$.
\end{lemma}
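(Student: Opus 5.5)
The plan is to use the conjugacy of maximal tori in $G_1^\sss$ and transport the given inclusion by conjugation. Let $T_1=\cZ_{G_1}(T_1^\sssp)$ be a principal quasi-torus with $\vk(T_1)\subseteq T_2=\cZ_{G_2}(T_2^\sssp)$, and let $T_1'=\cZ_{G_1}(T_1'^{\,\sssp})$ be another principal quasi-torus in $G_1$. By Lemma \ref{l:t-product}(i) we have $T_1^\sssp=G_1^\sss\cap T_1$, so $T_1^\sssp$ is recovered from $T_1$. The maximal tori $T_1^\sssp$ and $T_1'^{\,\sssp}$ of the connected reductive group $G_1^\sss$ are conjugate over $k_s$. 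Since tori split over $k_s$, this is the standard conjugacy theorem. The resulting set $\mathcal T=\{g\in G_1^\sss : gT_1^\sssp g^{-1}=T_1'^{\,\sssp}\}$ is a right torsor under $N_{G_1^\sss}(T_1^\sssp)$ and a left torsor under $N_{G_1^\sss}(T_1'^{\,\sssp})$.

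\textbf{Step 1.} Fix $g\in G_1^\sss(k_s)$ with $gT_1^\sssp g^{-1}=T_1'^{\,\sssp}$. Then $gT_1g^{-1}=\cZ_{G_1}(gT_1^\sssp g^{-1})=T_1'$ over $k_s$. Put $h=\vk(g)\in G_2^\sss(k_s)$; here we use that $\vk$ maps $G_1^\sss$ into $G_2^\sss$. Then $\vk(T_1')=h\vk(T_1)h^{-1}\subseteq hT_2h^{-1}$. Moreover $hT_2h^{-1}=\cZ_{G_2}(hT_2^\sssp h^{-1})$, and $hT_2^\sssp h^{-1}$ is a maximal torus of $G_2^\sss$ over $k_s$. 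So over $k_s$ the image of $T_1'$ lies in a principal quasi-torus.

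\textbf{Step 2 (descent; the main obstacle).} The torus $hT_2^\sssp h^{-1}$ need not be defined over $k$, so $h$ must be replaced by one that yields a $k$-torus. I would first try to avoid choices and define $T_2'$ intrinsically. Set $S=\vk(T_1'^{\,\sssp})$, a $k$-torus in $G_2^\sss$, and consider $C=\cZ_{G_2^\sss}(S)^0$, a connected reductive $k$-subgroup of $G_2^\sss$. Over $k_s$, $hT_2^\sssp h^{-1}$ is a maximal torus of $G_2^\sss$ containing $S$, since $T_1'^{\,\sssp}\subseteq T_1'$ and therefore $\vk(T_1'^{\,\sssp})\subseteq hT_2h^{-1}\cap G_2^\sss=hT_2^\sssp h^{-1}$ by Lemma \ref{l:t-product}(i). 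Hence it is a maximal torus of $C$, and $\vk(T_1')$ centralizes it.

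Next consider $D=\cZ_C(\vk(T_1'))$. We want a maximal torus of $C$ that is defined over $k$ and centralized by $\vk(T_1')$. Since $\vk(T_1')$ is a quasi-torus, it is a $k$-group of multiplicative type, and $D^0$ is reductive. The torus $hT_2^\sssp h^{-1}$ lies in $D$ and is maximal in $C$, so $D^0$ has full rank in $C$. Every maximal $k$-torus $T_2'^{\,\sssp}$ of $D^0$ exists over $k$ by Grothendieck's theorem on the existence of maximal tori over $k$. Such a torus is maximal in $C$ and in $G_2^\sss$, because it is $G_2^\sss(k_s)$-conjugate to a maximal torus of full rank, and it is centralized by $\vk(T_1')$.

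\textbf{Step 3.} Take $T_2'=\cZ_{G_2}(T_2'^{\,\sssp})$, a principal quasi-torus defined over $k$. By construction, $\vk(T_1')\subseteq T_2'$. To check the inclusion scheme-theoretically, I would argue with $R$-points and pass to $k_s$, using Lemma \ref{l:elementary} and the fact that $\vk(T_1')$ is smooth, hence determined by its $k_s$-points.

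With Step 2 done this way, Step 1 only serves to show that $D^0$ has full rank, and the argument never needs to choose a $k$-rational conjugating element.
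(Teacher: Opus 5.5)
Your proposal is correct and takes essentially the same route as the paper. You use a geometric conjugation of maximal tori of $G_1^\sss$ to show that the centralizer of $\vk(T_1')$ in $G_2^\sss$ contains, geometrically, a maximal torus of $G_2^\sss$; you then take a maximal $k$-torus of the identity component of that centralizer (Grothendieck's theorem) and its centralizer in $G_2$ as $T_2'$. The differences are cosmetic: the intermediate group $C$ is unnecessary, because $D^0$ equals the identity component of $\cZ_{G_2^\sss}(\vk(T_1'))$, which is the group $H'^{\,0}$ the paper uses directly, and the paper conjugates over $\kbar$ rather than over $k_s$.
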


\begin{proof}
Write $H=\cZ_{G_2^\sss}(\vk(T_1))$.
Since $\vk(T_1)\subseteq T_2$ and $T_2$ is commutative,
we have $H\supseteq T_2^\sssp$, where $T_2^\sssp\coloneqq T_2\cap G_2^\sss$
is a maximal torus in $G_2^\sss$.
Hence, any maximal torus of $H_\kbar$
is a maximal torus of $G^\sss_{2,\kbar}$\hs.

Let $T_1'\subseteq G_1$ be any principal quasi-torus.
Then $T_1'=\cZ_{G_1}(T_1^{\prime\,\sssp})$,
where $T_1^{\prime\,\sssp}$ is a maximal torus in $G_1^\sss$.
All maximal tori in $G_1^\sss$ are conjugate over $\kbar$;
see \cite[Theorem 6.4.1]{Springer-LAG}.
Hence, there exists an element $g\in G_1^\sss(\kbar)$ such that
\begin{equation}\label{e:max-tori}
T_{1,\kbar}^{\prime\, \sssp}=g\cdot T_{1,\kbar}^\sssp\cdot g^{-1}.
\end{equation}
Set $H'=\cZ_{G_2^\sss}\big(\vk(T_1')\big)$.
Then $H'$ is a $k$-subgroup of $G_2^\sss$.
It follows from \eqref{e:max-tori}
that we have $H'_\kbar=\vk(g)\cdot H_\kbar\cdot\vk(g)^{-1}$,
and therefore every maximal torus of $H'_\kbar$
is a maximal torus of $G^\sss_{2,\kbar}$\hs.

On the other hand, the connected $\kbar$-group $H^{\prime\,0}_\kbar$  contains a maximal torus
defined over $k$, that is, coming from a $k$-torus $T_{H^{\prime\,0}}$ of $H^{\prime\,0}$;
see \cite[Theorem 13.3.6 and Remark 13.3.7]{Springer-LAG}.
Write $T_2^{\prime\,\sssp}=T_{H^{\prime\,0}}$; then $T_2^{\prime\,\sssp}$ is a maximal torus of $G_2^\sss$.
Take $T'_2=\cZ_{G_2}(T_2^{\prime\,\sssp})$; then $T'_2$ is a principal quasi-torus in $G_2$.
Since $\vk(T_1')$ commutes with $H'$, it commutes with $T_2^{\prime\,\sssp}=T_{H^{\prime\,0}}\subseteq H'$.
Thus $\vk(T_1')\subseteq \cZ_{G_2}(T_2^{\prime\,\sssp})= T_2'$, as desired.
\end{proof}

\begin{definition}
A homomorphism $\vk \colon G_1\to G_2$ of quasi-connected reductive $k$-groups
is called {\em principal} if for some (and hence every)
principal quasi-torus $T_1\subseteq G_1$
there exists a principal quasi-torus $T_2\subseteq G_2$
such that   $\vk(T_1)\subseteq T_2$.
\end{definition}

By Lemma \ref{l:short-exact}(ii), a faithfully flat homomorphism $\pi$ as in Lemma  \ref{l:l0-short-exact}
is principal.

\begin{remark}
Every homomorphism $\vk \colon G_1\to G_2$ of quasi-connected reductive $k$-groups
with  $G_1$ {\em connected} is principal.
Indeed, let $T_1^\sssp\subset G_1^\sss$ be a maximal torus, and write
\[T_1^0=\cZ_{G_1^0}(T_1^\sssp)\subseteq G_1^0,\quad\  T_1=\cZ_{G_1}(T_1^\sssp)\subseteq G_1.\]
Since $G_1$ is connected, we have $T_1=T_1^0$.
The torus $\vk(T_1^0)\subseteq G_2^0$ is contained in some maximal torus $T_2^0$ of $G_2^0$.
Set
\[T_2^\sssp=T_2^0\cap G_2^\sss, \quad\ T_2=\cZ_{G_2}(T_2^\sssp).\]
Then $\vk(T_1)=\vk(T_1^0)\subseteq T_2^0\subseteq T_2$, and so $\vk$ is principal, as desired.
\end{remark}

\begin{example}\label{ex:PU2}
Let $k=\R$, $H={\rm U}_2$, $G={\rm PU}_2\coloneqq H/Z(H)$.
Let $\pi\colon H\to G$ denote the canonical homomorphism.
Consider the elements $\tilde b_1,\tilde b_2\in G^\ssc(\R)=H^\sss(\R)$ given by the following matrices:
\[ \tilde b_1=\SmallMatrix{0 &1\\ -1&0},\quad \tilde b_2=\SmallMatrix{i &0\\0 &-i}.\]
Then
\[\tilde b_1^2=-1, \quad \tilde b_2^2=-1,\quad\tilde b_1 \tilde b_2=-\tilde b_2\tilde b_1.\]
Let $b_1$ and $b_2$ denote the images in $G(\R)$ of $\tilde b_1$ and $\tilde b_2$, respectively,
and let $B\subset G$ denote the subgroup generated by $b_1$ and $b_2$.
Then
\[ b_1^2=1,\quad b_2^2=1,\quad b_1 b_2=b_2 b_1.\]
We see that $B$ is a (non-cyclic)  group of order 4, hence a quasi-torus.
The quasi-torus $B\subset G$ is not contained in any principal quasi-torus $T_G\subseteq G$,
because by Lemma \ref{l:short-exact}(ii), $\pi^{-1}(T_G)$ is a principal quasi-torus $T_H$ of $H$,
hence a commutative $\R$-group, while $\pi^{-1}(B)$ is not commutative.
\end{example}

\begin{remark}\label{ex:PU2-not-principal}
For $B$ and $G$ as in Examples \ref{ex:intro} and \ref{ex:PU2},
the inclusion homomorphism of quasi-connected reductive $\R$-groups $\iota\colon B\into G$
is {\em not} principal.
Indeed,  $B$ is a principal quasi-torus in $B$, but $\iota(B)=B$
is not contained in a principal quasi-torus of $G$.
\end{remark}

\section{A Picard crossed module from a quasi-connected reductive group}
\label{s:Picard}

Let $G$ be a quasi-connected reductive group over a field $k$.
We write $Z^\sssp=Z(G^\sss)$ and $Z^\sscp=Z(G^\ssc)$.
By Corollary \ref{c:cZ_G} we have
\begin{equation}\label{e:Z^sssp}
Z^\sssp=Z(G)\cap G^\sss.
\end{equation}
It is well known that $Z^\sscp=\rho^{-1}(Z^\sssp)$.
The homomorphisms
$G^\ssc\onto G^\sss\into G$ induce canonical homomorphisms
\begin{equation} \label{e:ad}
G^\ssc/Z^\sscp\isoto G^\sss/Z^\sssp\isoto G/Z(G),
\end{equation}
which are isomorphisms.
Indeed, for the first homomorphism, this is well known.
For the second one, the injectivity follows from \eqref{e:Z^sssp}.
Since by Definition \ref{d:q-c-r}(3) we have $G=Z(G)\cdot G^\sss$,
by Lemma \ref{l:AN} the composite homomorphism $G^\sss\into G\onto G/Z(G)$ is faithfully flat,
and by Fact \ref{f:W-ff} the induced homomorphism $G^\sss/Z^\sssp\to G/Z(G)$
is faithfully flat as well.

\begin{construction}\label{cons-braiding}
Following an idea of Deligne  \cite[\S2.0.2]{Deligne},
we observe that the morphism of $k$-varieties given by
the commutator map
\[[\tdash,\tdash]\colon G^\ssc\times_k G^\ssc\to G^\ssc,\  (x,y)\mapsto [x,y]\coloneqq xyx^{-1}y^{-1}.\]
factors via a morphism of $k$-varieties
\[(G^\ssc/Z^\sscp)\times (G^\ssc/Z^\sscp)\to G^\ssc.\]
Composing the latter morphism with the  homomorphism $G\to G/Z(G)\cong G^\ssc/Z^\sscp$ obtained using  \eqref{e:ad},
we obtain a morphism of $k$-varieties
\begin{equation}\label{e:cons-br}
\br\colon G\times_k G\to G^\ssc
\end{equation}
lifting
\[ [\tdash,\tdash]\colon\, G\times_k G\to G,\quad\ (g_1,g_2)\mapsto [g_1,g_2]\coloneqq g_1\hs g_2\hs g_1^{-1} g_2^{-1}\hs.\]

Let $g_1,g_2\in G(R)$ for some $k$-algebra $R$.
Since $G=Z(G)\cdot G^\sss$ by Definition \ref{d:q-c-r}(3),
and the homomorphism $G^\ssc\to G^\sss$ is faithfully flat, we can write
\[g_i=z_i\cdot \rho(s_i)\quad\  \text{with  $z_i\in Z(G)(R')$, \, $s_i\in G^\ssc(R')$\ \ for $i=1,2$}\]
for some faithfully flat  homomorphism $R\into R'$ (we use Fact \ref{f:W-ff}).
For each $i$, the images of $g_i$, $\rho(s_i)$, and $s_i$  in
\[ \big(G/Z(G)\big)(R')\cong(G^\sss/Z^\sssp)(R')\cong (G^\ssc/Z^\sscp)(R') \]
coincide.
It follows from the definition of $\br$ that
\begin{equation}\label{e:br-br}
\{g_1,g_2\}=[s_1,s_2].
\end{equation}
If $G^\sss$ is {\em simply connected}, then $\rho(s_i)=s_i$, and we obtain that
\begin{equation}\label{e:br-sc}
\{g_1,g_2\}=[s_1,s_2]=\big[\rho(s_1),\rho(s_2)\big]=[g_1,g_2].
\end{equation}
\end{construction}

\begin{proposition}\label{p:br}
The morphism $\br$ of \eqref{e:cons-br} is a {\emm braiding} of the crossed module $(G^\ssc,G,\rho,\theta)$.
Namely, the following equalities hold (cf. \cite[Definition 1.7]{CFFL})
\begin{align}
\rho\big(\{g_1,g_2\}\big)&=[g_1,g_2],\tag{\sf Br1}\label{e:Br1}\\
\{\rho(s'),g\}&=s'\cdot{\ha}^\upg (s')^{-1},\tag{\sf Br2}\label{e:Br2}\\
\{g,\rho(s')\}&={\ha}^\upg s'\cdot (s')^{-1},\tag{\sf Br3}\label{e:Br3}\\
\{g_1,\hs g_2 g_3\}&=\{g_1,g_2\}\cdot {\ha}^{g_2}\{g_1, g_3\},\tag{\sf Br4}\label{e:Br4}\\
\{g_1 g_2,\hs g_3\}&={\ha}^{g_1} \{g_2,g_3\}\cdot\{g_1,g_3\}\tag{\sf Br5}\label{e:Br5}
\end{align}
for every $k$-algebra $R$ and
for all $s'\in G^\ssc(R),\ g, g_1,g_2,g_3\in G(R)$.
Moreover, the braiding $\br$ is {\emm symmetric}, that is,
\begin{align}
\{g_1,g_2\}\cdot \{g_2,g_1\}&=1 \tag{\sf Sym}\label{e:Sym}
\end{align}
for all $g_1,g_2\in G(R)$.
Furthermore, the symmetric braiding $\br$ is {\emm Picard}, that is,
\begin{align}
\{g,g\}&=1\tag{\sf Pic}\label{e:Pic}
\end{align}
for all $g\in G(R)$.
\end{proposition}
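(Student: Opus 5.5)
The plan is to reduce every identity to a computation in the group $G^\ssc(R')$ for a suitable faithfully flat extension $R\into R'$. First, by Fact \ref{f:W-inj}, $G^\ssc(R)\to G^\ssc(R')$ is injective, so it suffices to check each identity after such a base change. Given $g,g_1,g_2,g_3\in G(R)$, I pass to a faithfully flat $R'$ over which $g_i=z_i\rho(s_i)$ with $z_i\in Z(G)(R')$ and $s_i\in G^\ssc(R')$, as in Construction \ref{cons-braiding}. Then \eqref{e:br-br} gives $\{g_i,g_j\}=[s_i,s_j]$.

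Next I need the action in these terms: ${}^{g_i}s={}^{z_i\rho(s_i)}s={}^{z_i}(s_is s_i^{-1})=s_iss_i^{-1}$. This uses Proposition \ref{p:q-ab}(i) ($Z(G)$ acts trivially) and \eqref{e:CM1}. Moreover $g_2g_3=z_2z_3\rho(s_2s_3)$ and $g_1g_2=z_1z_2\rho(s_1s_2)$ are decompositions of the same shape, so \eqref{e:br-br} also gives $\{g_1,g_2g_3\}=[s_1,s_2s_3]$ and $\{g_1g_2,g_3\}=[s_1s_2,s_3]$. For $\rho(s')$ I use the decomposition $z=1$, $s=s'$.

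With this in hand, each axiom becomes a standard commutator identity in the group $G^\ssc(R')$:
\begin{itemize}
\item \eqref{e:Br1}: $\rho([s_1,s_2])=[\rho(s_1),\rho(s_2)]=[g_1,g_2]$, because the central factors $z_i$ cancel in commutators.
\item \eqref{e:Br2}: $[s',s]=s'\cdot s s'^{-1}s^{-1}=s'\cdot{}^{g}(s')^{-1}$.
\item \eqref{e:Br3}: $[s,s']={}^{g}s'\cdot s'^{-1}$.
\item \eqref{e:Br4}: $[x,yw]=[x,y]\cdot y[x,w]y^{-1}$.
\item \eqref{e:Br5}: $[xy,w]=x[y,w]x^{-1}\cdot[x,w]$.
\item \eqref{e:Sym}: $[x,y][y,x]=1$.
\item \eqref{e:Pic}: $[x,x]=1$.
\end{itemize}

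The main subtlety is the well-definedness of \eqref{e:br-br}, namely that $[s_1,s_2]$ does not depend on the chosen decomposition. This is already built into the construction: $\br$ factors through $(G^\ssc/Z^\sscp)^2$ via the isomorphisms \eqref{e:ad}, and the commutator map on $G^\ssc$ factors through $G^\ssc/Z^\sscp$. One point to check is that evaluating $\br$ on $R'$-points of $G/Z(G)$ coming from $g_i$ agrees with the commutator of the lifts $s_i$. This holds by functoriality of the factored morphism together with the fact, already used in the construction, that $g_i$, $\rho(s_i)$ and $s_i$ have the same image in $(G^\ssc/Z^\sscp)(R')$.
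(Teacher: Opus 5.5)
Your proposal is correct and is essentially the paper's proof. The paper also passes to a faithfully flat $R\into R'$, writes $g_i=z_i\rho(s_i)$, and uses \eqref{e:br-br} to turn each of \eqref{e:Br1}--\eqref{e:Pic} into an elementary commutator identity in $G^\ssc(R')$. You additionally spell out steps the paper leaves tacit: injectivity via Fact \ref{f:W-inj}, the formula ${}^{z\rho(s)}x=sxs^{-1}$ from Proposition \ref{p:q-ab}(i) and \eqref{e:CM1}, and the decompositions of $g_2g_3$, $g_1g_2$ and $\rho(s')$.
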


\begin{proof}
We write
\[ g=z\cdot \rho(s),\quad g_i=z_i\cdot \rho(s_i)\quad \text{for}\ g,g_i\in G(R)\subseteq G(R'),
   \, z,z_i\in Z_G(R'),\, s,s_i\in G^\ssc(R'), \, i=1,2,3\]
for some faithfully flat homomorphism $R\into R'$.
Then by \eqref{e:br-br} the desired equalities become
\begin{align}
\rho[s_1,s_2]&=[g_1,g_2],\tag{\sf Br1$'$}\label{e:Br1'}\\
[s',s]&=s'\cdot s\, s^{\prime\hs -1} s^{-1},\tag{\sf Br2$'$}\label{e:Br2'}\\
[s,s']&=s\hs s' s^{-1}\cdot s^{\prime\hs -1},\tag{\sf Br3$'$}\label{e:Br3'}\\
[s_1,\hs s_2 s_3]&=[s_1,s_2]\cdot s_2[s_1, s_3]s_2^{-1},\tag{\sf Br4$'$}\label{e:Br4'}\\
[s_1 s_2,\hs s_3]&=s_1 [s_2,s_3] s_1^{-1}\cdot[s_1,s_3],\tag{\sf Br5$'$}\label{e:Br5'}\\
[s_1,s_2]&\cdot [s_2,s_1]=1, \tag{\sf Sym$'$}\label{e:Sym'}\\
[s,s]&=1,\tag{\sf Pic$'$}\label{e:Pic'}
\end{align}
which are obvious.
\end{proof}

\begin{definition}
For a quasi-connected reductive $k$-group $G$,
we consider the following  5-tuple: $(G^\ssc, G, \rho, \theta,\br)$,
where $(G^\ssc, G, \rho, \theta)=\CrM(G)$ is our crossed module and $\br$ is the Picard braiding
of Construction \ref{cons-braiding}. We say that $(G^\ssc, G, \rho, \theta,\br)$
is the {\em Picard crossed module obtained from} $G$, and write
\[ \PCrM(G)=(G^\ssc, G, \rho, \theta,\br).\]
\end{definition}

\begin{remark}\label{r:not-braiding}
By Proposition \ref{p:functor-CrM}, a homomorphism of quasi-connected reductive $k$-groups
$\vk\colon G_1\to G_2$ induces a morphism of crossed modules $\CrM(G_1)\to \CrM(G_2)$.
However, in general it does not induce a morphism of {\em Picard} crossed modules $\PCrM(G_1)\to \PCrM(G_2)$.
Indeed, in Example \ref{ex:PU2},
for the inclusion homomorphism of quasi-connected reductive $\R$-groups $\iota\colon B\into G$,
for $b_1,b_2\in B(\R)\subset G(\R)$ we have $\{b_1,b_2\}_B=1$ (because $B^\ssc=1$),
but $\big\{\iota(b_1),\iota(b_2)\big\}_G=[\tilde b_1, \tilde b_2]=-1\in G^\ssc(\R)={\rm SU}_2$\hs.
We see that $\big\{\iota(b_1),\iota(b_2)\big\}_G\neq \iota^\ssc\big(\{b_1,b_2\}_B\big)$.
\end{remark}

\begin{definition}
\label{d:R}
Let $G$ be a quasi-connected reductive group over a field $k$.
A {\em $t$-extension of $G$} is an extension
\begin{equation}\label{e:t-ext}
1\to S\to H\labelt\pi G\to 1
\end{equation}
where $S$ is a $k$-torus and $H$ is a quasi-connected reductive $k$-group
such that $H^\sss$ is simply connected.
\end{definition}

Note that by Lemma \ref{l:l0-short-exact} every $t$-extension is central.

Definition \ref{d:R} generalizes \cite[Definition 2.1]{BGA}
of $t$-extension of a connected reductive $k$-group.
Note that $z$-extensions of Kottwitz \cite{Kottwitz-82}
and flasque resolutions of Colliot-Th\'el\`ene \cite{CT}
are special cases of $t$-extensions in the case when $G$ is connected.

\begin{lemma}\label{l:cR}
Every quasi-connected reductive $k$-group $G$ admits a $t$-extension.
\end{lemma}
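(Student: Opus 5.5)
Our plan is to build the extension by pushing out a $z$-extension of $G^0$ along the given quotient description of $G$. By Definition \ref{d:Labesse} (equivalent to Definition \ref{d:q-c-r} by \cite[Theorems 2.14 and 2.18]{BGR}), we may write $G=\ker(\lambda\colon H_0\to S_0)$ with $H_0$ a connected reductive $k$-group and $S_0$ a $k$-torus. For $H_0$ we use \cite[Proposition 2.2]{BGA}, which gives a $t$-extension of connected reductive groups
\[1\to S\to \tilde H_0\labelt{\tilde\pi}H_0\to 1,\]
with $S$ a $k$-torus and $\tilde H_0^\sss$ simply connected.

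We then set $H\coloneqq \tilde\pi^{-1}(G)=\ker(\lambda\circ\tilde\pi\colon \tilde H_0\to S_0)$, a closed $k$-subgroup of $\tilde H_0$. By construction $H$ is the kernel of a surjective homomorphism from a connected reductive group onto a torus; the surjectivity holds because $\lambda$ and $\tilde\pi$ are faithfully flat. We also get a short exact sequence $1\to S\to H\to G\to 1$, obtained by restricting $\tilde\pi$. Exactness follows directly from Fact \ref{f:W-ff}: any $g\in G(R)$ lifts fppf-locally to $\tilde H_0$, and the lift automatically lies in $H$.

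Three points remain to be checked. \emph{Smoothness of $H$.} This comes from the exact sequence, since $S$ and $G$ are smooth (\cite[Proposition 1.62(a)]{Milne-AG}). With this, $H$ is quasi-connected reductive in the sense of Definition \ref{d:Labesse}. \emph{$H^\sss$ simply connected.} Here we want $H^\sss=\tilde H_0^\sss$. Indeed $H^0\subseteq \tilde H_0$, and $\tilde H_0^\sss$ is connected and lies in $\ker(\lambda\tilde\pi)$, since tori admit no nontrivial maps from semisimple groups. Hence $\tilde H_0^\sss\subseteq H^0$. Moreover $H^0$ contains $\tilde H_0^\sss$ and is contained in $\tilde H_0$, so its derived group is squeezed between $[\tilde H_0^\sss,\tilde H_0^\sss]=\tilde H_0^\sss$ and $[\tilde H_0,\tilde H_0]=\tilde H_0^\sss$. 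Thus $H^\sss=\tilde H_0^\sss$, which is simply connected. \emph{$S$ a torus.} This holds by construction.

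The main obstacle is the reliance on the equivalence of definitions from \cite{BGR} to obtain $G$ as a kernel $H_0\to S_0$. If we want to avoid that, we would instead start from Definition \ref{d:q-c-r} and take a $t$-extension $\tilde G^0\to G^0$. We would then glue: form $H=(Z'\times \tilde G^{0})/\ldots$, where $Z'$ is a suitable extension of $Z(G)$ by a torus. This route is messier because $Z(G)$ may be non-smooth. We therefore prefer the kernel approach above. Its only delicate step is the smoothness of $H$, and that is handled by the exact sequence.
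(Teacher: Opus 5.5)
Your proof is correct and follows the paper's own argument. You write $G=\ker[H_0\twoheadrightarrow S_0]$ via Definition \ref{d:Labesse}, take a $t$-extension $\tilde\pi\colon \tilde H_0\to H_0$ from \cite[Proposition 2.2]{BGA}, and set $H=\tilde\pi^{-1}(G)$; your checks of exactness, of the smoothness of $H$, and of $H^\sss=\tilde H_0^\sss$ supply details the paper leaves implicit. One small point: your opening sentence calls the construction a pushout of a $z$-extension of $G^0$, but what you actually carry out is a pullback of a $t$-extension of $H_0$, which is the right construction.
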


\begin{proof}
According to Definition \ref{d:Labesse},
we may write $G=\ker[G_1\twoheadrightarrow T_0]$
where $G_1$ is a {\em connected} reductive $k$-group, and $T_0$ is a $k$-torus.
There exists a $t$-extension of $G_1$
\begin{equation*}
1\to S\to H_1\labelt{\pi} G_1\to 1;
\end{equation*}
see \cite[Proposition-Definition 3.1]{CT} or \cite[Proposition 2.2]{BGA}.
Set $H=\pi^{-1}(G)\subseteq H_1$.
Then
\begin{equation*}
1\to S\to H\to G\to 1
\end{equation*}
is a desired $t$-extension of $G$.
\end{proof}

Let $\pi\colon H\to G$ be a $t$-extension and let $g_1,g_2\in G(R)$ for some $k$-algebra $R$.
For $i=1,2$, using Fact \ref{f:W-ff} we write $g_i=\pi(h_i)$
for some faithfully flat homomorphism $R\into R'$ and  $h_i\in H(R')$.
Using the equality $G=Z(G)\cdot G^\sss$ of Definition \ref{d:q-c-r}(3)
and using Fact \ref{f:W-ff} applied to the faithfully flat homomorphism $H^\sss=G^\ssc\to G^\sss$,
we may write
\[ h_i=z_{i,\mH}\cdot s_{i}\quad\ \text{with}\ \, z_{i,\mH}\in Z(H)(R''),\ s_{i}\in H^\sss(R'')=G^\ssc(R'')\]
for some faithfully flat homomorphism $R'\into R''$.
Then
\[g_i=\pi(h_i)=\pi(z_{i,\mH})\cdot \pi(s_{i})=\pi(z_{i,\mH})\cdot \rho(s_{i})\]
where  $s_{i}\in G^\ssc(R'')= H^\sss(R'')$
and by Lemma  \ref{l:short-exact}(i)
we have $\pi(z_{i,\mH})\in Z(G)(R'')$.
Therefore, by \eqref{e:br-br} and \eqref{e:br-sc}
we have
\begin{equation}\label{e:br-H}
\{g_1,g_2\}=[s_1,s_2]=[h_1,h_2].
\end{equation}

\begin{definition}\label{def:r-of-morph}
Let $\vk\colon G\to G'$ be a homomorphism of quasi-connected reductive $k$-groups.
A {\em $t$-extension of $\vk$}
is a commutative diagram of quasi-connected reductive $k$-groups
\begin{equation}\label{e:t-vk}
\begin{aligned}
\xymatrix{
H\ar[d]_-\pi\ar[r]^-\lambda  & H'\ar[d]^-{\pi'}\\
G\ar[r]^-\vk   & G'
}
\end{aligned}
\end{equation}
for which the vertical arrows $H\to G$ and $H'\to G'$ are $t$-extensions.
\end{definition}

\begin{proposition}\label{p:principal-induces}
A  homomorphism $\vk\colon G\to G'$ of quasi-connected reductive $k$-groups that
{\emm admits a $t$-extension} preserves the Picard braiding and thus
induces a morphism of braided crossed modules $\vk_*\colon \PCrM(G)\to\PCrM(G')$
(with the evident definition of morphisms of braided crossed modules).
\end{proposition}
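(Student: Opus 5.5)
We need to show $\{\vk(g_1),\vk(g_2)\}_{G'}=\vk^\ssc(\{g_1,g_2\}_G)$ for all $g_1,g_2\in G(R)$. The plan is to compute both sides through the $t$-extension using formula \eqref{e:br-H}, which expresses the braiding of a quasi-connected reductive group as an honest commutator of lifts to a $t$-extension.

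Fix a $t$-extension \eqref{e:t-vk} of $\vk$, with $\lambda\colon H\to H'$. Given $g_1,g_2\in G(R)$, we first choose a faithfully flat $R\into R'$ and lifts $h_i\in H(R')$ with $\pi(h_i)=g_i$ (Fact \ref{f:W-ff}). By \eqref{e:br-H} we get $\{g_1,g_2\}_G=[h_1,h_2]$, viewed in $H^\sss(R')=G^\ssc(R')$. Here we must also make explicit the identification $H^\sss\cong G^\ssc$: since $H^\sss$ is simply connected and $\pi|_{H^\sss}\colon H^\sss\to G^\sss$ is a central isogeny (Construction \ref{con:short-exact}), it is a universal covering, and \eqref{e:br-H} is meant via this isomorphism. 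Next, $\pi'(\lambda(h_i))=\vk(\pi(h_i))=\vk(g_i)$, so the elements $\lambda(h_i)\in H'(R')$ lift $\vk(g_i)$. Applying \eqref{e:br-H} to $G'$ then gives $\{\vk(g_1),\vk(g_2)\}_{G'}=[\lambda(h_1),\lambda(h_2)]=\lambda([h_1,h_2])$, read in $H'^\sss=G'^\ssc$.

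It remains to check that under the identifications $H^\sss\cong G^\ssc$ and $H'^\sss\cong G'^\ssc$, the restriction $\lambda|_{H^\sss}\colon H^\sss\to H'^\sss$ coincides with $\vk^\ssc$. Both maps lift $\vk|_{G^\sss}\colon G^\sss\to G'^\sss$ through the universal coverings, because $\pi'\circ\lambda=\vk\circ\pi$. The uniqueness part of the functoriality of universal coverings (Proposition \ref{p:GA}) then forces $\lambda|_{H^\sss}=\vk^\ssc$. Note that $\lambda(H^\sss)\subseteq H'^\sss$ automatically, since $H^\sss=[H^0,H^0]$. I expect this compatibility to be the only real obstacle, namely ensuring the uniqueness statement applies to homomorphisms between simply connected covers lifting a given map. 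Once it is in place we obtain equality in $G'^\ssc(R')$. Since both sides are defined over $R$, Fact \ref{f:W-inj} gives equality in $G'^\ssc(R)$.

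Finally, $\vk_*=(\vk^\ssc,\vk)$ is already a morphism of crossed modules by Proposition \ref{p:functor-CrM}. Together with the preservation of $\br$ just proved, it is therefore a morphism of braided (Picard) crossed modules $\PCrM(G)\to\PCrM(G')$.
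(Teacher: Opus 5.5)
Your proof is correct and follows the paper's argument: you lift $g_1,g_2$ to $h_1,h_2$ in the $t$-extension $H$, apply \eqref{e:br-H} on both sides, and conclude from $\lambda([h_1,h_2])=[\lambda(h_1),\lambda(h_2)]$. You also spell out two details the paper leaves implicit in ``we may identify $G^\ssc=H^\sss$'', and both are handled correctly: that $\lambda|_{H^\sss}$ agrees with $\vk^\ssc$ under the identifications $H^\sss\cong G^\ssc$ and $H^{\prime\,\sss}\cong G^{\prime\,\ssc}$ (by the uniqueness in Proposition \ref{p:GA}(i)), and the descent of the equality from $R'$ to $R$ (by Fact \ref{f:W-inj}).
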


\begin{proof}
By  Proposition \ref{p:functor-CrM} the pair $(\vk^\ssc,\vk)$ is a morphism of crossed modules of $k$-groups
$\CrM(G)\to\CrM(G')$.
It remains to show that $(\vk^\ssc,\vk)$ preserves the canonical braiding.
We must show that for every $k$-algebra $R$ and all $g_1,g_2\in G(R)$,
with  evident notations we have
\[\vk^\ssc\big(\hs\{g_1,g_2\}_G\hs\big)=\big\{\hs\vk(g_1), \vk(g_2)\hs\big\}_{\hm G'}\hs.\]
By assumption, we have a commutative diagram \eqref{e:t-vk} in which the vertical arrows are $t$-extensions.
Write
$g_1=\pi(h_1)$ and  $g_2=\pi(h_2)$
with $h_1,h_2\in H(R')$ for some faithfully flat homomorphism $R\into R'$.
We may identify $G^\ssc=H^\sss$ and similarly for $G'$; then
$\{g_1,g_2\}\in H^\sss(R')$.
By  \eqref{e:br-H} we have
\[ \{g_1,g_2\}= [h_1,h_2],\]
and similarly for the $t$-extension $H'$ of $G'$.
Thus it remains to prove the formula
\begin{align*}
&\lambda\big([h_1,h_2]\big)=\big[\lambda(h_1),\lambda(h_2)\big],
\end{align*}
which follows immediately from the fact that $\lambda$ is a homomorphism.
\end{proof}

\begin{theorem}\label{t:t-ext-principal}
Every {\emm principal} homomorphism of quasi-connected reductive $k$-groups admits a $t$-extension.
\end{theorem}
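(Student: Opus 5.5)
The plan is to build the $t$-extension of $\vk\colon G\to G'$ by pulling back a $t$-extension of $G'$ along $\vk$, and then resolving the resulting group once more. First choose, by Lemma \ref{l:cR}, a $t$-extension $1\to S'\to H'\labelt{\pi'} G'\to 1$, and form the fiber product
\[E\coloneqq G\times_{G'}H',\]
with projections $p\colon E\to G$ and $q\colon E\to H'$. Then $p$ is faithfully flat with kernel $S'$, so $E$ is smooth and $E^0$ is reductive, being an extension of $G^0$ by a torus. If $E$ is quasi-connected reductive, choose a $t$-extension $1\to S''\to H\labelt{\pi_E} E\to 1$ (Lemma \ref{l:cR}) and set
\[\pi\coloneqq p\circ\pi_E,\qquad \lambda\coloneqq q\circ\pi_E .\]
The diagram \eqref{e:t-vk} then commutes by construction. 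It remains to check that $\ker\pi$ is a torus. It is an extension $1\to S''\to\ker\pi\to S'\to 1$, and $S''$ is central in $H$ by Lemma \ref{l:l0-short-exact}. So the commutator of $\ker\pi$ lands in $S''$ and is bimultiplicative from a connected group to a torus, hence trivial by rigidity. Thus $\ker\pi$ is commutative, smooth, connected, and of multiplicative type, i.e.\ a torus.

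The real content, and the only place where principality is used, is showing that $E$ satisfies Definition \ref{d:q-c-r}. Fix a principal quasi-torus $T\subseteq G$ and a principal quasi-torus $T'\subseteq G'$ with $\vk(T)\subseteq T'$. By Lemma \ref{l:short-exact}(ii), $T_{H'}=\pi'^{-1}(T')$ is a principal quasi-torus of $H'$, hence commutative. So
\[E_T\coloneqq p^{-1}(T)=T\times_{T'}T_{H'}\]
is commutative. It is an extension of the quasi-torus $T$ by the torus $S'$, so it is a quasi-torus (using \cite[Corollary 12.22]{Milne-AG} as in Lemma \ref{l:t-product}). Set $\tilde Z\coloneqq p^{-1}(Z(G))\subseteq E_T$, using $Z(G)\subseteq T$; it is of multiplicative type.

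The key claim is that $\tilde Z$ is central in $E$. Take $e=(z,h')\in\tilde Z(R)$. Since $z$ is central in $G$, the map $x\mapsto [e,x]$ takes values in $S'$, which is central in $E$. On $E_T$ it is trivial because $E_T$ is commutative. On $E^\sss$ it is a homomorphism $E^\sss\to S'$ (the values are central), hence trivial because a semisimple group has no nontrivial characters. Next, $G=Z(G)\cdot G^\sss$, and $E^\sss\to G^\sss$ is faithfully flat since its kernel is finite in $S'$. By Lemma \ref{l:AN} and Fact \ref{f:W-ff} this gives $E=\tilde Z\cdot E^\sss\cdot S'=\tilde Z\cdot E^\sss$. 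Consequently $e$ commutes with all of $E$ (after fppf localization, using Fact \ref{f:W-inj}), so $\tilde Z\subseteq Z(E)$. Conversely, by Lemma \ref{l:preimage-center} applied to $p$, we have $p(Z(E))\subseteq Z(G)$, so $Z(E)\subseteq\tilde Z$. Hence $Z(E)=\tilde Z$ is of multiplicative type and $E=Z(E)\cdot E^\sss$, which is what Definition \ref{d:q-c-r} requires.

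I expect this centrality step to be the main obstacle, specifically the handling of the non-commutative part. For a non-principal $\vk$, as in Example \ref{ex:PU2}, the preimage of $Z(G)$ (or of $B$) is non-commutative, and the argument fails exactly at the step "trivial on $E_T$". So the proof must route through the commutative group $T\times_{T'}T_{H'}$ supplied by principality and Lemma \ref{l:short-exact}(ii). The remaining verifications (smoothness, exactness, and the identification $H^\sss$ simply connected, which comes from the choice of $H$) are routine.
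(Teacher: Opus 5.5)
Your proof is correct. It rests on the same mechanism as the paper's proof:
\begin{itemize}
\item a fiber product over $G'$ with a $t$-extension $H'\to G'$;
\item principality entering only through the commutativity of a $\pi'$-preimage, via Lemma~\ref{l:short-exact}(ii);
\item a check of Definition~\ref{d:q-c-r} showing that the preimage of $Z(G)$ is commutative and central, and that together with the semisimple part it gives the whole group.
\end{itemize}

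The organization differs. The paper (Proposition~\ref{p:H3}) fixes $t$-extensions $H_1\to G_1$ and $H_2\to G_2$ in advance and forms $H_3=H_1\times_{G_2}H_2$ in one step. Note that $H_3=H_1\times_{G_1}E$ for your $E=G_1\times_{G_2}H_2$, so the paper's group is one particular $t$-extension of your $E$. The paper therefore has to verify by hand that $H_3^\sss$ is simply connected, via the central isogeny onto $H_1^\sss$. It also needs $Z(H_1)=\pi_1^{-1}(Z(G_1))$ to make $Z_3$ commutative. You get simple connectedness for free from Lemma~\ref{l:cR} applied to $E$. The only price is the easy fact that a central extension of a torus by a torus is a torus.

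For centrality, the paper's Lemma~\ref{l:commutes} uses uniqueness of lifts to the universal cover (Proposition~\ref{p:GA}(i)). You instead note that $x\mapsto[e,x]$ is a homomorphism into the central torus $S'$. You then kill it on $E^\sss$ because a semisimple group scheme admits no nontrivial homomorphism to a torus. Since $e\in E(R)$, you need this over an arbitrary $k$-algebra $R$, not just over a field. This is the same relative fact the paper uses in Step~3 of the proof of Proposition~\ref{p:Conrad-existence}.

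Your route is more modular: it isolates the one substantive point, that $G\times_{G'}H'$ is quasi-connected reductive when $\vk$ is principal. The paper's route gives a slightly stronger output. For any prescribed $t$-extensions $H_1\to G_1$ and $H_2\to G_2$, there is a $t$-extension $H_3\to G_1$ mapping compatibly to both.
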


Theorem \ref{t:t-ext-principal} will be proved in Section \ref{s:t-ext-principal}.
Note that for a homomorphism of {\em connected} reductive $k$-groups,
this is \cite[Lemma 3.3]{BGA}.

\begin{corollary}\label{c:principal-induces}
A {\emm principal} homomorphism $\vk\colon G\to G'$ of quasi-connected reductive $k$-groups
induces a morphism of braided crossed modules $\vk_*\colon \PCrM(G)\to\PCrM(G')$.
\end{corollary}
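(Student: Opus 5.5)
This corollary is a formal consequence of two results already stated. I would argue as follows.

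Let $\vk\colon G\to G'$ be a principal homomorphism of quasi-connected reductive $k$-groups. First, Theorem \ref{t:t-ext-principal} (proved in Section \ref{s:t-ext-principal}) supplies a $t$-extension of $\vk$. Concretely, this is a commutative diagram \eqref{e:t-vk} in which $\pi\colon H\to G$ and $\pi'\colon H'\to G'$ are $t$-extensions and $\lambda\colon H\to H'$ is a homomorphism of quasi-connected reductive $k$-groups.

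Second, I would apply Proposition \ref{p:principal-induces} to this diagram. By Proposition \ref{p:functor-CrM}, the pair $(\vk^\ssc,\vk)$ is already a morphism of crossed modules $\CrM(G)\to\CrM(G')$. So it only remains to check that it preserves the braiding, that is,
\[\vk^\ssc\big(\{g_1,g_2\}_G\big)=\big\{\vk(g_1),\vk(g_2)\big\}_{G'}\quad\ \text{for } g_1,g_2\in G(R).\]
To see this, lift $g_i=\pi(h_i)$ with $h_i\in H(R')$, after a faithfully flat extension $R\into R'$. Identify $G^\ssc=H^\sss$ and $G'^\ssc=H'^\sss$. Then \eqref{e:br-H} gives $\{g_1,g_2\}=[h_1,h_2]$. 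Similarly, $\lambda(h_i)\in H'(R')$ lifts $\vk(g_i)$, so $\{\vk(g_1),\vk(g_2)\}=[\lambda(h_1),\lambda(h_2)]$. Since $\lambda$ is a homomorphism, $\lambda([h_1,h_2])=[\lambda(h_1),\lambda(h_2)]$.

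One compatibility needs checking here: under these identifications, $\vk^\ssc$ must coincide with the restriction $\lambda|_{H^\sss}\colon H^\sss\to H'^\sss$. I would deduce this from the uniqueness in Proposition \ref{p:GA}. The reason is that $\lambda|_{H^\sss}$ lifts $\vk|_{G^\sss}$ through the universal coverings $H^\sss\to G^\sss$ and $H'^\sss\to G'^\sss$. Finally, the equality obtained over $R'$ descends to $R$ by Fact \ref{f:W-inj}.

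All the real difficulty lies in the existence of the $t$-extension, Theorem \ref{t:t-ext-principal}, which is deferred. Given that theorem, the corollary itself is immediate.
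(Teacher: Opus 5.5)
Your proof is correct and matches the paper's, which deduces the corollary directly from Theorem~\ref{t:t-ext-principal} together with Proposition~\ref{p:principal-induces}. Your extra check that $\vk^\ssc$ agrees with $\lambda|_{H^\sss}$ (via the uniqueness in Proposition~\ref{p:GA}(i)), and the descent from $R'$ to $R$ via Fact~\ref{f:W-inj}, correctly fill in details that the paper's proof of Proposition~\ref{p:principal-induces} leaves implicit.
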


\begin{proof}
The corollary follows from Theorem \ref{t:t-ext-principal}
and Proposition \ref{p:principal-induces}.
\end{proof}

\begin{remark}\label{r:not-admit-t}
We show that the non-principal homomorphism $\iota\colon B\into G$
of Examples \ref{ex:intro} and \ref{ex:PU2}
does not admit a $t$-extension.
Indeed, let $\pi_B\colon A\to B$
be any $t$-extension of $B$.
By Lemma \ref{l:short-exact}(i) we have
\[Z(A)(\C)=\pi_B^{-1}\big(Z(B)(\C)\big)=\pi_B^{-1}\big(B(\C)\big)=A(\C),\]
hence the group $A(\C)$ is commutative.
Choose preimages $a_1,a_2\in A(\C)$ of $b_1,b_2\in B(\R)$, respectively; then  $a_1$ and $a_2$ commute.

On the other hand, let $\pi\colon H\to G$ be any $t$-extension of $G$,
and let $\lambda\colon A\to H$ be any morphism of $\R$-varieties
that fits into the commutative diagram
\[
\xymatrix{
A\ar[r]^-\lambda\ar[d]_{\pi_B}  &H\ar[d]^-{\pi} \\
B\ar[r]^-\iota                    &G.
}
\]
Let $h_j=\lambda(a_j)\in H(\C)$ for $j=1,2$. Then
\[\pi(h_j)=\pi(\lambda(a_j))=\iota(\pi_B(a_j))=\iota(b_j)=b_j\in G(\R).\]
Consider the preimages $\tilde b_j\in G^\ssc(\R)=H^\sss(\R)\subseteq H(\R)$ of $b_j$
of Example \ref{ex:PU2}.
Then
\begin{equation}\label{e:hi-xi-bi}
h_j=z_j\cdot \tilde b_j\ \  \text{for some}\ \, z_j\in S(\C)\subseteq Z(H)(\C)
\end{equation}
where $S=\ker[H\to G]$ (we use Lemma \ref{l:l0-short-exact}).
Since $\tilde b_1\tilde b_2=-\tilde b_2\tilde b_1\ne \tilde b_1\tilde b_2$, whereas $z_1,z_2$ are central,
we see from \eqref{e:hi-xi-bi} that $h_1$ and $h_2$ do not commute.
Since $a_1$ and $a_2$ commute, whereas their images $h_1=\lambda(a_1)$ and $h_2=\lambda(a_2)$ do not commute,
we see that  $\lambda$ cannot be a homomorphism of $\R$-groups,
which shows that our homomorphism $\iota$ does not admit a $t$-extension.
\end{remark}

\section{A \texorpdfstring{$t$}{t}-extension of a principal homomorphism}
\label{s:t-ext-principal}

In this section we prove Theorem \ref{t:t-ext-principal}.
The following proposition is inspired by \cite[Lemma 2.4.4]{Kottwitz-Duke}.

\begin{proposition}\label{p:H3}
Let $\vk\colon G_1\to G_2$ be a {\emm principal} homomorphism
of quasi-connected reductive $k$-groups, and let
\begin{align*}
&1\to S_1\labelt{\ha} H_1\labelt{\pi_1} G_1\to 1,\\
&1\to S_2\labelt{\ha} H_2\labelt{\pi_2} G_2\to 1
\end{align*}
be any $t$-extensions.
Then there exists a commutative diagram
\begin{equation}\label{e:H3}
\begin{aligned}
\xymatrix{
1 \ar[r] &S_1\ar[r]               &H_1\ar[r]^-{\pi_1}            &G_1\ar[r]              &1\\
1 \ar[r] &S_3\ar[r] \ar[u]\ar[d]    &H_3\ar[r]^-{\pi_3}\ar[u]\ar[d]^-\lambda
                                           &G_1\ar[r]\ar[u]_{\rm id}\ar[d]^-\vk  &1\\
1 \ar[r] &S_2\ar[r]               &H_2\ar[r]^-{\pi_2}            &G_2\ar[r]              &1
}
\end{aligned}
\end{equation}
in which the middle row is a $t$-extension of $G_1$.
\end{proposition}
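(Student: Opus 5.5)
The plan is to follow Kottwitz's fiber-product construction, and to use principality to keep the result quasi-connected. Consider the fiber product
\[P\coloneqq H_1\times_{G_2}H_2=\{(h_1,h_2)\ :\ \vk(\pi_1(h_1))=\pi_2(h_2)\}\subseteq H_1\times H_2,\]
formed with respect to $\vk\circ\pi_1$ and $\pi_2$. The projection $P\to G_1$, $(h_1,h_2)\mapsto\pi_1(h_1)$, is faithfully flat with kernel $S_1\times S_2$, a torus. The two projections $P\to H_1$ and $P\to H_2$ will give the vertical arrows in \eqref{e:H3}, and $\lambda$ will be the second projection. The group $P$ itself may fail to be quasi-connected: its center need not be of multiplicative type, and the equality $P=Z(P)\cdot P^\sss$ may fail. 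So I would replace $P$ by a suitable subgroup $H_3\subseteq P$.

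\textbf{Step 1 (the semisimple part).} Identify $H_i^\sss=G_i^\ssc$ for $i=1,2$. The graph
\[\Gamma_\vk=\{(s,\vk^\ssc(s))\ :\ s\in G_1^\ssc\}\]
lies in $P$, because $\pi_2\circ\vk^\ssc=\vk\circ\rho_1$ on $G_1^\ssc$ (Proposition \ref{p:GA}). Moreover $\Gamma_\vk\cong G_1^\ssc$ is simply connected semisimple. Since $P^0/(S_1\times S_2)\cong G_1^0$, one gets $P^\sss=\Gamma_\vk$ by comparing dimensions, or by the isogeny $P^\sss\to G_1^\sss$ which factors through the projection to $H_1^\sss$.

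\textbf{Step 2 (the candidate center, where principality enters).} Let $Z'$ be the preimage of $Z(G_1)$ in $P$. Take a point $(h_1,h_2)$ of $Z'$.
\begin{itemize}
\item By Lemma \ref{l:short-exact}(i), $h_1\in Z(H_1)$.
\item The element $\pi_2(h_2)=\vk(\pi_1(h_1))$ lies in $\vk(Z(G_1))$, which is contained in $\vk(T_1)$.
\item By principality, $\vk(T_1)\subseteq T_2$. Hence $h_2\in\pi_2^{-1}(T_2)=T_{H_2}$ by Lemma \ref{l:short-exact}(ii).
\end{itemize}
Thus $Z'\subseteq T_{H_1}\times T_{H_2}$, so $Z'$ is commutative. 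I would then check that $Z'$ is smooth and of multiplicative type, using that it is an extension of $Z(G_1)$, or rather of the appropriate quasi-torus, by $S_1\times S_2$. For this I would use \cite[Corollary 12.22]{Milne-AG} as in Lemma \ref{l:t-product}.

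Next, $Z'$ centralizes $\Gamma_\vk$. The component $h_1$ is central in $H_1$. For $h_2$, the commutator map $s\mapsto[h_2,\vk^\ssc(s)]$ has connected source $G_1^\ssc$. Its image lies in $\ker\pi_2\cap H_2^\sss=S_2\cap H_2^\sss$, which is finite, because $\pi_2(h_2)\in\vk(Z(G_1))$ commutes with $\vk(\rho_1(s))$. A morphism from a connected variety to a finite group sending $1$ to $1$ is trivial, so the commutator vanishes.

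\textbf{Step 3 (definition and verification).} Set $H_3\coloneqq Z'\cdot\Gamma_\vk\subseteq P$, the image of the multiplication map $Z'\times\Gamma_\vk\to P$, which is a homomorphism since the two factors commute. I would verify the following.
\begin{enumerate}
\item[(a)] $H_3\to G_1$ is faithfully flat with kernel $S_1\times S_2=:S_3$. Surjectivity holds because $G_1=Z(G_1)\cdot G_1^\sss$ and $Z'\to Z(G_1)$ is faithfully flat (Fact \ref{f:W-ff}). The kernel is $S_3$ because $S_3\subseteq Z'$.
\item[(b)] $H_3$ is smooth, with $H_3^0$ reductive and $H_3^\sss=\Gamma_\vk$ simply connected.
\item[(c)] $Z(H_3)=Z'$ and $H_3=Z(H_3)\cdot H_3^\sss$. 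Here $Z'\subseteq Z(H_3)$ by commutativity of $Z'$ and Step 2. For the reverse inclusion, Lemma \ref{l:short-exact}(i) applied to $1\to S_3\to H_3\to G_1\to1$ gives $Z(H_3)=\pi_3^{-1}(Z(G_1))=Z'$.
\end{enumerate}
These make the middle row of \eqref{e:H3} a $t$-extension. The diagram commutes by construction of the fiber product.

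I expect the main obstacle to be the structural verifications in Step 2 and in part (b): that $Z'$ is of multiplicative type and smooth, and that $H_3$ is a smooth closed subgroup with the right identity component, all argued functorially on $R$-points with fppf-local decompositions. The key conceptual input is exactly the use of principality to force $Z'$ into the commutative group $T_{H_1}\times T_{H_2}$. Without principality, this step fails, as Remark \ref{r:not-admit-t} shows.
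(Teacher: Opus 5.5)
Your construction is the paper's. The paper also takes $H_3=H_1\times_{G_2}H_2$ and uses principality exactly as you do, putting $\pi_3^{-1}(Z(G_1))$ inside the commutative group $Z(H_1)\times\pi_2^{-1}(T_2)$ (Lemmas~\ref{l:*} and~\ref{l:Z3-mult-type}). However, your premise that $P$ itself may fail to be quasi-connected is wrong, and the ``replacement'' step is vacuous. Your own step (a) shows that $Z'\cdot\Gamma_\vk\subseteq P$ are both extensions of $G_1$ by the same $S_3$. So for $p\in P(R)$ you get, fppf-locally, some $h\in H_3(R')$ with $ph^{-1}\in S_3(R')\subseteq H_3(R')$, and Lemma~\ref{l:elementary} then gives $H_3=P$. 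This is precisely Lemma~\ref{l:Z3H3ss}. Smoothness of $H_3$ and reductivity of $H_3^0$ then follow directly from $1\to S_3\to P\to G_1\to 1$.

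Several details need correcting.
\begin{enumerate}
\item $Z'$ is not smooth in general. Indeed $Z'/S_3\cong Z(G_1)$, which can be non-smooth (e.g.\ $\mu_p$ in characteristic $p$), and a quotient of a smooth group is smooth. This is harmless, since Definition~\ref{d:q-c-r} only requires $Z(H_3)$ to be of multiplicative type. But smoothness of $H_3$ must come from the extension above, not from $Z'\times\Gamma_\vk$.
\item More seriously, your argument that $Z'$ centralizes $\Gamma_\vk$ is not valid in the scheme-theoretic setting you need when $Z'$ is non-reduced. The group $S_2\cap H_2^\sss=\ker[G_2^\ssc\to G_2^\sss]$ may be infinitesimal. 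A pointed morphism from a connected scheme to an infinitesimal group need not be trivial over a non-reduced base: over $R=k[\epsilon]/(\epsilon^2)$ with ${\rm char}(k)=p$, the map $x\mapsto 1+\epsilon x$ is a nontrivial morphism $\mathbb{G}_{a,R}\to\mu_{p,R}$. The fix is easy. The map $s\mapsto[h_2,\vk^\ssc(s)]$ takes values in the central subgroup $S_2$, so it is a homomorphism $G^\ssc_{1,R}\to S_{2,R}$. A homomorphism from a semisimple group to a commutative one is trivial. The paper argues differently, via the uniqueness in Proposition~\ref{p:GA}(i) for the universal covering $(H_3^\sss)_R\to(G_1^\sss)_R$ (Lemma~\ref{l:commutes}).
\item Lemma~\ref{l:short-exact}(i) presupposes that $H_3$ is quasi-connected, so citing it in (c) is circular. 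The inclusion $Z(H_3)\subseteq Z'$ that you need follows from Lemma~\ref{l:preimage-center} alone.
\end{enumerate}
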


\begin{proof}
We follow \cite[Proof of Proposition 2.8]{BGA},
where the case of {\em connected} $G_1$ and $G_2$ was considered.
We take $H_3$ to be the fiber product of $H_1$ and $H_2$ over $G_2$\hs,
and denote by $\lambda\colon H_3\to H_2$ the projection homomorphism.
The faithful flatness  of $\pi_2\colon H_2\to G_2$ implies that of $H_3\to H_1$,
which in turn implies the faithful flatness of $\pi_3\colon H_3\to G_1$
and the surjectivity on the identity components $H_3^0\to G_1^0$.
The kernel $S_3$ of $\pi_3\colon H_3\to G_1$ is the product of $S_1$ and $S_2$,
hence a torus in $H_3$.
Since  by Lemma \ref{l:l0-short-exact} the $k$-torus
$S_1$ is central in $H_1$ and the $k$-torus  $S_2$ is central in $H_2$,
we see that the $k$-torus $S_3$ is central in $H_3$.
We obtain short exact sequences
\begin{align}
&1\to S_3\labelt{\ha} H_3\labelt{\pi_3} G_1\to 1,\label{e:S3H3G1} \\
&1\to S_3\labelt{\ha} H_3^0\labelt{\pi_3} G_1^0\to 1.\label{e:S3H30G10}
\end{align}
By \cite[Proposition 1.62]{Milne-AG}(a)
it follows from \eqref{e:S3H3G1} that $H_3$ is smooth.
By \cite[Lemma 2.6]{DH22}, it follows from \eqref{e:S3H30G10} that $H_3^0$ is reductive.
Moreover, the faithfully flat homomorphism $H_3^0\to G_1^0$
induces a faithfully flat homomorphism $H_3^\sss\to G_1^\sss$
with (central) kernel $S_3\cap H_3^\sss$.
Since $H_3^\sss$ is semisimple,
the last homomorphism is in fact  a central isogeny.
It follows that the homomorphism $H_3^\sss\to H_1^\sss\cong G_1^\ssc$ is a central isogeny as well,
whence $H_3^\sss\cong G_1^\ssc$ is simply connected.
By Lemma \ref{p:H3-qcr} below the $k$-group $H_3$ is quasi-connected reductive,
and therefore the middle row of diagram \eqref{e:H3} is a $t$-extension of $G_1$\hs,
which completes the proof of the proposition (modulo Lemma \ref{p:H3-qcr}).
\end{proof}

\begin{lemma}\label{p:H3-qcr}
The $k$-group $H_3=H_1\times_{G_2} H_2$ in the proof of Proposition \ref{p:H3}
is quasi-connected reductive.
\end{lemma}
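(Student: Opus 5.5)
The plan is to verify the conditions of Definition \ref{d:q-c-r-bis}, which is equivalent to Definition \ref{d:q-c-r} by Lemma \ref{l:equivalence}. The proof of Proposition \ref{p:H3} already gives condition (1): $H_3$ is smooth and $H_3^0$ is reductive, with $H_3^\sss\cong G_1^\ssc$. It remains to prove ($2'$) and ($3'$), which are statements about $\kbar$-points. This lets us avoid fppf bookkeeping. Throughout, $S_3=S_1\times S_2$ is a central torus in $H_3$.

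\emph{Commutative subgroup from principality.} Since $\vk$ is principal, choose principal quasi-tori $T_1\subseteq G_1$ and $T_2\subseteq G_2$ with $\vk(T_1)\subseteq T_2$. By Lemma \ref{l:short-exact}(ii), $T_{H_1}\coloneqq\pi_1^{-1}(T_1)$ and $T_{H_2}\coloneqq\pi_2^{-1}(T_2)$ are principal quasi-tori, hence commutative. Set
\[ T_{H_3}\coloneqq T_{H_1}\times_{T_2}T_{H_2}\subseteq H_3. \]
This is a closed subgroup of the commutative group $T_{H_1}\times T_{H_2}$, so it is commutative and of multiplicative type, with all $\kbar$-points semisimple. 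Moreover $T_{H_3}=\pi_3^{-1}(T_1)$, it contains $S_3$, and $T_{H_3}(\kbar)\to T_1(\kbar)$ is surjective. Define $C\coloneqq\pi_3^{-1}(Z(G_1))\cap T_{H_3}$. Principality is used exactly here: it lets us lift $Z(G_1)\subseteq T_1$ into a \emph{commutative} subgroup of $H_3$. Without it, two such lifts can fail to commute, as in Remark \ref{r:not-admit-t}.

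\emph{$C$ is central.} Let $t\in C(\kbar)$. For $x\in H_3^\sss(\kbar)$, the commutator $[t,x]$ maps to $[\pi_3(t),\pi_3(x)]=1$, since $\pi_3(t)$ is central in $G_1$. Hence $[t,x]\in S_3$. Because $S_3$ is central, the map $x\mapsto[t,x]$ is a homomorphism from the connected semisimple group $H_3^\sss$ to the torus $S_3$, and is therefore trivial. So $t$ commutes with $H_3^\sss$. It commutes with $C$ because $T_{H_3}$ is commutative, and with $S_3$ because $S_3$ is central.

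Now take any $h\in H_3(\kbar)$. Write $\pi_3(h)=z\,\rho(s)$ with $z\in Z(G_1)(\kbar)$ and $s\in G_1^\ssc(\kbar)=H_3^\sss(\kbar)$. Lift $z$ to some $t\in C(\kbar)$. Then $h=t\,s\,c$ with $c\in S_3(\kbar)$. Since $S_3\subseteq C$, this shows $H_3(\kbar)=C(\kbar)\cdot H_3^\sss(\kbar)$, and every element of $H_3(\kbar)$ commutes with $C(\kbar)$. Thus $C\subseteq Z(H_3)$, and ($3'$) follows.

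\emph{Condition ($2'$).} Let $z\in Z(H_3)(\kbar)$ and write $z=t\,x$ with $t\in C(\kbar)$ and $x\in H_3^\sss(\kbar)$. Then $x=t^{-1}z$ is central in $H_3$, so $x\in Z(H_3^\sss)(\kbar)$. Since $Z(H_3^\sss)$ lies in the maximal torus $\pi_3^{-1}(T_1)\cap H_3^\sss$, we get $x\in T_{H_3}(\kbar)$. Hence $z\in T_{H_3}(\kbar)$ is semisimple, which proves ($2'$).

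The main obstacle is the centrality of $C$, specifically making lifts of distinct central elements of $G_1$ commute in $H_3$. This is precisely where principality enters, through the commutative group $T_{H_3}$. Commutation with $H_3^\sss$ comes for free from the homomorphism-to-a-torus argument.
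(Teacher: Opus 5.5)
Your proof is correct. Its skeleton matches the paper's, but two steps use different arguments. Your $C$ equals $\pi_3^{-1}(Z(G_1))$ (because $Z(G_1)\subseteq T_1$), which is the paper's $Z_3$, and principality is used in the same place to make it commutative. You then show, as the paper does, that it commutes with $H_3^\sss$ and that $H_3=C\cdot H_3^\sss$.

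\emph{Framework.} The paper verifies Definition~\ref{d:q-c-r} directly on $R$-points, proving the scheme-theoretic equalities $Z_3=Z(H_3)$ and $H_3=Z_3\cdot H_3^\sss$. It gets that $Z(H_3)$ is of multiplicative type straight from $Z(H_3)\subseteq Z_3$ (Corollary~\ref{c:mult-type}), which matters because these centers may be non-smooth. You check Definition~\ref{d:q-c-r-bis} on $\kbar$-points, which avoids the fppf bookkeeping. The price is that the multiplicative-type property of $Z(H_3)$ is recovered only indirectly, through Lemma~\ref{l:equivalence}, that is, through \cite[Proposition 2.9]{BGR}.

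\emph{Commutation with $H_3^\sss$.} The paper (Lemma~\ref{l:commutes}) uses the uniqueness of lifts to universal coverings, Proposition~\ref{p:GA}(i). Your argument is more elementary: $x\mapsto[t,x]$ is a homomorphism $H_3^\sss\to S_3$, hence trivial. It would also work for $t\in Z_3(R)$ over any $k$-algebra $R$, since a homomorphism from a semisimple $R$-group to a commutative $R$-group is trivial.

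Two small remarks. First, your ``Thus $C\subseteq Z(H_3)$'' is only established on $\kbar$-points, and $C$ need not be reduced; but that is all ($3'$) requires. Second, ($2'$) has a shortcut: $\pi_3(Z(H_3))\subseteq Z(G_1)$ by Lemma~\ref{l:preimage-center}, so $Z(H_3)(\kbar)\subseteq C(\kbar)\subseteq T_{H_3}(\kbar)$ directly.
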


In order to prove Lemma \ref{p:H3-qcr}, we need a few lemmas.

\begin{lemma}\label{l:*}
Let $\vk\colon G_1\to G_2$ be a principal homomorphism
of quasi-connected reductive $k$-groups,
and let $\pi_2\colon H_2\to G_2$ be a $t$-extension.
Then the $k$-subgroup $\pi_2^{-1}\big(\vk(Z(G_1))\big)\subseteq H_2$ is commutative.
\end{lemma}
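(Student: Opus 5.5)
Since $\vk$ is principal, we fix a principal quasi-torus $T_1\subseteq G_1$ and choose a principal quasi-torus $T_2\subseteq G_2$ with $\vk(T_1)\subseteq T_2$. By Lemma \ref{l:t-product}(ii) we have $Z(G_1)\subseteq T_1$. Hence $\vk(Z(G_1))\subseteq \vk(T_1)\subseteq T_2$, and therefore
\[\pi_2^{-1}\big(\vk(Z(G_1))\big)\subseteq \pi_2^{-1}(T_2).\]
It then suffices to show that $\pi_2^{-1}(T_2)$ is commutative, because a closed $k$-subgroup of a commutative $k$-group is commutative. Commutativity can be tested on $R$-points, as in the earlier lemmas.

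Now $\pi_2\colon H_2\to G_2$ is a $t$-extension, so it is a short exact sequence of quasi-connected reductive $k$-groups with torus kernel $S_2$, as in Lemma \ref{l:l0-short-exact}. Let $T_2^\sssp=T_2\cap G_2^\sss$ be the maximal torus of $G_2^\sss$ defining $T_2$, that is, $T_2=\cZ_{G_2}(T_2^\sssp)$. Construction \ref{con:short-exact} provides a maximal torus $T_{H_2}^\sssp\subset H_2^\sss$ with $\pi_2(T_{H_2}^\sssp)=T_2^\sssp$; we take this torus in $H_2^\sss$ and let $T_{H_2}=\cZ_{H_2}(T_{H_2}^\sssp)$ be the corresponding principal quasi-torus of $H_2$. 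Lemma \ref{l:short-exact}(ii) then gives $\pi_2^{-1}(T_2)=T_{H_2}$. By Lemma \ref{l:t-product}(iv), $T_{H_2}$ is a quasi-torus and hence commutative, which completes the argument.

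The only point requiring care is the matching of tori. Construction \ref{con:short-exact} starts from a maximal torus of $H^\sss$ and produces one in $G^\sss$, whereas here $T_2^\sssp$ is given in $G_2^\sss$ first. So we must produce a maximal torus of $H_2^\sss$ mapping onto it. We take $T_{H_2}^\sssp$ to be the identity component of $\pi_2^{-1}(T_2^\sssp)\cap H_2^\sss$, the reduced preimage under the central isogeny $H_2^\sss\to G_2^\sss$; this is a maximal torus mapping onto $T_2^\sssp$. With this choice, \eqref{e:THss-TGss} applies, and so does Lemma \ref{l:short-exact}(ii). Apart from this, the proof is a direct chain of the earlier lemmas.
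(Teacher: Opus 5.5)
Your proof is correct and is the paper's own argument: $Z(G_1)\subseteq T_1$, $\vk(T_1)\subseteq T_2$, and $\pi_2^{-1}(T_2)$ is a principal quasi-torus of $H_2$ by Lemma \ref{l:short-exact}(ii), hence commutative. Your extra paragraph on matching tori only spells out a step the paper leaves implicit. Note that the scheme-theoretic preimage of $T_2^\sssp$ in $H_2^\sss$ is already a maximal torus, because the central kernel lies in every maximal torus, so passing to the reduced identity component is harmless but unnecessary.
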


\begin{proof}
Let $T_1\subseteq G_1$ be a principal quasi-torus; then $Z(G_1)\subseteq T_1$.
Since $\vk$ is principal, there exists a principal quasi-torus $T_2\subseteq G_2$
such that $\vk(T_1)\subseteq T_2$. Then $\vk(Z(G_1))\subseteq T_2$
and $\pi_2^{-1}\big(\vk(Z(G_1))\big)\subseteq \pi_2^{-1}(T_2)$.
By Lemma \ref{l:short-exact}(ii) the  preimage  $\pi_2^{-1}(T_2)$
is a principal quasi-torus in $H_2$, hence a commutative $k$-group,
and the lemma follows.
\end{proof}

\begin{lemma}\label{l:Z3-mult-type}
The $k$-group $Z_3\coloneqq\pi_3^{-1}(Z(G_1))\subseteq H_3$ is a $k$-group of multiplicative type.
\end{lemma}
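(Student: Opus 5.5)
Since $H_3$ is the fiber product $H_1\times_{G_2}H_2$, for every $k$-algebra $R$ we have $H_3(R)=\{(h_1,h_2)\in H_1(R)\times H_2(R):\vk(\pi_1(h_1))=\pi_2(h_2)\}$, and $\pi_3(h_1,h_2)=\pi_1(h_1)$. Hence
\[
Z_3=\pi_3^{-1}\big(Z(G_1)\big)=\pi_1^{-1}\big(Z(G_1)\big)\times_{G_2}\pi_2^{-1}\big(\vk(Z(G_1))\big).
\]
I will argue in three steps: $Z_3$ is commutative; it sits in an extension of $Z(G_1)$ by $S_3$; and a commutative extension of a group of multiplicative type by a torus is of multiplicative type.

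\textbf{Step 1: commutativity.} By Lemma~\ref{l:short-exact}(i) applied to the $t$-extension $\pi_1$, we have $\pi_1^{-1}(Z(G_1))=Z(H_1)$, which is commutative. By Lemma~\ref{l:*}, which is exactly where principality of $\vk$ enters, $\pi_2^{-1}\big(\vk(Z(G_1))\big)$ is commutative. So $Z_3$ is a closed $k$-subgroup of the product of two commutative $k$-groups $Z(H_1)\times H_2'$, where $H_2'=\pi_2^{-1}\big(\vk(Z(G_1))\big)$. Hence $Z_3$ is commutative. This is the one non-formal point, and Lemma~\ref{l:*} handles it.

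\textbf{Step 2: the extension.} Since $\pi_3\colon H_3\to G_1$ is faithfully flat with kernel $S_3$ (proof of Proposition~\ref{p:H3}), Fact~\ref{f:W-ff} shows that its restriction $Z_3\to Z(G_1)$ is faithfully flat. Indeed, fppf-locally every $R$-point of $Z(G_1)$ lifts to $H_3$, and any such lift automatically lies in $Z_3$. This gives a short exact sequence
\[
1\to S_3\to Z_3\to Z(G_1)\to 1
\]
of commutative $k$-groups, in which $S_3=S_1\times S_2$ is a torus and $Z(G_1)$ is of multiplicative type by Definition~\ref{d:q-c-r}(2).

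\textbf{Step 3: conclusion.} By \cite[Corollary 12.22]{Milne-AG}, already used in the proof of Lemma~\ref{l:t-product}(iv), a commutative extension of a group of multiplicative type by a group of multiplicative type is of multiplicative type. Therefore $Z_3$ is of multiplicative type. Smoothness is not needed here; it is not claimed, and $Z(G_1)$ may fail to be smooth.

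The main obstacle was commutativity in Step 1. Without principality, the preimages of $\vk(Z(G_1))$ in $H_2$ could fail to commute, as in Remark~\ref{r:not-admit-t}. Lemma~\ref{l:*} resolves this by placing that preimage inside a principal quasi-torus of $H_2$.
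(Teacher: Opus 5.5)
Your proof is correct and follows the paper's argument. Commutativity of $Z_3$ comes from Lemma~\ref{l:short-exact}(i) on the $H_1$-component and Lemma~\ref{l:*} on the $H_2$-component, and then \cite[Corollary 12.22]{Milne-AG} is applied to the extension of $Z(G_1)$ by $S_3$. Your Step~2, which derives faithful flatness of $Z_3\to Z(G_1)$ with kernel $S_3$ from Fact~\ref{f:W-ff}, makes explicit a point the paper only asserts.
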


\begin{proof}
Let $R$ be a  commutative unital $k$-algebra, and let $h_3,h_3'\in Z_3(R)\subseteq H_3(R)$. We may write
\[h_3=(h_1,h_2),\, h'_3=(h'_1,h'_2)\in \big(H_1\times_{G_2} H_2\big)(R)\quad\text{where}\ \, h_1,h_1'\in H_1(R),\   h_2,h_2'\in H_2(R).\]
Then
\[\pi_2(h_2)=\vk(\pi_1(h_1))\in G_2(R),\quad \pi_2(h'_2)=\vk(\pi_1(h'_1))\in G_2(R).\]
Moreover, since  $h_3,h_3'\in Z_3(R)=\pi_3^{-1}\big(Z(G_1)\big)(R)$,
we have $h_1,h_1'\in\pi_1^{-1}\big(Z(G_1)\big)(R)=Z(H_1)(R)$ (\hs we use Lemma \ref{l:short-exact}(i)\hs),
and therefore the elements  $h_1$ and $h_1'$ commute.
Moreover, $h_2,h_2'\in \pi_2^{-1}\big(\vk(Z(G_1))\big)(R)$, and  since $\vk$ is {\em principal},
by Lemma \ref{l:*} the elements $h_2$ and $h_2'$ commute
(this is the only place in the proof of Theorem \ref{t:t-ext-principal}
where we use the assumption that $\vk$ is principal).
We conclude that $h_3$ and $h_3'$ commute as well, and
thus the $k$-group $Z_3$ is commutative.
Since the commutative $k$-group $Z_3$
is an extension of the group of multiplicative type $Z(G_1)$
by the torus $S_3\cong S_1\times_k S_2$ (which is also of multiplicative type),
by \cite[Corollary 12.22]{Milne-AG} the $k$-group $Z_3$ is of multiplicative type, as desired.
\end{proof}

\begin{corollary}\label{c:mult-type}
The center $Z(H_3)$ is a $k$-group of multiplicative type.
\end{corollary}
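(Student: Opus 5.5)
I will prove that $Z(H_3)$ is of multiplicative type by showing that it is a closed $k$-subgroup of the $k$-group $Z_3=\pi_3^{-1}\big(Z(G_1)\big)$. Lemma \ref{l:Z3-mult-type} says $Z_3$ is of multiplicative type, and by \cite[Theorem 12.9 / Corollary 12.10]{Milne-AG} every closed subgroup of a group of multiplicative type is again of multiplicative type. So the whole corollary reduces to the inclusion
\[
Z(H_3)\subseteq \pi_3^{-1}\big(Z(G_1)\big).
\]

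For this inclusion I intend to use Lemma \ref{l:preimage-center}, applied to the homomorphism $\pi_3\colon H_3\to G_1$. In the proof of Proposition \ref{p:H3} we saw that $\pi_3$ is faithfully flat: it is the composite of the faithfully flat projection $H_3\to H_1$, obtained by base change from $\pi_2$, with the faithfully flat map $\pi_1$. Lemma \ref{l:preimage-center} then gives $\pi_3\big(Z(H_3)\big)\subseteq Z(G_1)$.

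Scheme-theoretically, this says that for every $k$-algebra $R$ and every $z\in Z(H_3)(R)$ we have $\pi_3(z)\in Z(G_1)(R)$, that is, $z\in Z_3(R)$. Hence $Z(H_3)$ is a closed subgroup scheme of $Z_3$. Both are closed subschemes of $H_3$, and the inclusion holds on $R$-points functorially, so it is a closed immersion.

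I expect no real obstacle here. The only point needing care is that we are working with possibly non-smooth subgroup schemes. This is exactly why the argument goes through $R$-points and the functorial statement of Lemma \ref{l:preimage-center}, rather than through $\kbar$-points.
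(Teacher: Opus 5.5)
Your proof is correct and is essentially the paper's argument. Lemma \ref{l:preimage-center}, applied to the faithfully flat $\pi_3$, gives $Z(H_3)\subseteq Z_3$. One then combines Lemma \ref{l:Z3-mult-type} with the fact that closed subgroups of groups of multiplicative type are again of multiplicative type; the paper cites \cite[Theorem 12.23]{Milne-AG} for this, rather than the numbering you give.
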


\begin{proof}
Indeed, by Lemma \ref{l:preimage-center},
$Z(H_3)\subseteq\pi_3^{-1}\big(Z(G_1)\big)\eqqcolon Z_3$\hs,
and by Lemma \ref{l:Z3-mult-type}
the $k$-group $Z_3$ is of multiplicative type.
Now the corollary follows from \cite[Theorem 12.23]{Milne-AG}.
\end{proof}

\begin{lemma}\label{l:commutes}
The subgroup $Z_3$ of $H_3$ commutes with $H_3^\sss$.
\end{lemma}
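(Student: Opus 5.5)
Since $Z_3$ and $H_3^\sss$ sit inside $H_3=H_1\times_{G_2}H_2$, I will check commutation componentwise on $R$-points. Let $h_3=(h_1,h_2)\in Z_3(R)$ and $x=(x_1,x_2)\in H_3^\sss(R')$ for an $R$-algebra $R'$. The aim is to show that $h_1$ commutes with $x_1$ in $H_1$ and $h_2$ commutes with $x_2$ in $H_2$; together these give $h_3 x h_3^{-1}=x$. By Lemma \ref{l:elementary} and Fact \ref{f:W-inj}, it is harmless to pass to faithfully flat extensions throughout.

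The first component is the easy one. The projections $H_3\to H_i$ are homomorphisms, so they carry $H_3^0$ into $H_i^0$. A homomorphism of connected groups carries derived subgroups to derived subgroups, so $H_3^\sss$ maps into $H_i^\sss$. In particular $x_1\in H_1^\sss(R')$. As computed in the proof of Lemma \ref{l:Z3-mult-type}, $h_1\in \pi_1^{-1}(Z(G_1))(R)=Z(H_1)(R)$ by Lemma \ref{l:short-exact}(i). Hence $h_1$ commutes with $x_1$.

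The second component is where the real work lies, since $h_2$ need not be central in $H_2$. I would use the braiding from Section \ref{s:Picard}. The element $\pi_2(h_2)=\vk(\pi_1(h_1))$ lies in $\vk(Z(G_1))$, and $\pi_2(x_2)=\vk(\pi_1(x_1))$ lies in $\vk(G_1^\sss)$. Since $\pi_1(h_1)$ is central in $G_1$, the element $\vk(\pi_1(h_1))$ commutes with $\vk(\pi_1(x_1))$ in $G_2$. Consequently the commutator $c=[h_2,x_2]\in H_2$ lies in $\ker\pi_2=S_2$, which is central by Lemma \ref{l:l0-short-exact}. Moreover $x_2\in H_2^\sss$, so $c\in H_2^\sss$ because $H_2^\sss$ is normal in $H_2$. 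Therefore $c\in S_2\cap H_2^\sss$, which is finite. For fixed $h_2$ the map $x_2\mapsto [h_2,x_2]$ is then a homomorphism from $H_2^\sss$ (really from $\lambda(H_3^\sss)$) to the central subgroup $S_2\cap H_2^\sss$; this holds because a commutator with a fixed element is a homomorphism whenever its values are central.

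The step I expect to be the main obstacle is showing that this homomorphism is trivial. The source $H_3^\sss$ is connected and the target is finite, so the homomorphism $x\mapsto [h_2,\lambda(x)]$ from $H_3^\sss$ to a finite group scheme should be trivial. Connectedness alone suffices when the target is étale. When the target is a non-smooth finite group, I would instead use that $H_3^\sss$ is semisimple, hence perfect, so it has no nontrivial homomorphisms to a commutative group. I would use the perfectness argument in both cases, treating $x\mapsto [h_2,\lambda(x)]$ as a homomorphism of $R$-group schemes from $H^\sss_{3,R}$ to the commutative group $(S_2\cap H_2^\sss)_R$. Over a general $R$ this requires $h_2$ to be an $R$-point. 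The homomorphism property needs centrality of the values, which was shown above. Triviality then follows because $H_{3,R}^\sss$ equals its own derived subgroup fppf-locally. This gives $h_2x_2=x_2h_2$, which completes the proof.
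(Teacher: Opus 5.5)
Your proof is correct, and it takes a somewhat different route from the paper's.

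\textbf{The paper's route.} It does not split $H_3=H_1\times_{G_2}H_2$ into components. For $z\in Z_3(R)$, conjugation by $z$ is an automorphism of $(H_3^\sss)_R$ lifting conjugation by $\pi_3(z)$ on $(G_1^\sss)_R$. The latter is the identity because $\pi_3(z)$ is central. Since $(H_3^\sss)_R\to(G_1^\sss)_R$ is a universal covering (proof of Proposition \ref{p:H3} together with Lemma \ref{l:uni_cov_bc}), the uniqueness in Proposition \ref{p:GA}(i) forces this automorphism to be trivial.

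\textbf{Your route.} You handle the $H_1$-component via $Z(H_1)=\pi_1^{-1}(Z(G_1))$. For the $H_2$-component you show that $x\mapsto[h_2,\lambda(x)]$ is a homomorphism $H^\sss_{3,R}\to S_{2,R}$ into a central commutative group, hence trivial. You rightly take the source to be $H_3^\sss$ rather than $H_2^\sss$: on all of $H_2^\sss$ the commutator with $h_2$ need not be central.

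\textbf{Comparison.} Your version uses only that $H_3^\sss$ is semisimple and $S_2$ is central. It does not need simple connectedness or the universal-covering formalism. The decisive input is nevertheless the same fact that underlies the uniqueness in Proposition \ref{p:GA}(i): a homomorphism from a semisimple group scheme to a commutative one is trivial. This is Step 3 of the proof of Proposition \ref{p:Conrad-existence}, citing \cite[Expos\'e XXII, Theorem 6.2.1(iv,v)]{SGA3}. Cite that directly rather than the vaguer ``perfect fppf-locally.''

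\textbf{Simplifications.} The componentwise split is unnecessary. For $z\in Z_3(R)$ and $x\in H_3^\sss$, the commutator $[z,x]$ already lies in $\ker\pi_3=S_3$, which is central in $H_3$, so your argument runs verbatim inside $H_3$. Some material can also be dropped:
\begin{itemize}
\item the announced use of the braiding from Section \ref{s:Picard}, which never actually occurs;
\item the finiteness of $S_2\cap H_2^\sss$;
\item the \'etale versus non-\'etale case distinction.
\end{itemize}
Commutativity of $S_2$ is all you use.
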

\begin{proof}
The identity component $H_3^0$ of $H_3$ is reductive.
For any $k$-algebra $R$,
the subgroup  $Z_3(R)$ of $H_3(R)$ acts on the left on the $R$-group scheme
$(H_{3}^\sss)_R$  by conjugation,
and this action is compatible with the action by conjugation of $Z(G_1)(R)$ on $(G_1^\sss)_R$\hs.
Since the latter action is trivial, we see that the action of $Z_3(R)$
on $(H_{3}^\sss)_R$ gives a homomorphism
\begin{equation}\label{e:AutH3-AutG1}
Z_3(R)\,\to\, \ker\Big[ \Aut\big(\hs (H^\sss_3)_R, (G^\sss_1)_R\big)
\,\to\,\Aut\,(G^\sss_1)_R\hs\Big].
\end{equation}
By Lemma \ref{l:uni_cov_bc}, the base change  $(H^\sss_3)_R\to (G^\sss_1)_R$
of the universal covering $H^\sss_3\to G^\sss_1$ is again a universal covering.
Now it follows from the uniqueness in Proposition \ref{p:GA}(i)
that the kernel in \eqref{e:AutH3-AutG1} is trivial.
Therefore, the action by conjugation of $Z_3(R)$ on $(H_3^\sss)_R$ is trivial,
and hence $Z_3(R)$ commutes with $H_3^\sss(R)$, as desired.
\end{proof}

\begin{lemma}\label{l:Z3H3ss}
$H_3=Z_3\cdot H_3^\sss$.
\end{lemma}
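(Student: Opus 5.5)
We must show $H_3=Z_3\cdot H_3^\sss$, where $Z_3=\pi_3^{-1}(Z(G_1))$. By Lemma \ref{l:AN} (applied to a suitable quotient) or directly by Fact \ref{f:W-ff}, it suffices to show the following: for every $k$-algebra $R$ and every $h\in H_3(R)$, there are a faithfully flat extension $R\into R''$ and elements $z\in Z_3(R'')$ and $s\in H_3^\sss(R'')$ with $h=zs$.

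We start from $g=\pi_3(h)\in G_1(R)$. Since $G_1=Z(G_1)\cdot G_1^\sss$ by Definition \ref{d:q-c-r}(3), there are a faithfully flat $R\into R'$ and elements $z_1\in Z(G_1)(R')$, $g^\sss\in G_1^\sss(R')$ with $g=z_1g^\sss$.

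Next we lift $g^\sss$ to $H_3^\sss$. In the proof of Proposition \ref{p:H3} we saw that $H_3^\sss\to G_1^\sss$ is faithfully flat (a central isogeny). Hence by Fact \ref{f:W-ff} there are a faithfully flat $R'\into R''$ and $s\in H_3^\sss(R'')$ with $\pi_3(s)=g^\sss$.

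Now set $z=hs^{-1}\in H_3(R'')$. Then $\pi_3(z)=g\,(g^\sss)^{-1}=z_1\in Z(G_1)(R'')$, so $z\in \pi_3^{-1}(Z(G_1))(R'')=Z_3(R'')$. Thus $h=zs$, as required.

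The only point needing care is the faithful flatness of $H_3^\sss\to G_1^\sss$, which we check as follows:
\begin{enumerate}
\item[(a)] $\pi_3\colon H_3^0\to G_1^0$ is faithfully flat with central toral kernel $S_3$, by \eqref{e:S3H30G10}.
\item[(b)] Derived subgroups map onto derived subgroups, so $\pi_3(H_3^\sss)=G_1^\sss$.
\item[(c)] This is already asserted in the proof of Proposition \ref{p:H3}, so we may invoke it.
\end{enumerate}
Lemma \ref{l:commutes} is not needed here. It will be used afterwards, together with Corollary \ref{c:mult-type}, to conclude via Lemma \ref{l:cZ_G}-type arguments that $Z_3=Z(H_3)$, and hence that $H_3=Z(H_3)\cdot H_3^\sss$.
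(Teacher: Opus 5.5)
Your proof is correct and matches the paper's argument. The paper also writes $\pi_3(h)=z_1\cdot g_1^{\rm ss}$ using $G_1=Z(G_1)\cdot G_1^{\rm ss}$, lifts $g_1^{\rm ss}$ fppf-locally to $H_3^{\rm ss}$ via the faithfully flat map $H_3^{\rm ss}\to G_1^{\rm ss}$ established in the proof of Proposition~\ref{p:H3}, and notes that $h\cdot s^{-1}$ lies in $\pi_3^{-1}(Z(G_1))=Z_3$.
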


\begin{proof}
Clearly, $Z_3\cdot H_3^\sss\subseteq H_3$.
We show that $H_3= Z_3\cdot H_3^\sss$.
Since the $k$-group $G_1$ is quasi-connected reductive, by Definition \ref{d:q-c-r}(3)
we have $$G_1=Z(G_1)\cdot G_1^\sss.$$
We have  $\pi_3\big(H_3^\sss\big)=G_1^\sss$ and
by construction we have $\pi_3^{-1}\big(Z(G_1)\big)=Z_3$.
Let $h_3\in H_3(R)$ for some $k$-algebra $R$.
Then $\pi_3(h_3)=z_1\cdot g^\sss_1$ where $z_1\in Z(G_1)(R')$
and $g^\sss_1\in G_1^\sss(R')$ for some faithfully flat homomorphism $R\into R'$.
Let $h^\sss_3\in H_3^\sss(R'')$ be a preimage of $g^\sss_1$
for some faithfully flat homomorphism $R'\into R''$.
Then $\pi_3(h_3\cdot (h^\sss_3)^{-1})=z_1\in Z(G_1)(R'')$,
whence
\[z_3\coloneqq h_3 \cdot (h^\sss_3)^{-1}\in Z_3(R'').\]
Thus
\[ h_3=z_3\cdot h^\sss_3\in Z_3(R'')\cdot H_3^\sss(R''),\]
which means that $H_3= Z_3\cdot H_3^\sss$, as desired.
\end{proof}

\begin{lemma}\label{l:Z3=Z(H_3)}
$Z_3=Z(H_3)$.
\end{lemma}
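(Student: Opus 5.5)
We prove the two inclusions $Z(H_3)\subseteq Z_3$ and $Z_3\subseteq Z(H_3)$ separately.

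The first inclusion is already available. The homomorphism $\pi_3\colon H_3\to G_1$ is faithfully flat by \eqref{e:S3H3G1}. So by Lemma \ref{l:preimage-center} we have $\pi_3\big(Z(H_3)\big)\subseteq Z(G_1)$, that is, $Z(H_3)\subseteq \pi_3^{-1}\big(Z(G_1)\big)=Z_3$. This is the same observation used in the proof of Corollary \ref{c:mult-type}.

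For the reverse inclusion we work functorially. Let $z\in Z_3(R)$ for a $k$-algebra $R$. By the criterion \cite[Proposition 1.92]{Milne-AG} (as in the proof of Lemma \ref{l:cZ_G}), we must show that $z h z^{-1}=h$ for every $R$-algebra $R'$ and every $h\in H_3(R')$. We use Lemma \ref{l:Z3H3ss} to decompose: there is a faithfully flat $R'\into R''$ with $h=z'\cdot h^\sss$, where $z'\in Z_3(R'')$ and $h^\sss\in H_3^\sss(R'')$. Lemma \ref{l:commutes} shows that $z$ commutes with $h^\sss$. Lemma \ref{l:Z3-mult-type} shows that $Z_3$ is commutative, so $z$ commutes with $z'$. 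Hence $zhz^{-1}=h$ in $H_3(R'')$, and Fact \ref{f:W-inj} gives the equality already in $H_3(R')$. Therefore $z\in Z(H_3)(R)$.

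There is no serious obstacle, since all the real work was done in Lemmas \ref{l:Z3-mult-type}, \ref{l:commutes} and \ref{l:Z3H3ss}. The one point needing care is that commutativity of $Z_3$ is used in full. That lemma is exactly where the principality of $\vk$ entered, through Lemma \ref{l:*}. With $Z_3=Z(H_3)$ in hand, Lemma \ref{p:H3-qcr} then follows: $Z(H_3)$ is of multiplicative type, and $H_3=Z(H_3)\cdot H_3^\sss$ by Lemma \ref{l:Z3H3ss}.
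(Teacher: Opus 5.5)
Your proposal is correct and follows the paper's proof essentially verbatim. The inclusion $Z(H_3)\subseteq Z_3$ comes from Lemma \ref{l:preimage-center}, and the reverse inclusion comes from the decomposition $h=z'\cdot h^\sss$ of Lemma \ref{l:Z3H3ss}, combined with the commutativity of $Z_3$ (Lemma \ref{l:Z3-mult-type}) and Lemma \ref{l:commutes}; your explicit use of Fact \ref{f:W-inj} to descend the equality from $R''$ to $R'$ is a step the paper leaves implicit.
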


\begin{proof}
Let $x\in Z_3(R)$ and $h\in H_3(R')$ where $R$ is a $k$-algebra and $R'$ is an $R$-algebra.
It follows from  Lemma \ref{l:Z3H3ss}  that
there exist a faithfully flat homomorphism $R'\into R''$ and $z\in Z_3(R'')$,  $h^\sss\in H_3^\sss(R'')$
such that $h = z\cdot h^\sss$.
Since by Lemma \ref {l:Z3-mult-type} the $k$-group $Z_3$ is commutative,
$x$ commutes with $z$.
Since by Lemma \ref{l:commutes} $Z_3$ commutes with $H_3^\sss$,
the element $x$ commutes with  $h^\sss$.
Thus $x$ commutes with $h$, which
means that $Z_3\subseteq Z(H_3)$.
Conversely, by Lemma \ref{l:preimage-center} we have
$Z(H_3)\subseteq \pi_3^{-1}\big(Z(G_1)\big)\eqqcolon Z_3$\hs,
and the lemma follows.
\end{proof}

\begin{proof}[Proof of Lemma \ref{p:H3-qcr}]
We have: the $k$-group $H_3$ is smooth, its identity component $H_3^0$ is reductive,
by Corollary \ref{c:mult-type} the center $Z(H_3)$ is  of multiplicative type,
and by Lemmas \ref{l:Z3H3ss} and \ref{l:Z3=Z(H_3)} we have
\[H_3=Z_3\cdot H_3^\sss=Z(H_3)\cdot H_3^\sss.\]
Thus, by Definition \ref{d:q-c-r} the $k$-group $H_3$ is quasi-connected reductive,
which completes the proofs of Lemma \ref{p:H3-qcr} and of Proposition \ref{p:H3}.
\end{proof}

\begin{proof}[Proof of Theorem \ref{t:t-ext-principal}]
Let $\vk\colon G_1\to G_2$ be a principal homomorphism.
Choose any $t$-extensions $\pi_1\colon H_1\to G_1$ and $\pi_2\colon H_2\to G_2\hs$,
and let the $t$-extension $\pi_3\colon H_3\to G_1$
and the homomorphism $\lambda\colon H_3\to H_2$ be as in Proposition \ref{p:H3}.
Then the commutative diagram
\[
\xymatrix{
H_3\ar[d]_-{\pi_3}\ar[r]^-{\lambda}  & H_2\ar[d]^-{\pi_2}\\
G_1\ar[r]^-\vk   & G_2\,.
}
\]
is a desired $t$-extension of $\vk$.
\end{proof}

\section{Abelian Galois cohomology via the Picard braiding}
\label{s:Picard-to-abelian}

Let $G$ be a quasi-connected reductive $k$-group.
In Section \ref{s:Picard}  we constructed a Picard crossed module of algebraic groups
\begin{equation}\label{e:PCM}
\PCrM(G)=(G^\ssc,G,\rho,\theta,\br).
\end{equation}

Passing from \eqref{e:PCM} to $k_s$-points, we obtain a
Picard crossed module of groups
\[ \PCrM(G)(k_s)=(G^\ssc(k_s), G(k_s), \rho, \theta,\br).\]

We can describe the Picard crossed module of $k$-groups $\PCrM(G)(k_s)$ in terms of a $t$-extension $H\to G$.
Let $g_1,g_2\in G(k_s)$.
The short exact sequence of smooth $k$-groups \eqref{e:t-ext}
gives rise to an exact sequence of Galois cohomology over $k_s$
\[ S(k_s)\to H(k_s)\to G(k_s)\to H^1(k_s,S)=1,\]
hence the homomorphism $H(k_s)\to G(k_s)$ is surjective, and therefore
we can lift $g_1,g_2$ to some $h_1,h_2\in H(k_s)$, respectively.
Then by \eqref{e:br-H}
we have
\begin{align*}
&\{g_1,g_2\} =[h_1,h_2]\in H^\sss(k_s)=G^\ssc(k_s).
\end{align*}

The Picard braiding
\[\br\colon G(k_s)\times G(k_s)\to G^\ssc(k_s)\]
defines an abelian group structure on the pointed set
\[H^1_\crr(k,G)=H^1\big(\Gal(k_s/k), (G^\ssc(k_s), G(k_s),\rho,\theta)\big);\]
see Appendix \ref{app:gr-coh-Picard}.
We denote the obtained abelian group by
\[ H^1_\ab(k,G)\coloneqq H^1\big(k,(G^\ssc, G,\rho,\theta,\br)\big)\]
and call it the {\em abelian} (or: {\em abelianized}) {\em Galois cohomology group of} $G$.
The morphism of pointed sets \eqref{e:cr1}
\begin{equation*}
\ab^1=\crr^1\colon\, H^1(k,G)\,\lra\, H^1_\crr(k, G)=H^1_\ab(k,G)
\end{equation*}
is called the {\em abelianization map}.

\begin{remark}\label{r:crr-not-hom}
For an arbitrary homomorphism of quasi-connected reductive $k$-groups $\vk\colon G_1\to G_2$\hs,
the induced map
\[\vk_\crr\colon \, H^1_\ab(k,G_1)=H^1_\crr(k,G_1)\,\lra\, H^1_\crr(k,G_2)= H^1_\ab(k,G_2)\]
of \eqref{e:commut-cr} does not have to be a homomorphism.
Indeed, consider the inclusion homomorphism $\iota\colon B\into G$  of Example \ref{ex:PU2}
and Remark \ref{r:not-braiding}.
Let $\Gamma=\Gal(\C/\R)$ and let $\gamma\in\Gamma$ denote the complex conjugation.
Since $B^\ssc=1$, we have $H^1_\ab(\R,B)=H^1(\R,B)\cong B(\R)$,
which is a non-cyclic abelian group of order 4.
From the short exact sequence of $\R$-groups
\[ 1\to\mu_2\to G^\ssc\to G\to 1\]
we see that the abelianization map is the coboundary map
\[\ab^1=\delta^1\colon\, H^1(\R,G)\,\lra\, H^1_\ab(\R,G)=H^2(\R,\mu_2)=\{\pm1\}.\]
It sends the class of the 1-cocycle $\iota(b_1)=b_1\in G(\R)$
to $\tilde b_1\cdot{}^\gamma \tilde b_1=\tilde b_1^2=-1$.
Since the same holds for the classes of the 1-cocycles  $b_2$ and $b_1 b_2$ as well,
we see that the induced map $\iota_\crr\colon H^1_\ab(\R,B)\to H^1_\ab(\R,G)$
sends the three non-unit elements of the abelian group of order 4 $H^1_\ab(\R,B)\cong B(\R)$
to $-1\in H^1_\ab(\R,G)\cong\{\pm1\}$.
Therefore, this map is not a homomorphism.
\end{remark}

By Corollary \ref{c:principal-induces}, a {\em principal} homomorphism
of quasi-connected reductive $k$-groups
\[\vk\colon G_1\to G_2\]
induces a morphism of Picard crossed modules of algebraic groups
\[\PCrM(G_1)\coloneqq \big(G_1^\ssc, G_1,\rho_1,\theta_1,\br_{1}\big)
\to \big(G_2^\ssc, G_2,\rho_2,\theta_2,\br_{2}\big)\eqqcolon \PCrM(G_2)\]
which in turn induces a morphism of Picard crossed modules of $\Gal(k_s/k)$-groups
\[ \PCrM(G_1)(k_s)\to \PCrM(G_2)(k_s)\]
whence we obtain an induced homomorphism of abelian groups
\[\vk_\ab\colon H^1_\ab(k,G_1)\to H^1_\ab(k,G_2).\]
Thus we obtain the following theorem:

\begin{theorem}\label{t:functor}
The assignments
\[ G\rightsquigarrow H^1_\ab(k,G),\ \, \vk\mapsto \vk_\ab\]
define a functor from the category of quasi-connected reductive $k$-groups
with {\emm principal} homomorphisms to the category of abelian groups.
\end{theorem}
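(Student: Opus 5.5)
Two things have to be checked: that a principal homomorphism $\vk$ induces a group homomorphism $\vk_\ab$, and that the assignment respects identities and composition. For the first, I invoke Corollary \ref{c:principal-induces}. It gives a morphism of braided crossed modules $(\vk^\ssc,\vk)\colon \PCrM(G_1)\to\PCrM(G_2)$, which is an identity of morphisms of $k$-varieties and so holds on $R$-points for every $k$-algebra $R$. Taking $R=k_s$ yields a morphism of Picard crossed modules of $\Gamma$-groups $\PCrM(G_1)(k_s)\to\PCrM(G_2)(k_s)$. Its $\Gamma$-equivariance is automatic, because all the maps are defined over $k$.

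The next step uses the construction of Appendix \ref{app:gr-coh-Picard}. There, following Noohi and Breen, the group law on $H^1$ of a Picard crossed module of $\Gamma$-groups is written explicitly on hyper-cocycles $(u,\psi)$. The product of two such hyper-cocycles is built from the group laws of $A$ and $G$, the action $\theta$, the Galois action, and the braiding $\br$. A morphism $(\alpha,\varphi)$ of crossed modules that commutes with the braidings, $\alpha(\{g,g'\})=\{\varphi(g),\varphi(g')\}$, therefore sends the product of two hyper-cocycles to the product of their images. It also sends the unit cocycle to the unit cocycle. Since $\vk_\crr$ is already known to be well defined on classes (Section \ref{s:cr-cohomology}), this shows that $\vk_\ab=\vk_\crr$ is a homomorphism of abelian groups.

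Functoriality is then straightforward. The identity map is principal: take $T_2=T_1$. A composite of principal homomorphisms is principal: if $\vk_{12}(T_1)\subseteq T_2$ for a principal quasi-torus $T_2$, then by Lemma \ref{l:some-any} the map $\vk_{23}$ sends this same $T_2$ into some principal quasi-torus $T_3$. So principal homomorphisms form a subcategory. By the proposition on composition in Section \ref{s:crossed}, $\CrM(\vk_{23}\circ\vk_{12})=\CrM(\vk_{23})\circ\CrM(\vk_{12})$, and $H^1(k,-)$ is a functor on crossed modules. Hence $(\vk_{23}\vk_{12})_\ab=(\vk_{23})_\ab(\vk_{12})_\ab$ and $(\mathrm{id})_\ab=\mathrm{id}$.

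The main obstacle is the compatibility step: checking that the explicit product formula in Appendix \ref{app:gr-coh-Picard} is natural with respect to braiding-preserving morphisms. If that formula is instead given through a monoidal structure on the associated Picard stack or groupoid, I would argue that a braided morphism induces a symmetric monoidal functor, which then induces a homomorphism on $\pi_0$ of the torsor categories.
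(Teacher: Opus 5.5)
Your proposal is correct and follows the paper's own route. Like the paper, you apply Corollary \ref{c:principal-induces}, pass to $k_s$-points to get a morphism of Picard crossed modules of $\Gamma$-groups, and then use the naturality of the group law of Appendix \ref{app:gr-coh-Picard}. You additionally spell out, via Lemma \ref{l:some-any}, that identities and composites of principal homomorphisms are principal and that the construction respects composition, which the paper leaves implicit.
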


For a principal homomorphism of quasi-connected reductive $k$-groups $\vk\colon G_1\to G_2$
we can write the commutative diagram \eqref{e:commut-cr} as
\begin{equation*}
\begin{aligned}
\xymatrix{
H^1(k, G_1)\ar[r]^-{\vk_*}\ar[d]_-\ab &H^1(k, G_2)\ar[d]^-\ab\\
H^1_\ab(k,G_1)\ar[r]^-{\vk_\ab} &H^1_\ab(k,G_2).
}
\end{aligned}
\end{equation*}
In this diagram, the bottom horizontal arrow is a homomorphism of abelian groups,
whereas the other arrows are morphisms of pointed sets.

One can compute the abelian group cohomology $H^1_\ab(k,G)$
of a quasi-connected reductive $k$-group $G$
using principal quasi-tori in $G$.
Let $T^\sscp\subset G^\ssc$ be a maximal torus, and set
\[
Z=Z(G),\ Z^\sscp=Z(G^\sscp),\quad
T=\cZ_G(T^\sssp)=Z\cdot T^\sssp, \ \  \text{where}
   \ \, T^\sssp=\rho(T^\sscp)\subset G^\sss\subseteq G.\]

\begin{lemma}\label{l:qi-ZTG}
Let $G$, $G^\ssc$, $Z$, $Z^\sscp$, $T^\sscp$, and $T$ be as above.
Then the inclusions of complexes of $k$-groups
\begin{equation*}
(Z^\sscp\to Z)\,\labelt i\, (T^\sscp\to T)\,\labelt j\, (G^\ssc\to G)
\end{equation*}
are quasi-isomorphisms.
\end{lemma}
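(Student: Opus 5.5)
To prove that $i$ and $j$ are quasi-isomorphisms, I will compute the kernel and cokernel of each complex of $k$-groups, viewed as a two-term complex in degrees $-1$ and $0$. It suffices to show that each inclusion induces isomorphisms on the kernels of $\rho$ (the $H^{-1}$ term) and on the cokernels of $\rho$ (the $H^0$ term, a quotient group scheme). Here $Z^\sscp\to Z$ is the restriction of $\rho$ to $Z(G^\ssc)$, and $T^\sscp\to T$ is the restriction of $\rho$ to $T^\sscp$.

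\emph{Kernels.} We have $\ker\rho=\ker\pi\subseteq Z^\sscp$, because the universal covering $\pi\colon G^\ssc\to G^\sss$ is a central isogeny. Moreover $Z^\sscp\subseteq T^\sscp$, since the center of a connected reductive group lies in every maximal torus. Hence
\[\ker(Z^\sscp\to Z)=\ker(T^\sscp\to T)=\ker(G^\ssc\to G)=\ker\pi,\]
and the inclusions induce the identity on $H^{-1}$.

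\emph{Cokernels.} The images are $\rho(Z^\sscp)=Z^\sssp$ (well known, as in the proof of Proposition \ref{p:q-ab}(iii)), $\rho(T^\sscp)=T^\sssp$, and $\rho(G^\ssc)=G^\sss$. I need the natural maps
\[Z/Z^\sssp\to T/T^\sssp\to G/G^\sss=G^\qt\]
to be isomorphisms.
\begin{itemize}
\item By Lemma \ref{l:qt}(i), $Z\to G^\qt$ is faithfully flat, and its kernel is $Z\cap G^\sss=Z^\sssp$ by Corollary \ref{c:cZ_G}. So $Z/Z^\sssp\isoto G^\qt$.
\item By Lemma \ref{l:t-product}(iii), $T\to G^\qt$ is faithfully flat with kernel $T^\sssp$. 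So $T/T^\sssp\isoto G^\qt$.
\item The map $Z/Z^\sssp\to T/T^\sssp$ is compatible with these two isomorphisms, hence is itself an isomorphism.
\end{itemize}
So both inclusions induce isomorphisms on $H^0$, and $i$ and $j$ are quasi-isomorphisms. Composing, $j\circ i$ is one as well.

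\emph{Main obstacle.} The delicate point is scheme-theoretic rather than set-theoretic: $Z$ and $Z^\sssp$ may be non-smooth, so "cokernel" must mean the fppf quotient, and "quasi-isomorphism" means isomorphisms of fppf sheaves $\ker$ and $\coker$. If one only needs Galois hyper-cohomology, a quasi-isomorphism of complexes of $k_s$-points suffices. In that setting I would check surjectivity of $Z(k_s)\to G^\qt(k_s)$ modulo $G^\sss(k_s)$ using Definition \ref{d:q-c-r}(3) together with faithful flatness (Fact \ref{f:W-ff}); this may require care when $Z$ is not smooth. The fppf-sheaf formulation avoids that issue, and I would adopt it.
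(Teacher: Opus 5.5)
Your proof is correct and follows essentially the same route as the paper. The kernels agree because $\ker\rho\subseteq Z^\sscp\subseteq T^\sscp$, and the cokernel maps are isomorphisms because $Z\cap G^\sss=Z^\sssp$ and $T\cap G^\sss=T^\sssp$ give injectivity, while $G=Z\cdot G^\sss$ and $T=Z\cdot T^\sssp$ give faithful flatness. Citing Lemma~\ref{l:qt}(i) and Lemma~\ref{l:t-product}(iii) just packages this last step. Working with fppf quotients is what the paper does here too. The $k_s$-point version is treated separately in Proposition~\ref{t:Zev}, where the claim for $i$ requires $Z^\sscp$ to be \'etale.
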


\begin{proof}
We must show that the induced homomorphisms on the kernels and cokernels
are isomorphisms.
We have $\ker\rho\subseteq Z^\sscp\subseteq T^\sscp$, whence
\[\ker[Z^\sscp\to Z]= \ker[T^\sscp\to T]=\ker[G^\ssc\to G].\]
Consider the induced homomorphisms
\begin{align}
i_\cok \colon&\coker[Z^\sscp\to Z]\to \coker[T^\sscp\to T]=T/T^\sssp\label{e:ZT}\\
j_\cok \colon&\coker[T^\sscp\to T]\to \coker[G^\ssc\to G]=G/G^\sss.\label{e:TG}
\end{align}
By Corollary \eqref{c:cZ_G} and Lemma  \ref{l:t-product}(i) we have
\[Z\cap G^\sss=Z^\sssp=\rho(Z^\sscp)\quad\ \text{and}
\quad\   T\cap G^\sss=T^\sssp=\rho(T^\sscp).\]
It follows that the homomorphisms  \eqref{e:ZT} and \eqref{e:TG}
are injective.
We show that  they are faithfully flat.
Write
\begin{align*}
G_\cok&=\coker[G^\ssc\to G]=G/G^\sss,\\
T_\cok&=\coker[T^\sscp\to T]=T/T^\sssp,\\
Z_\cok&=\coker[Z^\sscp\to Z]=Z/Z^\sssp.
\end{align*}
Since by Lemma \ref{l:t-product}(ii) we have $T=Z\cdot T^\sssp$ and since by
Definition \ref{d:q-c-r}(3) we have $G=Z\cdot G^\sss$,
by Lemma \ref{l:AN} the composite homomorphisms
\[ Z\into T\onto T_\cok\quad\ \text{and}\quad\  Z\into G\onto G_\cok\]
are faithfully flat,
whence the composite homomorphism $T\into G\onto G_\cok$ is faithfully flat as well
(we use Fact \ref{f:W-ff}).
It follows that the induced homomorphisms $Z_\cok\to T_\cok$ of \eqref{e:ZT}
and $T_\cok\to G_\cok$ of \eqref{e:TG} are faithfully flat
(again, we use Fact \ref{f:W-ff}).
Hence they are isomorphisms, as desired.
\end{proof}

\begin{proposition}\label{t:Zev} \
\begin{enumerate}
\item[\rm(i)]
With the assumptions and notation of Lemma \ref{l:qi-ZTG}, the morphism of crossed modules
{\em of the groups of $k_s$-points}
\[\big(T^\sscp(k_s)\to T(k_s)\big)\,\lra\,  \big(G^\ssc(k_s)\to G(k_s)\big)\]
is a quasi-isomorphism.
\item[\rm(ii)]
Moreover, if the finite $k$-group $Z^\sscp$ is \'etale, then the morphism of complexes
{\em of the groups of $k_s$-points}
\[\big(Z^\sscp(k_s)\to Z(k_s)\big)\,\lra\,  \big(T^\sscp(k_s)\to T(k_s)\big)\]
is a quasi-isomorphism.
\end{enumerate}
\end{proposition}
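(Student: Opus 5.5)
We must show that the induced homomorphisms of abstract groups on kernels and cokernels of the $k_s$-point complexes are isomorphisms. For kernels this is immediate in both parts: taking $k_s$-points commutes with kernels, and by the proof of Lemma \ref{l:qi-ZTG} we have $\ker\rho\subseteq Z^\sscp\subseteq T^\sscp$. Hence
\[\ker\big[Z^\sscp(k_s)\to Z(k_s)\big]=\ker\big[T^\sscp(k_s)\to T(k_s)\big]=\ker\big[G^\ssc(k_s)\to G(k_s)\big]=(\ker\rho)(k_s).\]
The whole difficulty is with cokernels, where $k_s$-points need not commute with quotients.

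For (i), injectivity of $T(k_s)/\rho(T^\sscp(k_s))\to G(k_s)/\rho(G^\ssc(k_s))$ amounts to the following: if $t\in T(k_s)$ equals $\rho(s)$ with $s\in G^\ssc(k_s)$, then $s\in T^\sscp(k_s)$. Since $\rho(s)\in T\cap G^\sss=T^\sssp$ by Lemma \ref{l:t-product}(i), we get $s\in\rho^{-1}(T^\sssp)(k_s)=T^\sscp(k_s)$, using $\rho^{-1}(T^\sssp)=T^\sscp$ (the preimage of a maximal torus under a central isogeny is the maximal torus). For surjectivity, take $g\in G(k_s)$. Because $G^\ssc$ is smooth and connected, all maximal tori of $G^\ssc_{k_s}$ are $G^\ssc(k_s)$-conjugate, so we may conjugate freely over $k_s$. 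The approach is to write $g=z\cdot\rho(s)$ up to something in $T(k_s)$. Conjugation by $g$ sends $T^\sssp$ to another maximal torus of $G^\sss$ defined over $k_s$. Lift this to $G^\ssc$ and choose $s\in G^\ssc(k_s)$ with $\rho(s)^{-1}g$ normalizing $T^\sssp$. Then $n\coloneqq\rho(s)^{-1}g$ normalizes $T^\sssp$, and its image in $G/Z=G^\sss/Z^\sssp$ (by \eqref{e:ad}) lies in the normalizer of the image torus.

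Next, adjust $n$ by an element of $\rho(N_{G^\ssc}(T^\sscp)(k_s))$ so that it acts trivially on $T^\sssp$, which puts it in $\cZ_G(T^\sssp)(k_s)=T(k_s)$. For this adjustment we need to know that the automorphism of the adjoint torus induced by $n$ lies in the Weyl group, realized by $k_s$-points. This holds because the image of $n$ in $(G^\sss/Z^\sssp)(k_s)$ normalizes a maximal torus, and the Weyl group of a split-over-$k_s$ torus is represented by $k_s$-points of the normalizer in $G^\ssc$. We expect this step to be the main obstacle: passing from the adjoint group back to $G^\ssc(k_s)$ must be handled carefully, since $G^\ssc(k_s)\to(G^\ssc/Z^\sscp)(k_s)$ need not be surjective when $Z^\sscp$ is non-smooth. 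This is why we work via the normalizer and the Weyl group, whose elements do lift to $N_{G^\ssc}(T^\sscp)(k_s)$ because $T^\sscp_{k_s}$ is split.

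For (ii), injectivity follows exactly as before from $Z\cap T^\sssp=Z^\sssp$ and $\rho^{-1}(Z^\sssp)=Z^\sscp$. For surjectivity, we use $T=Z\cdot T^\sssp$ from Lemma \ref{l:t-product}(ii), that is, the exact sequence $1\to Z^\sssp\to Z\times T^\sssp\to T\to1$, with antidiagonal kernel. Taking $k_s$-points, the obstruction to lifting $t\in T(k_s)$ lies in $H^1_\fppf(k_s,Z^\sssp)$, which need not vanish. Instead, we compose with $T^\sscp$: the map $Z\times T^\sscp\to T$ is faithfully flat with kernel $\{(\rho(c)^{-1},c)\}\cong Z^\sscp$, where $c\in Z^\sscp$. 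Since $Z^\sscp$ is étale, $H^1_\fppf(k_s,Z^\sscp)=H^1(k_s,Z^\sscp)=1$, so $Z(k_s)\times T^\sscp(k_s)\to T(k_s)$ is surjective. Hence every $t\in T(k_s)$ has the form $z\rho(s)$, which gives surjectivity of $Z(k_s)/\rho(Z^\sscp(k_s))\to T(k_s)/\rho(T^\sscp(k_s))$.
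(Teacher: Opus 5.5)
Your argument is correct. For the kernels, both injectivity statements, and part (ii), it is essentially the paper's proof. In (ii) you even shorten it slightly: since $K=\ker[Z\times T^\sscp\to T]\cong Z^\sscp$ is étale, $H^1_\fppf(k_s,K)=1$, so $Z(k_s)\times T^\sscp(k_s)\to T(k_s)$ is surjective outright. The paper instead runs the same diagram chase as in (i).

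The genuine difference is the surjectivity in (i). The paper uses no structure theory there. It compares the central extensions $1\to K\to Z\times T^\sscp\to T\to 1$ and $1\to K\to Z\times G^\ssc\to G\to 1$, which share the same possibly non-smooth kernel $K\cong Z^\sscp$. It observes that $H^1_\fppf(k_s,Z\times T^\sscp)\cong H^1_\fppf(k_s,Z)\cong H^1_\fppf(k_s,Z\times G^\ssc)$, because $T^\sscp$ and $G^\ssc$ are smooth. A diagram chase then shows that $T(k_s)$ and $G(k_s)$ have the same image in $H^1_\fppf(k_s,K)$, whence $G(k_s)=T(k_s)\cdot\rho(G^\ssc(k_s))$.

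You instead combine three ingredients:
\begin{enumerate}
\item conjugacy of maximal tori of $G^\ssc_{k_s}$ under $G^\ssc(k_s)$;
\item the fact that $g\in G(k_s)$ acts on $G^\sss$ through $G/Z\cong G^\sss/Z^\sssp$;
\item $k_s$-rational representatives in $N_{G^\ssc}(T^\sscp)$ for the Weyl group $W$ of the split torus $T^\sscp_{k_s}$.
\end{enumerate}
Your route avoids flat cohomology of the non-smooth kernel in (i), and it shows exactly where quasi-connectedness is used. For $\OO_{2m}$, a reflection normalizing a maximal torus of $\SO_{2m}$ induces an automorphism outside $W$. The paper's route is shorter, uniform for (i) and (ii), and needs only $G=Z\cdot G^\sss$, $T=Z\cdot T^\sssp$, and smoothness.

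One step should be made precise. Let $\tilde w\in N_{G^\ssc}(T^\sscp)(k_s)$ be the element you choose. You cannot compare the actions of $n$ and $\rho(\tilde w)$ on $T^\sssp$ by lifting elements of $(T^\sssp/Z^\sssp)(k_s)$ to $T^\sssp(k_s)$. That map need not be surjective when $Z^\sssp$ is not étale, for example $x\mapsto x^p$ on $k_s^\times$.

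Instead, put $m=\rho(\tilde w)^{-1}n$. Its image in $(G/Z)(k_s)\cong(G^\sss/Z^\sssp)(k_s)$ normalizes $T^\sssp/Z^\sssp$ and has trivial image in $W$. A maximal torus of the connected reductive group $G^\sss/Z^\sssp$ is its own centralizer, so this image lies in $(T^\sssp/Z^\sssp)(k_s)$.

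By Lemma \ref{l:t-product}(ii) and Corollary \ref{c:cZ_G}, we have $T\supseteq Z$ and $T/Z\cong T^\sssp/Z^\sssp$. So the preimage of this subgroup under $G\to G/Z$ is exactly $T$, giving $m\in T(k_s)$ with no lifting issue. Alternatively, the two automorphisms of $T^\sssp$ agree on the finite-index sublattice $X(T^\sssp/Z^\sssp)$ of the character lattice, hence agree.
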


\begin{proof}
We closely follow  Appendix B to \cite{Borovoi-CR}, due to Zev Rosengarten,
where the case of a {\em connected} reductive group $G$ was considered.
We must show that the homomorphisms
\begin{align}\label{e:ker}
 j_{\ker}\colon& \ker\big[T^\sscp(k_s) \to T(k_s)\big] \,\lra\, \ker\big[G^\ssc(k_s) \to G(k_s)\big],\\
\label{e:coker}
j_{\cok}\colon& \coker\big[T^\sscp(k_s) \to T(k_s)\big] \,\lra\, \coker\big[G^\ssc(k_s) \to G(k_s)\big]
\end{align}
are isomorphisms.
Moreover, when $Z^\sscp$ is \'etale, we must show that the homomorphisms
\begin{align}\label{e:Tker}
 i_{\ker}\colon& \ker\big[Z^\sscp(k_s) \to Z(k_s)\big] \,\lra\, \ker\big[T^\sscp(k_s) \to T(k_s)\big],\\
\label{e:Tcoker}
i_{\cok}\colon& \coker\big[Z^\sscp(k_s) \to Z(k_s)\big] \,\lra\, \coker\big[T^\sscp(k_s) \to T(k_s)\big]
\end{align}
are isomorphisms.

The homomorphisms \eqref{e:ker} and \eqref{e:Tker} are  isomorphisms
because by Lemma \ref{l:qi-ZTG} we have
\[\ker[Z^\sscp\to Z]=\ker[T^\sscp\to T]=\ker[G^\ssc\to G].\]

We prove the injectivity of \eqref{e:coker}.
Let   $[t]\in \coker\big[T^\sscp(k_s) \to T(k_s)\big]$, $t\in T(k_s)$,
and $[t]\in\ker j_\cok$\hs; then $t=\rho(s)$ for some $s\in G^\ssc(k_s)$.
Since $T^\sscp=\rho^{-1}(T)$, we see that $s\in T^\sscp(k_s)$, whence $[t]=1$, as desired.

We prove the injectivity of \eqref{e:Tcoker}.
Let   $[z]\in \coker\big[Z^\sscp(k_s) \to Z(k_s)\big]$, $z\in Z(k_s)$,
and $[z]\in\ker i_\cok$\hs; then $z=\rho(s)$ for some $s\in T^\sscp(k_s)$.
Since $Z^\sscp=\rho^{-1}(Z)$, we see that $s\in Z^\sscp(k_s)$, whence $[z]=1$, as desired.

We prove the surjectivity of \eqref{e:coker} and, when $Z^\sscp$ is \'etale,
the surjectivity of \eqref{e:Tcoker}.
By  Definition \ref{d:q-c-r}(3), the  homomorphism
\[\psi_G\colon Z \times G^\ssc \to G,\qquad (z,s)\mapsto z\cdot \rho(s)\]
is faithfully flat.
Its kernel $K$ is central and is canonically isomorphic to $\rho^{-1}(Z^\sssp)=Z^\sscp$, which may be non-smooth.
Similarly, by Lemma \ref{l:t-product}(ii) the homomorphism
\[\psi_T\colon Z \times T^\sscp \to T,\qquad (z,s)\mapsto z\cdot \rho(s)\]
is faithfully flat. It has the same kernel $K$.
We have a commutative diagram of $k$-group schemes with exact rows
\[
\xymatrix@R=6mm{
1 \ar[r] &K\ar[r]\ar@{=}[d]  &Z \times_k Z^\sscp\ar[r]^-{\psi_Z}\ar[d]  &Z\ar[r]\ar[d]   &1 \\
1 \ar[r] &K\ar[r]\ar@{=}[d]  &Z \times_k T^\sscp\ar[r]^-{\psi_T}\ar[d]  &T\ar[r]\ar[d]   &1 \\
1 \ar[r] &K\ar[r]            &Z \times_k G^\ssc\ar[r]^-{\psi_G}               &G \ar[r]        &1
}
\]
in which the maps on $k_s$-points
\[\psi_T\colon Z(k_s)\times T^\sscp(k_s)\to T(k_s)\quad\ \text{and}
\quad\ \psi_G\colon Z(k_s)\times G^\ssc(k_s)\to G(k_s)\]
may not be surjective.
This diagram gives rise to a commutative diagram of fppf (flat) cohomology groups with exact rows
\begin{equation}\label{e:ZTG}
\begin{aligned}
\xymatrix@R=6mm{
Z(k_s) \times Z^\sscp(k_s)\ar[r]^-{\psi_Z}\ar[d]  &Z(k_s)\ar[r]\ar[d]
    &H_\fppf^1(k_s, K)\ar[r]\ar@{=}[d]   &H_\fppf^1(k_s,Z \times_k Z^\sscp)\ar[d]^-{i_*}\\
Z(k_s) \times T^\sscp(k_s)\ar[r]^-{\psi_T}\ar[d]  &T(k_s)\ar[r]\ar[d]
    &H_\fppf^1(k_s, K)\ar[r]\ar@{=}[d]   &H_\fppf^1(k_s,Z \times_k T^\sscp)\ar[d]_-\simeq^-{j_*}\\
Z(k_s) \times G^\ssc(k_s)\ar[r]^-{\psi_G}                &G(k_s)\ar[r]
    &H_\fppf^1(k_s, K)\ar[r]             &H_\fppf^1(k_s,Z \times_k G^\ssc)
}
\end{aligned}
\end{equation}
in which the rightmost vertical arrow marked $j_*$ is the isomorphism
\[H_\fppf^1(k_s,Z \times_k T^\sscp)\cong H_\fppf^1(k_s,Z) \cong H_\fppf^1(k_s,Z \times_k G^\ssc)\]
because $k_s$ is separably closed
and the $k$-groups $T^\sscp$ and $G^\ssc$ are smooth.
Diagram \eqref{e:ZTG} shows that
\[G(k_s) = T(k_s)\cdot \psi_G\big(\hs Z(k_s) \times G^\ssc(k_s)\hs\big)=
    T(k_s)\cdot Z(k_s)\cdot\rho\big(G^\ssc(k_s)\big) = T(k_s)\cdot \rho\big(G^\ssc(k_s)\big),\]
whence the desired surjectivity of \eqref{e:coker}.

If $Z^\sscp$ is \'etale, then the rightmost vertical arrow marked $i_*$ in \eqref{e:ZTG}
is the isomorphism
\[
H_\fppf^1(k_s,Z \times_k Z^\sscp)\cong H_\fppf^1(k_s,Z) \cong H_\fppf^1(k_s,Z \times_k T^\sscp)
\]
because $k_s$ is separably closed
and the $k$-groups $Z^\sscp$ and $T^\sscp$ are smooth.
Diagram \eqref{e:ZTG} shows that
\[T(k_s) = Z(k_s)\cdot \psi\big(\hs Z(k_s) \times T^\sscp(k_s)\hs\big)=
Z(k_s)\cdot \rho\big(T^\sscp(k_s)\big),\]
whence the desired surjectivity of \eqref{e:Tcoker}.
\end{proof}

\begin{theorem}\label{t:f-iso}
For a quasi-connected reductive $k$-group $G$, a maximal torus $T^\sscp\subset G^\ssc$,
and the principal quasi-torus $T=\cZ_G(\rho(T^\sscp))$,
we have an isomorphism of abelian groups
\[H^1(k,T^\sscp\labelt\rho T)\isoto H^1_\ab(k,G),\]
which is functorial with respect to {\emm principal} homomorphisms $G_1\to G_2$.
Here functoriality means that for every principal homomorphism
of quasi-connected reductive groups $\vk\colon G_1\to G_2$
and for all principal quasi-tori $T_1\subseteq G_1$ and $T_2\subseteq G_2$
such that $\vk(T_1)\subseteq T_2$,
we have a commutative diagram of abelian groups
\begin{equation}\label{e:f-iso}
\begin{aligned}
\xymatrix@C=15mm{
H^1(k, T_1^\sscp\to T_1)\ar[r]^-{\vk_*}\ar[d]_-\sim &H^1(k, T_2^\sscp\to T_2)\ar[d]^-\sim\\
H^1_\ab (k,G_1)\ar[r]^-{\vk_\ab} &H^1_\ab(k,G_2).
}
\end{aligned}
\end{equation}

\end{theorem}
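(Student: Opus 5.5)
The isomorphism will come from the morphism of Picard crossed modules
\[
j\colon (T^\sscp\labelt\rho T,\theta_\triv,\br_\triv)\,\lra\,(G^\ssc\labelt\rho G,\theta,\br),
\]
given by the inclusions $T^\sscp\into G^\ssc$ and $T\into G$. I will first check that $j$ really is a morphism of braided crossed modules. The $G$-action on $G^\ssc$, restricted to $T$ and then to $T^\sscp$, is trivial. Indeed, write $t=z\cdot\rho(s)$ with $z\in Z(G)(R')$ and $s\in T^\sscp(R')$, using Lemma \ref{l:t-product}(ii). Then $\theta_z$ is trivial by Proposition \ref{p:q-ab}(i), and $\theta_{\rho(s)}=\inn(s)$ fixes $T^\sscp$ pointwise because $T^\sscp$ is commutative. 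The braiding restricted to $T\times T$ is also trivial: by \eqref{e:br-br}, $\{t_1,t_2\}=[s_1,s_2]=1$ with $s_i\in T^\sscp(R')$, and we conclude over $R$ by Fact \ref{f:W-inj}. Applying Appendix \ref{app:gr-coh-Picard} therefore gives a homomorphism of abelian groups $j_*\colon H^1(k,T^\sscp\to T)\to H^1_\ab(k,G)$. On the source, the group structure coming from the trivial braiding is the usual one on hypercohomology of a complex of commutative $\Gamma$-modules.

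Bijectivity is the next step. By Proposition \ref{t:Zev}(i), the morphism of crossed modules of $\Gamma$-groups $(T^\sscp(k_s)\to T(k_s))\to(G^\ssc(k_s)\to G(k_s))$ is a quasi-isomorphism: it is bijective on kernels and on cokernels. A quasi-isomorphism of crossed modules induces a bijection on $H^1$ by \cite[Section 3]{Borovoi-Memoir}, and this also works in arbitrary characteristic since only $k_s$-points enter. If one prefers a self-contained argument, a five-lemma argument on the exact sequences $H^0(\ker)\to\dots$ is not available for nonabelian crossed modules, so instead I would use the standard route. Every class can be twisted so that the cocycle takes values in the subcomplex, using surjectivity on cokernels and then exactness. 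Injectivity then comes from the exact sequence $1\to\ker\rho\to(\ldots)\to\coker\rho$ of hypercohomology, in the form $H^2(\ker)\to H^1(A\to G)\to H^1(\coker)$, together with twisting. I expect this to be the main technical point, and I would simply cite the Memoir's result that quasi-isomorphic crossed modules have isomorphic $H^1$. Since $j_*$ is a bijective homomorphism, it is an isomorphism of groups.

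Finally, functoriality. Let $\vk$ be principal with $\vk(T_1)\subseteq T_2$. Then $\vk^\ssc(T_1^\sscp)\subseteq\rho_2^{-1}(T_2)=T_2^\sscp$, using $T^\sscp=\rho^{-1}(T)$. Hence $\vk$ restricts to a morphism of complexes $(T_1^\sscp\to T_1)\to(T_2^\sscp\to T_2)$, which is obviously compatible with the trivial actions and braidings. The square of Picard crossed modules formed by $j_1$, $j_2$, this restriction, and $\vk_*=(\vk^\ssc,\vk)$ commutes on the nose, since all maps are restrictions of $\vk^\ssc$ and $\vk$. By Corollary \ref{c:principal-induces}, $\vk_*$ is a morphism of Picard crossed modules. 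Functoriality of $H^1$ with coefficients in Picard crossed modules (Appendix \ref{app:gr-coh-Picard}) then yields the commutative diagram \eqref{e:f-iso} of abelian groups.
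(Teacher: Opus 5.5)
Your proposal is correct and follows the paper's proof: the inclusion $(T^\sscp\to T,\theta_\triv,\br_\triv)\into\PCrM(G)$ yields the homomorphism, bijectivity follows from the $k_s$-point quasi-isomorphism of Proposition \ref{t:Zev}(i), and functoriality follows from the commuting square of Picard crossed modules. Two remarks on citations and detail. For the invariance of $H^1$ under quasi-isomorphisms, the paper cites \cite[Theorem 3.3]{Borovoi-Preprint} or \cite[Proposition 5.6]{Noohi}; cite those rather than relying on your twisting sketch, which is too vague to stand on its own. Your explicit check that $\theta$ and $\br$ restrict trivially to $T$ fills in a step the paper only asserts.
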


\begin{proof}
For $G,G^\ssc, T, T^\sscp$ as above, consider the Picard crossed module
\[ \big(T^\sscp, T,\rho_T,\theta_\triv, \br_\triv\big)\]
where $\rho_T\colon T^\sscp\to T$ is the restriction of $\rho$,
$\theta_\triv$ is the trivial action of $T$ on $T^\sscp$,
and the braiding $\br_\triv\colon T\times_k T\to T^\sscp$ is the trivial  map (identically $1$).
Then
\[ H^1(k,T^\sscp, T,\rho_T,\theta_\triv, \br_\triv)=H^1(k,T^\sscp\to T),\]
and the inclusion
\[ \big(T^\sscp, T,\rho_T,\theta_\triv, \br_\triv\big)\into \big(G^\ssc, G,\rho,\theta, \br\big)\]
is a morphism of Picard crossed modules.
Thus we obtain a homomorphism of abelian groups
\[ H^1(k, T^\sscp\to T)\to H^1_\ab(k,G).\]
By \cite[Theorem 3.3]{Borovoi-Preprint} or \cite[Proposition 5.6]{Noohi},
it follows from Proposition \ref{t:Zev}(i) that this homomorphism is bijective, hence an isomorphism.

Let
\[ \vk\colon G_1\to G_2\]
be a {\em principal} homomorphism of quasi-connected reductive $k$-groups.
Let
\[ T_1=\cZ_{G_1}(\rho_1(T_1^\sscp))=Z(G_1)\cdot\rho_1(T_1^\sscp)\subseteq G_1\]
be a principal quasi-torus in $G_1$, and let
\[ T_2=\cZ_{G_2}(\rho_2(T_2^\sscp))=Z(G_2)\cdot\rho_2(T_2^\sscp)\subseteq G_2\]
be a principal quasi-torus in $G_2$ such that
\[ \vk(T_1)\subseteq T_2 \ \ \text{and hence} \ \ \vk^\ssc(T_1^\sscp)\subseteq T_2^\sscp .\]
Then we have a natural commutative diagram of Picard crossed modules
\begin{equation*}
\begin{aligned}
\xymatrix@C=17mm{
\big(T_1^\sscp, T_1,\hs\rho_{1,T},\hs\theta_{1,\triv},\hs\br_{1,\triv}\big)\ar[r]\ar[d]
           &\big(T_2^\sscp, T_2,\hs\rho_{2,T},\hs\theta_{2,\triv},\hs\br_{2,\triv}\big)\ar[d]\\
\big(G_1^\sscp, G_1,\rho_1,\theta_{1},\br_{1}\big)\ar[r]
           &\big(G_2^\sscp, G_2,\rho_2,\theta_{2},\br_{2}\big)
}
\end{aligned}
\end{equation*}
(with evident notations), which induces the commutative  diagram of abelian groups \eqref{e:f-iso}.
\end{proof}

\begin{remark}
Instead of using Corollary \ref{c:principal-induces} in the proof of Theorem \ref{t:functor},
one can use the commutative diagram \eqref{e:f-iso}.
\end{remark}

\begin{corollary}\label{c:f-iso}
For $G,G^\ssc, T, T^\sscp, Z, Z^\sscp$ as above, if the finite $k$-group $Z^\sscp$ is \'etale,
then we have isomorphisms of abelian groups
\begin{equation}\label{e:H-ZTG}
H^1(k,Z^\sscp\labelt\rho Z)\isoto H^1(k,T^\sscp\labelt\rho T)\isoto H^1_\ab(k,G).
\end{equation}
Moreover, the composite isomorphism
\begin{equation}\label{e:H-ZG}
H^1(k,Z^\sscp\labelt\rho Z)\isoto H^1_\ab(k,G)
\end{equation}
does not depend on the choice of the pair  $(T^\sscp,T)$.
\end{corollary}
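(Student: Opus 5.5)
The plan has two parts: first build the isomorphisms in \eqref{e:H-ZTG}, then show that the composite \eqref{e:H-ZG} does not depend on the choice of $(T^\sscp,T)$.

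For the first arrow in \eqref{e:H-ZTG}, consider the morphism of complexes of quasi-tori $(Z^\sscp\to Z)\to(T^\sscp\to T)$ from Lemma \ref{l:qi-ZTG}. Give both complexes the trivial action and the trivial braiding. Since $Z^\sscp$ is \'etale, Proposition \ref{t:Zev}(ii) shows that this morphism is a quasi-isomorphism on $k_s$-points. These are complexes of abelian $\Gamma$-modules, i.e.\ Picard crossed modules with trivial action and braiding, so the induced map $H^1(k,Z^\sscp\to Z)\to H^1(k,T^\sscp\to T)$ is an isomorphism of abelian groups. The invariance of hypercohomology under quasi-isomorphisms applies here; one can cite \cite[Theorem 3.3]{Borovoi-Preprint} or \cite[Proposition 5.6]{Noohi}, exactly as in the proof of Theorem \ref{t:f-iso}. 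The second arrow is the isomorphism of Theorem \ref{t:f-iso}. Their composite is induced by the inclusion of Picard crossed modules
\[(Z^\sscp,Z,\rho_Z,\theta_\triv,\br_\triv)\into(G^\ssc,G,\rho,\theta,\br).\]
This inclusion is a morphism of Picard crossed modules for two reasons. First, $Z$ acts trivially on $G^\ssc$ by Proposition \ref{p:q-ab}(i). Second, $\{z_1,z_2\}=1$ for $z_1,z_2\in Z$, because by \eqref{e:br-br} we may take $s_i=1$.

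For independence, the key observation is that the composite \eqref{e:H-ZG} equals the map induced by this last inclusion of Picard crossed modules, and that inclusion makes no reference to $T$ or $T^\sscp$. To justify this, I will check that the triangle
\[(Z^\sscp\to Z)\to(T^\sscp\to T)\to(G^\ssc\to G)\]
commutes at the level of morphisms of Picard crossed modules, not just of complexes. It does, because both maps are inclusions and all actions and braidings involved are compatible (trivial on the $T$-level, and restricting to trivial on $Z$). Functoriality of $H^1$ for Picard crossed modules (Appendix \ref{app:gr-coh-Picard}) then gives that the composite of the induced maps equals the map induced by the composite inclusion. This is therefore independent of the pair $(T^\sscp,T)$.

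The main obstacle is making sure that the abelian group structure on $H^1$ of a crossed module with trivial action and trivial braiding agrees with the usual hypercohomology group structure of a two-term complex of abelian $\Gamma$-modules. This agreement is what lets the quasi-isomorphism invariance results be applied to give an isomorphism of groups. I would check it directly from the construction in Appendix \ref{app:gr-coh-Picard}: with trivial $\theta$ and $\br$, the product of cocycles should reduce to componentwise multiplication. Bijectivity then suffices, since the map is already a homomorphism by functoriality.
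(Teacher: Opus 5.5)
Your proposal is correct and follows the paper's proof. The first isomorphism comes from the quasi-isomorphism of Proposition \ref{t:Zev}(ii), the second from Theorem \ref{t:f-iso}, and independence of $(T^\sscp,T)$ holds because the composite is induced by the inclusion of Picard crossed modules $(Z^\sscp,Z,\rho_Z,\theta_\triv,\br_\triv)\into(G^\ssc,G,\rho,\theta,\br)$, which does not involve $T$. Your additional checks only spell out details the paper leaves implicit: that $Z$ acts trivially via Proposition \ref{p:q-ab}(i), that the braiding is trivial on $Z$ via \eqref{e:br-br}, and that bijectivity together with functoriality already gives a group isomorphism.
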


\begin{proof}
By Proposition \ref{t:Zev}(ii) the morphism of complexes of commutative groups
\[\big(Z^\sscp(k_s)\to Z(k_s)\big)\hs\into\hs\big(T^\sscp(k_s)\to T(k_s)\big)\]
is a quasi-isomorphism, from which
we obtain the first isomorphism of \eqref{e:H-ZTG}.
We obtain the second isomorphism of \eqref{e:H-ZTG} from Theorem \ref{t:f-iso}.
The composite isomorphism \eqref{e:H-ZG} comes from the morphism of Picard crossed modules
\[ \big(Z^\sscp, Z,\rho_Z,\theta_\triv, \br_\triv\big)\into \big(G^\ssc, G,\rho,\theta, \br\big)\]
(with evident notations), and therefore it does not depend on the choice of $(T^\sscp,T)$.
\end{proof}

\appendix
\numberwithin{equation}{section}

\section{Universal covering of a semisimple group scheme as a functor}
\label{app:universal}

In this appendix  we assume that  the reader is familiar with the background and terminology presented in \cite[Section 6]{Con14}.
Nevertheless, we shall provide the essential definitions to keep this appendix self-contained.

Let $S$ be a non-empty scheme and let $\mathcal{O}_S$ be its structure sheaf.
By an $S$-group we mean an $S$-group scheme.
For $S$-groups $G$ and $G'$, we denote by $\ul{\Hom}_{S\textnormal{-gp}}(G,G')$ the group functor on the category of $S$-schemes that is defined by
\[
\ul{\Hom}_{S\textnormal{-gp}}(G,G') (S')
=
\Hom_{S'\textnormal{-gp}}(G_{S'},G'_{S'})
\]
for every $S$-scheme $S'$.

We begin by recalling some basic facts concerning $S$-groups.
For a finitely generated abelian group $M$,
we denote by $M_S \coloneqq \bigsqcup_{m \in M} S_m$ the constant commutative $S$-group,
where $S_m$ denotes a copy of $S$ indexed by $m \in M$.
We define $D_S(M)$ to be the $S$-group $\operatorname{Spec}(\mathcal{O}_S[M])$ representing the group functor $\ul{\Hom}_{S\textnormal{-gp}}(M_S,\mathbb{G}_m)$; see \cite[Exposé VIII. Definition 1.1]{SGA3}, which is finitely presented and faithfully flat over $S$; see \cite[Exposé VIII. Proposition 2.1]{SGA3}.

\begin{definition}(\cite[Definition B.1.1]{Con14}, \cite[Exposé VIII, Definition 1.1]{SGA3})\label{d:mult_type}
    An $S$-group $H$ is \textit{diagonalizable} if $H \simeq D_S(M)$ over $S$ for a finitely generated abelian group $M$.
    An $S$-group $H$ is \textit{of multiplicative type} if there is an fppf covering $\{S_i \to S\}$ such that $H_{S_i}$ is diagonalizable over $S_i$ for each $i$.
\end{definition}

By fppf descent (for example, see \cite[Theorem 4.3.7]{Poonen}), an $S$-group of multiplicative type is finitely presented and faithfully flat over $S$.
A particular class of groups of multiplicative type consists of $S$-\textit{tori}.

\begin{definition}(\cite[Exposé IX, Definition 1.3]{SGA3})\label{d:torus}
    An $S$-group $T$ is an \textit{split $S$-torus} if $T \simeq D_S(M)$ over $S$
    for some finitely generated free abelian group $M$.
    An $S$-group $T$ is an $S$\textit{-torus} if there is an fppf covering $\{S_i \to S\}$ such that $T_{S_i}$ is a split torus for each $i$.
\end{definition}

If an $S$-group $H$ is diagonalizable, then $\ul{\Hom}_{S\textnormal{-gp}}(H,\mathbb{G}_m)$ is represented by a constant $S$-group associated with a finitely generated  abelian group $\X(H)$; see \cite[Exposé VIII, Corollary 1.5]{SGA3}.
The exact functors
\begin{equation}\label{t:equiv_multtype_locc}
    H \rightsquigarrow \X(H),
    \quad
    M \rightsquigarrow D_S(M) = \ul{\Hom}_{S\textnormal{-gp}}(M_S,\mathbb{G}_m)
\end{equation}
form an anti-equivalence between the category of diagonalizable $S$-groups and the category of finitely generated abelian groups; see \cite[Exposé VIII, Theorem 1.2]{SGA3}.
In fact, this anti-equivalence generalizes to the anti-equivalence  between the category of $S$-groups of multiplicative type and the category of {\em twisted constant $S$-groups} whose geometric fibers are finitely generated abelian groups; see {\cite[Exposé X, Corollary 5.9, Exposé VIII, Theorem 3.1]{SGA3},  or \cite[Proposition B.3.4]{Con14}}.

\begin{definition}(\cite[Expos\'e XIX, Definition 2.7]{SGA3})\label{d:red_ss_S-grp}
    An $S$-group $G$ is called \textit{reductive} (resp.  \textit{semisimple}) if it is affine and smooth over $S$ with connected and reductive (resp. semisimple) geometric fibers.
\end{definition}

\begin{definition}(\cite[Expos\'e XXII, Definition 4.2.9]{SGA3}, \cite[Definition 3.3.9]{Con14})
    Let $G$ and $G'$ be $S$-groups  that are affine and smooth over $S$.
    An $S$-homomorphism $f : G' \to G$ is an \textit{isogeny} if it is finite and faithfully flat.
    If, moreover, the kernel of $f$ is central in $G'$ (see \cite[Definition 2.2.2]{Con14}),
    then $f$ is called a \textit{central isogeny}.
\end{definition}

\begin{definition}(\cite[Expos\'e XXII, Definition 4.3.3]{SGA3})\label{d:sc}
A semisimple $S$-group $G$ is called {\em simply connected}
if its geometric fibers are simply connected
\end{definition}

\begin{definition}\label{d:universal}
Let $G$ be a semisimple $S$-group.
A {\em universal covering of} $G$ is a central isogeny $\pi\colon G^\ssc\to G$
where $G^\ssc$ is a simply connected semisimple $S$-group.
\end{definition}

\begin{lemma}\label{l:uni_cov_bc}
    Let $G$ be a semisimple $S$-group and let $S'$ be an $S$-scheme.
    Then $G_{S'}$ is a semisimple $S'$-group, and for a universal covering $\pi \colon G^\ssc \to G$ of $G$,
    the induced $S'$-homomorphism $\pi_{S'} \colon G^\ssc_{S'} \to G_{S'}$ is a universal covering of $G_{S'}$.
\end{lemma}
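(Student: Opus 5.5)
The lemma has two assertions, and I would verify each directly from Definitions \ref{d:red_ss_S-grp}, \ref{d:sc} and \ref{d:universal}, using the fact that every property involved is stable under base change.

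First, $G_{S'}$ is a semisimple $S'$-group. Affineness and smoothness are preserved by base change. For any geometric point $\bar s'\colon\operatorname{Spec}\Omega\to S'$ with image $\bar s$ in $S$, the fiber $(G_{S'})_{\bar s'}$ is $G_{\bar s}\times_{\kappa(\bar s)}\Omega$, where $G_{\bar s}$ is the fiber over the composite geometric point of $S$. A connected semisimple group over an algebraically closed field stays connected and semisimple after extension to a larger algebraically closed field. Connectedness holds because connected group schemes over a field are geometrically connected. Semisimplicity holds because the radical of a smooth connected affine group commutes with extension between algebraically closed fields; alternatively, semisimplicity can be read off from the root datum, which is unchanged. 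So every geometric fiber is connected semisimple. The same argument shows that $G^\ssc_{S'}$ is semisimple.

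Second, $\pi_{S'}$ is a universal covering.
\begin{itemize}
\item \emph{Isogeny.} Being finite and faithfully flat is stable under base change, so $\pi_{S'}$ is an isogeny.
\item \emph{Centrality.} The kernel satisfies $\ker(\pi_{S'})=(\ker\pi)_{S'}$. Centrality means that the conjugation action morphism $G^\ssc\times_S\ker\pi\to\ker\pi$ coincides with the second projection, equivalently that $\ker\pi\subseteq Z(G^\ssc)$ as a functor. An equality of morphisms is preserved by base change, so $\ker\pi_{S'}$ is central in $G^\ssc_{S'}$.
\item \emph{Simple connectedness.} The geometric fibers of $G^\ssc_{S'}$ are base changes of geometric fibers of $G^\ssc$ to larger algebraically closed fields. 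Simple connectedness of a semisimple group over an algebraically closed field is a property of its root datum, namely that the coroot lattice equals the cocharacter lattice. The root datum is invariant under such field extensions, because a split maximal torus and its roots base change compatibly. So the fibers of $G^\ssc_{S'}$ remain simply connected.
\end{itemize}

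Most of this is formal. The only point needing an honest citation is the invariance of semisimplicity and of simple connectedness under extension of algebraically closed fields. I would handle it by referring to \cite[Section 6]{Con14}, or to SGA3 Expos\'e XXII, where both properties are expressed through root data and shown to be fiberwise conditions that are insensitive to the choice of geometric point.
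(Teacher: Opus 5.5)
Your proof is correct. For the semisimplicity of $G_{S'}$ and $G^\ssc_{S'}$ and the simple connectedness of $G^\ssc_{S'}$, you argue as the paper does: pass to geometric fibers and use invariance under extension of algebraically closed fields. The paper cites \cite[Corollary 5.6.23]{Poonen} for this invariance, where you appeal to the root datum. If geometric points are allowed to have arbitrary algebraically closed residue fields, this invariance is not even needed. In that case the geometric fibers of $G_{S'}$ are literally geometric fibers of $G$ at the composite points $\operatorname{Spec}\Omega\to S'\to S$.

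The genuine difference is in showing that $\pi_{S'}$ is a central isogeny. The paper invokes \cite[Proposition 3.3.10]{Con14}: a surjective homomorphism of reductive $S$-groups is a central isogeny if and only if each geometric fiber has central kernel. It then reduces to a field extension $k'/k$, using that kernels and centers commute with base field extension.

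You bypass this criterion entirely:
\begin{itemize}
\item finiteness and faithful flatness are stable under arbitrary base change;
\item $\ker(\pi_{S'})=(\ker\pi)_{S'}$;
\item centrality is an identity of morphisms $G^\ssc\times_S\ker\pi\to\ker\pi$, equivalently a condition on $T$-points for all $S$-schemes $T$, so it is preserved by any base change $S'\to S$.
\end{itemize}
Your argument is more elementary and works verbatim over any base. The paper's is shorter given the citation, but it uses a nontrivial fiberwise criterion for something that follows formally from the definitions.
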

\begin{proof}
    Since the property of being simply connected is unchanged by any base field extension (see \cite[Corollary 5.6.23]{Poonen}), the $S'$-group $G_{S'}$ (resp. $G^\ssc_{S'}$) is a semisimple (resp. simply connected semisimple) $S'$-group by Definition \ref{d:red_ss_S-grp} (resp. Definition \ref{d:sc}).

    It remains to prove that $\pi_{S'}$ is a central isogeny.
    By \cite[Proposition 3.3.10]{Con14}, a surjective homomorphism of reductive $S$-groups is a central isogeny if and only if each of its geometric fibers has a central kernel.
    Since the geometric fibers of $\pi_{S'}$ are obtained from those of $\pi$ by base field extension, it suffices to prove the claim after replacing $S$ by the spectrum of a field.
    Thus let $S=\operatorname{Spec} k$ for a field $k$, and let $k'/k$ be a field extension. Then $\pi_{k'}$ has central kernel, since both kernels and centers are preserved under base field extension (see \cite[Remark 5.2.15]{Poonen} for the center).
    Hence $\pi_{k'}$ is a central isogeny, which completes the proof.
\end{proof}

\begin{definition}(\cite[Definition 3.2.1]{Con14})\label{d:max_tor}
    A \textit{maximal torus} in a smooth affine $S$-scheme $G$ is a torus $T \subset G$ such that the fiber $T_{\overline{s}}$ of any geometric point $\overline{s}$ of $S$ is a maximal torus in $G_{\overline{s}}$.
\end{definition}

Suppose that a reductive $S$-group $G$ admits a split maximal torus $T \subset G$.
Fix an isomorphism $T \simeq D_S(M)$ for a finitely generated free abelian group $M$
and let $M^\vee$ be the dual lattice of $M$.
By \cite[Exposé X, Theorem 5.6, Exposé XXII, 1.11]{SGA3}, both functors  $\ul{\Hom}_{S\textnormal{-gp}}(T,\mathbb{G}_m)$ and $\ul{\Hom}_{S\textnormal{-gp}}(\mathbb{G}_m,T)$ are representable by the following constant $S$-groups:
\[
M_S \cong \ul{\Hom}_{S\textnormal{-gp}}(T,\mathbb{G}_m)
\quad\text{and}\quad
M^\vee_S \cong \ul{\Hom}_{S\textnormal{-gp}}(\mathbb{G}_m,T).
\]
This induces canonical isomorphisms on $S$-points
\[
M_S(S) \cong \Hom_{S\textnormal{-gp}}(T,\mathbb{G}_m)
\quad\text{and}\quad
M^\vee_S(S) \cong \Hom_{S\textnormal{-gp}}(\mathbb{G}_m,T),
\]
where $M_S(S)$ (resp. $M^\vee_S(S)$) consists of locally constant $M$-valued (resp. $M^\vee$-valued) functions on $S$.
In particular, by identifying $M$ (resp. $M^\vee$) with the group of constant $M$-valued (resp. $M^\vee$-valued) functions on $S$, we obtain injective homomorphisms
\[
M \hookrightarrow \Hom_{S\textnormal{-gp}}(T,\mathbb{G}_m)
\quad\text{and}\quad
M^\vee \hookrightarrow \Hom_{S\textnormal{-gp}}(\mathbb{G}_m,T).
\]
If the scheme $S$ is connected, then these injections are also surjective.

We consider the vector bundle $\mathfrak{g} = \Lie(G/S)$ over $S$ equipped with the natural $T$-action
via the adjoint representation of $G$ and with the natural $\mathbb{G}_m$-action defined by scalar multiplication.
By identifying $m \in M$ with its image under $M \hookrightarrow \Hom_{S\textnormal{-gp}}(T,\mathbb{G}_m)$,
we may regard $m$ as a character of $G$, hence
$m(t) \in \mathbb{G}_m(S')$ for every $S$-scheme $S'$ and every $t \in T(S')$.
We define $\mathfrak{g}_m$ to be the subbundle of $\mathfrak{g}$ whose $S'$-points for every $S$-scheme $S'$ are given by
\[
\mathfrak{g}_m(S') = \{v \in \mathfrak{g}(S') \mid t \cdot v_{S''} = m(t) \cdot v_{S''}
\textit{ for every  $S'$-scheme } S''\textit{ and } t \in T(S'') \}
\]
where $t \cdot v_{S''}$ and $m(t) \cdot v_{S''}$ denote the actions
of $T$ and $\mathbb{G}_m$ on $\mathfrak{g}$,  respectively; see \cite[Exposé XIX, \S 3.1]{SGA3}.
There is an $\mathcal{O}_S$-linear $M$-graded decomposition $\mathfrak{g} = \bigoplus_{m \in M} \mathfrak{g}_m$;
see \cite[Section 4]{Con14} or \cite[Lemma A.8.8]{CGP}.
The elements $m \in M$ for which $\mathfrak{g}_m \neq 0$ are called the \textit{weights} of the $T$-action on $\mathfrak{g}$.

\begin{definition}(\cite[Definition 4.1.1]{Con14})
    Suppose that a reductive $S$-group $G$ admits a split maximal $S$-torus $T \subset G$ with an isomorphism $T \simeq D_S(M)$ for a finitely generated free abelian group $M$.
    A \textit{root} for $(G,T,M)$ is a nonzero $a \in M$ such that $\mathfrak{g}_a$ is a line bundle.
    We call such $\mathfrak{g}_a$ a \textit{root space}.
\end{definition}

Under the natural inclusion $M \xhookrightarrow{} \Hom_{S\textnormal{-gp}}(T,\mathbb{G}_m)$, we may view a root $a \in M$ as a nontrivial character $a \colon T \to \mathbb{G}_m$ .
A root space $\mathfrak{g}_a$ need not be a trivial line bundle; see \cite[Example 4.1.2]{Con14}.

\begin{definition}(\cite[Definition 5.1.1]{Con14})\label{def:split_red_S-grp}
    A reductive $S$-group $G$ is \textit{split} if there is a (split) maximal torus $T$ with an isomorphism $T \simeq D_S(M)$ for a finitely generated free abelian group $M$ such that
    \begin{enumerate}
        \item the nontrivial weights $a\colon T \to \mathbb{G}_m$ that occur in $\mathfrak{g}$ arise from elements of $M$
        under $M \xhookrightarrow{} \Hom_{S\textnormal{-gp}}(T,\mathbb{G}_m)$,
        \item  each weight space $\mathfrak{g}_{a}$ is a trivial line bundle,
         \item each coroot $a^\vee \colon \mathbb{G}_m \to T$ arises from an element of $M^\vee$ under $M^\vee \xhookrightarrow{} \Hom_{S\textnormal{-gp}}(\mathbb{G}_m,T)$; see \cite[Theorem 4.2.6]{Con14} for the definition of coroots.
    \end{enumerate}
    We call such $(G,T,M)$ a \textit{split triple}.
\end{definition}

In particular, every nontrivial weight $a \colon T \to \mathbb{G}_m$ occurring in Definition \ref{def:split_red_S-grp} is a root for $(G,T,M)$.
For a split triple $(G,T,M)$, we denote by
 $\Phi$ the set of roots $a \in M$ for $(G,T,M)$, and by $\Phi^\vee$ the set of corresponding coroots $a^\vee \in M^\vee$.
The associated $R(G,T,M) \coloneqq (M,\Phi,M^\vee,\Phi^\vee)$ is a reduced root datum; see \cite[Proposition 5.1.6]{Con14}.

\begin{definition}(\cite[Definition 6.1.6]{Con14})\label{d:compatible_splittings}
    Let $(G,T,M)$ and $(G',T',M')$ be split triples over a non-empty scheme $S$, and let $(M,\Phi,M^\vee,\Phi^\vee)$ and $(M',\Phi',M'\hs^\vee,\Phi'\hs^\vee)$ be the associated reduced root data respectively.
    A central isogeny $f \colon G' \to G$ such that $f(T') \subseteq T$ is called \textit{compatible with the splittings} if there is a homomorphism $h \colon M \to M'$ and a bijection $d \colon \Phi' \isoto \Phi$ such that
    \begin{enumerate}
        \item the induced homomorphism $f^* \colon M_S = \ul{\Hom}_{S\textnormal{-gp}}(T,\mathbb{G}_m) \to \ul{\Hom}_{S\textnormal{-gp}}(T',\mathbb{G}_m) = M'_{S}$ arises from $h$,
        \item $h(d(a')) = a'$ and $h^\vee(a^{\prime\hs\vee}) = d(a')^\vee$ for every $a' \in \Phi'$.
    \end{enumerate}
\end{definition}

We also write $f \colon (G',T',M') \to (G,T,M)$ to denote that $f$ is compatible with the splittings.

Note that the restriction $f|_{T'} \colon T' \to T$ is an isogeny and  $h \colon M \to M'$
is a finite-index inclusion; see \cite[Exposé XXII, Proposition 4.2.10]{SGA3}.
Furthermore, the isogeny $f$ uniquely determines $h$ and $d$.
The notion of an \textit{isomorphism} between split triples is the evident one.

\begin{definition}[{\cite[Exposé XXII, \S 4.2]{SGA3}}]\label{d:splittable}
    A central isogeny $f \colon G' \to G$ of reductive $S$-groups is \textit{splittable}
    if there exist maximal tori $T' \simeq D_S(M')$ of $G'$ and $T \simeq D_S(M)$ of $G$ such that
    \begin{enumerate}
        \item $(G,T,M)$ and $(G',T',M')$ are split triples,
        \item $f(T') \subseteq T$,
        \item $f\colon (G',T',M') \to (G,T,M)$ is compatible with the splittings.
    \end{enumerate}
\end{definition}

\begin{lemma}[{\cite[Exposé XXII, Corollary 4.2.13]{SGA3}}]\label{l:isog_split}
    Every central isogeny $f \colon G' \to G$ of semisimple $S$-groups is étale-locally splittable, that is, there exists an étale covering $\{S_i \to S\}$ such that $f_{S_i} \colon G'_{S_i} \to G_{S_i}$ is splittable.
\end{lemma}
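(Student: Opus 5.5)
The statement is \cite[Exposé XXII, Corollary 4.2.13]{SGA3}: every central isogeny $f\colon G'\to G$ of semisimple $S$-groups becomes splittable after an étale covering of $S$. I would reduce it to two facts, which I state explicitly rather than reprove. First, every reductive $S$-group is étale-locally split; this is SGA3, Exposé XXII. Second, a central isogeny carrying one split maximal torus into another is automatically compatible with the splittings in the sense of Definition \ref{d:compatible_splittings}; this is \cite[Exposé XXII, Proposition 4.2.10]{SGA3}.

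I would first find a maximal torus of $G'$ étale-locally. By \cite[Expos\'e XII, Theorem 1.7]{SGA3}, maximal tori exist étale-locally on $S$. So after an étale covering, $G'$ has a maximal torus $T'$. I would then set $T=f(T')$, defined as the quotient $T'/(T'\cap\ker f)$. Since $\ker f$ is central and of multiplicative type, it lies in every maximal torus fiberwise, so $\ker f\subseteq T'$. It follows that $T$ is a closed subtorus of $G$ whose geometric fibers are maximal tori, so $T$ is a maximal torus of $G$ in the sense of Definition \ref{d:max_tor}.

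Next I would split the tori and the root data. Tori are étale-locally split, so after refining the cover both $T'\simeq D_S(M')$ and $T\simeq D_S(M)$ are split. Refining further, which may be done Zariski-locally, makes $S$ connected and trivializes the root line bundles $\mathfrak g'_{a'}$ and $\mathfrak g_a$. Once the tori are split and $S$ is connected, the roots and coroots are constant sections of $M$ and $M^\vee$. Hence $(G',T',M')$ and $(G,T,M)$ become split triples as in Definition \ref{def:split_red_S-grp}.

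Finally I would check compatibility. The map $f|_{T'}\colon T'\to T$ induces $h\colon M\to M'$ by the anti-equivalence \eqref{t:equiv_multtype_locc}. Since $f$ is a central isogeny, $\Lie f$ identifies the root spaces; fiberwise this is classical, and it gives a bijection $d\colon\Phi'\to\Phi$ with $h(d(a'))=a'$. The coroot condition $h^\vee(a'^\vee)=d(a')^\vee$ I would check using the uniqueness of coroots: $f\circ a'^\vee$ is a cocharacter pairing to $2$ with $d(a')$ and compatible with the reflections, so it equals $d(a')^\vee$. This last check is the one I expect to be the main obstacle, and the cleanest route is to cite \cite[Exposé XXII, Proposition 4.2.10]{SGA3} for it.
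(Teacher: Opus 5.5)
Your argument is correct, but it does not follow the paper, which gives no argument here and simply quotes \cite[Expos\'e XXII, Corollary 4.2.13]{SGA3}. You instead unpack that corollary:
\begin{itemize}
\item take a maximal torus $T'\subseteq G'$ \'etale-locally;
\item push it forward to $T=f(T')\cong T'/\ker f$;
\item split both tori \'etale-locally;
\item make the roots, coroots and $(f|_{T'})^*$ constant, and trivialize the root spaces, Zariski-locally;
\item verify Definition \ref{d:compatible_splittings}, using that a central isogeny identifies root spaces.
\end{itemize}
The gain is that your route lands exactly on the paper's Conrad-style notion of compatibility, which involves only tori and root data. So one need not match it against SGA3's own notion of a morphism compatible with d\'eploiements. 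The citation is shorter, and your argument still relies on SGA3 results of comparable depth: \'etale-local maximal tori, splitting of tori, and either XXII 4.2.10 or the field case for the coroots.

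Two statements need repair.

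First, a Zariski refinement cannot in general make $S$ connected, because connected components need not be open (e.g.\ for the spectrum of an infinite product of fields). What you actually need does hold on the members of a Zariski open cover: the finitely many locally constant data involved are constant. These data are the roots and coroots, as sections of $M_S$, $M^\vee_S$, $M'_S$, $M^{\prime\vee}_S$, and the homomorphism $M_S\to M'_S$ induced by $f|_{T'}$.

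Second, fiberwise containment $(\ker f)_{\bar s}\subseteq T'_{\bar s}$ does not by itself give $\ker f\subseteq T'$ over a non-reduced base. Instead, note that $\ker f$ is central, hence lies in the centralizer of $T'$ in $G'$, which is $T'$.

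Once the data are constant, the coroot identity $h^\vee(a^{\prime\vee})=d(a')^\vee$ that you single out as the main obstacle is an equality of constant sections of $M^\vee$. It can therefore be checked on a single geometric fiber. There it is the classical fact that a central isogeny of reductive groups over an algebraically closed field induces a central isogeny of root data.
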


Let us recall the definitions of semisimple and simply connected root data and the notion of a central isogeny between reduced root data.

\begin{definition}(\cite[Definition 1.3.11]{Con14})\label{d:root_data}
    A reduced root datum $R = (X, \Phi, X^\vee, \Phi^\vee)$ is \textit{semisimple} if $\Phi$ spans $X_{\mathbb{Q}}$ over $\mathbb{Q}$.
    A semisimple reduced root datum is \textit{simply connected} if $\mathbb{Z}\Phi^\vee = X^\vee$.
\end{definition}

\begin{lemma}\label{l:ssc_iff_ssc}
    Let $(G,T,M)$ be a split triple and let $R = (M,\Phi,M^\vee,\Phi^\vee)$ be the associated root datum.
    \begin{enumerate}
        \item[\rm (i)] An $S$-group $G$ is semisimple if and only if $R$ is semisimple.
        \item[\rm (ii)] A semisimple $S$-group $G$ is simply connected if and only if $R$ is simply connected.
    \end{enumerate}
\end{lemma}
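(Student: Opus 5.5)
The plan is to reduce both assertions to the case of a single geometric fiber, where the statement becomes the classical dictionary between split reductive groups over an algebraically closed field and their root data.

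\emph{Step 1: the root datum is locally constant on geometric fibers.} For every geometric point $\bar s$ of $S$, I will check that the base change $(G_{\bar s},T_{\bar s},M)$ is a split triple over $\bar s$ with the same root datum $R=(M,\Phi,M^\vee,\Phi^\vee)$. This is immediate from the definitions. The weight decomposition $\mathfrak g=\bigoplus_{m\in M}\mathfrak g_m$ commutes with base change. Each root space $\mathfrak g_a$ is a trivial line bundle, so it stays nonzero on every fiber, and the nonzero weights on the fiber are exactly the elements of $\Phi$. The coroots $a^\vee\in M^\vee$ also restrict to the fiber coroots, since the defining property in \cite[Theorem 4.2.6]{Con14} is compatible with base change. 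So $R(G,T,M)=R(G_{\bar s},T_{\bar s},M)$ for all $\bar s$.

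\emph{Step 2: assertion (i).} By Definition \ref{d:red_ss_S-grp}, $G$ is semisimple if and only if each geometric fiber $G_{\bar s}$ is semisimple. Over an algebraically closed field, a connected reductive group with maximal torus $T_{\bar s}$ is semisimple if and only if its radical (the identity component of its center) is trivial. The center of $G_{\bar s}$ is the intersection of the kernels of the roots in $T_{\bar s}=D(M)$. Its character group is therefore $M/\langle\text{saturation of }\Z\Phi\rangle$ up to torsion, so the radical is finite if and only if $\Z\Phi$ has finite index in $M$, that is, if and only if $\Phi$ spans $M_\Q$. Combined with Step 1, this gives (i).

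\emph{Step 3: assertion (ii).} By Definition \ref{d:sc}, $G$ is simply connected if and only if every $G_{\bar s}$ is. Over an algebraically closed field, a semisimple group with root datum $R$ is simply connected exactly when its fundamental group $M^\vee/\Z\Phi^\vee$ is trivial. To avoid depending on the convention for the fundamental group, I would argue through isogenies. Let $R^\ssc$ be the simply connected datum with lattice $(\Z\Phi^\vee)^\vee\supseteq M$. By the existence theorem, $R^\ssc$ gives a simply connected group $G^\ssc_{\bar s}$ together with a central isogeny $G^\ssc_{\bar s}\to G_{\bar s}$, whose kernel is Cartier dual to $(\Z\Phi^\vee)^\vee/M\cong (M^\vee/\Z\Phi^\vee)^\ast$. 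This isogeny is an isomorphism if and only if $\Z\Phi^\vee=M^\vee$. Step 1 then yields (ii).

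The main obstacle is Step 1, specifically that the coroots and the nonvanishing of root spaces behave well under base change. The triviality of $\mathfrak g_a$ required in Definition \ref{def:split_red_S-grp}(2) is exactly what handles the nonvanishing. If the base change of coroots proves delicate, I would instead cite \cite[Proposition 5.1.6 and Exposé XXII, 1.14]{SGA3}, where the root datum of a split group is shown to be constant on fibers.
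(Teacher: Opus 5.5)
Your route is the same as the paper's. You reduce to geometric fibers, which all carry the same root datum $R$, and then apply the classical dictionary over an algebraically closed field. The paper cites \cite[Example 5.1.7]{Con14} for (i) and \cite[Example 3.2.6]{CGP} for the fiberwise criterion $M=(\Z\Phi^\vee)^*$ in (ii). Your Step 2 correctly unpacks the first citation: the character group of $\bigcap_{a\in\Phi}\ker a$ is $M/\Z\Phi$, so the radical is trivial if and only if $\Z\Phi$ has finite index in $M$.

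The isogeny argument in Step 3, which you intend as the actual proof, is circular in one direction. To say that the group built from $R^\ssc$ by the Existence Theorem is ``simply connected'' is exactly the implication ``$M^\vee=\Z\Phi^\vee$ implies simply connected'' for that group. That implication is what (ii) is supposed to supply. Indeed, the paper uses Lemma \ref{l:ssc_iff_ssc}(ii) for precisely this purpose in Step 1 of the proof of Proposition \ref{p:Conrad-existence}.

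What your argument does prove is the other direction. If $G_{\bar s}$ is simply connected, the central isogeny $G^\ssc_{\bar s}\to G_{\bar s}$ given by the Existence and Isogeny Theorems must be an isomorphism. So its kernel $D\big((\Z\Phi^\vee)^*/M\big)$ is trivial, and $M^\vee=\Z\Phi^\vee$.

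For the converse, either cite the classical criterion as the paper does, or argue directly as follows. Suppose $M^\vee=\Z\Phi^\vee$ and $f\colon G'\to G_{\bar s}$ is a central isogeny with $G'$ connected, and take maximal tori with $f(T')\subseteq T_{\bar s}$. The induced central isogeny of root data $(h,d)$ gives an injection $h^\vee\colon M^{\prime\vee}\hookrightarrow M^\vee$ with $h^\vee(\Z\Phi^{\prime\vee})=\Z\Phi^\vee=M^\vee$. Hence $h$ is an isomorphism, and $\ker f=D\big(M'/h(M)\big)$ is trivial; here $\ker f$ is central, hence contained in $T'$.
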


\begin{proof}
    Assertion (i) follows from \cite[Example 5.1.7]{Con14}.
    For assertion (ii), suppose that $G$ is a semisimple $S$-group.
    Then $\Phi$ spans $M_{\mathbb{Q}}$ over $\mathbb{Q}$ by assertion (i), hence both $\mathbb{Z} \Phi \subseteq M$ and $\mathbb{Z}\Phi^\vee \subseteq M^\vee$ are finite-index inclusions, which yields
    \[
    \mathbb{Z} \Phi \subseteq M \subseteq (\mathbb{Z}\Phi^\vee)^*
    \]
    where the lattice $(\mathbb{Z}\Phi^\vee)^*$ in $M_{\mathbb{Q}}$ is $\mathbb{Z}$-dual to the coroot lattice $\mathbb{Z}\Phi^\vee \subseteq M^\vee$.
    For all geometric points $\bar s$ of $S$, the split triples $(G_{\bar s},T_{\bar s},M)$ have the same associated root datum $(M,\Phi,M^\vee,\Phi^\vee)$.
    This yields that any geometric fiber $G_{\bar s}$ is simply connected if and only if $M = (\mathbb{Z}\Phi^\vee)^*$ by \cite[Example 3.2.6]{CGP}, which proves assertion (ii) by Definition \ref{d:sc}.
\end{proof}

\begin{definition}(\cite[Definition 6.1.8, Example 6.1.9]{Con14})\label{d:isog_root}
    Let $R = (X,\Phi,X^\vee,\Phi^\vee)$ and
    $R' = (X',\Phi',X^{\prime\hs\vee},\Phi^{\prime\hs\vee})$ be reduced root data.
    A \textit{central isogeny} $R' \to R$ of root data is a pair $(h,d)$ where $h \colon X \to X'$ is a homomorphism and $d \colon \Phi' \isoto \Phi$ is a bijection such that
    \begin{enumerate}
        \item [(i)]  $h$ is a finite-index injection (i.e., $h_\mathbb{Q}$ is an isomorphism),
        \item[(ii)] $h(d(a')) = a'$ and $h^\vee(a^{\prime\hs\vee}) = d(a')^\vee$ for all $a' \in \Phi'$.
    \end{enumerate}
\end{definition}

Since the root data are reduced, the homomorphism $h$ uniquely determines $d$ via the condition $h(d(a')) = a'$.
The notion of \textit{isomorphism} between root data is the evident one.

For a central isogeny $f \colon (G',T',M') \to (G,T,M)$ compatible with splittings, there is the associated central isogeny of root data
\[
R(f) \colon R(G',T',M') = (M',\Phi',M^{\prime\hs\vee},\Phi^{\prime\hs\vee}) \to (M,\Phi,M^\vee,\Phi^\vee) = R(G,T,M).
\]

\begin{theorem}[{\cite[Theorem 6.1.16, Theorem 6.1.17]{Con14}, \cite[Exposé XXV, Theorem 1.1, Corollary 1.2]{SGA3}}]\label{t:3thms}
    Let $(G,T,M)$ and $(G',T',M')$ be split triples over $S$.
    \begin{enumerate}
        \item \textup{(Isogeny Theorem)}
        Every central isogeny of root data $R(G',T',M') \to R(G,T,M)$ is induced by a central isogeny $f \colon (G',T',M') \to (G,T,M)$ compatible with the splittings, and such $f$ is unique up to the natural faithful $(T/Z_G)(S)$-action on $G$.
        \item \textup{(Existence Theorem)}
        Every reduced root datum is isomorphic to the root datum of a split triple $(G,T,M)$.
        \item \textup{(Isomorphism Theorem)}
        Every isomorphism of root data $R(G',T',M') \isoto R(G,T,M)$ is induced by an isomorphism of split triples $(G',T',M') \isoto (G,T,M)$, and this isomorphism is unique up to the natural faithful $(T/Z_G)(S)$-action on $G$.
    \end{enumerate}
\end{theorem}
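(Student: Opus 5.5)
This statement is cited from Conrad's notes and SGA3, so the most economical "proof" just appeals to those references. If I were to sketch the argument, I would follow the standard route in \cite[Section 6]{Con14}.

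For the Existence Theorem, I would first treat the adjoint semisimple case using Chevalley's construction over $\Z$. Concretely, I would build a Chevalley basis of the split semisimple Lie algebra over $\Q$ attached to the root system. I would then take the $\Z$-form of the group generated by the exponentials of root vectors inside $\GL$ of a suitable admissible lattice in a faithful representation. Base change from $\operatorname{Spec}\Z$ to $S$ then gives a split triple over $S$. The general reduced root datum $(X,\Phi,X^\vee,\Phi^\vee)$ is handled in three steps. I would realize the derived simply connected datum as above. I would realize the radical as the split torus $D_S$ of the relevant lattice. Finally I would take the quotient of their product by a diagonal finite central subgroup of multiplicative type, matching $X$ as a finite-index sublattice; this quotient exists by fppf descent.

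For the Isomorphism Theorem, the key input is the structure of split reductive groups over $S$ via the big cell $U^-\times T\times U^+$. With this, $G$ is generated as an fppf sheaf by $T$ and the root groups $U_a\cong\mathbb{G}_a$. The group law on the big cell is determined by the commutator relations, and the structure constants of these relations are universal integers fixed by the root datum once signs are fixed by a pinning. An isomorphism of root data together with a choice of pinnings therefore yields an isomorphism of big cells compatible with the birational group law. It then extends to the groups by Weil's theorem on extending birational group laws, which is available in the relative setting as SGA3 Exp.\ XVIII. The pinnings are unique up to $(T/Z_G)(S)$-conjugation, which gives the stated uniqueness.

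For the Isogeny Theorem, I would follow the same pattern. The map $h$ gives $T'\to T$. The condition $h(d(a'))=a'$ forces the root group maps to be $x_{a'}(u)\mapsto x_{d(a')}(u)$, up to the rescaling permitted by the $T/Z_G$-action, and these maps respect the commutator relations. Extension from the big cell again follows from the birational group law theorem, and faithful flatness with central kernel is checked on geometric fibers. I expect the main obstacle to be verifying that the commutator structure constants are preserved when $h$ is not an isomorphism. This is delicate in characteristics $2$ and $3$, where special isogenies occur, but the centrality assumption rules those out. In practice I would simply cite \cite[Theorems 6.1.16, 6.1.17]{Con14} and \cite[Exposé XXV]{SGA3}.
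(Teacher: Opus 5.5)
Your proposal agrees with the paper. The paper does not prove Theorem~\ref{t:3thms} at all; it quotes it from Conrad (Theorems 6.1.16, 6.1.17) and SGA3, Exp.~XXV, specialized to central isogenies, so your ultimate recourse to those citations is exactly its treatment. Your sketch is a reasonable outline of those sources: Chevalley--Demazure existence plus a quotient of $G^\ssc\times$ torus by a finite central multiplicative subgroup, and pinned big cells extended via the birational group-law theorem of SGA3, Exp.~XVIII, with uniqueness from the simply transitive $(T/Z_G)(S)$-action on pinnings. The one glossed step is the claim that the big-cell group law is determined by the root datum and pinning, which is where the real work of SGA3, Exp.~XXIII lies.
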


Theorem \ref{t:3thms} is stated in \cite{SGA3} for morphisms of split reductive $S$-groups compatible with the splittings and $p$-morphisms of root data  (defined in \cite[Exposé XXII, Definition 4.2.1 and  Exposé XXI, Definition 6.8.1]{SGA3}),
and the statement is specialized in \cite[Theorem 6.1.16 and Theorem 6.1.17]{Con14} to the case of isogenies
(not necessarily central).
Here, we restrict our statement to \textit{central} isogenies, which is sufficient for our purposes;
see \cite[Example 6.1.9]{Con14}.
We explain the natural faithful $(T/Z_G)(S)$-action on $G$ in the following remark:

\begin{remark}\label{r:conj_act}
    Let $G$ be a reductive $S$-group.
    Consider a homomorphism of group functors
    \[
    \inn \colon G \to \ul{\Aut}(G), \quad g \mapsto \inn(g),
    \]
    where for any $S$-scheme $S'$ and any $g \in G(S')$, the induced automorphism $\inn(g) \in \Aut_{S'}(G_{S'})$ is defined on $S''$-points by $\inn(g)(g') = g g' g^{-1}$ for every $g' \in G(S'')$ and every $S'$-scheme $S''$; see Construction \ref{cons:theta}.
    This homomorphism fits into an exact sequence of group functors
    \[
    1 \to Z_G \to G \xrightarrow{\, \inn\, } \ul{\Aut}(G),
    \]
    which induces an injective homomorphism
    \[
    c \colon G/Z_{G} \to \ul{\Aut}(G).
    \]
    See \cite[Exposé XXIV \S 1.1]{SGA3}.
    The faithful action of $(T/Z_{G})(S)$ on $G$ in Theorem \ref{t:3thms}(1),(3) is induced by the composition
    \[
    T/Z_G \to G/Z_{G} \xrightarrow{\,c\,} \ul{\Aut}(G).
    \]
    For any $\bar g \in (G/Z_G)(S)$, the definition of the fppf quotient yields that there exists an fppf covering $\{S_i \to S\}$ and $g_i \in G(S_i)$ such that ${\bar g}_{S_i} = \bar{g}_i \in G/Z_{G}(S_i)$, hence
    \[
    c(\bar g)_{S_i} = c(\bar g_i) = \inn(g_i) \in \ul{\Aut}(G)(S_i) = \Aut_{S_i}(G_{S_i})
    \]
    for each $i$, where $\bar g_i$ is the image of $g_i$ under $G(S_i) \to (G/Z_G)(S_i)$ and $\bar g_{S_i}$ (resp. $c(\bar g)_{S_i}$) is the image of $\bar g$ under $(G/Z_G)(S) \to (G/Z_G)(S_i)$ (resp. the image of $c(\bar g)$ under $\ul{\Aut}(G)(S) \to \ul{\Aut}(G)(S_i)$).
\end{remark}

\begin{proposition} [{\cite[Exercise 6.5.2(i)]{Con14}}]
\label{p:Conrad-existence}
Any semisimple $S$-group $G$ over a non-empty scheme $S$ admits a universal $S$-covering $\pi\colon G^\ssc\to G$.
Such $(G^\ssc,\pi)$ is unique up to a unique isomorphism, that is,
    if $(G^{\prime\hs\ssc},\pi')$ is another universal covering of $G$,
    then there exists a unique isomorphism $G^\ssc \isoto G^{\prime\hs\ssc}$
    compatible with $\pi$ and $\pi'$.
\end{proposition}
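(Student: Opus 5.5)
Existence will come from Lemma \ref{l:isog_split} by étale-local splitting and descent, and uniqueness from the Isomorphism Theorem, the main difficulty being the rigidity of universal coverings. We begin with \emph{uniqueness}, since it is needed to glue. Suppose $\pi\colon G^\ssc\to G$ and $\pi'\colon G^{\prime\hs\ssc}\to G$ are two universal coverings. We claim that an isomorphism $\phi\colon G^\ssc\isoto G^{\prime\hs\ssc}$ with $\pi'\circ\phi=\pi$ is unique if it exists. If $\phi_1,\phi_2$ are two such, then $\phi_2^{-1}\phi_1$ is an automorphism $\alpha$ of $G^\ssc$ with $\pi\circ\alpha=\pi$. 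Hence $g\mapsto \alpha(g)g^{-1}$ is a morphism of schemes $G^\ssc\to\ker\pi$. Since $\ker\pi$ is central and finite of multiplicative type, this map is a homomorphism. Its source $G^\ssc$ has connected geometric fibers, so it is trivial fiberwise. Because $G^\ssc$ is smooth, hence flat and reduced on fibers, the map is trivial, and $\alpha={\rm id}$. This rigidity step is the one I expect to need the most care over a general base. The delicate point is showing that a homomorphism from a group with connected fibers to a finite multiplicative-type group is trivial. For this I would argue étale-locally, where $\ker\pi$ is diagonalizable and the map corresponds to characters of $G^\ssc$. These characters vanish because the geometric fibers of $G^\ssc$ are semisimple, so their character groups are trivial.

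For \emph{existence} locally, we apply Lemma \ref{l:isog_split} to the identity isogeny, or more precisely use that $G$ is étale-locally split. This gives an étale covering $\{S_i\to S\}$ with split triples $(G_{S_i},T_i,M_i)$, and we may refine so that each $S_i$ is connected, making the root datum constant. Let $R_i=(M_i,\Phi_i,M_i^\vee,\Phi_i^\vee)$. It is semisimple by Lemma \ref{l:ssc_iff_ssc}(i). Define $R_i^\ssc$ by replacing $M_i$ with $(\mathbb Z\Phi_i^\vee)^*$. The inclusion $M_i\into(\mathbb Z\Phi_i^\vee)^*$ together with the identity on roots gives a central isogeny of root data $R_i^\ssc\to R_i$ in the sense of Definition \ref{d:isog_root}. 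By the Existence Theorem \ref{t:3thms}(2) there is a split triple realizing $R_i^\ssc$. By the Isogeny Theorem \ref{t:3thms}(1) there is a central isogeny $\pi_i\colon G_i^\ssc\to G_{S_i}$ compatible with splittings. By Lemma \ref{l:ssc_iff_ssc}(ii), $G_i^\ssc$ is simply connected, so $\pi_i$ is a universal covering of $G_{S_i}$.

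To \emph{glue}, note that on overlaps $S_{ij}$ the coverings $\pi_i|_{S_{ij}}$ and $\pi_j|_{S_{ij}}$ are both universal coverings of $G_{S_{ij}}$ by Lemma \ref{l:uni_cov_bc}. We need an isomorphism between them over $S_{ij}$. Étale-locally on $S_{ij}$ both sit in split triples with the same root datum up to isomorphism, since simply connected semisimple data are determined by their roots. The Isomorphism Theorem \ref{t:3thms}(3) then gives an isomorphism compatible with the maps to $G$, after adjusting by the $(T/Z_G)$-action and the covering isogenies. The uniqueness established above makes these local isomorphisms agree on further overlaps, so they descend to a canonical $\phi_{ij}$ over $S_{ij}$. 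The same uniqueness forces the cocycle condition. Since $G^\ssc_i$ is affine over $S_i$, effective étale (fppf) descent for affine schemes glues the $G_i^\ssc$ and $\pi_i$ to $\pi\colon G^\ssc\to G$. The properties of being smooth, affine, semisimple, simply connected, and a central isogeny are fiberwise or étale-local, so $\pi$ is a universal covering. Applying the same local-then-descend argument to two global coverings $\pi,\pi'$ yields the global isomorphism, and uniqueness was shown at the start.
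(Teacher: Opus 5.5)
Your outline follows the paper's proof: the split case via the Existence and Isogeny Theorems, comparison of split coverings via the Isomorphism Theorem, rigidity of isomorphisms over $G$, and \'etale descent for both uniqueness and existence. However, two steps do not go through as written.

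The first is the rigidity step. Over a non-reduced base, ``trivial on geometric fibers'' plus smoothness of the source does not give ``trivial''. Take $S=\operatorname{Spec} k[\epsilon]$ with $\epsilon^2=0$. The map $x\mapsto 1+\epsilon x$ is a nontrivial homomorphism $\mathbb{G}_a\to\mathbb{G}_m$ that is trivial on the geometric fiber, and when $\operatorname{char} k=p$ it even lands in $\mu_p$. So the principle you isolate (a homomorphism from a group with connected fibers to a finite group of multiplicative type is trivial) is false. Likewise, ``the characters vanish because the geometric fibers have trivial character groups'' is exactly the invalid inference, since $\mathbb{G}_a$ has no characters over a field either. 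What you need is a rigidity statement that uses semisimplicity over the base itself: every homomorphism from a semisimple $S$-group to a commutative $S$-group is trivial, for arbitrary $S$. This is \cite[Expos\'e XXII, Theorem 6.2.1]{SGA3}, which the paper invokes in Step 3. One way to prove it: characters of a maximal torus are rigid, $T$-invariance kills a character on each root group, and the big cell is schematically dense.

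The second is the local comparison, which is the heart of the matter (Step 2 of the paper) and is only gestured at. The Isomorphism Theorem gives an isomorphism of split triples $\pi''\colon G'\isoto G^\ssc$, not one compatible with the maps to $G$.

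You must first choose the isomorphism of root data $(h'',d'')$ so that $R(\pi')=R(\pi)\circ(h'',d'')$. The paper takes $h''$ dual to $a'^\vee\mapsto d'(a')^\vee$ on coroot lattices and checks $h'=h''\circ h$ using $M'^\vee=\mathbb{Z}\Phi'^\vee$. Only then does the uniqueness part of the Isogeny Theorem apply, and it gives merely $\pi\circ\pi''=c(\bar t)\circ\pi'$ for some $\bar t\in(T/Z_G)(S)$. That correction lives on $G$.

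To move it to $G^\ssc$, one uses that $\pi$ induces $T^\sscp/Z_{G^\ssc}\isoto T/Z_G$ (both are $D_S(\mathbb{Z}\Phi)$). Lift $\bar t$ to $\bar t^\ssc$ and set $\varphi=c(\bar t^\ssc)^{-1}\circ\pi''$; then $\pi\circ\varphi=\pi'$ by Lemma \ref{l:inn_comm}.

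Your phrase ``after adjusting by the $(T/Z_G)$-action and the covering isogenies'' supplies neither the compatible choice of $(h'',d'')$ nor this lifting. Without the lifting there is no automorphism of $G^\ssc$ to adjust by. Note also that for the Isogeny Theorem to apply, both coverings over $S_{ij}$ must be compatible with one and the same split triple of $G_{S_{ij}}$. Your $\pi_i$ and $\pi_j$ were built from different tori $T_i$ and $T_j$, so this has to be arranged, after a further \'etale localization if necessary.
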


Proposition \ref{p:Conrad-existence} is well known for semisimple groups over a field;
see \cite[Proposition (2.24)(ii)]{BT} or \cite[Corollary A.4.11(1)]{CGP}.
For group schemes it is stated as an exercise  in \cite[Exercise 6.5.2(i)]{Con14}.
We give a detailed proof here for the sake of completeness,
since our Proposition \ref{p:GA} depends on \cite[Corollary 2.6]{GA}, a result whose proof implicitly assumes Proposition \ref{p:Conrad-existence}.

\begin{proof}
    In Step 1 and Step 2, we first assume that our semisimple $S$-group $G$ is split.

    \textit{Step 1.} We prove the existence of a splittable universal covering of a split semisimple $S$-group
    (see Definition \ref{d:splittable}).
    Let $(G,T,M)$ be a split triple and let $R = (M,\Phi,M^\vee,\Phi^\vee)$ be the associated reduced root datum.
    Since $G$ is semisimple, the root datum $R$ is also semisimple; see Lemma \ref{l:ssc_iff_ssc}.
    Hence, we have $\mathbb{Z} \Phi \subseteq M \subseteq (\mathbb{Z} \Phi^\vee)^*$,
    where the lattice $(\mathbb{Z}\Phi^\vee)^*$ in $M_{\mathbb{Q}}$ is $\mathbb{Z}$-dual
    to the coroot lattice $\mathbb{Z}\Phi^\vee \subseteq M^\vee$.
    We define a root datum $R^\ssc \coloneqq (M^\ssc,\Phi,{M^\ssc}^\vee,\Phi^\vee)$ with $M^\ssc \coloneqq (\mathbb{Z} \Phi^\vee)^*$ and ${M^\ssc}^\vee \coloneqq \mathbb{Z} \Phi^\vee$.
    This root datum $R^\ssc$ is simply connected semisimple (see Definition \ref{d:root_data}),
    with the evident central isogeny $(h,d) \colon R^\ssc \to R$ of root data
    where $h \colon M \hookrightarrow M^\ssc$ is the natural inclusion and $d \colon \Phi \to \Phi$ is the identity map.
    By the Existence Theorem and Isogeny Theorem,
    there exists a split triple $(G^\ssc,T^\sscp,M^\ssc)$ whose associated root datum is $R^\ssc$,
    together with a central isogeny $\pi \colon (G^\ssc,T^\sscp,M^\ssc) \to (G,T,M)$
    compatible with the splittings and inducing the isogeny $(h,d)$ of root data.
    Since the root datum $R^\ssc$ is simply connected semisimple, the $S$-group $G^\ssc$ is simply connected semisimple by Lemma \ref{l:ssc_iff_ssc},
    providing a splittable universal covering $\pi \colon G^\ssc \to G$.

    \textit{Step 2.}
    We prove the existence of an isomorphism between splittable universal coverings of a split semisimple $S$-group.
    Let $G'$ be a simply connected split semisimple $S$-group and let $\pi' \colon G' \to G$ be a splittable universal covering.
    Then there exist split maximal $S$-tori $T'\simeq D_S(M')$ in $G'$ and $T \simeq D_S(M)$ in $G$ such that $\pi' \colon (G',T',M') \to (G,T,M)$ is a central isogeny compatible with the splittings.
    Let $\pi \colon G^\ssc \to G$ be a splittable universal covering in Step 1.
    We wish to show that there exists an isomorphism $\varphi \colon G' \to G^\ssc$ such that $\pi \circ \varphi = \pi'$.

    Consider a central isogeny of root data associated with $\pi'$
    \[
    (h',d') \colon R' = (M',\Phi',M^{\prime\hs\vee},\Phi^{\prime\hs\vee}) \to (M,\Phi,M^\vee,\Phi^\vee) = R
    \]
    where $h' \colon M \hookrightarrow M'$ is a finite-index injective homomorphism and $d' \colon \Phi' \isoto \Phi$ is a bijection.
    Since $G'$ is simply connected semisimple, we have $M' = (\mathbb{Z}\Phi^{\prime\hs\vee})^*$
    and $M^{\prime\hs\vee} = \mathbb{Z}\Phi^{\prime\hs\vee}$ by Lemma \ref{l:ssc_iff_ssc}.
    The bijection $d'$ induces an isomorphism on the coroot lattices
    $d^{\prime\hs\vee} \colon \mathbb{Z} \Phi^{\prime\hs\vee} \isoto \mathbb{Z} \Phi^\vee, \, a^{\prime\hs\vee} \mapsto d'(a')^\vee$.
    By taking the dual, we have an isomorphism $h'' \coloneqq (d^{\prime\hs\vee})^* \colon M^\ssc \isoto M'$ where $M^\ssc = (\mathbb{Z} \Phi^\vee)^*$ and $M' = (\mathbb{Z} \Phi^{\prime\hs\vee})^*$.
    With a bijection $d'' \coloneqq d^{-1} \circ d' \colon \Phi' \isoto \Phi$, this yields an isomorphism between the root data
    \[
    (h'',d'') \colon R' = (M',\Phi',M^{\prime\hs\vee},\Phi^{\prime\hs\vee})
    \isoto
    (M^\ssc,\Phi,M^{\ssc\hs\vee},\Phi^\vee) = R^\ssc.
    \]
    We wish to show that this isomorphism satisfies the compatibility $(h',d') = (h,d) \circ (h'',d'')$.
    Since $d' = d \circ d''$, it suffices to show $h' = h'' \circ h$.
    By Definition \ref{d:isog_root}(ii), we have
    \[
    h^{\prime\hs\vee}(a^{\prime\hs\vee}) = d'(a')^\vee = d(d''(a'))^\vee = h^\vee(d''(a')^\vee) = h^\vee(h^{\prime\prime\hs\vee}(a^{\prime\hs\vee})) = (h^\vee \circ h^{\prime\prime\hs\vee})(a'\hs^\vee)
    \]
    for all $a' \in \Phi'$.
    Since $M^{\prime\hs\vee} = \mathbb{Z} \Phi^{\prime\hs\vee}$, this implies
    $h^{\prime\hs\vee} = h^\vee \circ h^{\prime\prime\hs\vee}$, hence $h' = h'' \circ h$.

    By Isomorphism Theorem, the isomorphism of root data $(h'',d'')$ is induced by an isomorphism between split triples
    $
    \pi'' \colon (G',T',M') \isoto (G^\ssc,T^\sscp,M^\ssc).
    $
    Since $\pi'$ and $\pi \circ \pi''$ induce the same central isogeny of root data $(h',d') = (h,d) \circ (h'',d'')$, by Isogeny Theorem there exists $\bar{t} \in (T/Z_G)(S)$ such that $\pi \circ \pi'' = c(\bar t) \circ \pi'$; see Remark \ref{r:conj_act} for $c(\bar t)$.
    We then modify $\pi''$ to define an isomorphism $\varphi \colon G' \isoto G^\ssc$ satisfying the compatibility $\pi \circ \varphi = \pi'$.
    We wish to lift $\bar t$ to an element of $(T^\sscp/Z_{G^\ssc})(S)$.
    The centers of $G^\ssc$ and $G$ can be identified with $Z_{G^\ssc} \simeq D_S(M^\ssc/\mathbb{Z}\Phi)$ and $Z_G \simeq D_S(M/\mathbb{Z}\Phi)$; see \cite[Example 5.1.7]{Con14}.
    Since $T^\sscp \simeq D_S(M^\ssc)$ and $T\simeq D_S(M)$, the anti-equivalence in \eqref{t:equiv_multtype_locc} shows that both $T^\ssc/Z_{G^\ssc}$ and $T/Z_{G}$ are isomorphic to $D_S(\mathbb{Z}\Phi)$.
    Thus $\pi$ induces an isomorphism $T^\ssc/Z_{G^\ssc} \isoto T/Z_{G}$, and  hence there is a unique lift $\bar{t}^\ssc \in (T^\ssc/Z_{G^\ssc})(S)$ of $\bar{t} \in (T/Z_{G})(S)$.
    Now the isomorphism $\varphi \coloneqq c(\bar{t}^\ssc)^{-1} \circ \pi'' \colon G' \isoto G^\ssc$ satisfies
     \begin{equation}\label{e-pi-varphi-pi'}
    \pi \circ \varphi
    = \pi \circ c(\bar{t}^\ssc)^{-1} \circ \pi''
    = c(\bar{t})^{-1} \circ \pi \circ \pi''
    = \pi',
    \end{equation}
   where the second equality follows from Lemma \ref{l:inn_comm} below,
    and \eqref{e-pi-varphi-pi'} gives the desired compatibility $\pi \circ \varphi = \pi'$.

    In the remaining steps, we consider an arbitrary semisimple $S$-group $G$.

    \textit{Step 3.} We prove the uniqueness of an isomorphism between universal coverings.
    Let $\pi \colon G^\ssc \to G$ and $\pi' \colon {G^\ssc}' \to G$ be universal coverings.
    Suppose $\phi_1, \phi_2 \colon G^\ssc \isoto {G^\ssc}'$ are two $S$-group isomorphisms such that $\pi' \circ \phi_1 = \pi$ and $\pi' \circ \phi_2 = \pi$.
    For any $S$-scheme $S'$ and any $g \in G^\ssc(S')$, we have $\pi' \circ \phi_1(g) = \pi' \circ \phi_2(g)$, which implies that $\phi_1(g) \cdot \phi_2(g)^{-1} \in \ker \pi'(S')$.
    Since $\pi'$ is a central isogeny, $\ker\pi'$ is a central subgroup scheme of ${G^\ssc}'$.
    Hence the pointwise product
    $ (\phi_1 \cdot \phi_2^{-1})(g)=\phi_1(g)\cdot \phi_2(g)^{-1} $
    defines an $S$-group homomorphism
    $\phi_1\cdot \phi_2^{-1}\colon G^\ssc\longrightarrow\ker\pi'$.
    By \cite[Exposé XXII, Theorem 6.2.1(iv,v)]{SGA3}, any homomorphism from a semisimple $S$-group
    to a commutative $S$-group is trivial.
    This implies that the homomorphism $\phi_1 \cdot \phi_2^{-1}$ is trivial, whence $\phi_1 = \phi_2$.

    In Step 4 and Step 5, we prove the uniqueness and the existence of a universal covering for an arbitrary semisimple $S$-group $G$ via étale descent.
    For an étale covering $\{S_i \to S\}$, we set $S_{ij}= S_i\times_S S_j$ and $S_{ijk} = S_i \times_S S_j \times_{S} S_k$.
    For any pair $(i,j)$, we denote the natural projections by $p_1 \colon S_{ij} \to S_i$ and $p_2 \colon S_{ij} \to S_j$.
    For any triple $(i,j,k)$, we denote the natural projections by $p_{12} \colon S_{ijk} \to S_{ij}$, $p_{13} \colon S_{ijk} \to S_{ik}$, $p_{23} \colon S_{ijk} \to S_{jk}$, $q_1\colon S_{ijk} \to S_i$, $q_2\colon S_{ijk} \to S_j$, and $q_3\colon S_{ijk} \to S_k$.

    \textit{Step 4.}
    We prove the uniqueness of a universal covering of a semisimple $S$-group up to a unique isomorphism.
    For universal coverings $\pi \colon G^\ssc \to G$ and $\pi'\colon {G^\ssc}' \to G$,
    we wish to show that there exists a unique isomorphism $\varphi \colon G^\ssc \to {G^\ssc}'$
    satisfying the compatibility $\pi' \circ \varphi = \pi$.
    The uniqueness of such $\varphi$ follows from Step 3.
    By Lemma \ref{l:uni_cov_bc} and Lemma \ref{l:isog_split}, there exists an étale covering $\{S_i \to S\}$ such that both $\pi_{S_i} \colon G^\ssc_{S_i} \to G_{S_i}$ and $\pi'_{S_i} \colon {G^\ssc_{S_i}}' \to G_{S_i}$ are splittable universal coverings for each $i$.
    Then there exists a unique isomorphism $\varphi_i \colon G^\ssc_{S_i} \isoto {G^\ssc_{S_i}}'$ compatible with $\pi_{S_i}$ and $\pi_{S_i}'$ by Step 2 and Step 3.
    By base change to $S_{ij}$, this yields two isomorphisms $p_1^* \varphi_{i}, p_2^* \varphi_j \colon G^\ssc_{S_{ij}} \to {G^{\prime\hs\ssc}_{S_{ij}}}$, both compatible with $\pi_{S_{ij}}$ and $\pi'_{S_{ij}}$.
    These two isomorphisms must coincide by Step 3, hence $p_1^* \varphi_i = p_2^* \varphi_j$ for all $i, j$.
    Since $\ul{\Hom}_{S\textnormal{-gp}}(G^\ssc, G^{\prime\hs\ssc})$ is an étale sheaf on $S$ (in fact, an fpqc sheaf),
    we have an equalizer diagram
    \[
    \begin{tikzcd}[column sep=large]
        \Hom_{S\textnormal{-gp}}(G^\ssc, G'\hs^\ssc) \arrow[r]
        & \prod_i \Hom_{S_i\textnormal{-gp}}(G^\ssc_{S_i}, G^{\prime\hs\ssc}_{S_i})
          \arrow[r, shift left, "p_1^*"]
          \arrow[r, shift right, "p_2^*"']
        & \prod_{i,j} \Hom_{S_{ij}\textnormal{-gp}}(G^\ssc_{S_{ij}}, G^{\prime\hs\ssc}_{S_{ij}}).
    \end{tikzcd}
    \]
    Consequently, the collection $(\varphi_i)_i \in \prod_i \Hom_{S_i\textnormal{-gp}}(G^\ssc_{S_i} , G^{\prime\hs\ssc}_{S_i})$ such that $p_1^* \varphi_i = p_2^* \varphi_j$ glues to a unique $S$-homomorphism $\varphi \colon G^\ssc \to G^{\prime\hs\ssc}$ such that $\varphi_{S_i} = \varphi_i$ for each $i$.
    Since $\pi'_{S_i} \circ \varphi_{S_i} = \pi'_{S_i} \circ \varphi_i = \pi_{S_i}$ for each $i$, the equalizer diagram implies the desired compatibility $\pi' \circ \varphi = \pi$ over $S$.
    Furthermore, the fact that each $\varphi_{S_i}$ is an isomorphism yields that $\varphi$ is also an isomorphism; see \cite[\href{https://stacks.math.columbia.edu/tag/02L4}{Tag 02L4}]{stacks-project}.

    \textit{Step 5.}
    We prove the existence of a universal covering of a semisimple $S$-group.
    Since $G$ splits étale-locally, there exists an étale covering $\{S_i \to S\}$
    such that $G_{S_i}$ is a split semisimple $S_i$-group; see \cite[Lemma 5.1.3]{Con14}.
    For each $i$, the split semisimple $S_i$-group $G_{S_i}$ admits a universal covering $\pi_i \colon G_i^\ssc \to G_{S_i}$ by Step 1.
    After base change to $S_{ij}$, both $p_1^* G_i^\ssc$ and $p_2^* G_j^\ssc$ are universal coverings of a semisimple $S_{ij}$-group $G_{S_{ij}}$.
    Then by Step 4, there exists a unique $S_{ij}$-group isomorphism
    $
    \varphi_{ij} \colon p_2^* G_j^\ssc \isoto p_1^* G_i^\ssc
    $
    compatible with $p_2^* \pi_j$ and $p_1^*\pi_i$ for all $i,j$.
    Base changing further to $S_{ijk}$, we obtain isomorphisms between universal coverings of $G_{S_{ijk}}$ as follows:
    \[
    p_{12}^* \varphi_{ij} \colon q_2^* G_j^\ssc \isoto q_1^* G_i^\ssc, \quad
    p_{23}^* \varphi_{jk} \colon q_3^* G_k^\ssc \isoto q_2^* G_j^\ssc, \quad \text{and} \quad
    p_{13}^* \varphi_{ik} \colon q_3^* G_k^\ssc \isoto q_1^* G_i^\ssc.
    \]
    By Step 3, we must have
    \[
    p^*_{12} \varphi_{ij} \circ p_{23}^* \varphi_{jk} = p_{13}^* \varphi_{ik}
    \colon q_3^* G_k^\ssc \isoto q_1^* G_i^\ssc
    \]
    for all triples $(i,j,k)$, which establishes the cocycle condition,
    hence the collection $\{G^\ssc_i,\varphi_{ij}\}$ is a descent datum;
    see \cite[Definition 4.2]{Vistoli} or \cite[\href{https://stacks.math.columbia.edu/tag/026B}{Tag 026B}]{stacks-project}.
    Since étale descent is effective for affine $S$-schemes (see \cite[Theorem 4.33]{Vistoli} or  \cite[\href{https://stacks.math.columbia.edu/tag/0245}{Tag 0245}]{stacks-project}), there is an affine $S$-group $G^\ssc$ with $\pi \colon G^\ssc \to G$ such that for each $i$, there exists an $S_i$-isomorphism $G^\ssc_{S_i} \isoto G^\ssc_i$ compatible with $\pi_{S_i}$ and $\pi_i$.

    It remains to show that $G^\ssc$ is a simply connected semisimple $S$-group and that $\pi$ is a central isogeny.
    Since $G^\ssc$ is étale-locally smooth, it is a smooth $S$-group;
    see \cite[\href{https://stacks.math.columbia.edu/tag/02VL}{Tag 02VL}]{stacks-project}.
    We see that every geometric fiber of $G^\ssc$ is simply connected semisimple by
    construction, hence $G^\ssc$ is a simply connected semisimple $S$-group; see Definition \ref{d:sc}.
    Similarly, $\pi \colon G^\ssc \to G$ is surjective since it is étale-locally surjective; see \cite[\href{https://stacks.math.columbia.edu/tag/02KV}{Tag 02KV}]{stacks-project}.
    Furthermore, since the induced map $\pi_{\bar s}$ on each geometric fiber is a central isogeny, it follows from \cite[Proposition 3.3.10]{Con14} that $\pi$ is a central isogeny over $S$.
    Therefore, $\pi \colon G^\ssc \to G$ is a universal covering.
    \end{proof}

    \begin{lemma}\label{l:inn_comm}
    Let $f \colon H \to G$ be a homomorphism of reductive $S$-groups and let $\bar f \colon H/Z_H \to G/Z_G$ be the induced homomorphism.
    For any $\bar h \in (H/Z_H)(S)$ and any $\bar g \coloneqq \bar{f}(\bar h) \in (G/Z_G)(S)$, the diagram
    \[
    \begin{tikzcd}
        H \ar[d,"f"'] \ar[r,"{c_H(\bar{h})}"] &H \ar[d,"f"] \\
        G \ar[r,"{c_G(\bar{g})}"] &G
    \end{tikzcd}
    \]
    commutes, where $c_G(\bar g) \in \Aut_S(G)$ and $c_H(\bar h) \in \Aut_S(H)$ are defined in Remark \ref{r:conj_act}.
\end{lemma}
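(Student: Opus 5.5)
The plan is to reduce to the description of $c$ via local representatives, as in Remark \ref{r:conj_act}, and then use the fppf sheaf property of morphisms. Both $c_G(\bar g)\circ f$ and $f\circ c_H(\bar h)$ are $S$-morphisms $H\to G$. Since $\ul{\Hom}_{S\textnormal{-gp}}(H,G)$ is an fpqc sheaf, in particular an fppf sheaf, it suffices to prove equality after base change to the members of some fppf covering $\{S_i\to S\}$.

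First, by the definition of the fppf quotient, choose an fppf covering $\{S_i\to S\}$ and elements $h_i\in H(S_i)$ whose images in $(H/Z_H)(S_i)$ equal $\bar h_{S_i}$. By Remark \ref{r:conj_act}, $c_H(\bar h)_{S_i}=\inn(h_i)$. Next, set $g_i\coloneqq f(h_i)\in G(S_i)$. The induced map $\bar f$ is defined so that the square of quotient maps $H\to H/Z_H$, $G\to G/Z_G$ commutes; this uses $f(Z_H)\subseteq Z_G$, which holds because $\bar f$ is assumed to exist, or via Lemma \ref{l:preimage-center} when $f$ is faithfully flat. Hence the image of $g_i$ in $(G/Z_G)(S_i)$ is $\bar f(\bar h)_{S_i}=\bar g_{S_i}$. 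Applying Remark \ref{r:conj_act} again, now with the same covering and the representatives $g_i$, gives $c_G(\bar g)_{S_i}=\inn(g_i)$. One should note here that $c(\bar g)$ is independent of the choice of local representatives, since $c$ is a morphism of functors out of $G/Z_G$.

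It then remains to check the identity $f\circ\inn(h_i)=\inn(f(h_i))\circ f$ over $S_i$. On $S''$-points for any $S_i$-scheme $S''$ this reads $f(h_ixh_i^{-1})=f(h_i)f(x)f(h_i)^{-1}$, which holds because $f$ is a homomorphism. Gluing via the sheaf property then yields $f\circ c_H(\bar h)=c_G(\bar g)\circ f$ over $S$.

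The only mildly delicate step is making sure the representatives $g_i=f(h_i)$ legitimately compute $c_G(\bar g)$. This requires that $\bar f$ is compatible with the quotient maps and that $c$ is well defined independently of the chosen covering and lifts. Both facts follow from $c$ being a morphism of fppf sheaves $G/Z_G\to\ul{\Aut}(G)$ induced by $\inn$, so I expect no real obstacle.
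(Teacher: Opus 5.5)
Your proposal is correct and follows the paper's proof essentially verbatim: lift $\bar h$ fppf-locally to $h_i\in H(S_i)$, set $g_i=f(h_i)$ so that Remark~\ref{r:conj_act} identifies $c_H(\bar h)_{S_i}=\inn(h_i)$ and $c_G(\bar g)_{S_i}=\inn(g_i)$, check $f\circ\inn(h_i)=\inn(g_i)\circ f$ because $f$ is a homomorphism, and conclude since equality of morphisms can be checked fppf-locally. Your side remark is a fair clarification that the paper leaves tacit: the compatibility $f(Z_H)\subseteq Z_G$ is implicit in the existence of $\bar f$, and it holds automatically in the paper's application to a central isogeny.
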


\begin{proof}
    Let $\{S_i \to S\}$ be an fppf covering together with
    $h_i \in H(S_i)$ such that
    $\bar h_i=\bar h_{S_i}\in(H/Z_H)(S_i)$.
    Let $g_i=f(h_i)\in G(S_i)$.
    Then $\bar g_i=\bar g_{S_i}$, and
    Remark \ref{r:conj_act} gives
    \[
    c_H(\bar h)_{S_i}=\inn(h_i) \quad \text{and} \quad
    c_G(\bar g)_{S_i}=\inn(g_i).
    \]
    Since $g_i=f(h_i)$, we have a commutative diagram
    \[
    \begin{tikzcd}
        H_{S_i} \ar[d,"f_{S_i}"'] \ar[r,"\inn(h_i)"] &
        H_{S_i} \ar[d,"f_{S_i}"]\\
        G_{S_i} \ar[r,"\inn(g_i)"] &
        G_{S_i},
    \end{tikzcd}
    \]
    which implies that
    \[
    (f\circ c_H(\bar h))_{S_i}
    =
    (c_G(\bar g)\circ f)_{S_i}
    \]
    for every $i$.
    Since equality of morphisms of $S$-schemes may be checked fppf-locally,
    we conclude that
    \[
    f\circ c_H(\bar h)=c_G(\bar g)\circ f.
    \]
\end{proof}

\begin{proposition}[{\cite[Corollary 2.6 and Remark 2.7]{GA}}]
\label{p:GA}
Let $S$ be a non-empty scheme.
\begin{enumerate}
\item[\rm (i)]
    Let $\varphi\colon G_1\to G_2$ be a homomorphism of semisimple $S$-groups.
    Then there exists a unique homomorphism $\varphi^\ssc\colon G_1^\ssc\to G_2^\ssc$
    such that the following diagram commutes:
    \[
    \xymatrix@C=13mm{
    G_1^\ssc\ar[r]^-{\varphi^\ssc}\ar[d]_-{\pi_1}    &G_2^\ssc\ar[d]^-{\pi_2}  \\
    G_1\ar[r]^-\varphi                               &G_2
    }
    \]
\item[\rm (ii)] For any homomorphisms of semisimple $S$-groups
    \[ G_1\labelto{\varphi_{12} } G_2\labelto{\varphi_{23} } G_3\]
    we have
    \[ (\varphi_{23}\circ\varphi_{12})^\ssc= \varphi_{23}^\ssc\circ\varphi_{12}^\ssc\hs.\]
\end{enumerate}
\end{proposition}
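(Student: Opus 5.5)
Proposition \ref{p:GA} is proved in two stages: first when the groups are split and the universal coverings are splittable, then in general by étale descent. The pattern is the same as in the proof of Proposition \ref{p:Conrad-existence}.

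Uniqueness in (i) comes first, and it is the easy part. Suppose $\psi_1,\psi_2\colon G_1^\ssc\to G_2^\ssc$ both lift $\varphi$. Then $\psi_1(g)\psi_2(g)^{-1}\in\ker\pi_2$, and $\ker\pi_2$ is central in $G_2^\ssc$. Hence the pointwise product $\psi_1\cdot\psi_2^{-1}$ is a homomorphism $G_1^\ssc\to\ker\pi_2$. This is a homomorphism from a semisimple $S$-group to a commutative one, so it is trivial by \cite[Exposé XXII, Theorem 6.2.1]{SGA3}, exactly as in Step 3 above. Assertion (ii) then follows formally. Both $(\varphi_{23}\circ\varphi_{12})^\ssc$ and $\varphi_{23}^\ssc\circ\varphi_{12}^\ssc$ lift $\varphi_{23}\circ\varphi_{12}$ through $\pi_1$ and $\pi_3$, so they coincide by uniqueness.

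For existence in (i), uniqueness lets me work étale-locally on $S$. Suppose étale-local lifts $\varphi_i^\ssc$ exist on a covering $\{S_i\to S\}$. Their pullbacks to $S_{ij}$ both lift $\varphi_{S_{ij}}$, so they agree by uniqueness. Since $\ul{\Hom}_{S\textnormal{-gp}}(G_1^\ssc,G_2^\ssc)$ is an étale sheaf, they glue to a global $\varphi^\ssc$, and $\pi_2\circ\varphi^\ssc=\varphi\circ\pi_1$ can be checked locally. By Lemma \ref{l:isog_split} and Lemma \ref{l:uni_cov_bc}, I may therefore assume $S$ is affine and both $\pi_1$ and $\pi_2$ are splittable.

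The main obstacle is constructing the lift locally when $\varphi$ is an arbitrary homomorphism: it need not be an isogeny, it need not be compatible with the splittings, and its image need not be normal. I would avoid root-datum functoriality for general homomorphisms and instead use a graph/fiber-product argument. Let $E=G_1\times_{G_2}G_2^\ssc$; it is an extension of $G_1$ by the finite central multiplicative group $\ker\pi_2$. Passing to the identity component, or rather the derived group, $E'\coloneqq\mathcal{D}(E^0)$ gives a semisimple $S$-group with a central isogeny onto $G_1$. This map is surjective because $G_1$ is perfect, and its kernel is central multiplicative. By the universal property of $G_1^\ssc$, namely that a simply connected group covers every central isogeny onto $G_1$ (which I would deduce from the Isogeny Theorem \ref{t:3thms}(1) after splitting), $\pi_1$ factors through $E'\to G_1$. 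Composing with the projection $E'\to G_2^\ssc$ yields $\varphi^\ssc$.

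The delicate points are the following. First, the derived group $E'$ must exist over $S$ and be semisimple; this uses \cite[Exposé XXII, 6.2]{SGA3}. Second, a central isogeny from a simply connected group must dominate any other central isogeny; one proves this by checking on root data that $M\subseteq M_E\subseteq(\mathbb{Z}\Phi^\vee)^*$, and then applying the Isogeny Theorem \ref{t:3thms}(1) together with the Isomorphism Theorem \ref{t:3thms}(3), as in Step 2. I expect the second point to be the technical heart of the argument.
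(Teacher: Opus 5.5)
Your uniqueness argument and your deduction of (ii) are correct. They coincide with the argument of Step~3 in the proof of Proposition~\ref{p:Conrad-existence}. The \'etale-local reduction by gluing is also fine. So is your plan for the initiality of $G_1^\ssc$ among central isogenies onto $G_1$: after splitting via Lemma~\ref{l:isog_split} one has $M\subseteq M_{E'}\subseteq(\mathbb{Z}\Phi^\vee)^*$, and the argument is essentially Step~2 of the same proof.

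The paper does not prove existence itself. It cites \cite[Corollary 2.6]{GA} and supplies only Proposition~\ref{p:Conrad-existence}, on which that reference implicitly relies. So your existence argument has to stand on its own. It has a gap, and the gap is in the first of your two ``delicate points'', not the second.

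The problem is the construction of $E'=\mathcal{D}(E^0)$. The reference \cite[Expos\'e XXII, 6.2]{SGA3} constructs the derived group only for \emph{reductive} $S$-groups, and $E$ is not one in general. Indeed, $E=G_1\times_{G_2}G_2^\ssc$ is an extension of $G_1$ by the finite multiplicative group $\ker\pi_2$. It is therefore not smooth once $\ker\pi_2$ has non-\'etale fibers, for instance $\ker\pi_2=\mu_p$ for $G_2=\mathrm{PGL}_p$ in characteristic $p$.

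This case cannot be avoided. Construction~\ref{cons:theta} and Lemma~\ref{l:commutes} apply the proposition over $\operatorname{Spec}R$ for arbitrary $k$-algebras $R$, and $\operatorname{char}k=p$ is allowed. Over such bases even $E^0$ is problematic. Take $S=\operatorname{Spec}\mathbb{Z}_p$, $G_2=\mathrm{PGL}_p$ and $\varphi$ trivial, so that $E=G_1\times_S\mu_p$. Then the union of the fiberwise identity components is not open in $E$, because the generic point of $\mu_p$ with residue field $\mathbb{Q}_p(\zeta_p)$ specializes to the closed point. Over a field you could fall back on the derived group of an arbitrary algebraic group. Over a general $S$, however, the existence of a semisimple $E'\subseteq E$ with a central isogeny onto $G_1$ is exactly what remains unproved.

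You can repair this within your framework by first pushing out to a torus.

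1. Since $\ker\pi_2\subseteq Z(G_2^\ssc)\subseteq T$ for a split maximal torus $T$ of $G_2^\ssc$, set $H=(E\times_S T)/\ker\pi_2$, with $\ker\pi_2$ embedded antidiagonally.
2. Then $H$ is an extension of $G_1$ by the central torus $T$, hence a reductive $S$-group. So \cite[Expos\'e XXII, 6.2]{SGA3} now applies, and $\operatorname{Der}(H)\to G_1$ is a central isogeny of semisimple $S$-groups.
3. Your initiality argument gives $f\colon G_1^\ssc\to\operatorname{Der}(H)\subseteq H$ lifting $\pi_1$.
4. The map $f$ factors through $E$. The composite $G_1^\ssc\to H\to H/E\cong T/\ker\pi_2$ is a homomorphism from a semisimple group to a commutative one, hence trivial, by the same fact you used for uniqueness.
5. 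Set $\varphi^\ssc=\mathrm{pr}_2\circ f$.

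This is the same device as the $t$-extensions of Definition~\ref{d:R}.
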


\begin{remark}
    Let $G$ be a semisimple $S$-group and let $\pi \colon G^\ssc \to G$ be a universal covering.
    Using Proposition \ref{p:Conrad-existence} and Proposition \ref{p:GA}, one can easily show that $G^\ssc$ is initial among  central isogenies over $S$; namely, for every central isogeny $\pi' \colon G' \to G$, there exists a unique central isogeny $\pi'' \colon G^\ssc \to G'$ such that $\pi' \circ \pi'' = \pi$.
\end{remark}

\section{Group cohomology of a braided crossed module}
\label{app:gr-coh-Picard}

In this appendix we define a structure of abelian group
on the groups cohomology of a symmetrically braided crossed module (in particular,
in the case of a Picard crossed module). We follow Noohi \cite[Sections 3 and 4]{Noohi};
the difference is that we consider a left crossed module,
while Noohi works with right crossed modules.
Like Noohi, we omit the calculations in the version to be published in a journal.
However, unlike Noohi, we provide detailed calculations
in this arXiv version of our paper.

Let $\Gamma$ be a profinite group, and let $\big(A \labelt{\rho} G,\theta,\br \big)$
be a braided crossed module of $\Gamma$-groups; see Proposition \ref{p:br} for the definition of a braiding.
We assume that the braiding $\br$ is $\Gamma$-equivariant, that is,
${}^\upsig \{g,g'\} = \{{}^\upsig\hmm g,{}^\upsig\hmm g'\}$
for any $g,g' \in G$ and $\sigma \in \Gamma$.
We construct a natural group structure on  $H^1(\Gamma,A \to G)$,
and we show that under the additional assumption that the braiding $\br$ is symmetric,
the obtained group  is abelian.

We first prove the following lemma:

\begin{lemma}\label{l:noname0}
One has that
\begin{align}
    {\ha}^\upg s \cdot \{g,g'\} &= \{g,\rho(s)g'\} \cdot s \label{e:B.1} \\
    s \cdot \{g',g\} &= \{\rho(s)g',g\} \cdot {\ha}^\upg s \label{e:B.2} \\
    \{g,g'\}\cdot {\ha}^{g'g} s &= {\ha}^{g g'}s \cdot \{g,g'\} \label{e:B.3} \\
    \{1,g\} &= \{g,1\} = 1 \label{e:B.4} \\
    \{g,g'\} &= {\ha}^{g'g}\{g^{-1},g^{\prime\,-1}\} \label{e:B.5}
\end{align}
for any $s \in A$ and $g, g' \in G$.
\end{lemma}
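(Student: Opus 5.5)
All five identities follow formally from the axioms (Br1)--(Br5) of Proposition \ref{p:br} together with the crossed module axioms (CM1), (CM2). They hold for an arbitrary braided crossed module (no symmetry or Picard property is needed). The plan is to prove \eqref{e:B.4} first, since it is used in the others, and then obtain each remaining identity by expanding a braiding whose argument is a product, using (Br4) or (Br5).

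For \eqref{e:B.4}, apply (Br4) with $g_2=g_3=1$:
\[
\{g,1\}=\{g,1\}\cdot{}^{1}\{g,1\}=\{g,1\}^2,
\]
so $\{g,1\}=1$. Symmetrically, (Br5) with $g_1=g_2=1$ gives $\{1,g\}=1$.

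For \eqref{e:B.1}, expand $\{g,\rho(s)g'\}$ by (Br4):
\[
\{g,\rho(s)g'\}=\{g,\rho(s)\}\cdot{}^{\rho(s)}\{g,g'\}.
\]
By (Br3) the first factor is ${}^g s\cdot s^{-1}$. By (CM1) the second factor is $s\{g,g'\}s^{-1}$. Multiplying on the right by $s$ gives ${}^g s\cdot\{g,g'\}$, as required.

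For \eqref{e:B.2}, expand $\{\rho(s)g',g\}$ by (Br5) and then use (Br2) and (CM1):
\[
\{\rho(s)g',g\}={}^{\rho(s)}\{g',g\}\cdot\{\rho(s),g\}=s\{g',g\}s^{-1}\cdot s\cdot{}^g s^{-1}.
\]
Multiplying on the right by ${}^g s$ yields $s\{g',g\}$.

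For \eqref{e:B.3}, apply (CM1) to $x=\{g,g'\}$, which gives $x\,y\,x^{-1}={}^{\rho(x)}y$. By (Br1), $\rho(x)=gg'g^{-1}g'^{-1}$. Taking $y={}^{g'g}s$, we get
\[
x\cdot{}^{g'g}s\cdot x^{-1}={}^{gg'g^{-1}g'^{-1}g'g}s={}^{gg'}s,
\]
which is \eqref{e:B.3} after multiplying on the right by $x$. I expect this to be a one-line check, with care needed only about left versus right conventions.

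For \eqref{e:B.5}, which I expect to be the main obstacle because of bookkeeping, start from
\[
1=\{g,g^{-1}g\}=\{g,g^{-1}\}\cdot{}^{g^{-1}}\{g,g\}
\]
by \eqref{e:B.4} and (Br4). Similarly, expand $1=\{gg^{-1},g'\}$ by (Br5) to express $\{g^{-1},g'\}$ through $\{g,g'\}$, namely ${}^{g}\{g^{-1},g'\}=\{g,g'\}^{-1}$. Do the analogous step in the second variable by expanding $\{g^{-1},g'g'^{-1}\}$ with (Br4). Combining the two inversions, one in each argument, should give
\[
\{g^{-1},g'^{-1}\}={}^{g^{-1}g'^{-1}}\{g,g'\}.
\]
Applying ${}^{g'g}$ to both sides then gives \eqref{e:B.5}. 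The sign and ordering bookkeeping (for example, whether the conjugating element is $g'g$ or $gg'$) must be checked carefully. If needed, one can reconcile the two orders using \eqref{e:B.3} together with (Br1), since (Br1) identifies the discrepancy between ${}^{gg'}$ and ${}^{g'g}$ as conjugation by $\{g,g'\}$.
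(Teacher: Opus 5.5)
Your proposal is correct and follows essentially the paper's route. (B.1)--(B.3) are the same computations with (Br2)--(Br5), (CM1) and (Br1), and your idempotence argument for (B.4) ($\{g,1\}=\{g,1\}^2$ via (Br4), and similarly via (Br5)) is an equally trivial substitute for the paper's use of (Br2)/(Br3) with $s'=1$.

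For (B.5) there is one bookkeeping point. The paper inverts in the second argument first (Br4) and then in the first argument (Br5), which lands directly on ${}^{g'g}$. Your order (Br5 first, then Br4) actually yields $\{g,g'\}={}^{gg'}\{g^{-1},g'^{-1}\}$, and your opening display involving $\{g,g\}$ is not needed. Your proposed fix does work: by (B.3), ${}^{gg'}y=\{g,g'\}\cdot{}^{g'g}y\cdot\{g,g'\}^{-1}$, so $\{g,g'\}={}^{gg'}y$ forces $\{g,g'\}={}^{g'g}y$ for $y=\{g^{-1},g'^{-1}\}$.
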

\begin{proof}
    We prove \eqref{e:B.1} as follows:
    \begin{align*}
        {\ha}^\upg s \cdot \{g,g'\}
        &= \{g,\rho(s)\}\cdot s \cdot \{g,g'\}
        &&\textit{ by  } \eqref{e:Br3} \\
        &= \{g,\rho(s)\}\cdot {\ha}^{\rho(s)}\{g,g'\} \cdot s
        &&\textit{ by  } \eqref{e:CM1} \\
        &= \{g,\rho(s)g'\}\cdot s
        &&\textit{ by  } \eqref{e:Br4}.
    \end{align*}
    The proof of \eqref{e:B.2} is similar.
    We prove  \eqref{e:B.3} as follows:
    \begin{align*}
        \{g,g'\} \cdot {\ha}^{g'g} s
        &= {\ha}^{\rho(\{g,g'\})} {\ha}^{g'g}\hs s \cdot \{g,g'\}
        &&\textit{ by  } \eqref{e:CM1} \\
        &= {\ha}^{[g,g'] g'g}\hs s \cdot \{g,g'\}
        &&\textit{ by  } \eqref{e:Br1} \\
        &= {\ha}^{gg'} s \cdot \{g,g'\}.
    \end{align*}
    Equality \eqref{e:B.4} is clear since $\{1,g\} = \{\rho(1),g\} = 1 = \{g,\rho(1)\} = \{g,1\}$ by \eqref{e:Br2} and \eqref{e:Br3}.
    We prove \eqref{e:B.5} as follows:
    \begin{align*}
        \{g,g'\}
        &= \{g,1\}\cdot {\ha}^{g'\!}\{g,g^{\prime\,-1}\}^{-1}
        &&\textit{ by } \eqref{e:Br4} \\
        &= {\ha}^{g'\!}\{g,g^{\prime\,-1}\}^{-1}
        &&\textit{ by } \eqref{e:B.4} \\
        &={\ha}^{g'\!}({\ha}^\upg\{g^{-1},g^{\prime\,-1}\}^{-1} \cdot \{1,g'\})^{-1}
        &&\textit{ by } \eqref{e:Br5} \\
        &= {\ha}^{g'g}\{g^{-1},g^{\prime\,-1}\}
        &&\textit{ by } \eqref{e:B.4}
    \end{align*}
    This completes the proof of Lemma \ref{l:noname0}.
\end{proof}

We wish to construct group structures on the set of
0-(hyper-)cochains $C^0(\Gamma,A \to G) \coloneqq  \operatorname{Maps}(\Gamma,A) \times G$
and the set of 1-(hyper-)cochains
$C^1(\Gamma,A \to G) \coloneqq  \operatorname{Maps}(\Gamma \times \Gamma,A) \times \operatorname{Maps}(\Gamma,G)$.
Using the braiding $\br$, we define a group structure on $C^0(\Gamma,A \to G)$ under the product and the inverse given by
\begin{align*}
    (\varphi^1,g_1) \cdot (\varphi^2,g_2)
    &\coloneqq
    (\varphi^{1,2},g_1 g_2) \\
    (\varphi,g)^{-1} &\coloneqq  (\varphi',g^{-1})
\end{align*}
where for any $\sigma \in \Gamma$
\begin{align}
    \varphi^{1,2}_\sigma
    &=
    {\ha}^{g_1} \{g_1^{-1} \rho(\varphi^1_\sigma) {\ha\ha}^\upsig\hmm g_1,g_2\}^{-1}
    \cdot
    \varphi^1_\sigma
    \cdot
    {\ha}^{{\ha}^\upsig\hmm g_1} \varphi^2_\sigma\hs,  \label{eq:product_C0} \\
    \varphi'_\sigma
    &=
    {\ha}^{{\ha}^\upsig\hmm g^{-1}} \varphi^{-1}_\sigma
    \cdot
    {\ha}^{{\ha}^\upsig\hmm g^{-1} g} \{g^{-1}\rho(\varphi_\sigma) {\ha\ha}^\upsig\hmm g,g^{-1}\}.\notag
\end{align}
One can check that we indeed obtain a group structure on $C^0(\Gamma,A \to G)$.

\begin{remark}\label{rmk:grp_C0_ordinary}
    This group structure on $C^0(\Gamma,A \to G)$ is different from the one in \cite[Section 3.3.1]{Borovoi-Memoir},
    for which the product is defined by \eqref{e:grp_C0_Bor}.
    However, they coincide when we restrict to the set of 0-cocycles  $Z^0(\Gamma,A \to G)$, which consists of $(\varphi,g) \in C^0(\Gamma,A \to G)$ where $\varphi_{\sigma\tau} = \varphi_\sigma {\ha}^\upsig \varphi_{\tau}$ and
    $\rho(\varphi_\sigma) {\ha}^\upsig\hmm g = g$
    for any $\sigma, \tau \in \Gamma$.
\end{remark}

\begin{lemma}\label{l:grp_str_c1}
    The set $C^1(\Gamma, A \to G)$ becomes a group with the operations defined as follows
    (for any $(u^i, \psi^i) \in C^1(\Gamma,A \to G)$ and $\sigma, \tau \in \Gamma$) :
    \begin{enumerate}
        \item The product $(u^1,\psi^1) \cdot (u^2,\psi^2)$ is the pair $(u^{1,2},\psi^{1,2})$ where
    \begin{align}
        u^{1,2}_{\sigma, \tau}
        &=
        u^1_{\sigma, \tau} \cdot {\ha}^{\psi^1_\sigma {\ha}^\upsig \psi^1_\tau} u^2_{\sigma, \tau} \cdot {\ha}^{\psi^1_\sigma} \left\{ {\ha}^\upsig \psi^1_\tau, \psi^2_\sigma \right\},
        \label{eq:def_prod_1} \\
        \psi^{1,2}_\sigma &= \psi^1_\sigma \psi^2_\sigma; \label{eq:def_prod_2}
    \end{align}
     \item The inverse of $(u,\psi)$ is the pair $(u',\psi')$ where
    \begin{align}
        u'_{\sigma,\tau} &\coloneqq  {\ha}^{\psi_{\sigma\tau}^{-1}} u_{\sigma,\tau}^{-1} \cdot  \{{\ha}^\upsig\psi_\tau^{-1},\psi_\sigma^{-1}\},  \label{eq:def_inv_1} \\
        \psi'_\sigma &\coloneqq  \psi_\sigma^{-1}; \label{eq:def_inv_2}
    \end{align}
    \item The identity element is $(1,1)$.
    \end{enumerate}
\end{lemma}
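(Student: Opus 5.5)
We verify the group axioms for $C^1(\Gamma,A\to G)$ with the product \eqref{eq:def_prod_1}--\eqref{eq:def_prod_2}, using only the crossed-module identities \eqref{eq:prop1_crossed_mod}--\eqref{eq:prop4_crossed_mod} (equivalently \eqref{e:CM1}, \eqref{e:CM2}), the braiding identities \eqref{e:Br1}--\eqref{e:Br5}, the $\Gamma$-equivariance of $\br$, and Lemma \ref{l:noname0}. The second component is simply pointwise multiplication in $\operatorname{Maps}(\Gamma,G)$, which is already a group. So every verification reduces to an identity in $A$ for the first component, at a fixed pair $(\sigma,\tau)$. Note also that the products in \eqref{eq:def_prod_1} and \eqref{eq:def_inv_1} are again locally constant, since all ingredients are.

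The plan proceeds in three steps.

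\emph{Identity.} Put $(u^2,\psi^2)=(1,1)$ in \eqref{eq:def_prod_1}. The extra factor becomes $\{{}^\sigma\psi^1_\tau,1\}=1$ by \eqref{e:B.4}, so $(u,\psi)\cdot(1,1)=(u,\psi)$. Symmetrically, putting $(u^1,\psi^1)=(1,1)$ leaves $u^2$, twisted by ${}^{1}$, together with $\{1,\psi^2_\sigma\}=1$.

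\emph{Inverse.} Compute $(u,\psi)\cdot(u',\psi')$ with $u'$ as in \eqref{eq:def_inv_1}. The twist ${}^{\psi_\sigma{}^\sigma\psi_\tau}$ applied to ${}^{\psi_{\sigma\tau}^{-1}}u^{-1}_{\sigma,\tau}$ does not cancel on the nose, because $\psi$ is only a cochain. The remedy is to commute $u_{\sigma,\tau}$ past the braiding terms using \eqref{e:B.3}, and to rewrite the braid terms $\{{}^\sigma\psi_\tau^{-1},\psi_\sigma^{-1}\}$ and ${}^{\psi_\sigma}\{{}^\sigma\psi_\tau,\psi_\sigma^{-1}\}$ against each other via \eqref{e:B.5} and \eqref{e:Br4}, with \eqref{e:B.4} killing $\{g,1\}$. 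One then checks the other-side inverse in the same way, or deduces it from associativity together with the one-sided inverse and identity, via the standard monoid argument.

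\emph{Associativity (main obstacle).} Expand both $((u^1,\psi^1)(u^2,\psi^2))(u^3,\psi^3)$ and $(u^1,\psi^1)((u^2,\psi^2)(u^3,\psi^3))$. The $u$-parts agree after twisting. The comparison reduces to equating
\[
{}^{\psi^1_\sigma\psi^2_\sigma}\{{}^\sigma(\psi^1_\tau\psi^2_\tau),\psi^3_\sigma\}\cdot(\text{earlier braid terms})
\]
with the corresponding expression on the other side. The strategy is:
\begin{itemize}
\item split $\{{}^\sigma\psi^1_\tau\,{}^\sigma\psi^2_\tau,\psi^3_\sigma\}$ by \eqref{e:Br5} and $\{{}^\sigma\psi^1_\tau,\psi^2_\sigma\psi^3_\sigma\}$ by \eqref{e:Br4};
\item move the $u^i$ factors to a common position, using \eqref{e:B.1}--\eqref{e:B.3} to pass elements of $A$ across braid brackets at the cost of changing conjugating elements by $\rho(\text{braid})=$ commutators, by \eqref{e:Br1}.
\end{itemize}
The delicate point is tracking the twisting exponents; after the moves they should match exactly, by \eqref{e:CM1}. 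I would first verify the case $u^i=1$, which is a pure braid identity, and then insert the $u^i$ one at a time. Only equivariance of $\br$, not symmetry, should be needed at this stage; symmetry is reserved for commutativity later.
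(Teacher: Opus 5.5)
\textbf{Verdict: genuine gap in the inverse step.} Your identity and associativity steps are fine and follow the paper's route. There, the $u^3$-term is moved by \eqref{e:B.3}, the bracket is split by \eqref{e:Br5}, and two brackets are reordered by \eqref{e:B.3} and recombined by \eqref{e:Br4}; no cocycle condition enters.

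The gap is in the inverse step. You correctly noticed that the twist does not cancel for a mere cochain, but your remedy cannot work. In $(u,\psi)\cdot(u',\psi^{-1})$ the two bracket terms ${}^{\psi_\sigma\,{}^{\sigma}\psi_\tau}\{({}^{\sigma}\psi_\tau)^{-1},\psi_\sigma^{-1}\}\cdot{}^{\psi_\sigma}\{{}^{\sigma}\psi_\tau,\psi_\sigma^{-1}\}$ already multiply to ${}^{\psi_\sigma}\{1,\psi_\sigma^{-1}\}=1$, by \eqref{e:Br5} and \eqref{e:B.4}. So the first component is exactly
\[
u_{\sigma,\tau}\cdot {}^{g}u_{\sigma,\tau}^{-1},\qquad g=\psi_\sigma\,{}^{\sigma}\psi_\tau\,\psi_{\sigma\tau}^{-1}.
\]
No rearrangement via \eqref{e:B.3} makes this trivial, because for a cochain it is not trivial. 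Here is a counterexample:
\begin{itemize}
\item Take $\Gamma=1$, $A=G$ a nonabelian group, $\rho=\mathrm{id}$, the conjugation action, and $\{g,g'\}=[g,g']$.
\item This is $\PCrM(G)(k_s)$ for a simply connected semisimple $G$ with $k=k_s$.
\item Then $(u,\psi)\cdot(u',\psi^{-1})=(u\psi u^{-1}\psi^{-1},1)$.
\end{itemize}
So \eqref{eq:def_inv_1} is not the inverse in $C^1$ in general, and item (2) of the statement cannot be proved as written.

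The missing ingredient is the cocycle condition \eqref{eq:def1_1-cocycle}. It gives $g=\rho(u_{\sigma,\tau})^{-1}$, hence ${}^{g}u_{\sigma,\tau}^{-1}=u_{\sigma,\tau}^{-1}$ by \eqref{e:CM1}. This is exactly what the paper's own computation invokes at this step. So what is actually established is that \eqref{eq:def_inv_1} inverts elements of $Z^1$, which is all that Lemma \ref{l:grp_str_Z1} needs.

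If you want $C^1$ itself to be a group, replace the exponent $\psi_{\sigma\tau}^{-1}$ in \eqref{eq:def_inv_1} by $(\psi_\sigma\,{}^{\sigma}\psi_\tau)^{-1}$. The same bracket cancellation then gives a right inverse for every cochain, and your monoid argument finishes. This corrected formula agrees with \eqref{eq:def_inv_1} on $Z^1$.
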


\begin{proof}
    For any $(u^1,\psi^1), (u^2,\psi^2), (u^3,\psi^3) \in C^1(\Gamma,A \to G)$,
    we denote
    $(u^{(1,2),3},\psi^{(1,2),3}) \coloneqq  ((u^1,\psi^1) \cdot (u^2,\psi^2)) \cdot (u^3,\psi^3)$
    and
    $(u^{1,(2,3)},\psi^{1,(2,3)}) \coloneqq  (u^1,\psi^1) \cdot ((u^2,\psi^2) \cdot (u^3,\psi^3))$.
    To prove the associativity, we must show that $u^{(1,2),3}_{\sigma,\tau} = u^{1,(2,3)}_{\sigma,\tau}$ and $\psi^{(1,2),3}_{\sigma} = \psi^{1,(2,3)}_{\sigma}$ for
    any $\sigma, \tau \in \Gamma$, which are proved as follows, respectively:
    \begin{align*}
        u^{(1,2),3}_{\sigma,\tau}
        &= u^{1,2}_{\sigma,\tau}
        \cdot
        {\ha}^{\psi^1_\sigma \psi^2_\sigma {\ha}^\upsig \psi^1_\tau {\ha}^\upsig \psi^2_\tau} u^3_{\sigma,\tau}
        \cdot
        {\ha}^{\psi^1_\sigma \psi^2_\sigma}
        \{{\ha}^\upsig \psi^1_\tau {\ha}^\upsig \psi^2_\tau,\psi^3_\sigma\}
        && \textit{ by } \eqref{eq:def_prod_1}, \eqref{eq:def_prod_2} \\
        &= (u^1_{\sigma, \tau} \cdot {\ha}^{\psi^1_\sigma {\ha}^\upsig \psi^1_\tau} u^2_{\sigma, \tau} \cdot {\ha}^{\psi^1_\sigma} \left\{ {\ha}^\upsig \psi^1_\tau, \psi^2_\sigma \right\})
        \cdot
        {\ha}^{\psi^1_\sigma \psi^2_\sigma {\ha}^\upsig \psi^1_\tau {\ha}^\upsig \psi^2_\tau} u^3_{\sigma,\tau}
        \cdot
        {\ha}^{\psi^1_\sigma \psi^2_\sigma}
        \{{\ha}^\upsig \psi^1_\tau {\ha}^\upsig \psi^2_\tau,\psi^3_\sigma\}
        && \textit{ by } \eqref{eq:def_prod_1} \\
        &= u^1_{\sigma, \tau} \cdot {\ha}^{\psi^1_\sigma {\ha}^\upsig \psi^1_\tau} u^2_{\sigma, \tau}
        \cdot
        {\ha}^{\psi^1_\sigma {\ha}^\upsig \psi^1_\tau \psi^2_\sigma  {\ha}^\upsig \psi^2_\tau} u^3_{\sigma,\tau}
        \cdot
        {\ha}^{\psi^1_\sigma} \left\{ {\ha}^\upsig \psi^1_\tau, \psi^2_\sigma \right\}
        \cdot
        {\ha}^{\psi^1_\sigma \psi^2_\sigma}
        \{{\ha}^\upsig \psi^1_\tau {\ha}^\upsig \psi^2_\tau,\psi^3_\sigma\}
        && \textit{ by } \eqref{e:B.3} \\
        &=u^1_{\sigma,\tau}
        \cdot {\ha}^{\psi^1_\sigma {\ha}^\upsig \psi^1_\tau}
        (u^2_{\sigma,\tau} \cdot {\ha}^{\psi^2_\sigma {\ha}^\upsig \psi^3_\tau} u^3_{\sigma,\tau})
        \cdot
        {\ha}^{\psi^1_\sigma} \left\{ {\ha}^\upsig \psi^1_\tau, \psi^2_\sigma \right\}
        \cdot
        {\ha}^{\psi^1_\sigma \psi^2_\sigma {\ha}^\upsig \psi^1_\tau}
        \{{\ha}^\upsig \psi^2_\tau,\psi^3_\sigma\}
        \cdot
        {\ha}^{\psi^1_\sigma \psi^2_\sigma}
        \{{\ha}^\upsig \psi^1_\tau,\psi^3_\sigma\}
        && \textit{ by } \eqref{e:Br5} \\
        &=u^1_{\sigma,\tau}
        \cdot {\ha}^{\psi^1_\sigma {\ha}^\upsig \psi^1_\tau}
        (u^2_{\sigma,\tau} \cdot {\ha}^{\psi^2_\sigma {\ha}^\upsig \psi^3_\tau} u^3_{\sigma,\tau})
        \cdot
        {\ha}^{\psi^1_\sigma {\ha}^\upsig \psi^1_\tau \psi^2_\sigma}\{{\ha}^\upsig \psi^2_\tau,\psi^3_\sigma\}
        \cdot
        {\ha}^{\psi^1_\sigma}
        \{{\ha}^\upsig \psi^1_\tau, \psi^2_\sigma\}
        \cdot
        {\ha}^{\psi^1_\sigma \psi^2_\sigma}
        \{{\ha}^\upsig \psi^1_\tau,\psi^3_\sigma\}
        && \textit{ by } \eqref{e:B.3} \\
        &
        =u^1_{\sigma,\tau}
        \cdot {\ha}^{\psi^1_\sigma {\ha}^\upsig \psi^1_\tau}
        (u^2_{\sigma,\tau} \cdot {\ha}^{\psi^2_\sigma {\ha}^\upsig \psi^3_\tau} u^3_{\sigma,\tau}
        \cdot
        {\ha}^{\psi^2_\sigma}\{{\ha}^\upsig \psi^2_\tau,\psi^3_\sigma\})
        \cdot
        {\ha}^{\psi^1_\sigma}
        \{{\ha}^\upsig \psi^1_\tau, \psi^2_\sigma \psi^3_\sigma\}
        && \textit{ by } \eqref{e:Br5} \\
        &
        =u^1_{\sigma,\tau}
        \cdot {\ha}^{\psi^1_\sigma {\ha}^\upsig \psi^1_\tau} u^{2,3}_{\sigma,\tau}
        \cdot
        {\ha}^{\psi^1_\sigma}
        \{{\ha}^\upsig \psi^1_\tau, \psi^2_\sigma \psi^3_\sigma\}
        && \textit{ by } \eqref{eq:def_prod_1}  \\
        &=u^{1,(2,3)}_{\sigma,\tau}
        && \textit{ by } \eqref{eq:def_prod_1}, \eqref{eq:def_prod_2} \\
        \psi^{(1,2),3}_\sigma
        &= \psi^1_\sigma \psi^2_\sigma \psi^3_\sigma = \psi^{1,(2,3)}_\sigma
        &&\textit{ by } \eqref{eq:def_prod_2}.
    \end{align*}
    To prove the other group axioms, it suffices to show that  for any $(u,\psi) \in C^1(\Gamma,A\to G)$, $(1,1)$ is the right identity and $(u',\psi')$ is the right inverse of $(u,\psi)$.
    It is clear that $(u,\psi) \cdot (1,1) = (u,\psi)$ by
    \eqref{e:B.4}, \eqref{eq:def_prod_1}, and \eqref{eq:def_prod_2}.
    If we denote $(u'',\psi'') \coloneqq  (u,\psi) * (u',\psi')$, then for any $\sigma,\tau \in \Gamma$ we have:
    \begin{align*}
        u''_{\sigma,\tau}
        &=
        u_{\sigma,\tau}
        \cdot {\ha}^{\psi_\sigma {\ha}^{\upsig} \psi_\tau} u'_{\sigma,\tau}
        \cdot {\ha}^{\psi_\sigma}\{{\ha}^\upsig \psi_\tau,\psi^{-1}_\sigma\}
        &&\textit{ by }\eqref{eq:def_prod_1}, \eqref{eq:def_inv_2} \\
        &=
        u_{\sigma,\tau}
        \cdot {\ha}^{\psi_\sigma {\ha}^{\upsig} \psi_\tau} ({\ha\ha}^{\psi_{\sigma\tau}^{-1}} u_{\sigma,\tau}^{-1} \cdot
        \{{\ha}^\upsig\psi_\tau^{-1},\psi_\sigma^{-1}\})
        \cdot {\ha}^{\psi_\sigma}\{{\ha}^\upsig \psi_\tau,\psi^{-1}_\sigma\}
        &&\textit{ by } \eqref{eq:def_inv_1} \\
        &=
        u_{\sigma,\tau}
        \cdot  {\ha}^{\rho(u^{-1}_{\sigma,\tau})} u_{\sigma,\tau}^{-1}
        \cdot
        {\ha}^{\psi_\sigma {\ha}^{\upsig} \psi_\tau}\{{\ha}^\upsig\psi_\tau^{-1},\psi_\sigma^{-1}\}
        \cdot {\ha}^{\psi_\sigma}\{{\ha}^\upsig \psi_\tau,\psi^{-1}_\sigma\}
        &&\textit{ by } \eqref{eq:def1_1-cocycle} \\
        &=
        {\ha}^{\psi_\sigma {\ha}^{\upsig} \psi_\tau}\{{\ha}^\upsig\psi_\tau^{-1},\psi_\sigma^{-1}\}
        \cdot {\ha}^{\psi_\sigma}\{{\ha}^\upsig \psi_\tau,\psi^{-1}_\sigma\}
        &&\textit{ by } \eqref{e:CM1} \\
        &=
        {\ha}^{\psi_\sigma}\{{\ha}^{\upsig} \psi_\tau {\ha}^\upsig\psi_\tau^{-1},\psi_\sigma^{-1}\}
        &&\textit{ by } \eqref{e:Br5} \\
        &= 1
        &&\textit{ by } \eqref{e:B.4} \\
        \psi''_\sigma &= \psi^{-1}_\sigma \psi_\sigma = 1
        &&\textit{ by } \eqref{eq:def_prod_2}, \eqref{eq:def_inv_2}.
    \end{align*}
    This shows that $(u'',\psi'') = (1,1)$, which completes the proof of Lemma \ref{l:grp_str_c1}.
\end{proof}

Recall that the set of 1-cocycles $Z^1(\Gamma,A \to G)$ is a subset of $C^1(\Gamma,A \to G)$ consisting of the elements $(u,\psi)$
satisfying  conditions \eqref{eq:def1_1-cocycle} and \eqref{eq:def2_1-cocycle}.

\begin{lemma}\label{l:grp_str_Z1}
    Under the group structure defined in Lemma \ref{l:grp_str_c1},
    the set of 1-cocycles $Z^1(\Gamma,A \to G)$ forms a subgroup of $C^1(\Gamma,A \to G)$.
\end{lemma}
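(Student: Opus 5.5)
To prove the lemma, I would check directly that $Z^1(\Gamma,A\to G)$ contains the identity and is closed under the product \eqref{eq:def_prod_1}--\eqref{eq:def_prod_2} and under the inverse \eqref{eq:def_inv_1}--\eqref{eq:def_inv_2}. Since Lemma \ref{l:grp_str_c1} already gives associativity and the group axioms on $C^1$, nothing else is needed. The identity $(1,1)$ clearly satisfies \eqref{eq:def1_1-cocycle} and \eqref{eq:def2_1-cocycle}. For closure, I would take $(u^1,\psi^1),(u^2,\psi^2)\in Z^1$, form $(u^{1,2},\psi^{1,2})$, and verify the two cocycle conditions in turn.

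The first condition, \eqref{eq:def1_1-cocycle}, is formal. Apply $\rho$ to \eqref{eq:def_prod_1}, using \eqref{eq:prop4_crossed_mod} and \eqref{e:Br1}. This gives
\[
\rho(u^{1,2}_{\sigma,\tau})=\rho(u^1_{\sigma,\tau})\,\psi^1_\sigma{}^\sigma\psi^1_\tau\,\rho(u^2_{\sigma,\tau})\,({}^\sigma\psi^1_\tau)^{-1}(\psi^1_\sigma)^{-1}\cdot\psi^1_\sigma\,[{}^\sigma\psi^1_\tau,\psi^2_\sigma]\,(\psi^1_\sigma)^{-1}.
\]
Multiplying on the right by $\psi^1_\sigma\psi^2_\sigma\,{}^\sigma\psi^1_\tau\,{}^\sigma\psi^2_\tau$, the commutator telescopes and reorders the middle factors. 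Then \eqref{eq:def1_1-cocycle} for $\psi^2$ and then for $\psi^1$ yields $\psi^1_{\sigma\tau}\psi^2_{\sigma\tau}$.

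The second condition, \eqref{eq:def2_1-cocycle}, is the main obstacle. I would expand both sides $u^{1,2}_{\sigma,\tau\nu}\cdot{}^{\psi^{1,2}_\sigma\sigma}u^{1,2}_{\tau,\nu}$ and $u^{1,2}_{\sigma\tau,\nu}\cdot u^{1,2}_{\sigma,\tau}$ as words in $u^1$, translates of $u^2$, and translates of four braiding terms. The plan is to move all $u^2$-factors to a canonical position using \eqref{e:B.3}, which lets a braiding term pass a translated element of $A$ at the cost of changing the translating element from $g'g$ to $gg'$. The $u^1$- and $u^2$-parts then reduce separately by \eqref{eq:def2_1-cocycle} for each cocycle, together with \eqref{eq:def1_1-cocycle} to rewrite $\psi_{\sigma\tau}$.

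What remains is an identity purely among braiding terms. I would decompose its arguments with \eqref{e:Br4}, \eqref{e:Br5}, the $\Gamma$-equivariance of $\br$, and \eqref{e:B.1}/\eqref{e:B.2}. The last two are needed whenever an argument contains $\rho(u^i_{\tau,\nu})$, which comes from substituting $\psi^i_{\tau\nu}=\rho(u^i_{\tau,\nu})\psi^i_\tau{}^\tau\psi^i_\nu$. This bookkeeping is essentially the hexagon-type check in Noohi \cite[Section 4]{Noohi}, transcribed to left crossed modules, and I would follow his ordering of rewrites.

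For inverses, rather than verifying directly, I would use a shortcut. The map $(u,\psi)\mapsto(u,\psi)^{-1}$ in $C^1$ is the unique right inverse, and it is also a left inverse. So I would show that if $x\in Z^1$ and $x\cdot y=(1,1)$, then $y\in Z^1$. One route is to run the closure computation "backwards": the identities expressing the cocycle conditions of $x\cdot y$ in terms of those of $x$ and $y$ are invertible in the relevant variables. If that is awkward, I would check \eqref{eq:def1_1-cocycle} for $(u',\psi')$ directly via \eqref{e:Br1} and \eqref{eq:def1_1-cocycle}, and verify \eqref{eq:def2_1-cocycle} using \eqref{e:B.5} to convert $\{{}^\sigma\psi_\tau^{-1},\psi_\sigma^{-1}\}$ into a translate of $\{\psi_\sigma,{}^\sigma\psi_\tau\}$, which reduces to the product case.
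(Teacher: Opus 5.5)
Your treatment of the identity and of products follows the paper. You check \eqref{eq:def1_1-cocycle} by applying $\rho$, and your computation via \eqref{eq:prop4_crossed_mod} and \eqref{e:Br1} is correct. You check \eqref{eq:def2_1-cocycle} by a direct expansion whose ingredients are exactly those of the paper's chain: \eqref{e:CM1}, \eqref{e:B.1}--\eqref{e:B.3}, \eqref{e:Br4}, \eqref{e:Br5}, equivariance, and both cocycle conditions for both factors. Your expansion is only planned, not written out, but so is the paper's in its journal version.

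You differ genuinely on inversion. The paper checks the inverse head-on. It proves ${}^{\psi_{\sigma\tau}}{u'}^{-1}_{\sigma,\tau}=u_{\sigma,\tau}\{{}^\sigma\psi_\tau,\psi_\sigma\}^{-1}$ from \eqref{e:B.5} and then runs a second long chain. Your shortcut is valid and avoids that chain, but you should state it as a defect identity, in the following order.

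First, set $D(x)_{\sigma,\tau}=\rho(u_{\sigma,\tau})\psi_\sigma\,{}^\sigma\psi_\tau\,\psi_{\sigma\tau}^{-1}$. Your $\rho$-computation uses no hypothesis on the second factor, and it gives $D(xy)_{\sigma,\tau}=D(x)_{\sigma,\tau}\cdot\psi^1_{\sigma\tau}D(y)_{\sigma,\tau}(\psi^1_{\sigma\tau})^{-1}$. So $x\in Z^1$ and $xy=(1,1)$ force \eqref{eq:def1_1-cocycle} for $y$. This must come first: after the \eqref{e:B.1} step, the product chain for \eqref{eq:def2_1-cocycle} uses \eqref{eq:def1_1-cocycle} for the second factor, to replace $\rho(u^2_{\sigma,\tau})\psi^2_\sigma\,{}^\sigma\psi^2_\tau$ by $\psi^2_{\sigma\tau}$.

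Second, in that chain \eqref{eq:def2_1-cocycle} for the second factor can be used (and in the paper is used) exactly once, conjugated by $\psi^1_{\sigma\tau\nu}$ and followed by a common right factor. Hence, for $x\in Z^1$, the quotient (left side)(right side)$^{-1}$ of \eqref{eq:def2_1-cocycle} for $xy$ equals ${}^{\psi^1_{\sigma\tau\nu}}$ applied to the same quotient for $y$. That is the precise sense in which the computation is ``invertible''.

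A side benefit is that you use only the existence of a right inverse in $C^1$, not formula \eqref{eq:def_inv_1}. The paper's verification of that formula in Lemma \ref{l:grp_str_c1} itself invokes \eqref{eq:def1_1-cocycle}.

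There is one slip in your fallback. \eqref{e:B.5} gives $\{{}^\sigma\psi_\tau^{-1},\psi_\sigma^{-1}\}={}^{\psi_\sigma^{-1}\,{}^\sigma\psi_\tau^{-1}}\{{}^\sigma\psi_\tau,\psi_\sigma\}$. This is a translate of $\{{}^\sigma\psi_\tau,\psi_\sigma\}$, not of $\{\psi_\sigma,{}^\sigma\psi_\tau\}$. The two are interchangeable only under \eqref{e:Sym}, which this lemma does not assume.
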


\begin{proof}
    It suffices to show that $Z^1(\Gamma,A \to G)$ is closed under the multiplication and inversion defined in Lemma \ref{l:grp_str_c1}, by proving
    \eqref{eq:def1_1-cocycle} and \eqref{eq:def2_1-cocycle}.
    For any $(u^1,\psi^1), (u^2,\psi^2) \in Z^1(\Gamma,A \to G)$ and $\sigma, \tau, \nu \in \Gamma$, their product $(u^{1,2},\psi^{1,2})$ satisfies
    \eqref{eq:def1_1-cocycle} and \eqref{eq:def2_1-cocycle} as follows:
    \begin{align*}
    \rho(u^{1,2}_{\sigma,\tau}) \cdot \psi^{1,2}_\sigma \cdot {\ha}^\upsig \psi^{1,2}_{\tau}
    &=
    \rho(u^1_{\sigma,\tau} \cdot {\ha}^{\psi^1_\sigma {\ha}^\upsig \psi^1_\tau} u^2_{\sigma,\tau}
    \cdot {\ha}^{\psi^1_\sigma} \{{\ha}^\upsig \psi^1_\tau,\psi^2_\sigma\}) \cdot (\psi^1_\sigma \psi^2_\sigma) \cdot ({\ha}^\upsig \psi^1_\tau {\ha}^\upsig \psi^2_\tau)
     &&\textit{ by } \eqref{eq:def_prod_1}, \eqref{eq:def_prod_2} \\
      &=
    \rho({\ha}^{\rho(u^1_{\sigma,\tau})\psi^1_\sigma {\ha}^\upsig \psi^1_\tau} u^2_{\sigma,\tau} \cdot u^1_{\sigma,\tau}
    \cdot {\ha}^{\psi^1_\sigma} \{{\ha}^\upsig \psi^1_\tau,\psi^2_\sigma\}) \cdot (\psi^1_\sigma \psi^2_\sigma) \cdot ({\ha}^\upsig \psi^1_\tau {\ha}^\upsig \psi^2_\tau)
     &&\textit{ by } \eqref{e:CM1} \\
    &=
    \rho({\ha\ha}^{\psi^1_{\sigma\tau}} u^2_{\sigma,\tau} )\cdot \rho(u^1_{\sigma,\tau}) \cdot \rho( {\ha}^{\psi^1_\sigma} \{{\ha}^\upsig \psi^1_\tau,\psi^2_\sigma\}) \cdot (\psi^1_\sigma \psi^2_\sigma) \cdot ({\ha}^\upsig \psi^1_\tau {\ha}^\upsig \psi^2_\tau)
     &&\textit{ by } \eqref{eq:def1_1-cocycle} \\
    &=\psi^1_{\sigma \tau} \rho(u^2_{\sigma,\tau}) (\psi^1_{\sigma \tau})^{-1} \cdot \rho(u^1_{\sigma,\tau}) \cdot \psi^1_{\sigma} [{\ha}^\upsig \psi^1_\tau,\psi^2_\sigma] \cdot \psi^2_\sigma \cdot ({\ha}^\upsig \psi^1_\tau {\ha}^\upsig \psi^2_\tau)
    &&\textit{ by } \eqref{e:CM2}, \eqref{e:Br1} \\
    &= \psi^1_{\sigma \tau} \rho(u^2_{\sigma,\tau}) (\psi^1_{\sigma \tau})^{-1} \cdot (\rho(u^1_{\sigma,\tau}) \cdot \psi^1_{\sigma} {\ha}^\upsig \psi^1_\tau) \cdot \psi^2_\sigma {\ha}^\upsig \psi^2_\tau \\
    &= \psi^1_{\sigma \tau} \psi^2_{\sigma \tau}
    &&\textit{ by } \eqref{eq:def1_1-cocycle} \\
    &= \psi^{1,2}_{\sigma \tau}
    &&\textit{ by } \eqref{eq:def_prod_2}.
    \end{align*}
        \begin{align*}
        &u^{1,2}_{\sigma,\tau \nu} \cdot {\ha}^{\psi^{1,2}_\sigma \sigma}u^{1,2}_{\tau,\nu}= \\
        &=(u^1_{\sigma, \tau \nu} \cdot {\ha}^{\psi^1_\sigma {\ha}^\upsig \psi^1_{\tau \nu}} u^2_{\sigma,\tau \nu} \cdot {\ha}^{\psi^1_\sigma} \left\{ {\ha}^\upsig \psi^1_{\tau \nu}, \psi^2_\sigma \right\})
         \cdot
         {\ha}^{\psi^1_\sigma \psi^2_\sigma \sigma}
         (u^1_{\tau, \nu} \cdot {\ha}^{\psi^1_\tau {\ha}^\tau \psi^1_\nu} u^2_{\tau,\nu} \cdot {\ha}^{\psi^1_\tau} \left\{ {\ha}^\tau \psi^1_\nu, \psi^2_\tau \right\})
         &&\textit{ by } \eqref{eq:def_prod_1} \\
         & ={\ha}^{\rho(u^1_{\sigma,\tau \nu})\psi^1_{\sigma}{\ha}^\upsig \psi^1_{\tau \nu}} u^2_{\sigma,\tau \nu}
         \cdot
         u^1_{\sigma, \tau \nu}
         \cdot
         {\ha}^{\psi^1_\sigma} (\left\{ {\ha}^\upsig \psi^1_{\tau \nu}, \psi^2_\sigma \right\}
         \cdot
         {\ha}^{\psi^2_\sigma \sigma} u^1_{\tau, \nu} \cdot {\ha}^{{\psi^2_\sigma} {\ha}^\upsig \psi^1_\tau {\ha}^{\upst} \psi^1_\nu} {\ha}^\upsig u^2_{\tau,\nu} \cdot {\ha}^{\psi^2_\sigma {\ha}^\upsig \psi^1_\tau} \left\{ {\ha}^{\upst} \psi^1_\nu, {\ha}^\upsig \psi^2_\tau \right\})
         &&\textit{ by } \eqref{e:CM1}\\
         & ={\ha}^{\psi^1_{\sigma \tau \nu}} u^2_{\sigma,\tau \nu}
         \cdot
         u^1_{\sigma, \tau \nu}
         \cdot
         {\ha}^{\psi^1_\sigma} (\left\{ {\ha}^\upsig \psi^1_{\tau \nu}, \psi^2_\sigma \right\}
         \cdot
         {\ha}^{\psi^2_\sigma \sigma} u^1_{\tau, \nu} \cdot {\ha}^{{\psi^2_\sigma} {\ha}^\upsig \psi^1_\tau {\ha}^{\upst} \psi^1_\nu} {\ha}^\upsig u^2_{\tau,\nu} \cdot {\ha}^{\psi^2_\sigma {\ha}^\upsig \psi^1_\tau} \left\{ {\ha}^{\upst} \psi^1_\nu, {\ha}^\upsig \psi^2_\tau \right\})
         &&\textit{ by } \eqref{eq:def1_1-cocycle}\\
         & ={\ha}^{\psi^1_{\sigma \tau \nu}} u^2_{\sigma,\tau \nu}
         \cdot
         u^1_{\sigma, \tau \nu}
         \cdot
         {\ha}^{\psi^1_\sigma} (\left\{\rho({\ha}^\upsig  u^1_{\tau,\nu}) {\ha}^\upsig \psi^1_{\tau} {\ha}^{\upst} \psi^1_\nu, \psi^2_\sigma \right\}
         \cdot
         {\ha}^{\psi^2_\sigma \sigma} u^1_{\tau, \nu} \cdot {\ha}^{{\psi^2_\sigma} {\ha}^\upsig \psi^1_\tau {\ha}^{\upst} \psi^1_\nu} {\ha}^\upsig u^2_{\tau,\nu} \cdot {\ha}^{\psi^2_\sigma {\ha}^\upsig \psi^1_\tau} \left\{ {\ha}^{\upst} \psi^1_\nu, {\ha}^\upsig \psi^2_\tau \right\})
         &&\textit{ by } \eqref{eq:def1_1-cocycle} \\
         & ={\ha}^{\psi^1_{\sigma \tau \nu}} u^2_{\sigma,\tau \nu}
         \cdot
         u^1_{\sigma, \tau \nu}
         \cdot
         {\ha}^{\psi^1_\sigma} {\ha}^\upsig u^1_{\tau, \nu}
         \cdot
         {\ha}^{\psi^1_\sigma} (\left\{ {\ha}^\upsig \psi^1_{\tau}{\ha}^{\upst} \psi^1_\nu, \psi^2_\sigma \right\}
         \cdot
         {\ha}^{{\psi^2_\sigma} {\ha}^\upsig \psi^1_\tau {\ha}^{\upst} \psi^1_\nu} {\ha}^\upsig u^2_{\tau,\nu} \cdot {\ha}^{\psi^2_\sigma {\ha}^\upsig \psi^1_\tau} \left\{ {\ha}^{\upst} \psi^1_\nu, {\ha}^\upsig \psi^2_\tau \right\})
         &&\textit{ by } \eqref{e:B.2} \\
         & ={\ha}^{\psi^1_{\sigma \tau \nu}} u^2_{\sigma,\tau \nu}
         \cdot
         u^1_{\sigma \tau, \nu}
         \cdot
        u^1_{\sigma, \tau}
         \cdot
         {\ha}^{\psi^1_\sigma} (\left\{{\ha}^\upsig \psi^1_{\tau} {\ha}^{\upst} \psi^1_\nu, \psi^2_\sigma \right\}
         \cdot
         {\ha}^{{\psi^2_\sigma} {\ha}^\upsig \psi^1_\tau {\ha}^{\upst} \psi^1_\nu} {\ha}^\upsig u^2_{\tau,\nu} \cdot {\ha}^{\psi^2_\sigma {\ha}^\upsig \psi^1_\tau} \left\{ {\ha}^{\upst} \psi^1_\nu, {\ha}^\upsig \psi^2_\tau \right\})
         &&\textit{ by } \eqref{eq:def2_1-cocycle}\\
         & ={\ha}^{\psi^1_{\sigma \tau \nu}} u^2_{\sigma,\tau \nu}
         \cdot
         u^1_{\sigma \tau, \nu}
         \cdot
        u^1_{\sigma, \tau}
         \cdot
         {\ha}^{\psi^1_\sigma} (
         {\ha}^{{\ha}^\upsig \psi^1_\tau {\ha}^{\upst} \psi^1_\nu {\psi^2_\sigma} } {\ha}^\upsig u^2_{\tau,\nu}
         \cdot
         \{{\ha}^\upsig \psi^1_{\tau} {\ha}^{\upst} \psi^1_\nu, \psi^2_\sigma \}
         \cdot
        {\ha}^{\psi^2_\sigma {\ha}^\upsig \psi^1_\tau} \left\{ {\ha}^{\upst} \psi^1_\nu, {\ha}^\upsig \psi^2_\tau \right\})
        &&\textit{ by } \eqref{e:B.3} \\
        & ={\ha}^{\psi^1_{\sigma \tau \nu}} u^2_{\sigma,\tau \nu}
         \cdot
         u^1_{\sigma \tau, \nu}
         \cdot
        u^1_{\sigma, \tau}
         \cdot
         {\ha}^{\psi^1_\sigma} (
         {\ha}^{{\ha}^\upsig \psi^1_\tau {\ha}^{\upst} \psi^1_\nu {\psi^2_\sigma} } {\ha}^\upsig u^2_{\tau,\nu}
         \cdot
         {\ha}^{{\ha}^\upsig \psi^1_{\tau}}\{{\ha}^{\upst} \psi^1_\nu, \psi^2_\sigma\}
         \{{\ha}^{\upsig} \psi^1_\tau, \psi^2_\sigma \}
         \cdot
        {\ha}^{\psi^2_\sigma {\ha}^\upsig \psi^1_\tau} \left\{ {\ha}^{\upst} \psi^1_\nu, {\ha}^\upsig \psi^2_\tau \right\})
        &&\textit{ by } \eqref{e:Br5} \\
        & ={\ha}^{\psi^1_{\sigma \tau \nu}} u^2_{\sigma,\tau \nu}
         \cdot
         u^1_{\sigma \tau, \nu}
         \cdot
        u^1_{\sigma, \tau}
         \cdot
         {\ha}^{\psi^1_\sigma} (
         {\ha}^{{\ha}^\upsig \psi^1_\tau {\ha}^{\upst} \psi^1_\nu {\psi^2_\sigma} } {\ha}^\upsig u^2_{\tau,\nu}
         \cdot
         {\ha}^{{\ha}^\upsig \psi^1_{\tau}}\{{\ha}^{\upst} \psi^1_\nu, \psi^2_\sigma\}
         \cdot
         {\ha}^{{\ha}^\upsig \psi^1_\tau \psi^2_\sigma}\left\{ {\ha}^{\upst} \psi^1_\nu, {\ha}^\upsig \psi^2_\tau \right\}
         \cdot
         \{{\ha}^{\upsig} \psi^1_\tau, \psi^2_\sigma \})
         &&\textit{ by } \eqref{e:B.3}\\
         & ={\ha}^{\psi^1_{\sigma \tau \nu}} u^2_{\sigma,\tau \nu}
         \cdot
         u^1_{\sigma \tau, \nu}
         \cdot
        u^1_{\sigma, \tau}
         \cdot
         {\ha}^{\psi^1_\sigma} (
         {\ha}^{{\ha}^\upsig \psi^1_\tau {\ha}^{\upst} \psi^1_\nu {\psi^2_\sigma} } {\ha}^\upsig u^2_{\tau,\nu}
         \cdot
         {\ha}^{{\ha}^\upsig \psi^1_{\tau}}\{{\ha}^{\upst} \psi^1_\nu, \psi^2_\sigma {\ha}^\upsig \psi^2_\tau \}
         \cdot
         \{{\ha}^{\upsig} \psi^1_\tau, \psi^2_\sigma \})
         &&\textit{ by } \eqref{e:Br4} \\
         & ={\ha}^{\psi^1_{\sigma \tau \nu}} u^2_{\sigma,\tau \nu}
         \cdot
         {\ha}^{\rho(u^1_{\sigma \tau,\nu})(\rho(u^1_{\sigma,\tau}) \psi^1_\sigma {\ha}^\upsig \psi^1_\tau) {\ha}^{\upst} \psi^1_\nu {\psi^2_\sigma} } {\ha}^\upsig u^2_{\tau,\nu}
        \cdot
          u^1_{\sigma \tau, \nu}
         \cdot
        u^1_{\sigma, \tau}
         \cdot
         {\ha}^{\psi^1_\sigma} (
         {\ha}^{{\ha}^\upsig \psi^1_{\tau}}\{{\ha}^{\upst} \psi^1_\nu, \psi^2_\sigma {\ha}^\upsig \psi^2_\tau \}
         \cdot
         \{{\ha}^{\upsig} \psi^1_\tau, \psi^2_\sigma \})
         &&\textit{ by } \eqref{e:CM1} \\
         & ={\ha}^{\psi^1_{\sigma \tau \nu}} u^2_{\sigma,\tau \nu}
         \cdot
         {\ha}^{\psi^1_{\sigma \tau \nu}\psi^2_\sigma} {\ha}^\upsig u^2_{\tau,\nu}
        \cdot
          u^1_{\sigma \tau, \nu}
         \cdot
        u^1_{\sigma, \tau}
         \cdot
         {\ha}^{\psi^1_\sigma} (
         {\ha}^{{\ha}^\upsig \psi^1_{\tau}}\{{\ha}^{\upst} \psi^1_\nu, \psi^2_\sigma {\ha}^\upsig \psi^2_\tau \}
         \cdot
         \{{\ha}^{\upsig} \psi^1_\tau, \psi^2_\sigma \})
         &&\textit{ by } \eqref{eq:def1_1-cocycle} \\
         & ={\ha}^{\psi^1_{\sigma \tau \nu}} u^2_{\sigma\tau, \nu}
         \cdot
         {\ha}^{\psi^1_{\sigma \tau \nu}} u^2_{\sigma,\tau}
        \cdot
          u^1_{\sigma \tau, \nu}
         \cdot
        u^1_{\sigma, \tau}
         \cdot
         {\ha}^{\psi^1_\sigma} (
         {\ha}^{{\ha}^\upsig \psi^1_{\tau}}\{{\ha}^{\upst} \psi^1_\nu, \psi^2_\sigma {\ha}^\upsig \psi^2_\tau \}
         \cdot
         \{{\ha}^{\upsig} \psi^1_\tau, \psi^2_\sigma \})
         &&\textit{ by } \eqref{eq:def2_1-cocycle} \\
         & ={\ha}^{\psi^1_{\sigma \tau \nu}} u^2_{\sigma\tau, \nu}
         \cdot
         {\ha}^{\rho(u^1_{\sigma \tau,\nu})(\rho(u^1_{\sigma,\tau}) \psi^1_\sigma {\ha}^\upsig \psi^1_\tau) {\ha}^{\upst} \psi^1_\nu {\psi^2_\sigma} } {\ha}^\upsig u^2_{\sigma,\tau}
        \cdot
          u^1_{\sigma \tau, \nu}
         \cdot
        u^1_{\sigma, \tau}
         \cdot
         {\ha}^{\psi^1_\sigma} (
         {\ha}^{{\ha}^\upsig \psi^1_{\tau}}\{{\ha}^{\upst} \psi^1_\nu, \psi^2_\sigma {\ha}^\upsig \psi^2_\tau \}
         \cdot
         \{{\ha}^{\upsig} \psi^1_\tau, \psi^2_\sigma \})
         &&\textit{ by } \eqref{eq:def1_1-cocycle} \\
         &={\ha}^{\psi^1_{\sigma \tau \nu}} u^2_{\sigma \tau, \nu}
         \cdot
         u^1_{\sigma\tau,\nu}
         \cdot
         u^1_{\sigma,\tau}
         \cdot
         {\ha}^{\psi^1_\sigma {\ha}^\upsig \psi^1_\tau}({\ha}^{{\ha}^{\upst} \psi^1_\nu} u^2_{\sigma, \tau} \cdot \left\{ {\ha}^{\upst} \psi^1_\nu, \psi^2_{\sigma}{\ha}^\upsig \psi^2_\tau \right\} )
         \cdot
         {\ha}^{\psi^1_\sigma} \left\{ {\ha}^\upsig \psi^1_\tau, \psi^2_\sigma \right\}
         &&\textit{ by } \eqref{e:CM1} \\
         &={\ha}^{\psi^1_{\sigma \tau \nu}} u^2_{\sigma \tau, \nu}
         \cdot
         u^1_{\sigma\tau,\nu}
         \cdot
         u^1_{\sigma,\tau}
         \cdot
         {\ha}^{\psi^1_\sigma {\ha}^\upsig \psi^1_\tau}(\left\{ {\ha}^{\upst} \psi^1_\nu, \rho(u^2_{\sigma,\tau}) \psi^2_{\sigma}{\ha}^\upsig \psi^2_\tau \right\} \cdot u^2_{\sigma, \tau})
         \cdot
         {\ha}^{\psi^1_\sigma} \left\{ {\ha}^\upsig \psi^1_\tau, \psi^2_\sigma \right\}
         &&\textit{ by } \eqref{e:B.1} \\
         &={\ha}^{\rho(u^1_{\sigma\tau,\nu})\psi^1_{\sigma \tau}{\ha}^{\upst} \psi^1_\nu} u^2_{\sigma \tau, \nu}
         \cdot
         u^1_{\sigma\tau,\nu}
         \cdot
         u^1_{\sigma,\tau}
         \cdot
         {\ha}^{\psi^1_\sigma {\ha}^\upsig \psi^1_\tau}(\left\{ {\ha}^{\upst} \psi^1_\nu, \psi^2_{\sigma \tau}\right\} \cdot u^2_{\sigma, \tau})
         \cdot
         {\ha}^{\psi^1_\sigma} \left\{ {\ha}^\upsig \psi^1_\tau, \psi^2_\sigma \right\}
         &&\textit{ by } \eqref{eq:def1_1-cocycle} \\
         &=u^1_{\sigma\tau, \nu} \cdot {\ha}^{\psi^1_{\sigma\tau} {\ha}^{\upst} \psi^1_\nu}\cdot u^2_{\sigma\tau, \nu}
         \cdot {\ha}^{\rho(u^1_{\sigma,\tau})\psi^1_{\sigma}{\ha}^\upsig \psi^1_\tau} \left\{ {\ha}^{\upst} \psi^1_\nu, \psi^2_{\sigma \tau}\right\}
         \cdot
         u^1_{\sigma, \tau} \cdot {\ha}^{\psi^1_\sigma {\ha}^\upsig \psi^1_\tau} u^2_{\sigma, \tau} \cdot {\ha}^{\psi^1_\sigma} \left\{ {\ha}^\upsig \psi^1_\tau, \psi^2_\sigma \right\}
         &&\textit{ by } \eqref{e:CM1} \\
         &=(u^1_{\sigma\tau, \nu} \cdot {\ha}^{\psi^1_{\sigma\tau} {\ha}^{\upst} \psi^1_\nu}\cdot u^2_{\sigma\tau, \nu}
         \cdot {\ha}^{\psi^1_{\sigma\tau}} \left\{ {\ha}^{\upst} \psi^1_\nu, \psi^2_{\sigma \tau}\right\})
         \cdot
         (u^1_{\sigma, \tau} \cdot {\ha}^{\psi^1_\sigma {\ha}^\upsig \psi^1_\tau} u^2_{\sigma, \tau} \cdot {\ha}^{\psi^1_\sigma} \left\{ {\ha}^\upsig \psi^1_\tau, \psi^2_\sigma \right\})
         &&\textit{ by } \eqref{eq:def1_1-cocycle}\\
         &= u^{1,2}_{\sigma \tau,\nu} \cdot u^{1,2}_{\sigma,\tau}
         &&\textit{ by } \eqref{eq:def_prod_1}.
    \end{align*}

    For any $(u,\psi) \in Z^1(\Gamma,A \to G)$, its inverse $(u',\psi')$ satisfies \eqref{eq:def1_1-cocycle} as follows:
    \begin{align*}
        \rho(u'_{\sigma,\tau}) \cdot \psi'_\sigma \cdot {\ha}^\upsig \psi'_\tau
        &= \rho({\ha\ha}^{\psi^{-1}_{\sigma \tau}} u^{-1}_{\sigma,\tau}) \cdot \rho(\{{\ha}^\upsig \psi_\tau^{-1},\psi_\sigma^{-1}\}) \cdot \psi^{-1}_\sigma \cdot {\ha}^\upsig \psi^{-1}_\tau \\
        &= \psi^{-1}_{\sigma \tau} \rho(u^{-1}_{\sigma,\tau}) \psi_{\sigma \tau}\cdot \rho(\{{\ha}^\upsig \psi_\tau^{-1},\psi_\sigma^{-1}\}) \cdot \psi^{-1}_\sigma \cdot {\ha}^\upsig \psi^{-1}_\tau
        &&\textit{ by } \eqref{e:CM2} \\
        &= \psi^{-1}_{\sigma \tau} \rho(u^{-1}_{\sigma,\tau}) \psi_{\sigma \tau}
        \cdot
        [{\ha}^\upsig \psi_\tau^{-1},\psi_\sigma^{-1}]
        \cdot \psi^{-1}_\sigma \cdot {\ha}^\upsig \psi^{-1}_\tau
        &&\textit{ by } \eqref{e:Br1} \\
        &= \psi^{-1}_{\sigma \tau}
        \cdot\rho(u^{-1}_{\sigma,\tau})
        \cdot
        \rho(u_{\sigma,\tau})\cdot \psi_\sigma \cdot {\ha}^\upsig \psi_\tau
        \cdot
        [{\ha}^\upsig \psi_\tau^{-1},\psi_\sigma^{-1}]
        \cdot \psi^{-1}_\sigma \cdot {\ha}^\upsig \psi^{-1}_\tau
        &&\textit{ by } \eqref{eq:def1_1-cocycle} \\
        &= \psi'_{\sigma \tau}\hs.
    \end{align*}
    To check \eqref{eq:def2_1-cocycle} for $(u',\psi')$, we use the equality
    \begin{equation}\label{eq:u'_inverse}
    \begin{aligned}
    {\ha}^{\psi_{\sigma \tau}} {u'}_{\sigma,\tau}^{-1}
        =
        u_{\sigma,\tau}\{\psi_\sigma,{\ha}^\upsig \psi_\tau\}, \text{ proved as }
        {\ha}^{\psi_{\sigma \tau}} {u'}^{-1}_{\sigma, \tau}
        &= {\ha}^{\psi_{\sigma \tau}}(\{{\ha}^\upsig \psi_\tau^{-1},\psi^{-1}_\sigma\})^{-1} \cdot u_{\sigma,\tau}
        &&\textit{ by } \eqref{eq:def_inv_1} \\
        &= {\ha}^{\psi_{\sigma \tau}{\ha}^\upsig \psi_\tau^{-1} \psi_\sigma^{-1}}\{{\ha}^\upsig \psi_\tau,\psi_\sigma,\}^{-1} \cdot u_{\sigma,\tau}
        &&\textit{ by } \eqref{e:B.5} \\
        &= {\ha}^{\rho(u_{\sigma,\tau})}\{{\ha}^\upsig \psi_\tau,\psi_\sigma\}^{-1} \cdot u_{\sigma,\tau}
        &&\textit{ by } \eqref{eq:def1_1-cocycle} \\
        &= u_{\sigma,\tau} \cdot \{{\ha}^\upsig \psi_\tau,\psi_\sigma\}^{-1}
        &&\textit{ by } \eqref{e:CM1}.
    \end{aligned}
    \end{equation}
    By taking the inverse and ${\ha}^{\psi_{\sigma \tau \nu}} (\quad)$ on the both sides, \eqref{eq:def2_1-cocycle} for $(u',\psi')$ is equivalent to
    \[
    {\ha}^{\psi_{\sigma \tau \nu}}({\ha\ha}^{\psi'_\sigma \sigma} {u'}_{\tau,\nu}^{-1} \cdot {u'}_{\sigma, \tau \nu}^{-1})
    =
    {\ha}^{\psi_{\sigma \tau \nu}}({u'}_{\sigma,\tau}^{-1} \cdot {u'}_{\sigma \tau,\nu}^{-1}),
    \]
    which is proved as follows:
    \begin{align*}
        &{\ha}^{\psi_{\sigma \tau \nu}}({\ha\ha}^{\psi'_\sigma \sigma} {u'}_{\tau,\nu}^{-1} \cdot {u'}_{\sigma, \tau \nu}^{-1})= \\
        &={\ha}^{\psi_{\sigma \tau \nu}}{\ha}^{\psi^{-1}_\sigma \sigma} {u'}_{\tau,\nu}^{-1} \cdot {\ha}^{\psi_{\sigma \tau \nu}}{u'}_{\sigma, \tau \nu}^{-1}
        &&\textit{ by } \eqref{eq:def_inv_2} \\
        &={\ha}^{\psi_{\sigma \tau \nu}}{\ha}^{\psi^{-1}_\sigma} {\ha}^{{\ha}^\upsig\psi^{-1}_{\tau\nu}}
        ({\ha}^\upsig {u}_{\tau,\nu} \cdot \{{\ha}^{\upst} \psi_\nu,{\ha}^\upsig \psi_\tau\}^{-1})
        \cdot
        u_{\sigma,\tau \nu} \cdot \{{\ha}^{\upsig} \psi_{\tau\nu},\psi_{\sigma}\}^{-1}
        &&\textit{ by } \eqref{eq:u'_inverse} \\
        &={\ha}^{\rho(u_{\sigma,\tau \nu})[\psi_{\sigma},{\ha}^{\upsig} \psi_{\tau\nu}]}
        ({\ha}^\upsig {u}_{\tau,\nu} \cdot \{{\ha}^{\upst} \psi_\nu,{\ha}^\upsig \psi_\tau\}^{-1})
        \cdot
        u_{\sigma,\tau \nu} \cdot \{{\ha}^{\upsig} \psi_{\tau\nu},\psi_{\sigma}\}^{-1}
        &&\textit{ by } \eqref{eq:def1_1-cocycle} \\
        &= u_{\sigma,\tau \nu}
        \cdot\{{\ha}^{\upsig} \psi_{\tau\nu},\psi_{\sigma}\}^{-1}
        \cdot {\ha}^{\rho(\{{\ha}^\upsig \psi_{\tau \nu},\psi_\sigma\} u_{\sigma,\tau \nu}^{-1})\rho(u_{\sigma,\tau \nu}\{\psi_\sigma,{\ha}^\upsig \psi_{\tau \nu}\})} ({\ha}^\upsig {u}_{\tau,\nu} \cdot \{{\ha}^{\upst} \psi_\nu,{\ha}^\upsig \psi_\tau\}^{-1})
        &&\textit{ by } \eqref{e:CM1} \\
        &= u_{\sigma,\tau \nu}
        \cdot
        \{{\ha}^\upsig \psi_{\tau \nu},\psi_\sigma\}^{-1}
        \cdot
        {\ha}^\upsig {u}_{\tau,\nu}
        \cdot \{{\ha}^{\upst} \psi_\nu,{\ha}^\upsig \psi_\tau\}^{-1}
        &&\textit{ by } \eqref{e:Br1} \\
        &= u_{\sigma,\tau \nu}
        \cdot
        \{\rho({\ha}^\upsig u_{\tau,\nu}) {\ha}^\upsig\psi_{\tau} {\ha}^{\upst} \psi_{\nu},\psi_\sigma\}^{-1}
        \cdot
        {\ha}^\upsig {u}_{\tau,\nu}
        \cdot \{{\ha}^{\upst} \psi_\nu,{\ha}^\upsig \psi_\tau\}^{-1}
        &&\textit{ by } \eqref{eq:def1_1-cocycle} \\
        &= u_{\sigma,\tau \nu}
        \cdot
        {\ha}^{\psi_\sigma \sigma} {u}_{\tau,\nu}
        \cdot
        \{{\ha}^\upsig\psi_{\tau} {\ha}^{\upst} \psi_{\nu},\psi_\sigma\}^{-1}
        \cdot
         \{{\ha}^{\upst} \psi_\nu,{\ha}^\upsig \psi_\tau\}^{-1}
        &&\textit{ by } \eqref{e:B.2} \\
        &= u_{\sigma,\tau \nu}
        \cdot
        {\ha}^{\psi_\sigma \sigma} {u}_{\tau,\nu}
        \cdot
        \{{\ha}^\upsig\psi_{\tau},\psi_\sigma\}^{-1}
        \cdot
        {\ha}^{{\ha}^\upsig\psi_{\tau}}\{{\ha}^{\upst} \psi_{\nu},\psi_\sigma\}^{-1}
        \cdot
         \{{\ha}^{\upst} \psi_\nu,{\ha}^\upsig \psi_\tau\}^{-1}
        &&\textit{ by } \eqref{e:Br5} \\
        &= u_{\sigma,\tau \nu}
        \cdot
        {\ha}^{\psi_\sigma \sigma} {u}_{\tau,\nu}
        \cdot
        \{{\ha}^\upsig\psi_{\tau},\psi_\sigma\}^{-1}
        \cdot
        \{{\ha}^{\upst} \psi_{\nu},{\ha}^\upsig\psi_{\tau} \psi_\sigma\}^{-1}
        &&\textit{ by } \eqref{e:Br4}
        \\
        &= u_{\sigma\tau, \nu}
        \cdot
        {u}_{\sigma,\tau}
        \cdot
        \{{\ha}^\upsig\psi_{\tau},\psi_\sigma\}^{-1}
        \cdot
        \{{\ha}^{\upst} \psi_{\nu},{\ha}^\upsig\psi_{\tau} \psi_\sigma\}^{-1}
        &&\textit{ by } \eqref{eq:def2_1-cocycle}
        \\
        &= u_{\sigma\tau, \nu}
        \cdot
        \{{\ha}^{\upst} \psi_{\nu},\rho({u}_{\sigma,\tau}
        \cdot
        \{{\ha}^\upsig\psi_{\tau},\psi_\sigma\}^{-1}){\ha}^\upsig\psi_{\tau} \psi_\sigma\}^{-1}
        \cdot
        {\ha}^{{\ha}^{\upst}\psi_\nu}({u}_{\sigma,\tau}
        \cdot
        \{{\ha}^\upsig\psi_{\tau},\psi_\sigma\}^{-1})
        &&\textit{ by } \eqref{e:B.1} \\
        &= u_{\sigma\tau, \nu}
        \cdot
        \{{\ha}^{\upst} \psi_{\nu},\rho({u}_{\sigma,\tau}) \psi_\sigma {\ha}^\upsig\psi_{\tau}\}^{-1}
        \cdot
        {\ha}^{{\ha}^{\upst}\psi_\nu}({u}_{\sigma,\tau}
        \cdot
        \{{\ha}^\upsig\psi_{\tau},\psi_\sigma\}^{-1})
        &&\textit{ by } \eqref{e:Br1} \\
        &= (u_{\sigma\tau, \nu}
        \cdot
        \{{\ha}^{\upst} \psi_{\nu},\psi_{\sigma \tau}\}^{-1})
        \cdot
        {\ha}^{{\ha}^{\upst}\psi_\nu}({u}_{\sigma,\tau}
        \cdot
        \{{\ha}^\upsig\psi_{\tau},\psi_\sigma\}^{-1})
        &&\textit{ by } \eqref{eq:def1_1-cocycle}
        \\
        &=
        {\ha}^{\rho(u_{\sigma\tau,\nu}\{{\ha}^{\upst}\psi_\nu,\psi_{\sigma \tau}\}^{-1}) {\ha}^{\upst} \psi_\nu}
        (u_{\sigma,\tau} \cdot \{{\ha}^\upsig \psi_\tau,\psi_\sigma\}^{-1})
        \cdot
        (u_{\sigma \tau,\nu} \cdot \{{\ha}^{\upst}\psi_\nu,\psi_{\sigma \tau}\}^{-1})
        &&\textit{ by } \eqref{e:CM1} \\
        &={\ha}^{\rho(u_{\sigma\tau,\nu})\psi_{\sigma \tau}{\ha}^{\upst}\psi_\nu \psi_{\sigma\tau}^{-1}}
        (u_{\sigma,\tau} \cdot \{{\ha}^\upsig \psi_\tau,\psi_\sigma\}^{-1})
        \cdot
        (u_{\sigma \tau,\nu} \cdot \{{\ha}^{\upst}\psi_\nu,\psi_{\sigma \tau}\}^{-1})
        &&\textit{ by } \eqref{e:Br1} \\
        &=
        {\ha}^{\psi_{\sigma \tau \nu} \psi_{\sigma\tau}^{-1}}
        (u_{\sigma,\tau} \cdot \{{\ha}^\upsig \psi_\tau,\psi_\sigma\}^{-1})
        \cdot
        (u_{\sigma \tau,\nu} \cdot \{{\ha}^{\upst}\psi_\nu,\psi_{\sigma \tau}\}^{-1})
        &&\textit{ by } \eqref{eq:def1_1-cocycle} \\
        &=
        {\ha}^{\psi_{\sigma \tau \nu}}({u'}_{\sigma,\tau}^{-1} \cdot {u'}_{\sigma \tau,\nu}^{-1})
        &&\textit{ by } \eqref{eq:u'_inverse}.
    \end{align*}
    This completes the proof of Lemma \ref{l:grp_str_Z1}.
\end{proof}

We wish to construct a differential:  a group homomorphism $d \colon C^0(\Gamma,A \to G) \to Z^1(\Gamma,A \to G)$.
For $(\varphi,g) \in C^0(\Gamma,A \to G)$, we define $d(\varphi,g)$
to be the 1-cocycle $(u,\psi) \in C^1(\Gamma,A \to G)$ with
\begin{align}
    u_{\sigma,\tau}
    &\coloneqq  {\ha}^{g^{-1}}(\varphi_{\sigma \tau}
    \cdot  {\ha}^\upsig \varphi_\tau^{-1}
    \cdot \varphi_\sigma^{-1}),\notag\\
    \tag{\sf Dif1}\label{e:HM1} \\
    \psi_\sigma
    &\coloneqq
    g^{-1} \cdot \rho(\varphi_\sigma) \cdot {\ha}^\upsig\hmm g
    \tag{\sf Dif2}\label{e:HM2}
\end{align}
for any $\sigma, \tau \in \Gamma$.

We prove certain equalities, which will be used to prove other lemmas.
\begin{lemma}\label{l:noname1}
    \begin{enumerate}
        \item
        For any $(\varphi^1,g_1)\in C^0(\Gamma,A \to G)$ and $g_2 \in G$, we have
        \begin{align}
        {\ha}^{g_2} \{g_1,\psi^2_\sigma\}
        =
        \{g_1,g_2\}^{-1} \cdot {\ha}^{g_1}  \varphi^2_\sigma \cdot \{g_1,{\ha}^\upsig\hmm g_2\} \cdot (\varphi^2_\sigma)^{-1}
        \label{eq:lem_tech-1},
        \end{align}
        \begin{align}
        {\ha}^{g_1} \{\psi^1_\sigma,g_2\}
        =
        \varphi^1_\sigma  \cdot \{{\ha}^\upsig\hmm g_1,g_2\} \cdot {\ha}^{g_2}  (\varphi^1_\sigma)^{-1} \cdot \{g_1,g_2\}^{-1}
        \label{eq:lem_tech-2}
        \end{align}
        where $\psi^i_\sigma = g_i^{-1}\rho(\varphi^i_\sigma) {\ha}^\upsig\hmm g_i$ for $i = 1,2$.

        \item
    For any $(u,\psi) \in Z^1(\Gamma,A \to G)$ and $(\varphi,g) \in C^0(\Gamma,A \to G)$, we have
    \[
    d((\varphi,g)) \cdot (u,\psi)
    =
    (u',\psi')
    \]
    where
    \begin{align}
        u'_{\sigma,\tau} &=
        {\ha}^{g^{-1}}(\varphi_{\sigma \tau}
        \cdot {\ha}^{{\ha}^{\upst} g}u_{\sigma,\tau}
        \cdot
        \{{\ha}^{\upst} g,\psi_\sigma\} \cdot {\ha}^{\psi_\sigma \sigma}\varphi_\tau^{-1} \cdot \{{\ha}^\upsig\hmm g,\psi_\sigma\}^{-1} \cdot \varphi_\sigma^{-1}) ,
        \label{eq:lem_tech-3} \\
        \psi'_\sigma &= g^{-1} \rho(\varphi_\sigma) {\ha}^\upsig\! g \cdot \psi_\sigma\hs. \label{eq:lem_tech-4}
    \end{align}
    \end{enumerate}
\end{lemma}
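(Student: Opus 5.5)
Both parts are direct computations from the braided crossed module axioms (CM1), (CM2), (Br1)--(Br5) and the identities (B.1)--(B.5) of Lemma~\ref{l:noname0}; no cocycle condition on $(\varphi^i,g_i)$ is needed for part (1).

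\emph{Identity \eqref{eq:lem_tech-1}.} I would start from the right-hand side. First expand $\{g_1,{}^\sigma\! g_2\}$ by writing
\[
{}^\sigma\! g_2=\rho(\varphi^2_\sigma)^{-1}\, g_2\,\psi^2_\sigma .
\]
Applying (Br4) twice gives
\[
\{g_1,{}^\sigma\! g_2\}=\{g_1,\rho(\varphi^2_\sigma)^{-1}\}\cdot{}^{\rho(\varphi^2_\sigma)^{-1}}\!\big(\{g_1,g_2\}\cdot{}^{g_2}\{g_1,\psi^2_\sigma\}\big).
\]
Next use (Br3) to rewrite
\[
\{g_1,\rho(\varphi^2_\sigma)^{-1}\}={}^{g_1}(\varphi^2_\sigma)^{-1}\cdot\varphi^2_\sigma ,
\]
and (CM1) to turn the outer conjugation ${}^{\rho(\varphi^2_\sigma)^{-1}}$ into conjugation by $(\varphi^2_\sigma)^{-1}$ in $A$. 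After these substitutions the factors ${}^{g_1}\varphi^2_\sigma$ and $(\varphi^2_\sigma)^{\pm1}$ cancel, leaving exactly \eqref{eq:lem_tech-1}.

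\emph{Identity \eqref{eq:lem_tech-2}.} This is symmetric: write ${}^\sigma\! g_1=\rho(\varphi^1_\sigma)^{-1} g_1\psi^1_\sigma$ and expand $\{{}^\sigma\! g_1,g_2\}$ using (Br5) together with (Br2) in place of (Br4) and (Br3). Alternatively, I could try deducing it from \eqref{eq:lem_tech-1} via the inversion formula (B.5), but the direct computation is probably cleaner.

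\emph{Part (2).} Set $(u^1,\psi^1)=d(\varphi,g)$, given by (Dif1) and (Dif2), and $(u^2,\psi^2)=(u,\psi)$, and plug into the product formula \eqref{eq:def_prod_1}. The $\psi$-component is immediate from \eqref{eq:def_prod_2} and gives \eqref{eq:lem_tech-4}. For the $u$-component we obtain
\[
{}^{g^{-1}}(\varphi_{\sigma\tau}\,{}^\sigma\varphi_\tau^{-1}\varphi_\sigma^{-1})\cdot{}^{\psi^1_\sigma\,{}^\sigma\!\psi^1_\tau}u_{\sigma,\tau}\cdot{}^{\psi^1_\sigma}\{{}^\sigma\!\psi^1_\tau,\psi_\sigma\}.
\]
I would simplify this in three steps.
\begin{enumerate}
\item Observe that $\psi^1_\sigma\,{}^\sigma\!\psi^1_\tau=\rho(u^1_{\sigma,\tau})\,g^{-1}\rho(\varphi_{\sigma\tau})\,{}^{\sigma\tau}\! g$, and use (CM1) to move ${}^{\cdots}u_{\sigma,\tau}$ to the left past $u^1_{\sigma,\tau}$. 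This produces ${}^{g^{-1}}(\varphi_{\sigma\tau}\cdot{}^{{}^{\sigma\tau}\! g}u_{\sigma,\tau}\cdot\ldots)$.
\item Evaluate the braiding term ${}^{\psi^1_\sigma}\{{}^\sigma\!\psi^1_\tau,\psi_\sigma\}$ by applying \eqref{eq:lem_tech-2}, $\sigma$-twisted, to the pair $({}^\sigma\varphi_\tau,{}^\sigma\! g)$ with second argument $\psi_\sigma$. This expresses it through $\{{}^{\sigma\tau}\! g,\psi_\sigma\}$, ${}^{\psi_\sigma}({}^\sigma\varphi_\tau)^{-1}$, and $\{{}^\sigma\! g,\psi_\sigma\}^{-1}$.
\item Use (B.3) and (CM1) to reorder the resulting factors into the form \eqref{eq:lem_tech-3}.
\end{enumerate}

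The main obstacle is this bookkeeping in part (2): keeping track of which conjugation applies where, in particular the prefactor ${}^{\psi^1_\sigma}$ versus ${}^{g^{-1}}$ and the $\sigma$-twist in \eqref{eq:lem_tech-2}. The plan is to first pull everything under a common ${}^{g^{-1}}$ using $\psi^1_\sigma=g^{-1}\rho(\varphi_\sigma)\,{}^\sigma\! g$ and (CM1), and only then match the remaining factors against \eqref{eq:lem_tech-3} one at a time.
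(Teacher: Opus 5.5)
Your proposal follows essentially the paper's route. Part (1) is the same \eqref{e:Br4}/\eqref{e:Br3}/\eqref{e:CM1} computation (resp.\ \eqref{e:Br5}/\eqref{e:Br2}/\eqref{e:CM1}), only read from the other end. Part (2) is the paper's computation: expand \eqref{eq:def_prod_1}, fold everything under ${}^{g^{-1}}$ via \eqref{e:CM1}, and apply \eqref{eq:lem_tech-2} twisted by $\sigma$, which uses the $\Gamma$-equivariance of the braiding.

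There is one slip to fix in step 1 of part (2): the identity has the wrong sign. It should read
\[
\psi^1_\sigma\,{}^{\sigma}\psi^1_\tau=\rho(u^1_{\sigma,\tau})^{-1}\,g^{-1}\rho(\varphi_{\sigma\tau})\,{}^{\sigma\tau}g .
\]
Equivalently, $\psi^1_\sigma\,{}^{\sigma}\psi^1_\tau=g^{-1}\rho(\varphi_\sigma\,{}^{\sigma}\varphi_\tau)\,{}^{\sigma\tau}g$, which is the form the paper uses directly. Only with the inverse does \eqref{e:CM1} turn $u^1_{\sigma,\tau}\cdot{}^{\psi^1_\sigma{}^{\sigma}\psi^1_\tau}u_{\sigma,\tau}$ into ${}^{g^{-1}}\bigl(\varphi_{\sigma\tau}\cdot{}^{{}^{\sigma\tau}g}u_{\sigma,\tau}\cdot{}^{\sigma}\varphi_\tau^{-1}\varphi_\sigma^{-1}\bigr)$.

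Your step 3 is also unnecessary. After steps 1 and 2, the factors ${}^{\sigma}\varphi_\tau^{\pm1}$ and $\varphi_\sigma^{\pm1}$ cancel telescopically, and \eqref{eq:lem_tech-3} follows without \eqref{e:B.3}.
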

\begin{proof}
    We prove \eqref{eq:lem_tech-1} as follows:
    \begin{align*}
        {\ha}^{g_2} \{g_1,\psi^2_\sigma\}
        &= {\ha}^{g_2} \{g_1,g_2^{-1}\rho(\varphi^2_\sigma){\ha}^\upsig\hmm g_2\} \cdot \varphi^2_\sigma \\
        &= {\ha}^{g_2}\{g_1,g_2^{-1}\}
        \cdot
        \{g_1,\rho(\varphi^2_\sigma){\ha}^\upsig\hmm g_2\}
        &&\textit{ by } \eqref{e:Br4}\\
        &=
        \{g_1,g_2\}^{-1} \cdot {\ha}^{g_1}  \varphi^2_\sigma \cdot \{g_1,{\ha}^\upsig\hmm g_2\}
        \cdot (\varphi^2_\sigma)^{-1} &&\textit{ by } \eqref{e:Br4}, \eqref{e:B.4}.
    \end{align*}
    The proof of \eqref{eq:lem_tech-2} is similar.
    We denote $(u'',\psi'') = d((\varphi,g)) \cdot (u,\psi)$.
        Then \eqref{eq:lem_tech-4} holds since $\psi'_\sigma = (g^{-1} \varphi_\sigma {\ha}^\upsig\hmm g)
        \cdot
        \psi_\sigma = \psi''_\sigma$ for any $\sigma$ by \eqref{e:HM2}.
        We prove \eqref{eq:lem_tech-3} as follows:
    \begin{align*}
        &u''_{\sigma,\tau}= \\
        &= {\ha}^{g^{-1}}(\varphi_{\sigma \tau}
        \cdot {\ha}^\upsig \varphi_\tau^{-1}
        \cdot
        \varphi_\sigma^{-1})
        \cdot {\ha}^{g^{-1} \rho(\varphi_\sigma {\ha}^\upsig \varphi_\tau) {\ha}^{\upst} g}u_{\sigma,\tau}
        \cdot
        {\ha}^{g^{-1} \rho(\varphi_\sigma) {\ha}^\upsig\hmm g}
        \{{\ha}^\upsig\hmm g^{-1} \rho({\ha}^\upsig \varphi_\tau) {\ha}^{\upst} g,\psi_\sigma\}
        &&\textit{ by } \eqref{eq:def_prod_1}, \eqref{e:HM1}, \eqref{e:HM2} \\
        &= {\ha}^{g^{-1}}(\varphi_{\sigma \tau}
        \cdot {\ha}^{{\ha}^{\upst} g}u_{\sigma,\tau}
        \cdot {\ha}^\upsig \varphi_\tau^{-1}
        \cdot
        {\ha}^{{\ha}^\upsig\hmm g}
        \{{\ha}^\upsig\hmm g^{-1} \rho({\ha}^\upsig \varphi_\tau) {\ha}^{\upst} g,\psi_\sigma\}
        \cdot \varphi_\sigma^{-1})
        &&\textit{ by } \eqref{e:CM1} \\
        &= {\ha}^{g^{-1}}(\varphi_{\sigma \tau}
        \cdot {\ha}^{{\ha}^{\upst} g}u_{\sigma,\tau}
        \cdot {\ha}^\upsig \varphi_\tau^{-1}
        \cdot
        {\ha}^{{\ha}^\upsig}({\ha}^\upg
        \{g^{-1} \rho(\varphi_\tau) {\ha}^{\tau} g,{\ha}^{\sigma^{-1}}\psi_\sigma\})
        \cdot \varphi_\sigma^{-1})
        \\
        &= {\ha}^{g^{-1}}(\varphi_{\sigma \tau}
        \cdot {\ha}^{{\ha}^{\upst} g}u_{\sigma,\tau}
        \cdot
        \{{\ha}^{\upst} g,\psi_\sigma\} \cdot {\ha}^{\psi_\sigma \sigma}\varphi_\tau^{-1} \cdot \{{\ha}^\upsig\hmm g,\psi_\sigma\}^{-1} \cdot \varphi_\sigma^{-1})
        &&\textit{ by } \eqref{eq:lem_tech-2}.
    \end{align*}
    This completes the proof of Lemma \ref{l:noname1}.
\end{proof}

\begin{lemma}\label{l:noname2}
    \begin{enumerate}
        \item We have $d(\varphi,g) \in Z^1(\Gamma,A \to G)$ for any $(\varphi,g) \in C^0(\Gamma,A \to G)$.
        \item The map $d \colon C^0(\Gamma,A \to G) \to Z^1(\Gamma,A \to G)$ is a group homomorphism.
    \end{enumerate}
\end{lemma}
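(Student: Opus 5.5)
For part (1), I will observe that $d(\varphi,g)$ coincides with the result of the right action of $(\varphi,g)\in C^0(\Gamma,A\to G)$ on the trivial hyper-cocycle $(1,1)$. Indeed, putting $u=1$ and $\psi=1$ in \eqref{eq:action_C0-1} and \eqref{eq:action_C0-2} gives exactly \eqref{e:HM1} and \eqref{e:HM2}. Since that action is well defined, meaning it preserves $Z^1(\Gamma,A\to G)$ as recalled in Section \ref{s:coboundary} (via the formulas of \cite[Appendix A]{Borovoi-16-arXiv}), we obtain $d(\varphi,g)=(1,1)*(\varphi,g)\in Z^1(\Gamma,A\to G)$. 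As a sanity check, \eqref{eq:def1_1-cocycle} can also be verified directly. We have $\rho(u_{\sigma,\tau})=g^{-1}\rho(\varphi_{\sigma\tau})\,{}^\sigma\!\rho(\varphi_\tau)^{-1}\rho(\varphi_\sigma)^{-1}g$ by \eqref{e:CM2}, and multiplying by $\psi_\sigma\,{}^\sigma\psi_\tau$ telescopes to $\psi_{\sigma\tau}$.

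For part (2), I will take $(\varphi^1,g_1),(\varphi^2,g_2)\in C^0$ with product $(\varphi^{1,2},g_1g_2)$ given by \eqref{eq:product_C0}. The goal is to show $d(\varphi^{1,2},g_1g_2)=d(\varphi^1,g_1)\cdot d(\varphi^2,g_2)$ in the group structure of Lemma \ref{l:grp_str_c1}. Rather than expanding both sides blindly, I will use Lemma \ref{l:noname1}(2) with $(u,\psi)=d(\varphi^2,g_2)$, which lies in $Z^1$ by part (1), and $(\varphi,g)=(\varphi^1,g_1)$. This gives the right-hand side in closed form: its $\psi$-component is $g_1^{-1}\rho(\varphi^1_\sigma)\,{}^\sigma g_1\cdot g_2^{-1}\rho(\varphi^2_\sigma)\,{}^\sigma g_2$, and its $u$-component is given by \eqref{eq:lem_tech-3} with $u=u^2$ and $\psi=\psi^2$.

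The $\psi$-components are handled first. I need $(g_1g_2)^{-1}\rho(\varphi^{1,2}_\sigma)\,{}^\sigma(g_1g_2)$ to equal the above. Applying $\rho$ to \eqref{eq:product_C0} and using \eqref{e:CM2} and \eqref{e:Br1} turns the braiding factor into a commutator $[g_1^{-1}\rho(\varphi^1_\sigma){}^\sigma g_1,\,g_2]^{-1}$ conjugated by $g_1$. Expanding that commutator, the two sides should agree after cancellation; this is a direct group computation.

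The main obstacle is the equality of the $u$-components. Writing $u^2_{\sigma,\tau}={}^{g_2^{-1}}(\varphi^2_{\sigma\tau}\,{}^\sigma(\varphi^2_\tau)^{-1}(\varphi^2_\sigma)^{-1})$, I will substitute it into \eqref{eq:lem_tech-3}. Then I will rewrite the two braiding terms $\{{}^{\sigma\tau}g_1,\psi^2_\sigma\}$ and $\{{}^\sigma g_1,\psi^2_\sigma\}^{-1}$ using \eqref{eq:lem_tech-1}, with $g_1$ replaced by ${}^{\sigma\tau}g_1$ and ${}^\sigma g_1$ respectively, and with the $\Gamma$-equivariance of $\br$. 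This converts them into expressions involving $\{\,\cdot\,,g_2\}$, $\{\,\cdot\,,{}^\sigma g_2\}$ and $\varphi^2_\sigma$. On the left-hand side, I expand $\varphi^{1,2}_{\sigma\tau}$, ${}^\sigma(\varphi^{1,2}_\tau)^{-1}$ and $(\varphi^{1,2}_\sigma)^{-1}$ via \eqref{eq:product_C0}, and split the braidings $\{g_1^{-1}\rho(\varphi^1_\sigma){}^\sigma g_1,g_2\}$ with \eqref{e:Br5}, \eqref{e:B.2} and \eqref{e:B.5} into pieces of the form $\{{}^\sigma g_1,g_2\}$, $\{g_1,g_2\}$ and $\varphi^1_\sigma$-conjugates. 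I will then bring both sides to a normal form, moving every element of $A$ past the braiding factors using \eqref{e:B.1}--\eqref{e:B.3} and \eqref{e:CM1}. I expect the terms to match pairwise, with the telescoping pattern being the one in the proof of Lemma \ref{l:grp_str_Z1}. Finally, since both sides are cocycles, I could alternatively compare them only after applying $\rho$ and then verify the residual discrepancy in $\ker\rho$ vanishes. I will try the direct normal-form comparison first.
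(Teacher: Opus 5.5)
Your proposal is correct.

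\textbf{Part (2).} This is the paper's own argument. The paper also applies Lemma \ref{l:noname1}(2) with $(u,\psi)=d(\varphi^2,g_2)$ and $(\varphi,g)=(\varphi^1,g_1)$. It matches the $\psi$-components through \eqref{e:CM2} and \eqref{e:Br1}. It then matches the $u$-components by rewriting the braiding terms with \eqref{eq:lem_tech-1} and \eqref{eq:lem_tech-2}; the latter is exactly your splitting of $\{g_1^{-1}\rho(\varphi^1_\sigma)\,{}^\sigma g_1,g_2\}$ via \eqref{e:Br5} and \eqref{e:B.2}.

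Your expectation that the terms match is borne out, and more cleanly than a term-by-term normal form suggests. By \eqref{eq:lem_tech-2} one has $\varphi^{1,2}_\sigma=\{g_1,g_2\}\,{}^{g_2}\varphi^1_\sigma\,\{{}^\sigma g_1,g_2\}^{-1}\,{}^{{}^\sigma g_1}\varphi^2_\sigma$. The $\Gamma$-equivariance of $\br$ is needed only to expand ${}^\sigma\varphi^{1,2}_\tau$; \eqref{eq:lem_tech-1} itself holds for an arbitrary first argument. After inserting this, everything cancels except the single identity $\{h,g_2\}\cdot{}^{g_2h}u^2_{\sigma,\tau}={}^{hg_2}u^2_{\sigma,\tau}\cdot\{h,g_2\}$ with $h={}^{\sigma\tau}g_1$, and that is \eqref{e:B.3}.

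\textbf{Part (1).} Here you take a different, shorter route. The paper verifies \eqref{eq:def1_1-cocycle} and \eqref{eq:def2_1-cocycle} for $d(\varphi,g)$ by direct computation. You instead observe that $d(\varphi,g)=(1,1)*(\varphi,g)$, by the formulas defining the right action in Section \ref{s:coboundary}. You then invoke the fact, stated there, that this action preserves $Z^1(\Gamma,A\to G)$. This is legitimate. The only trade-off is that your argument rests on the cited computation in \cite[Appendix A]{Borovoi-16-arXiv}, whereas the paper's direct check keeps the appendix self-contained; that check is a short telescoping after one use of \eqref{e:CM1}.
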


\begin{proof}
    To prove (1), it suffices to show that $(u,\psi) \coloneqq  d(\varphi,g)$ satisfies \eqref{eq:def1_1-cocycle} and \eqref{eq:def2_1-cocycle}.
    We prove \eqref{eq:def1_1-cocycle} as follows:
    \begin{align*}
        \rho(u_{\sigma,\tau}) \cdot \psi_\sigma \cdot  {\ha}^\upsig \psi_\tau
        &= \rho({\ha}^{g^{-1}}(\varphi_{\sigma \tau} \cdot {\ha}^\upsig \varphi^{-1}_\tau \cdot \varphi_\sigma^{-1}))
        \cdot
        (g^{-1} \cdot \rho(\varphi_\sigma) \cdot {\ha}^\upsig\hmm g)
        \cdot
        ({\ha}^\upsig\hmm g^{-1} \cdot \rho({\ha}^\upsig\varphi_\tau) \cdot {\ha}^{\upst} g)
        &&\textit{ by } \eqref{e:HM1}, \eqref{e:HM2} \\
        &= g^{-1} \cdot
        \rho(\varphi_{\sigma \tau} \cdot {\ha}^\upsig \varphi^{-1}_\tau \cdot \varphi_\sigma^{-1})
        \cdot
        \rho(\varphi_\sigma)
        \cdot
        \rho({\ha}^\upsig\varphi_\tau) \cdot {\ha}^{\upst} g
        &&\textit{ by } \eqref{e:CM2} \\
        &=
        g^{-1}
        \cdot
        \rho(\varphi_{\sigma \tau})
        \cdot g^{\upst}
        \\
        &= \psi_{\sigma \tau}
        &&\textit{ by } \eqref{e:HM1}.
        \end{align*}
        We prove \eqref{eq:def2_1-cocycle} as follows:
        \begin{align*}
        &u_{\sigma,\tau \nu} \cdot {\ha}^{\psi_\sigma \sigma} u_{\tau,\nu} =\\
        &= {\ha}^{g^{-1}}(\varphi_{\sigma \tau \nu} \cdot  {\ha}^{\upsig} \varphi_{\tau \nu}^{-1} \cdot \varphi_\sigma^{-1})
        \cdot
        {\ha}^{g^{-1}\rho(\varphi_\sigma)}({\ha}^\upsig\varphi_{\tau \nu} \cdot  {\ha}^{\upst} \varphi_\nu^{-1} \cdot {\ha}^\upsig \varphi_\tau^{-1})
        &&\textit{ by } \eqref{e:HM1}, \eqref{e:HM2} \\
        &={\ha}^{g^{-1}}(\varphi_{\sigma \tau \nu} \cdot  {\ha}^{\upsig} \varphi_{\tau\nu}^{-1}
        \cdot
        \varphi_\sigma^{-1}
        \cdot
        \varphi_\sigma
        \cdot
        {\ha}^\upsig \varphi_{\tau \nu} \cdot  {\ha}^{\upst} \varphi_\nu^{-1} \cdot {\ha}^\upsig\varphi_\tau^{-1}
        \cdot \varphi_\sigma^{-1})
        &&\textit{ by } \eqref{e:CM1} \\
        &={\ha}^{g^{-1}}(\varphi_{\sigma \tau \nu}
        \cdot  {\ha}^{\upst} \varphi_\nu^{-1} \cdot {\ha}^\upsig\varphi_\tau^{-1}
        \cdot \varphi_\sigma^{-1})
        \\
        &= {\ha}^{g^{-1}}(\varphi_{\sigma \tau \nu} \cdot
        {\ha}^{\upst}
        \varphi_\nu^{-1}
        \cdot
        \varphi_{\sigma \tau}^{-1})
        \cdot
        {\ha}^{g^{-1}}
        (\varphi_{\sigma \tau}
        \cdot
        {\ha}^{\upsig}\varphi_\tau^{-1}
        \cdot \varphi_\sigma^{-1})
        \\
        &= u_{\sigma \tau,\nu} \cdot u_{\sigma,\tau}
        &&\textit{ by } \eqref{e:HM1}.
    \end{align*}
    For $(\varphi^1,g_1), (\varphi^2,g_2) \in C^0(\Gamma,A \to G)$, we denote
    $(\varphi^3,g_3) = (\varphi^1,g_1) \cdot (\varphi^2,g_2)$, $(u^3,\psi^3) \coloneqq  d((\varphi^3,g_3))$, $(u^1,\psi^1) \coloneqq  d((\varphi^1,g_1))$, $(u^2,\psi^2) \coloneqq  d((\varphi^1,g_1))$, and $(u^{1,2},\psi^{1,2}) \coloneqq  (u^1,\psi^1) \cdot (u^2,\psi^2)$.
    To prove (2), it suffices to show that $(u^3,\psi^3) = (u^{1,2},\psi^{1,2})$, which is proved as follows:
        \begin{align*}
        \psi^{1,2}_\sigma
        &= \psi^1_\sigma \cdot \psi^2_\sigma
        \\
        &= (g_1^{-1} \rho(\varphi^1_\sigma){\ha}^\upsig\hmm g_1) \cdot (g_2^{-1}\rho(\varphi^2_\sigma) {\ha}^\upsig\hmm g_2)
        &&\textit{ by } \eqref{e:HM2} \\
        &= g_2^{-1}[g_1^{-1}\rho(\varphi^1_\sigma){\ha}^{\upsig} g_1,g_2]^{-1} g_1^{-1}\rho(\varphi^1_\sigma){\ha}^\upsig\hmm g_1
        \rho( \varphi^2_\sigma)
        {\ha}^\upsig\hmm g_2
        \\
        &= (g_1 g_2)^{-1}\rho({\ha}^{g_1}\{g_1^{-1}\rho(\varphi^1_\sigma){\ha}^{\upsig} g_1,g_2 \}^{-1} \varphi^1_\sigma {\ha}^{{\ha}^\upsig\hmm g_1}\varphi^2_\sigma) {\ha}^\upsig (g_1g_2)
        &&\textit{ by } \eqref{e:CM2}, \eqref{e:Br1} \\
        &= \psi^3_\sigma
        &&\textit{ by }\eqref{eq:product_C0}, \eqref{e:HM2},
        \end{align*}
        \begin{align*}
        &{\ha}^{g_3} u^{1,2}_{\sigma,\tau}= \\
        &= {\ha}^{g_1 g_2 g_1^{-1}}(\varphi^1_{\sigma \tau}
        \cdot {\ha}^{{\ha}^{\upst} g_1}u^2_{\sigma,\tau}
        \cdot
        \{{\ha}^{\upst} g_1,\psi^2_\sigma\}
        \cdot
        {\ha}^{\psi^2_\sigma \sigma}(\varphi_\tau^1)^{-1} \cdot \{{\ha}^\upsig\hmm g_1,\psi^2_\sigma\}^{-1} \cdot (\varphi^1_\sigma)^{-1})
        &&\textit{ by } \eqref{eq:lem_tech-3} \\
        &= {\ha}^{[g_1,g_2]g_2}(\varphi^1_{\sigma \tau}
        \cdot {\ha}^{{\ha}^{\upst} g_1}u^2_{\sigma,\tau}
        \cdot
        \{{\ha}^{\upst} g_1,\psi^2_\sigma\} \cdot {\ha}^{\psi^2_\sigma \sigma}(\varphi_\tau^1)^{-1} \cdot \{{\ha}^\upsig\hmm g_1,\psi^2_\sigma\}^{-1} \cdot (\varphi^1_\sigma)^{-1}) \\
        &= {\ha}^{[g_1,g_2]g_2}(\varphi^1_{\sigma \tau}
        \cdot {\ha}^{{\ha}^{\upst} g_1}u^2_{\sigma,\tau}
        \cdot
        \{{\ha}^{\upst} g_1,\psi^2_\sigma\} \cdot {\ha}^{g_2^{-1}\rho(\varphi^2_\sigma){\ha}^\upsig\hmm g_2 \sigma}(\varphi_\tau^1)^{-1} \cdot \{{\ha}^\upsig\hmm g_1,\psi^2_\sigma\}^{-1} \cdot (\varphi^1_\sigma)^{-1})
        &&\textit{ by } \eqref{eq:action_C0-2}\\
        &= {\ha}^{[g_1,g_2]}({\ha}^{g_2}(\varphi^1_{\sigma \tau}
        \cdot {\ha}^{{\ha}^{\upst} g_1}u^2_{\sigma,\tau}
        \cdot
        \{{\ha}^{\upst} g_1,\psi^2_\sigma\})
        \cdot {\ha}^{\rho(\varphi^2_\sigma){\ha}^\upsig\hmm g_2 \sigma}(\varphi_\tau^1)^{-1}
        \cdot {\ha}^{g_2}\{{\ha}^\upsig\hmm g_1,\psi^2_\sigma\}^{-1} \cdot ({\ha}^{g_2}\varphi^1_\sigma)^{-1}) \\
        &= {\ha}^{[g_1,g_2]}({\ha}^{g_2}(\varphi^1_{\sigma \tau}
        \cdot {\ha}^{{\ha}^{\upst} g_1}u^2_{\sigma,\tau}
        \cdot
        \{{\ha}^{\upst} g_1,\psi^2_\sigma\})
        \cdot \varphi^2_\sigma
        \cdot
        {\ha}^{{\ha}^\upsig\hmm g_2 \sigma}(\varphi_\tau^1)^{-1}
        (\varphi^2_\sigma)^{-1}
        \cdot {\ha}^{g_2}\{{\ha}^\upsig\hmm g_1,\psi^2_\sigma\}^{-1} \cdot ({\ha}^{g_2}\varphi^1_\sigma)^{-1})
        &&\textit{ by } \eqref{e:CM1}\\
        &= {\ha}^{[g_1,g_2]}({\ha}^{g_2}(\varphi^1_{\sigma \tau}
        \cdot {\ha}^{{\ha}^{\upst} g_1}u^2_{\sigma,\tau}
        \cdot
        \{{\ha}^{\upst} g_1,\psi^2_\sigma\})
        \cdot \varphi^2_\sigma
        \cdot
        {\ha}^{{\ha}^\upsig\hmm g_2 \sigma}(\varphi_\tau^1)^{-1}
        &&\textit{ by } \eqref{eq:lem_tech-1} \\
        &\quad \cdot (\{{\ha}^\upsig\hmm g_1,{\ha}^\upsig\hmm g_2\}^{-1}
        \cdot
        {\ha}^{{\ha}^\upsig\hmm g_1} (\varphi^2_\sigma)^{-1}
        \cdot
        \{{\ha}^\upsig\hmm g_1,g_2\})
        \cdot ({\ha}^{g_2}\varphi^1_\sigma)^{-1})) \\
        &= \{g_1,g_2\} \cdot ({\ha}^{g_2}(\varphi^1_{\sigma \tau}
        \cdot {\ha}^{{\ha}^{\upst} g_1}u^2_{\sigma,\tau}
        \cdot
        \{{\ha}^{\upst} g_1,\psi^2_\sigma\})
        \cdot \varphi^2_\sigma
        \cdot
        {\ha}^{{\ha}^\upsig\hmm g_2 \sigma}(\varphi_\tau^1)^{-1}
        \cdot
        \{{\ha}^\upsig\hmm g_1,{\ha}^\upsig\hmm g_2\}^{-1}
        &&\textit{ by } \eqref{e:Br1}, \eqref{e:CM1} \\
        &\quad \cdot(
        {\ha}^{{\ha}^\upsig\hmm g_1} (\varphi^2_\sigma)^{-1}
        \cdot
        \{{\ha}^\upsig\hmm g_1,g_2\})
        \cdot ({\ha}^{g_2}\varphi^1_\sigma)^{-1})) \cdot \{g_1,g_2\}^{-1}
        \\
        &= \{g_1,g_2\} \cdot ({\ha}^{g_2}(\varphi^1_{\sigma \tau}
        \cdot {\ha}^{{\ha}^{\upst} g_1}u^2_{\sigma,\tau}
        \cdot
        \{{\ha}^{\upst} g_1,\psi^2_\sigma\})
        \cdot \varphi^2_\sigma
        \cdot
        {\ha}^{{\ha}^\upsig\hmm g_2 \sigma}(\varphi_\tau^1)^{-1}
        \cdot
        \{{\ha}^\upsig\hmm g_1,{\ha}^\upsig\hmm g_2\}^{-1}
        &&\textit{ by } \eqref{eq:lem_tech-2} \\
        &\quad
        \cdot
        {\ha}^{{\ha}^\upsig\hmm g_1} (\varphi^2_\sigma)^{-1}
        \cdot
        (\varphi^1_\sigma)^{-1}
        \cdot
        {\ha}^{g_1}\{\psi^1_\sigma,g_2\}
        \\
        &= \{g_1,g_2\}
        \cdot {\ha}^{g_2}(\varphi^1_{\sigma \tau}
        \cdot {\ha}^{{\ha}^{\upst} g_1}u^2_{\sigma,\tau}
        \cdot
        \{{\ha}^{\upst} g_1,\psi^2_\sigma\})
        \cdot \varphi^2_\sigma
        \cdot
        {\ha}^{{\ha}^\upsig\hmm g_2 \sigma}(\varphi_\tau^1)^{-1}
        \cdot
        \{{\ha}^\upsig\hmm g_1,{\ha}^\upsig\hmm g_2\}^{-1}
        \cdot
        (\varphi^3_\sigma)^{-1}
        &&\textit{ by } \eqref{eq:product_C0} \\
        &= \{g_1,g_2\} \cdot {\ha}^{g_2}(\varphi^1_{\sigma \tau}
        \cdot {\ha}^{{\ha}^{\upst} g_1}u^2_{\sigma,\tau})
        \cdot
        \{{\ha}^{\upst} g_1,g_2\}^{-1}
        \cdot
        {\ha}^{{\ha}^{\upst} g_1} \varphi^2_\sigma
        \cdot \{{\ha}^{\upst} g_1,{\ha}^\upsig\hmm g_2\}
        &&\textit{ by } \eqref{eq:lem_tech-1}\\
        &\quad
        \cdot
        {\ha}^{{\ha}^\upsig\hmm g_2 \sigma}(\varphi_\tau^1)^{-1}
        \cdot
        \{{\ha}^\upsig\hmm g_1,{\ha}^\upsig\hmm g_2\}^{-1}
        \cdot
        (\varphi^3_\sigma)^{-1} \\
        &= \{g_1,g_2\} \cdot {\ha}^{g_2} \varphi^1_{\sigma \tau}
        \cdot {\ha}^{g_2{\ha}^{\upst} g_1 g_2^{-1}} (\varphi^2_{\sigma \tau}
        \cdot {\ha}^\upsig (\varphi^2_\tau)^{-1}
        \cdot
        (\varphi^2_\sigma)^{-1})
        \cdot
        \{{\ha}^{\upst} g_1,g_2\}^{-1}
        &&\textit{ by } \eqref{e:HM1}\\
        &\quad \cdot {\ha}^{{\ha}^{\upst} g_1} \varphi^2_\sigma
        \cdot \{{\ha}^{\upst} g_1,{\ha}^\upsig\hmm g_2\}
        \cdot
        {\ha}^{{\ha}^\upsig\hmm g_2 \sigma}(\varphi_\tau^1)^{-1}
        \cdot
        \{{\ha}^\upsig\hmm g_1,{\ha}^\upsig\hmm g_2\}^{-1}
        \cdot
        (\varphi^3_\sigma)^{-1} \\
        &= \{g_1,g_2\} \cdot {\ha}^{g_2} \varphi^1_{\sigma \tau}
        \cdot {\ha}^{\rho(\{{\ha}^{\upst} g_1,g_2 \}^{-1}){\ha}^{\upst}g_1} (\varphi^2_{\sigma \tau}
        \cdot {\ha}^\upsig (\varphi^2_\tau)^{-1}
        \cdot
        (\varphi^2_\sigma)^{-1})
        \cdot
        \{{\ha}^{\upst} g_1,g_2\}
        &&\textit{ by } \eqref{e:Br1}\\
        &\quad \cdot {\ha}^{{\ha}^{\upst} g_1} \varphi^2_\sigma
        \cdot \{{\ha}^{\upst} g_1,{\ha}^\upsig\hmm g_2\}
        \cdot
        {\ha}^{{\ha}^\upsig\hmm g_2 \sigma}(\varphi_\tau^1)^{-1}
        \cdot
        \{{\ha}^\upsig\hmm g_1,{\ha}^\upsig\hmm g_2\}^{-1}
        \cdot
        (\varphi^3_\sigma)^{-1}
        \\
        &= \{g_1,g_2\} \cdot {\ha}^{g_2} \varphi^1_{\sigma \tau}
        \cdot
        \{{\ha}^{\upst} g_1,g_2\}^{-1}
        \cdot
        {\ha}^{{\ha}^{\upst}g_1} (\varphi^2_{\sigma \tau}
        \cdot {\ha}^\upsig (\varphi^2_\tau)^{-1}
        \cdot
        (\varphi^2_\sigma)^{-1})
        &&\textit{ by } \eqref{e:CM1}\\
        &\quad \cdot {\ha}^{{\ha}^{\upst} g_1} \varphi^2_\sigma
        \cdot \{{\ha}^{\upst} g_1,{\ha}^\upsig\hmm g_2\}
        \cdot
        {\ha}^{{\ha}^\upsig\hmm g_2 \sigma}(\varphi_\tau^1)^{-1}
        \cdot
        \{{\ha}^\upsig\hmm g_1,{\ha}^\upsig\hmm g_2\}^{-1}
        \cdot
        (\varphi^3_\sigma)^{-1} \\
         &= {\ha}^{g_1}\{g_1^{-1}\rho(\varphi^1_{\sigma \tau}) {\ha}^{\upst} g_1,g_2\}^{-1}
        \cdot
        \varphi^1_{\sigma \tau}
        \cdot
        {\ha}^{{\ha}^{\upst}g_1} (\varphi^2_{\sigma \tau}
        \cdot ({\ha}^\upsig \varphi^2_\tau)^{-1}
        )
        &&\textit{ by } \eqref{eq:lem_tech-2}\\
        &\quad \cdot  \{{\ha}^{\upst} g_1,{\ha}^\upsig\hmm g_2\}
        \cdot
        {\ha}^{{\ha}^\upsig\hmm g_2 \sigma}(\varphi_\tau^1)^{-1}
        \cdot
        \{{\ha}^\upsig\hmm g_1,{\ha}^\upsig\hmm g_2\}^{-1}
        \cdot
        (\varphi^3_\sigma)^{-1} \\
        &= \varphi^3_{\sigma \tau} \cdot
        {\ha}^{{\ha}^{\upst}g_1} ({\ha}^\upsig \varphi^2_\tau)^{-1}
        \cdot \{{\ha}^{\upst} g_1,{\ha}^\upsig\hmm g_2\}
        \cdot
        {\ha}^{{\ha}^\upsig\hmm g_2 }({\ha}^\upsig\varphi_\tau^1)^{-1}
        \cdot
        \{{\ha}^\upsig\hmm g_1,{\ha}^\upsig\hmm g_2\}^{-1}
        \cdot
        (\varphi^3_\sigma)^{-1}
        &&\textit{ by } \eqref{eq:product_C0} \\
        &= \varphi^3_{\sigma \tau} \cdot
        {\ha}^\upsig(\varphi^3_\tau)^{-1}
        \cdot
        (\varphi^3_\sigma)^{-1}
        &&\textit{ by } \eqref{eq:product_C0}, \eqref{eq:lem_tech-2} \\
        &={\ha}^{g_3} u^3_{\sigma,\tau}
        &&\textit{ by } \eqref{e:HM1}.
    \end{align*}
    This completes the proof of Lemma \ref{l:noname2}.
\end{proof}

Consider the set of $0$-coboundaries $B^0(\Gamma,A \to G)$, which consists of $(\varphi^s,\rho(s)^{-1}) \in C^0(\Gamma,A \to G)$ where $\varphi^s_\sigma = s^{-1} {\ha}^\upsig s$ for $\sigma \in \Gamma$.
It is easy to check that $B^0(\Gamma,A \to G)$ is a normal subgroup of $C^0(\Gamma,A \to G)$.
Since $d$ vanishes on $B^0(\Gamma,A \to G)$, we obtain an induced homomorphism
\begin{equation}\label{e:bar-d}
\bar{d} \colon C^0(\Gamma,A \to G)/B^0(\Gamma,A \to G)\to Z^1(\Gamma,A \to G).
\end{equation}

\begin{remark}
    \begin{enumerate}
    \item Under the group structure on $C^0(\Gamma,A \to G)$ in Remark \ref{rmk:grp_C0_ordinary}, $d$ is not a homomorphism.

    \item
    It is easy to show that the kernel of $d$ is $Z^0(\Gamma,A \to G)$.
    In particular, the kernel of $\bar{d} \colon C^0(\Gamma,A \to G)/B^0(\Gamma,A \to G) \to Z^1(\Gamma,A \to G)$ becomes
    $H^0(\Gamma,A \to G) \coloneqq  Z^0(\Gamma,A \to G)/B^0(\Gamma,A \to G)$.
    This yields a canonical group structure on $H^0(\Gamma,A \to G)$.
    Furthermore, $H^0(\Gamma,A \to G)$ is abelian in the presence of the braiding $\br$; see \cite[Section 3.1]{Noohi}.
    \end{enumerate}
\end{remark}

We define a left action of $Z^1(\Gamma,A \to G)$ on $C^0(\Gamma,A \to G)$ by
\begin{align*}
    {\ha}^{(u,\psi)}(\varphi,g)
    = (\varphi',g)
\end{align*}
where
\begin{align}
    \varphi'_\sigma
       = \{\psi_\sigma,g\}^{-1} \cdot {\ha}^{\psi_\sigma} \varphi_\sigma \cdot \{{\ha}^\upsig\hmm g,\psi_\sigma\}^{-1}
    \label{eq:act_Z1}
\end{align}
for $\sigma \in \Gamma$.
Since this action preserves $B^0(\Gamma,A \to G)$,
we obtain an induced action of $Z^1(\Gamma,A \to G)$ on $C^0(\Gamma,A \to G)/B^0(\Gamma,A \to G)$.

\begin{lemma}\label{l:crossed-module}
    The homomorphism $\bar{d}$ of \eqref{e:bar-d} together with the above action
    of $Z^1(\Gamma,A \to G)$ on $C^0(\Gamma,A \to G)/B^0(\Gamma,A \to G)$
    is a (left) crossed module.
\end{lemma}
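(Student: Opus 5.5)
To prove Lemma \ref{l:crossed-module}, we must verify four things for the action \eqref{eq:act_Z1} and the homomorphism $\bar d$ of \eqref{e:bar-d}. First, \eqref{eq:act_Z1} defines an action of the group $Z^1(\Gamma,A\to G)$ by group automorphisms of $C^0(\Gamma,A\to G)$ preserving $B^0$. Second, the analogue of \eqref{e:CM2} holds: $\bar d\big({}^{(u,\psi)}x\big)=(u,\psi)\cdot\bar d(x)\cdot(u,\psi)^{-1}$. Third, the analogue of \eqref{e:CM1} holds: ${}^{\bar d(x)}y=x\,y\,x^{-1}$ for $x,y\in C^0/B^0$. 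Each identity will be reduced to the braid axioms \eqref{e:Br1}--\eqref{e:Br5}, the crossed module axioms \eqref{e:CM1}, \eqref{e:CM2}, the auxiliary identities \eqref{e:B.1}--\eqref{e:B.5}, and the identities \eqref{eq:lem_tech-1}--\eqref{eq:lem_tech-4} of Lemma \ref{l:noname1}. The method is the same mechanical one used in Lemmas \ref{l:grp_str_Z1} and \ref{l:noname2}.

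For the action, we first check that ${}^{(1,1)}(\varphi,g)=(\varphi,g)$ using \eqref{e:B.4}. We then check ${}^{(u^1,\psi^1)}\big({}^{(u^2,\psi^2)}(\varphi,g)\big)={}^{(u^1,\psi^1)\cdot(u^2,\psi^2)}(\varphi,g)$. On the $G$-component this is trivial. On the $A$-component, expand $\{\psi^1_\sigma\psi^2_\sigma,g\}$ by \eqref{e:Br5} and $\{{}^\sigma g,\psi^1_\sigma\psi^2_\sigma\}$ by \eqref{e:Br4}. Notice that the $u$-part of the product in \eqref{eq:def_prod_1} does not enter \eqref{eq:act_Z1} at all, so only the $\psi$-components matter. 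Compatibility with the product \eqref{eq:product_C0} also has to be checked. Here the main tool is to rewrite $\{g_1^{-1}\rho(\varphi^1_\sigma){}^\sigma g_1,g_2\}$ after conjugation by $\psi_\sigma$, using \eqref{e:B.3}, \eqref{e:Br4}, and \eqref{e:Br5}. Preservation of $B^0$ is a short computation: for $(\varphi^s,\rho(s)^{-1})$, use \eqref{e:Br2} and \eqref{e:Br3} to turn the braidings into $A$-expressions in $s$ and ${}^{\psi_\sigma}s$. The result is then $(\varphi^{s'},\rho(s')^{-1})$ with $s'={}^{\psi_\sigma}s$ up to the needed correction. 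This requires care because $\psi_\sigma$ depends on $\sigma$, and $\rho(s)^{-1}$ must be preserved, so the honest statement is that the image lies in $B^0$ modulo adjustment by \eqref{e:Br2}.

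The equivariance $\bar d({}^{z}x)=z\,\bar d(x)\,z^{-1}$ splits into the two components of \eqref{e:HM1} and \eqref{e:HM2}. The $\psi$-component follows from \eqref{e:Br1} and \eqref{e:CM2}. Indeed, $\rho$ of the new $\varphi'_\sigma$ equals $[\psi_\sigma,g]^{-1}\psi_\sigma\rho(\varphi_\sigma)\psi_\sigma^{-1}[{}^\sigma g,\psi_\sigma]^{-1}$, and after multiplying by $g^{-1}$ and ${}^\sigma g$ this becomes $\psi_\sigma\,(g^{-1}\rho(\varphi_\sigma){}^\sigma g)\,{}^\sigma\psi$-type terms. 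These match the $\psi$-part of the product with the inverse given in \eqref{eq:def_inv_2}. The $u$-component is the long one. To organize it, we compute $z\cdot d(x)$ via \eqref{eq:def_prod_1}, and we compute $d({}^zx)\cdot z$ via Lemma \ref{l:noname1}(2), formula \eqref{eq:lem_tech-3}. It then suffices to prove $d({}^zx)\cdot z=z\cdot d(x)$, which avoids handling inverses through \eqref{eq:def_inv_1}.

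The identity \eqref{e:CM1}-type, ${}^{d(x)}y=xyx^{-1}$, we will prove in the form ${}^{d(x)}y\cdot x=x\cdot y$ in $C^0$, possibly only modulo $B^0$. With $x=(\varphi^1,g_1)$ and $y=(\varphi^2,g_2)$, we have $d(x)=(u,\psi)$ with $\psi_\sigma=g_1^{-1}\rho(\varphi^1_\sigma){}^\sigma g_1$. Both sides have $G$-component $g_2g_1$ versus $g_1g_2$. These differ by $[g_1,g_2]=\rho(\{g_1,g_2\})$, so equality can only hold modulo $B^0$, with correcting element $s=\{g_1,g_2\}$ or its inverse. The computation then amounts to showing that the $A$-components differ by $\varphi^s$, using \eqref{eq:lem_tech-1}, \eqref{eq:lem_tech-2}, and \eqref{e:B.3}. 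I expect this step, namely identifying the exact coboundary correction and matching the braiding terms produced by \eqref{eq:product_C0} on both sides, to be the main obstacle. My first attempt will be $s=\{g_1,g_2\}^{-1}$, after which I will expand both products with the identity \eqref{eq:lem_tech-2} applied to $\{\psi_\sigma,g_2\}$.
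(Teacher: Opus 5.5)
Your plan is essentially the paper's proof. The paper proves \eqref{e:CM2} exactly as $d({}^{z}x)\cdot z=z\cdot d(x)$ via \eqref{eq:lem_tech-3} and \eqref{eq:def_prod_1}, and \eqref{e:CM1} by verifying $(\varphi^{s},\rho(s)^{-1})\cdot(x\cdot y)={}^{d(x)}y\cdot x$ with $s=\{g_1,g_2\}$, a computation in which it also uses \eqref{e:Sym}. Your extra checks (action by automorphisms, preservation of $B^0(\Gamma,A\to G)$) are only asserted there.

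One prediction of yours is off, though harmlessly. Applying \eqref{e:Br3} to $\{\psi_\sigma,\rho(s)^{-1}\}$ and \eqref{e:Br2} to $\{\rho({}^{\sigma}s)^{-1},\psi_\sigma\}$ makes all ${}^{\psi_\sigma}$-terms cancel. So the action fixes each $(\varphi^{s},\rho(s)^{-1})\in B^0$ exactly, with no $\sigma$-dependent $s'$ and no further adjustment.
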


\begin{proof}
    We need to show that this action satisfies  \eqref{e:CM1} and \eqref{e:CM2}.
    For $(\varphi^1,g_1), (\varphi^2,g_2) \in C^0(\Gamma,A \to G)$, we denote by
    $(u^1,\psi^1) = d((\varphi^1,g_1))$,
    $(\varphi^{\prime\hs 2},g_2) = {\ha}^{(u^1,\psi^1)}(\varphi^2,g_2)$,
    $(\varphi^{\prime\hs 2,1},g_1g_2) = (\varphi^{\prime\hs 2},g_2) \cdot (\varphi^1,g_1)$ and $(\varphi^{1,2},g_1g_2) = (\varphi^1,g_1) \cdot (\varphi^2,g_2)$.
    To prove \eqref{e:CM1}, we need to show that
    \[
    [\varphi^{1,2},g_1g_2] = [\varphi^{\prime\hs 2,1},g_2g_1].
    \]
    It suffices to show that
    \[
    (\varphi^{\{g_1,g_2\}},\rho(\{g_1,g_2\}^{-1})) \cdot (\varphi^{1,2},g_1g_2)
    =
    (\varphi^{\prime\hs 2,1},g_2 g_1).
    \]
    where $(\varphi^{\{g_1,g_2\}},\rho(\{g_1,g_2\}^{-1})) \in B^0(\Gamma,A \to G)$, $\varphi^{\{g_1,g_2\}} \colon \sigma \mapsto \{g_1,g_2\}^{-1} \cdot \{{\ha}^\upsig\hmm g_1,{\ha}^\upsig\hmm g_2\}$.
    We denote the left-hand side by $(\varphi,g)$.
    The desired equality is proved as follows:
    \begin{align*}
        g
        = \rho(\{g_1,g_2\}^{-1}) \cdot g_1g_2 = [g_1,g_2] g_1 g_2 = g_2 g_1,
    \end{align*}
    \begin{align*}
        \varphi_\sigma
        &=
        {\ha}^{\rho(\{g_1,g_2\}^{-1})}
        \{g_1 g_2,1\}
        \cdot
        \varphi^{\{g_1,g_2\}}_\sigma
        \cdot {\ha}^{\rho({\ha}^\upsig\{g_1,g_2\}^{-1})}
        (
        {\ha}^{g_1}\{\psi^1_\sigma,g_2\}^{-1} \varphi^1_\sigma {\ha}^{{\ha}^\upsig\hmm g_1} \varphi^2_\sigma
        )
        &&\textit{ by } \eqref{eq:product_C0}\\
        &= \{g_1,g_2\}^{-1}
        \cdot \{{\ha}^\upsig\hmm g_1, {\ha}^\upsig\hmm g_2\}
        \cdot {\ha}^{\rho({\ha}^\upsig\{g_1,g_2\}^{-1})}
        (
        {\ha}^{g_1}\{\psi^1_\sigma,g_2\}^{-1} \varphi^1_\sigma {\ha}^{{\ha}^\upsig\hmm g_1} \varphi^2_\sigma
        )
        \\
        &= \{g_1,g_2\}^{-1}
        \cdot
        (
        {\ha}^{g_1}\{\psi^1_\sigma,g_2\}^{-1} \varphi^1_\sigma {\ha}^{{\ha}^\upsig\hmm g_1} \varphi^2_\sigma
        )
        \cdot \{{\ha}^\upsig\hmm g_1,{\ha}^{\upsig} g_2\}
        &&\textit{ by } \eqref{e:CM1} \\
        &= \{g_1,g_2\}^{-1}
        \cdot
        {\ha}^{g_1}(\{\psi^1_\sigma,g_2\}^{-1} {\ha}^{g_1^{-1}\rho(\varphi^1_\sigma){\ha}^\upsig\hmm g_1} \varphi^2_\sigma)
        \cdot \varphi^1_\sigma
        \cdot \{{\ha}^\upsig\hmm g_1,{\ha}^{\upsig} g_2\}
        &&\textit{ by } \eqref{e:CM1} \\
        &= \{g_2,g_1\}
        \cdot
        {\ha}^{g_1}(\{\psi^1_\sigma,g_2\}^{-1} {\ha}^{\psi^1_\sigma} \varphi^2_\sigma)
        \cdot \varphi^1_\sigma
        \cdot \{{\ha}^{\upsig} g_2,{\ha}^\upsig\hmm g_1\}^{-1}
        &&\textit{ by } \eqref{e:HM2} \\
        &= \{g_2,g_1\}
        \cdot
        {\ha}^{g_1}
        \varphi^{\prime\hs 2}_\sigma
        \cdot
        {\ha}^{g_1} \{{\ha}^\upsig\hmm g_2,\psi^1_\sigma\}
        \cdot \varphi^1_\sigma
        \cdot \{{\ha}^{\upsig} g_2,{\ha}^\upsig\hmm g_1\}^{-1}
        &&\textit{ by } \eqref{eq:act_Z1} \\
        &= \{g_2,g_1\}
        \cdot {\ha}^{g_1} \varphi^{\prime\hs 2}_\sigma
        \cdot \{{\ha}^\upsig\hmm g_2,g_1\}^{-1}
        \cdot {\ha}^{{\ha}^\upsig\hmm g_2} \varphi^1_\sigma
        &&\textit{ by } \eqref{eq:lem_tech-2} \\
        &= {\ha}^{g_2}\{g_2^{-1}\rho(\varphi^{\prime\hs 2}_\sigma){\ha}^\upsig\hmm g_2,g_1\}^{-1} \cdot \varphi^{\prime\hs 2}_\sigma \cdot {\ha}^{{\ha}^\upsig\hmm g_2} \varphi^1_\sigma
        &&\textit{ by } \eqref{eq:lem_tech-1} \\
        &=\varphi^{\prime\hs 2,1}_\sigma
        &&\textit{ by } \eqref{eq:product_C0}.
    \end{align*}
    We denote by $(\varphi',g) = {\ha}^{(u,\psi)}(\varphi,g)$, $(u',\psi') = d(\varphi',g) \cdot (u,\psi)$,
    and $(u'',\psi'') = (u,\psi) \cdot d((\varphi,g))$.
    We prove \eqref{e:CM2} by showing that $(u',\psi') = (u'',\psi'')$ as follows:
    \begin{align*}
        \psi'_\sigma
        &=g^{-1}\cdot \rho(\varphi'_\sigma)\cdot {\ha}^\upsig\hmm g \cdot \psi_\sigma
        &&\textit{ by } \eqref{e:HM2} \\
        &=g^{-1}\cdot \rho(\{\psi_\sigma,g\}^{-1}{\ha}^{\psi_\sigma}\varphi_\sigma\{{\ha}^\upsig\hmm g,\psi_\sigma\}^{-1})\cdot {\ha}^\upsig\hmm g \cdot \psi_\sigma
        &&\textit{ by } \eqref{eq:act_Z1} \\
        &= \psi_\sigma \cdot (g^{-1} \cdot \rho(\varphi_\sigma)
        \cdot {\ha}^\upsig\hmm g)
        &&\textit{ by } \eqref{e:Br1}, \eqref{e:CM1}\\
        &= \psi''_\sigma
        &&\textit{ by } \eqref{e:HM2},
    \end{align*}
    \begin{align*}
        &{\ha}^\upg u'_{\sigma,\tau} = \\
        &= \varphi'_{\sigma \tau}
        \cdot {\ha}^{{\ha}^{\upst} g}u_{\sigma,\tau}
        \cdot
        \{{\ha}^{\upst} g,\psi_{\sigma\tau}\} \cdot {\ha}^{\psi_\sigma \sigma} \varphi^{\prime\,-1}_\tau
        \cdot
        \{{\ha}^\upsig\hmm g,\psi_\sigma\}^{-1} \cdot \varphi^{\prime\,-1}_\sigma
        &&\textit{ by } \eqref{eq:lem_tech-3}\\
        &= \{\psi_{\sigma \tau},g\}^{-1}
        \cdot {\ha}^{\psi_{\sigma \tau}}\varphi_{\sigma \tau}
        \cdot
        \{{\ha}^{\upst} g,\psi_{\sigma \tau}\}^{-1}
        \cdot {\ha}^{{\ha}^{\upst} g}u_{\sigma,\tau}
        \cdot
        \{{\ha}^{\upst} g,\psi_\sigma\} \cdot {\ha}^{\psi_\sigma \sigma} \varphi^{\prime\,-1}_\tau
        \cdot
        \{{\ha}^\upsig\hmm g,\psi_\sigma\}^{-1} \cdot \varphi^{\prime\,-1}_\sigma
        &&\textit{ by } \eqref{eq:act_Z1} \\
        &=
        \{\rho(u_{\sigma,\tau})\psi_\sigma{\ha}^{\upst} \psi_\tau,g\}^{-1}
        \cdot
        {\ha}^{\psi_{\sigma\tau}}\varphi_{\sigma \tau}
        \cdot
        \{{\ha}^{\upst} g,\rho(u_{\sigma,\tau})\psi_\sigma {\ha}^\upsig \psi_\tau\}^{-1}
        \cdot {\ha}^{{\ha}^{\upst} g}u_{\sigma,\tau}
        \\
        &\quad
        \cdot
        \{{\ha}^{\upst} g,\psi_\sigma\} \cdot {\ha}^{\psi_\sigma \sigma} \varphi^{\prime\,-1}_\tau
        \cdot
        \{{\ha}^\upsig\hmm g,\psi_\sigma\}^{-1} \cdot \varphi^{\prime\,-1}_\sigma
        \\
        &= {\ha}^\upg u_{\sigma,\tau}
        \cdot
        \{\psi_\sigma{\ha}^\upsig \psi_\tau,g\}^{-1}
        \cdot
        u_{\sigma,\tau}^{-1}
        \cdot
        {\ha}^{\psi_{\sigma\tau}}\varphi_{\sigma \tau}
        \cdot
        u_{\sigma,\tau}
        \cdot
        \{{\ha}^{\upst} g,\psi_\sigma {\ha}^\upsig \psi_\tau\}^{-1}
        &&\textit{ by } \eqref{e:Br5} \\
        &\quad
        \cdot
        \{{\ha}^{\upst} g,\psi_\sigma\} \cdot {\ha}^{\psi_\sigma \sigma} \varphi^{\prime\,-1}_\tau
        \cdot
        \{{\ha}^\upsig\hmm g,\psi_\sigma\}^{-1} \cdot \varphi^{\prime\,-1}_\sigma \\
        &= {\ha}^\upg u_{\sigma,\tau}
        \cdot
        \{\psi_\sigma{\ha}^\upsig \psi_\tau,g\}^{-1}
        \cdot
        u_{\sigma,\tau}^{-1}
        \cdot
        {\ha}^{\psi_{\sigma\tau}}\varphi_{\sigma \tau}
        \cdot
        u_{\sigma,\tau}
        \cdot
        {\ha}^{\psi_\sigma}\{{\ha}^{\upst} g,{\ha}^\upsig \psi_\tau\}^{-1}
        \cdot
        {\ha}^{\psi_\sigma \sigma} \varphi^{\prime\,-1}_\tau
        \cdot
        \{{\ha}^\upsig\hmm g,\psi_\sigma\}^{-1} \cdot \varphi^{\prime\,-1}_\sigma
        &&\textit{ by }\eqref{e:Br4}\\
        &= {\ha}^\upg u_{\sigma,\tau} \cdot
        {\ha}^{\rho(\{\psi_\sigma{\ha}^\upsig \psi_\tau,g\})^{-1}\rho(u_{\sigma,\tau}^{-1})\psi_{\sigma\tau}}\varphi_{\sigma \tau}
        \cdot
        \{\psi_\sigma{\ha}^\upsig \psi_\tau,g\}^{-1}
        \cdot
        {\ha}^{\psi_\sigma}\{{\ha}^{\upst} g,{\ha}^\upsig \psi_\tau\}^{-1}
        \cdot
        {\ha}^{\psi_\sigma \sigma} \varphi^{\prime\,-1}_\tau
        \cdot
        \{{\ha}^\upsig\hmm g,\psi_\sigma\}^{-1} \cdot \varphi^{\prime\,-1}_\sigma
        &&\textit{ by }\eqref{e:CM1} \\
        &= {\ha}^\upg u_{\sigma,\tau} \cdot
        {\ha}^{g \psi_\sigma {\ha}^\upsig \psi_\tau g^{-1}}\varphi_{\sigma \tau}
        \cdot
        \{\psi_\sigma{\ha}^\upsig \psi_\tau,g\}^{-1}
        \cdot
        {\ha}^{\psi_\sigma}\{{\ha}^{\upst} g,{\ha}^\upsig \psi_\tau\}^{-1}
        \cdot
        {\ha}^{\psi_\sigma \sigma} \varphi^{\prime\,-1}_\tau
        \cdot
        \{{\ha}^\upsig\hmm g,\psi_\sigma\}^{-1} \cdot \varphi^{\prime\,-1}_\sigma
        &&\textit{ by }\eqref{eq:def1_1-cocycle} \\
        &= {\ha}^\upg u_{\sigma,\tau} \cdot
        {\ha}^{g \psi_\sigma {\ha}^\upsig \psi_\tau g^{-1}}\varphi_{\sigma \tau}
        \cdot
        \{\psi_\sigma{\ha}^\upsig \psi_\tau,g\}^{-1}
        \cdot
        {\ha}^{\psi_\sigma}\{{\ha}^{\upst} g,{\ha}^\upsig \psi_\tau\}^{-1}
        &&\textit{ by }\eqref{eq:act_Z1} \\
        &\quad
        \cdot
        {\ha}^{\psi_\sigma} (\{{\ha}^{\upst} g,{\ha}^\upsig \psi_\tau\}
         \cdot {\ha}^{{\ha}^\upsig \psi_\tau} {\ha}^\upsig \varphi_\tau^{-1}
         \cdot \{{\ha}^\upsig \psi_\tau,{\ha}^\upsig\hmm g\} )
        \cdot
        \{{\ha}^\upsig\hmm g,\psi_\sigma\}^{-1} \cdot
        (\{{\ha}^\upsig\hmm g,\psi_\sigma\}
        {\ha}^{\psi_\sigma}\varphi_\sigma^{-1}
        \{\psi_\sigma,g\}) \\
       &= {\ha}^\upg u_{\sigma,\tau} \cdot
        {\ha}^{g \psi_\sigma {\ha}^\upsig \psi_\tau g^{-1}}\varphi_{\sigma \tau}
        \cdot
        \{\psi_\sigma{\ha}^\upsig \psi_\tau,g\}^{-1}
        \cdot
        {\ha}^{\psi_\sigma} (
         {\ha}^{{\ha}^\upsig \psi_\tau}{\ha}^\upsig \varphi_\tau^{-1}
         \cdot \{{\ha}^\upsig \psi_\tau,{\ha}^\upsig\hmm g\} )
        \cdot
        {\ha}^{\psi_\sigma }\varphi_\sigma^{-1}
        \{\psi_\sigma,g\} \\
        &= {\ha}^\upg u_{\sigma,\tau} \cdot
        {\ha}^{g \psi_\sigma {\ha}^\upsig \psi_\tau g^{-1}}\varphi_{\sigma \tau}
        \cdot
         {\ha}^{\rho(\{\psi_\sigma{\ha}^\upsig\psi_\tau,g\}^{-1})\psi_\sigma{\ha}^\upsig\psi_\tau}{\ha}^\upsig \varphi_\tau^{-1}
         \cdot
         \{\psi_\sigma{\ha}^\upsig \psi_\tau,g\}^{-1}
         \cdot {\ha}^{\psi_\sigma} \{{\ha}^\upsig \psi_\tau,{\ha}^\upsig\hmm g\}
        \cdot
        {\ha}^{\psi_\sigma }\varphi_\sigma^{-1}
        \{\psi_\sigma,g\}
        &&\textit{ by } \eqref{e:CM1}\\
        &= {\ha}^\upg u_{\sigma,\tau} \cdot
        {\ha}^{g \psi_\sigma {\ha}^\upsig \psi_\tau g^{-1}}\varphi_{\sigma \tau}
        \cdot
         {\ha}^{g \psi_\sigma {\ha}^\upsig \psi_\tau g^{-1}}{\ha}^\upsig \varphi_\tau^{-1}
         \cdot
         \{\psi_\sigma{\ha}^\upsig \psi_\tau,g\}^{-1}
         \cdot {\ha}^{\psi_\sigma} \{{\ha}^\upsig \psi_\tau,{\ha}^\upsig\hmm g\}
        \cdot
        {\ha}^{\psi_\sigma }\varphi_\sigma^{-1}
        \{\psi_\sigma,g\}
        &&\textit{ by } \eqref{e:Br1}\\
        &= {\ha}^\upg u_{\sigma,\tau} \cdot
        {\ha}^{g \psi_\sigma {\ha}^\upsig \psi_\tau g^{-1}}(\varphi_{\sigma \tau}
        \cdot
         {\ha}^\upsig \varphi_\tau^{-1})
         \cdot
         \{\psi_\sigma,g\}^{-1}
         \cdot
         {\ha}^{\psi_\sigma}\{{\ha}^\upsig \psi_\tau,g\}^{-1}
         \cdot {\ha}^{\psi_\sigma} \{{\ha}^\upsig \psi_\tau,{\ha}^\upsig\hmm g\}
        \cdot
        {\ha}^{\psi_\sigma }\varphi_\sigma^{-1}
        \{\psi_\sigma,g\}
        &&\textit{ by } \eqref{e:Br5}
        \\
        &= {\ha}^\upg u_{\sigma,\tau} \cdot
        {\ha}^{g \psi_\sigma {\ha}^\upsig \psi_\tau g^{-1}}(\varphi_{\sigma \tau}
        \cdot
         {\ha}^\upsig \varphi_\tau^{-1})
         \cdot
         {\ha}^{\rho(\{\psi_\sigma{\ha},g\})^{-1}}
         {\ha}^{\psi_\sigma}(\{{\ha}^\upsig \psi_\tau,g\}^{-1}
         \cdot \{{\ha}^\upsig \psi_\tau,{\ha}^\upsig\hmm g\}
        \cdot
        \varphi_\sigma^{-1})
        &&\textit{ by } \eqref{e:CM1}\\
        &= {\ha}^\upg u_{\sigma,\tau} \cdot
        {\ha}^{g \psi_\sigma {\ha}^\upsig \psi_\tau g^{-1}}(\varphi_{\sigma \tau}
        \cdot
         {\ha}^\upsig \varphi_\tau^{-1})
         \cdot
         {\ha}^{g \psi_\sigma g^{-1}}(\{{\ha}^\upsig \psi_\tau,g\}^{-1}
         \cdot \{{\ha}^\upsig \psi_\tau,{\ha}^\upsig\hmm g\}
        \cdot
        \varphi_\sigma^{-1})
        &&\textit{ by } \eqref{e:Br1}\\
        &= {\ha}^\upg u_{\sigma,\tau} \cdot
        {\ha}^{g \psi_\sigma {\ha}^\upsig \psi_\tau g^{-1}}(\varphi_{\sigma \tau}
        \cdot
         {\ha}^\upsig \varphi_\tau^{-1} \cdot \varphi_\sigma^{-1})
         \cdot
         {\ha}^{(g \psi_\sigma g^{-1})(g {\ha}^\upsig \psi_\tau g^{-1})}\varphi_\sigma
         \cdot
         {\ha}^{g \psi_\sigma g^{-1}}(\{{\ha}^\upsig \psi_\tau,g\}^{-1}
         \cdot \{{\ha}^\upsig \psi_\tau,{\ha}^\upsig\hmm g\}
        \cdot
        \varphi_\sigma^{-1})
        \\
        &= {\ha}^\upg u_{\sigma,\tau} \cdot
        {\ha}^{g \psi_\sigma {\ha}^\upsig \psi_\tau g^{-1}}(\varphi_{\sigma \tau}
        \cdot
         {\ha}^\upsig \varphi_\tau^{-1} \cdot \varphi_\sigma^{-1})
         \cdot
         {\ha}^{g \psi_\sigma g^{-1}}({\ha}^{g {\ha}^\upsig \psi_\tau g^{-1}}
         \varphi_\sigma
         \cdot \{{\ha}^\upsig \psi_\tau,g\}^{-1}
         \cdot \{{\ha}^\upsig \psi_\tau,{\ha}^\upsig\hmm g\}
        \cdot
        \varphi_\sigma^{-1})
        \\
        &= {\ha}^\upg u_{\sigma,\tau} \cdot
        {\ha}^{g \psi_\sigma {\ha}^\upsig \psi_\tau g^{-1}}(\varphi_{\sigma \tau}
        \cdot
         {\ha}^\upsig \varphi_\tau^{-1} \cdot \varphi_\sigma^{-1})
         \cdot
         {\ha}^{g \psi_\sigma g^{-1}}(
         \{{\ha}^\upsig \psi_\tau,g\}^{-1}
         \cdot {\ha}^{\rho(\{{\ha}^\upsig \psi_\tau,g\})g {\ha}^\upsig \psi_\tau g^{-1}}
         \varphi_\sigma
         \cdot \{{\ha}^\upsig \psi_\tau,{\ha}^\upsig\hmm g\}
        \cdot
        \varphi_\sigma^{-1})
        &&\textit{ by } \eqref{e:CM1}\\
        &= {\ha}^\upg u_{\sigma,\tau} \cdot
        {\ha}^{g \psi_\sigma {\ha}^\upsig \psi_\tau g^{-1}}(\varphi_{\sigma \tau}
        \cdot
         {\ha}^\upsig \varphi_\tau^{-1} \cdot \varphi_\sigma^{-1})
         \cdot
         {\ha}^{g \psi_\sigma g^{-1}}(
         \{{\ha}^\upsig \psi_\tau,g\}^{-1}
         \cdot {\ha}^{{\ha}^\upsig \psi_\tau}
         \varphi_\sigma
         \cdot \{{\ha}^\upsig \psi_\tau,{\ha}^\upsig\hmm g\}
        \cdot
        \varphi_\sigma^{-1})
        &&\textit{ by } \eqref{e:Br1}\\
        &= {\ha}^\upg u_{\sigma,\tau} \cdot
        {\ha}^{g \psi_\sigma {\ha}^\upsig \psi_\tau g^{-1}}(\varphi_{\sigma \tau}
        \cdot
         {\ha}^\upsig \varphi_\tau^{-1} \cdot \varphi_\sigma^{-1})
         \cdot
         {\ha}^{g \psi_\sigma g^{-1}}{\ha}^\upg \{{\ha}^\upsig \psi_\tau,g^{-1} \rho(\varphi_\sigma) {\ha}^\upsig\hmm g\}
        &&\textit{ by } \eqref{eq:lem_tech-1}\\
        &= {\ha}^\upg(u_{\sigma,\tau} \cdot
        {\ha}^{\psi_\sigma {\ha}^\upsig \psi_\tau}({\ha}^{g^{-1}}(\varphi_{\sigma \tau}
        \cdot
         {\ha}^\upsig \varphi_\tau^{-1} \cdot \varphi_\sigma^{-1}))
         \cdot
         {\ha}^{\psi_\sigma} \{{\ha}^\upsig \psi_\tau,g^{-1} \rho(\varphi_\sigma) {\ha}^\upsig\hmm g\})
        \\
        &= {\ha}^\upg u''_{\sigma,\tau}
        &&\textit{ by }\eqref{eq:def_prod_1}.
    \end{align*}
    This completes the proof of Lemma \ref{l:crossed-module}.
\end{proof}

Since \eqref{e:bar-d} is a crossed module, the image of $\bar d$
is a normal subgroup of $Z^1(\Gamma, A\to G)$, and therefore
$\coker\,\bar d$ is a  well-defined group;
see \cite[Lemma 3.3.2]{Borovoi-Memoir}.
We wish to obtain a group structure on $H^1(\Gamma,A \to G)$ by identifying it with $\operatorname{coker} \bar{d}$.

Recall that $H^1(\Gamma,A \to G)$ is the set of orbits of $Z^1(\Gamma,A \to G)$ under the right $C^0(\Gamma,A \to G)$-action denoted by $(u,\psi) * (\varphi,g)$.
This action is defined by \eqref{eq:action_C0-1}-\eqref{eq:action_C0-2}
with respect to the group structure on $C^0(\Gamma,A \to G)$ given by \eqref{e:grp_C0_Bor};
see \cite[Section 3.3]{Borovoi-Memoir}.

We relate this action to the image of $\bar{d}$.

\begin{lemma}\label{l:noname3}
    For any $(u,\psi) \in Z^1(\Gamma,A \to G)$ and $(\varphi,g) \in C^0(\Gamma,A \to G)$, we have
    \begin{align*}
        (u,\psi) * (\varphi,g)
        = d((\varphi \cdot \delta(\psi,g),g)) \cdot (u,\psi),
    \end{align*}
    where $\delta(\psi,g) \colon \Gamma \to A$ is defined by $\delta(\psi,g)_\sigma = \{{\ha}^\upsig\hmm g,\psi_\sigma\}^{-1}$.
\end{lemma}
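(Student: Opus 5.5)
The plan is a direct computation: evaluate both sides componentwise and compare, reusing Lemma \ref{l:noname1}(2) for the right-hand side. Write $\varphi'' \coloneqq \varphi\cdot\delta(\psi,g)$, that is, $\varphi''_\sigma=\varphi_\sigma\cdot\{{}^\sigma g,\psi_\sigma\}^{-1}$. This is a pointwise product in $\mathrm{Maps}(\Gamma,A)$, so $\varphi''$ is again locally constant and $(\varphi'',g)\in C^0(\Gamma,A\to G)$. By Lemma \ref{l:noname1}(2), $d(\varphi'',g)\cdot(u,\psi)=(u',\psi')$ is given explicitly by \eqref{eq:lem_tech-3} and \eqref{eq:lem_tech-4}. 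The left-hand side $(u,\psi)*(\varphi,g)$ is given by \eqref{eq:action_C0-1} and \eqref{eq:action_C0-2}.

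First I compare the $G$-components. By \eqref{eq:lem_tech-4},
\[\psi'_\sigma=g^{-1}\rho(\varphi''_\sigma)\,{}^\sigma g\,\psi_\sigma
= g^{-1}\rho(\varphi_\sigma)\,\rho\big(\{{}^\sigma g,\psi_\sigma\}\big)^{-1}\,{}^\sigma g\,\psi_\sigma .\]
By \eqref{e:Br1}, $\rho(\{{}^\sigma g,\psi_\sigma\})^{-1}=\psi_\sigma\,{}^\sigma g\,\psi_\sigma^{-1}\,({}^\sigma g)^{-1}$. Substituting, the product $\rho(\{{}^\sigma g,\psi_\sigma\})^{-1}\,{}^\sigma g\,\psi_\sigma$ collapses to $\psi_\sigma\,{}^\sigma g$. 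Hence $\psi'_\sigma=g^{-1}\rho(\varphi_\sigma)\psi_\sigma\,{}^\sigma g$, which is exactly \eqref{eq:action_C0-2}.

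Next I compare the $A$-components, after applying ${}^g(\cdot)$ to both. By \eqref{eq:action_C0-1} the target is $\varphi_{\sigma\tau}\,u_{\sigma,\tau}\,{}^{\psi_\sigma\sigma}\varphi_\tau^{-1}\,\varphi_\sigma^{-1}$. By \eqref{eq:lem_tech-3} with $\varphi''$ in place of $\varphi$, we have ${}^g u'_{\sigma,\tau}=\varphi''_{\sigma\tau}\cdot{}^{{}^{\sigma\tau}g}u_{\sigma,\tau}\cdot\{{}^{\sigma\tau}g,\psi_\sigma\}\cdot{}^{\psi_\sigma\sigma}(\varphi''_\tau)^{-1}\cdot\{{}^\sigma g,\psi_\sigma\}^{-1}\cdot(\varphi''_\sigma)^{-1}$. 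I will simplify this in three pieces.

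\textbf{(a) The tail.} The last two factors give $\{{}^\sigma g,\psi_\sigma\}^{-1}(\varphi''_\sigma)^{-1}=\{{}^\sigma g,\psi_\sigma\}^{-1}\{{}^\sigma g,\psi_\sigma\}\varphi_\sigma^{-1}=\varphi_\sigma^{-1}$.

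\textbf{(b) The middle.} Here ${}^{\psi_\sigma\sigma}(\varphi''_\tau)^{-1}={}^{\psi_\sigma}\big({}^\sigma\{{}^\tau g,\psi_\tau\}\big)\cdot{}^{\psi_\sigma\sigma}\varphi_\tau^{-1}$. By $\Gamma$-equivariance of the braiding, ${}^\sigma\{{}^\tau g,\psi_\tau\}=\{{}^{\sigma\tau}g,{}^\sigma\psi_\tau\}$. This factor is then merged with the preceding $\{{}^{\sigma\tau}g,\psi_\sigma\}$ via \eqref{e:Br4}, giving $\{{}^{\sigma\tau}g,\psi_\sigma{}^\sigma\psi_\tau\}$.

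\textbf{(c) The head.} Using \eqref{eq:def1_1-cocycle}, $\psi_\sigma{}^\sigma\psi_\tau=\rho(u_{\sigma,\tau})^{-1}\psi_{\sigma\tau}$. Identity \eqref{e:B.1} (or \eqref{e:Br4} with \eqref{e:Br3}) then rewrites $\{{}^{\sigma\tau}g,\rho(u_{\sigma,\tau})^{-1}\psi_{\sigma\tau}\}$ in terms of $\{{}^{\sigma\tau}g,\psi_{\sigma\tau}\}$ and a conjugate of $u_{\sigma,\tau}$. The factor ${}^{{}^{\sigma\tau}g}u_{\sigma,\tau}$ should cancel against that conjugate, leaving $u_{\sigma,\tau}$. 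Finally $\{{}^{\sigma\tau}g,\psi_{\sigma\tau}\}$ cancels the $\{{}^{\sigma\tau}g,\psi_{\sigma\tau}\}^{-1}$ coming from $\varphi''_{\sigma\tau}$.

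The only delicate point is the ordering of factors in (b) and (c), together with possibly needing \eqref{e:CM1} to move $\varphi_\tau$-terms past braiding terms. I expect this careful non-commutative bookkeeping, rather than any conceptual issue, to be the main obstacle.
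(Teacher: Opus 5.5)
Your proposal is correct and follows the paper's proof essentially step by step. The $G$-components are matched via \eqref{e:Br1}. The $A$-components are matched via \eqref{eq:lem_tech-3}, then $\Gamma$-equivariance of the braiding with \eqref{e:Br4} to merge the two braiding factors, then \eqref{e:B.1} with the cocycle relation \eqref{eq:def1_1-cocycle} to cancel against $\delta(\psi,g)_{\sigma\tau}$. The factors you merge in (b) are already adjacent, so no reordering via \eqref{e:CM1} is actually needed.
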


\begin{proof}
    We denote $(u',\psi') = (u,\psi) * (\varphi,g)$ and $(u'',\psi''') = d\big(\varphi \cdot \delta(\psi,g)\big) \cdot (u,\psi)$.
    We show that $(u'',\psi'') = (u',\psi')$ as follows:
    \begin{align*}
        \psi''_\sigma
        &=(g^{-1} \cdot\rho(\varphi_\sigma \cdot \delta(\psi,g)_\sigma) \cdot {\ha}^\upsig\hmm g) \cdot \psi_\sigma
        &&\textit{ by } \eqref{e:HM1} \\
        &=
        g^{-1} \cdot
        \rho(\varphi_\sigma) \cdot
        [{\ha}^\upsig\hmm g,\psi_\sigma]^{-1}
        \cdot {\ha}^\upsig\hmm g \cdot \psi_\sigma &&\textit{ by } \eqref{e:Br1} \\
        &=g^{-1} \cdot \rho(\varphi_\sigma) \cdot \psi_\sigma \cdot {\ha}^\upsig\hmm g \\
        &= \psi'_\sigma &&\textit{ by } \eqref{eq:action_C0-2},
    \end{align*}
    \begin{align*}
        &{\ha}^\upg u''_{\sigma,\tau}= \\
        &=\varphi_{\sigma \tau} \cdot \delta(\psi,g)_{\sigma\tau}
        \cdot {\ha}^{{\ha}^{\upst} g} u_{\sigma,\tau}
        \cdot \{{\ha}^{\upst} g,\psi_\sigma\}
        \cdot
        {\ha}^{\psi_\sigma \sigma}(\varphi_\tau \cdot \delta(\psi,g)_\tau)^{-1}
        \cdot
        \{{\ha}^\upsig\hmm g,\psi_\sigma\}^{-1}
        \cdot
        (\varphi_\sigma \cdot \delta(\psi,g)_\sigma)^{-1}
        &&\textit{ by } \eqref{eq:lem_tech-3} \\
        &=\varphi_{\sigma \tau} \cdot \{{\ha}^{\upst} g,\psi_{\sigma \tau}\}^{-1}
        \cdot {\ha}^{{\ha}^{\upst} g} u_{\sigma,\tau}
        \cdot \{{\ha}^{\upst} g,\psi_\sigma\}
        \cdot
        {\ha}^{\psi_\sigma}\{{\ha}^{\upst} g,{\ha}^\upsig \psi_\tau\}
        \cdot
        {\ha}^{\psi_\sigma \sigma}\varphi_\tau \cdot
        \varphi_\sigma^{-1}
        \\
        &=\varphi_{\sigma \tau} \cdot \{{\ha}^{\upst} g,\psi_{\sigma \tau}\}^{-1}
        \cdot {\ha}^{{\ha}^{\upst} g} u_{\sigma,\tau}
        \cdot \{{\ha}^{\upst} g,\psi_\sigma {\ha}^\upsig \psi_\tau\}
        \cdot
        {\ha}^{\psi_\sigma \sigma}\varphi_\tau \cdot
        \varphi_\sigma^{-1}
        &&\textit{ by }\eqref{e:Br4}\\
        &=\varphi_{\sigma \tau} \cdot \{{\ha}^{\upst} g,\psi_{\sigma \tau}\}^{-1}
        \cdot \{{\ha}^{\upst} g,\rho(u_{\sigma,\tau})\psi_\sigma {\ha}^\upsig \psi_\tau\}
        \cdot u_{\sigma,\tau}
        \cdot
        {\ha}^{\psi_\sigma \sigma}\varphi_\tau \cdot
        \varphi_\sigma^{-1}
        &&\textit{ by }\eqref{e:B.1}\\
        &=\varphi_{\sigma \tau} \cdot \{{\ha}^{\upst} g,\psi_{\sigma \tau}\}^{-1}
        \cdot \{{\ha}^{\upst} g,\psi_{\sigma \tau}\}
        \cdot u_{\sigma,\tau}
        \cdot
        {\ha}^{\psi_\sigma \sigma}\varphi_\tau \cdot
        \varphi_\sigma^{-1}
        &&\textit{ by }\eqref{eq:def1_1-cocycle}\\
        &=\varphi_{\sigma \tau} \cdot
        \cdot u_{\sigma,\tau}
        \cdot
        {\ha}^{\psi_\sigma \sigma}\varphi_\tau \cdot
        \varphi_\sigma^{-1}
        \\
        &= {\ha}^\upg u'_{\sigma,\tau} &&\textit{ by } \eqref{eq:action_C0-1}.
    \end{align*}
    This completes the proof of Lemma \ref{l:noname3}.
\end{proof}

Lemma \ref{l:noname3} implies the following corollary:

\begin{corollary}\label{cor:H1_coker}
    The canonical map $Z^1(\Gamma,A \to G)\to H^1(\Gamma,A \to G)$ induces
    a  canonical bijection $H^1(\Gamma,A \to G) \cong \operatorname{coker} \bar{d}$.
    In particular, $H^1(\Gamma,A \to G)$ inherits a canonical group structure.
\end{corollary}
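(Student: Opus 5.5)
The plan is to show that the orbits of the right $C^0$-action on $Z^1(\Gamma,A\to G)$ (the action used in \cite[Section 3.3]{Borovoi-Memoir} to define $H^1(\Gamma,A\to G)$) are exactly the right cosets of the subgroup $\im\bar d=\im d$ in the group $Z^1(\Gamma,A\to G)$ of Lemma \ref{l:grp_str_Z1}. By Lemma \ref{l:crossed-module}, $\bar d$ is a crossed module, so $\im\bar d$ is a normal subgroup of $Z^1(\Gamma,A\to G)$. Hence left and right cosets coincide, and $\coker\bar d=Z^1(\Gamma,A\to G)/\im d$ is a group. Once the orbit and coset equivalence relations agree, the identity map of $Z^1$ descends to a bijection $H^1(\Gamma,A\to G)\cong\coker\bar d$, and we transport the group structure along it.

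First inclusion: each orbit lies in a coset. Let $(u,\psi)\in Z^1(\Gamma,A\to G)$ and $(\varphi,g)\in C^0(\Gamma,A\to G)$. Lemma \ref{l:noname3} gives
\[
(u,\psi)*(\varphi,g)=d\big(\varphi\cdot\delta(\psi,g),g\big)\cdot(u,\psi),
\]
where $\varphi\cdot\delta(\psi,g)$ is the pointwise product in $\Maps(\Gamma,A)$. Hence $(u,\psi)*(\varphi,g)\in\im d\cdot(u,\psi)$, which is the coset of $(u,\psi)$.

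Second inclusion: each coset lies in an orbit. Take $(\varphi',g)\in C^0(\Gamma,A\to G)$ and the element $d(\varphi',g)\cdot(u,\psi)$. Set $\varphi_\sigma\coloneqq\varphi'_\sigma\cdot\{{}^\sigma g,\psi_\sigma\}$. This is locally constant because $\psi$ is locally constant and $\br$ is $\Gamma$-equivariant. Then $\varphi\cdot\delta(\psi,g)=\varphi'$, so Lemma \ref{l:noname3} gives $d(\varphi',g)\cdot(u,\psi)=(u,\psi)*(\varphi,g)$. Thus the map $(\varphi,g)\mapsto(\varphi\cdot\delta(\psi,g),g)$ is a bijection of $C^0$ for fixed $\psi$, and the orbit of $(u,\psi)$ equals $\im d\cdot(u,\psi)$ exactly.

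The distinguished point $[1,1]$ corresponds to the identity coset. The bijection is canonical because it is induced by the identity map of $Z^1$.

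The main obstacle is matching conventions: the $*$-action is defined with respect to Borovoi's group law \eqref{e:grp_C0_Bor} on $C^0$, while $d$ is a homomorphism for the braided group law \eqref{eq:product_C0}. This does not cause a problem. Only the underlying set of $C^0$ and the identity of Lemma \ref{l:noname3} are used, and left and right cosets agree because $\im d$ is normal. So no compatibility between the two group laws on $C^0$ is needed.
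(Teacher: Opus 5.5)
Your proof is correct and follows the paper's argument. The paper derives the corollary directly from Lemma \ref{l:noname3}: the $C^0$-orbit of $(u,\psi)$ is the coset $\im d\cdot(u,\psi)$ of the normal subgroup $\im d=\im\bar d$. You have written out the details the paper leaves implicit, namely that the reparametrization $(\varphi,g)\mapsto(\varphi\cdot\delta(\psi,g),g)$ is surjective and that $\im d$ is normal by Lemma \ref{l:crossed-module}. One small correction: $\sigma\mapsto\{{}^\sigma g,\psi_\sigma\}$ is locally constant because $\psi$ is locally constant and the $\Gamma$-action on $G$ is continuous (so $\sigma\mapsto{}^\sigma g$ is locally constant), not because the braiding is $\Gamma$-equivariant.
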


In the case that the braiding $\br$ is symmetric,
the group structure on $H^1(\Gamma,A \to G)$ becomes abelian.
We first construct the braiding on the crossed module \eqref{e:bar-d}
defined as follows:

\begin{lemma}\label{lem:braided_cross_mod_C0_Z1}
    The crossed module $\bar{d} \colon C^0(\Gamma,A \to G)/B^0(\Gamma,A \to G)\to Z^1(\Gamma,A \to G)$
    of \eqref{e:bar-d} is a braided crossed module with the braiding defined by
    \begin{align}
        \{(u^1,\psi^1),(u^2,\psi^2)\} \coloneqq
        \big[\{\psi^1,\psi^2\},1\big]
        \label{eq:br_Z1}
    \end{align}
    where $\{\psi^1,\psi^2\} \colon \Gamma \to A,\ \, \sigma \mapsto \{\psi^1_\sigma,\psi^2_\sigma\}$.
    Here $\big[\{\psi^1,\psi^2\},1\big]\in  C^0(\Gamma,A \to G)/B^0(\Gamma,A \to G)$  denotes the class
    of the 0-cochain $\big(\{\psi^1,\psi^2\},1\big)\in C^0(\Gamma,A \to G)$.
\end{lemma}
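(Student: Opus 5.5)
The plan is to verify directly that the map \eqref{eq:br_Z1} is well defined and satisfies the braiding axioms \eqref{e:Br1}--\eqref{e:Br5} for the crossed module $\bar d$. All computations are reduced to the identities \eqref{e:Br1}--\eqref{e:Br5} for $\br$ on $(A\to G)$, Lemma \ref{l:noname0}, and the explicit formulas \eqref{eq:product_C0}, \eqref{eq:def_prod_1}, \eqref{e:HM1}, \eqref{e:HM2}, \eqref{eq:act_Z1}.

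First, the pair $(\{\psi^1,\psi^2\},1)$ is a legitimate element of $C^0(\Gamma,A\to G)$. The map $\sigma\mapsto\{\psi^1_\sigma,\psi^2_\sigma\}$ is locally constant because $\psi^1$, $\psi^2$ are locally constant. Since the pair is defined on cocycles themselves rather than on classes, no independence-of-representative issue arises on the $Z^1$ side. One only has to observe that taking the class in $C^0/B^0$ is harmless.

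Next, check \eqref{e:Br1}: $\bar d[\{\psi^1,\psi^2\},1]$ must equal the commutator of $(u^1,\psi^1)$ and $(u^2,\psi^2)$ in the group $Z^1$ of Lemma \ref{l:grp_str_c1}.
\begin{itemize}
\item By \eqref{e:HM2}, the $\psi$-component of $d(\{\psi^1,\psi^2\},1)$ is $\rho\{\psi^1_\sigma,\psi^2_\sigma\}=[\psi^1_\sigma,\psi^2_\sigma]$, using \eqref{e:Br1} for $\br$.
\item By \eqref{eq:def_prod_2}, the $\psi$-component of the commutator is also $[\psi^1_\sigma,\psi^2_\sigma]$, so the two agree.
\item By \eqref{e:HM1}, the $u$-component of $d(\{\psi^1,\psi^2\},1)$ is $\{\psi^1_{\sigma\tau},\psi^2_{\sigma\tau}\}\cdot{}^\sigma\{\psi^1_\tau,\psi^2_\tau\}^{-1}\cdot\{\psi^1_\sigma,\psi^2_\sigma\}^{-1}$.
\item To match this with the $u$-component of the commutator, expand via \eqref{eq:def_prod_1} and \eqref{eq:def_inv_1}. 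Then substitute $\psi^i_{\sigma\tau}=\rho(u^i_{\sigma,\tau})\psi^i_\sigma{}^\sigma\psi^i_\tau$ from \eqref{eq:def1_1-cocycle}. Finally use \eqref{e:B.1}, \eqref{e:B.2} to pull the $u^i_{\sigma,\tau}$ out of the braces, and \eqref{e:Br4}, \eqref{e:Br5} to split the products inside them.
\end{itemize}
Equivariance ${}^\sigma\{g,g'\}=\{{}^\sigma g,{}^\sigma g'\}$ is used to move $\sigma$ inside braces.

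Then check \eqref{e:Br2}--\eqref{e:Br5}. For \eqref{e:Br2} and \eqref{e:Br3}, take $(u^1,\psi^1)=d(\varphi,g)$, so that $\psi^1_\sigma=g^{-1}\rho(\varphi_\sigma){}^\sigma g$. Compute $\{\psi^1_\sigma,\psi^2_\sigma\}$ with \eqref{eq:lem_tech-2} and \eqref{eq:lem_tech-1}, and compare with $[\varphi,g]\cdot{}^{(u^2,\psi^2)}[\varphi,g]^{-1}$ obtained from \eqref{eq:act_Z1} and the $C^0$-product \eqref{eq:product_C0}. The discrepancy should be an element of $B^0$, namely $(\varphi^{s},\rho(s)^{-1})$ with $s=\{g,\psi^2\}$-type terms, as in the proof of Lemma \ref{l:crossed-module}. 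For \eqref{e:Br4} and \eqref{e:Br5}, the pointwise identities $\{\psi^1_\sigma,\psi^2_\sigma\psi^3_\sigma\}=\{\psi^1_\sigma,\psi^2_\sigma\}\,{}^{\psi^2_\sigma}\{\psi^1_\sigma,\psi^3_\sigma\}$ hold directly. It remains to show that the product in $C^0$ of $(\{\psi^1,\psi^2\},1)$ with ${}^{(u^2,\psi^2)}(\{\psi^1,\psi^3\},1)$ gives exactly this. With $g_1=g_2=1$, \eqref{eq:product_C0} reduces to pointwise multiplication by \eqref{e:B.4}. Also \eqref{eq:act_Z1} with $g=1$ reduces to conjugation by $\psi_\sigma$, so these hold on the nose.

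The main obstacle is \eqref{e:Br1} together with \eqref{e:Br2}/\eqref{e:Br3}, which involve the intricate $u$-components and coboundary corrections. I would first treat \eqref{e:Br1}, reducing both sides to a normal form in which all $u$-terms are moved left via \eqref{e:CM1} and all braces are expanded into braces of $\psi^i_\sigma$ and ${}^\sigma\psi^i_\tau$. Symmetry is not needed for this lemma; \eqref{e:Sym} is used only afterwards to deduce commutativity of $\coker\bar d$.
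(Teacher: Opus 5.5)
Your treatment of \eqref{e:Br4} and \eqref{e:Br5} is fine and agrees with the paper. When $g=1$, the product \eqref{eq:product_C0} and the action \eqref{eq:act_Z1} are pointwise, so these two axioms hold on the nose.

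The gap is your closing claim that symmetry is not needed. The lemma sits in the case where $\br$ is assumed symmetric, and \eqref{e:Sym} is indispensable for \eqref{e:Br1} and for \eqref{e:Br2}/\eqref{e:Br3}:
\begin{itemize}
\item In the paper's computation of the $u$-component of \eqref{e:Br1}, \eqref{e:Sym} is used repeatedly to cancel cross terms of the shape $\{\psi^1_\sigma,{}^\sigma\psi^2_\tau\}$ against $\{{}^\sigma\psi^2_\tau,\psi^1_\sigma\}$.
\item In \eqref{e:Br2}, \eqref{e:Sym} gives $\{d(\varphi,g),(u,\psi)\}\cdot{}^{(u,\psi)}(\varphi,g)=(\varphi,g)$ exactly in $C^0$. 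No $B^0$-correction is needed, contrary to your expectation.
\item It also makes \eqref{eq:br_Z1} symmetric, which is how the paper reduces the check to \eqref{e:Br1}, \eqref{e:Br2} and \eqref{e:Br4}.
\end{itemize}
Rearranging with \eqref{e:B.1}, \eqref{e:B.2}, \eqref{e:Br4}, \eqref{e:Br5} alone, as you propose, cannot close the computation.

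To see this, take $\rho$ trivial and $A$, $G$ abelian with trivial actions. Then a braiding is just a bimultiplicative map $G\times G\to A$.
\begin{itemize}
\item The $u$-component of $d(\{\psi^1,\psi^2\},1)$ is $\{\psi^1_\sigma,{}^\sigma\psi^2_\tau\}\{{}^\sigma\psi^1_\tau,\psi^2_\sigma\}$.
\item The commutator of $(u^1,\psi^1)$ and $(u^2,\psi^2)$ in $Z^1$ has $u$-component $\{{}^\sigma\psi^1_\tau,\psi^2_\sigma\}\{{}^\sigma\psi^2_\tau,\psi^1_\sigma\}^{-1}$.
\end{itemize}
So \eqref{e:Br1} amounts to $\{\psi^1_\sigma,{}^\sigma\psi^2_\tau\}\{{}^\sigma\psi^2_\tau,\psi^1_\sigma\}=1$.

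Now choose the data as follows:
\begin{itemize}
\item $\Gamma=\{1,\gamma\}\cong\Z/2$ acting trivially;
\item $G=(\Z/2)^2$ with basis $e_1,e_2$, and $A=\{\pm1\}$;
\item $\{e_1,e_2\}=-1$, with the other basis pairings trivial;
\item the cocycles $u^i=1$, $\psi^i_\gamma=e_i$.
\end{itemize}
Then the identity fails at $\sigma=\tau=\gamma$.

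Likewise, \eqref{e:Br2} for $(\varphi,g)=(1,e_1)$ and $(u,\psi)=(1,\psi^2)$ would require $\{e_1,e_2\}\{e_2,e_1\}=1$. Here $B^0$ is trivial, so no coboundary can absorb the discrepancy.

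So you must invoke \eqref{e:Sym}. Once you do, your direct-verification plan goes through along the lines of the paper.
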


\begin{proof}
    We need to prove \eqref{e:Br1}-\eqref{e:Br5}.
    Since $\eqref{e:Sym}$ holds by its construction, the condition \eqref{e:Br2}
    (resp. \eqref{e:Br4}) is equivalent to \eqref{e:Br3} (resp. \eqref{e:Br5}).
    Thus it suffices to show that $\eqref{e:Br1}$, $\eqref{e:Br2}$, and $\eqref{e:Br4}$ hold.
    To prove \eqref{e:Br1}, we wish to show that
    \[
    \bar{d}\big(\{(u^1,\psi^1),(u^2,\psi^2)\}\big) \cdot (u^2,\psi^2) \cdot (u^1,\psi^1)
    =
    (u^1,\psi^1) \cdot (u^2,\psi^2)
    \]
    for $(u^1,\psi^1), (u^2,\psi^2) \in Z^1(\Gamma,A \to G)$.
    We denote
    \[(u^{\prime\,2,1},\psi^{\prime\,2,1}) = \bar{d}(\{(u^1,\psi^1),(u^2,\psi^2)\}) \cdot (u^2,\psi^2) \cdot (u^1,\psi^1)\]
    and
    \[(u^{1,2},\psi^{1,2}) = (u^1,\psi^1) \cdot (u^2,\psi^2).\]
    The desired equality is proved as follows:
    \begin{align*}
        \psi^{\prime\,2,1}_{\hs\sigma}
        &= \rho(\{\psi^2_\sigma,\psi^1_\sigma\}) \cdot \psi^2_\sigma \cdot \psi^1_\sigma
        &&\textit{ by } \eqref{e:HM2}, \eqref{eq:br_Z1}, \eqref{eq:def_prod_2} \\
        &= \psi^1_\sigma \cdot \psi^2_\sigma &&\textit{ by }\eqref{e:Br1},
    \end{align*}
    \begin{align*}
        &u^{\prime\,2,1}_{\,\sigma,\tau} =\\
        &= \{\psi^1_{\sigma \tau}, \psi^2_{\sigma \tau}\}
        \cdot
        u^2_{\sigma,\tau}
        \cdot {\ha}^{\psi^2_\sigma} \{{\ha}^\upsig \psi^1_\tau,{\ha}^\upsig \psi^2_\tau\}^{-1}
        \cdot
        \{\psi^1_\sigma,\psi^2_\sigma\}^{-1}
        &&\textit{ by } \eqref{eq:lem_tech-3}, \eqref{eq:def_prod_1}\\
        &\quad
        \cdot\big(
        {\ha}^{\rho(\{\psi^1_\sigma,\psi^2_\sigma\})\psi^2_\sigma\rho(\{{\ha}^\upsig\psi^1_\tau,{\ha}^\upsig\psi^2_\tau\}) {\ha}^\upsig\psi^2_\tau}
        u^1_{\sigma,\tau}\big)
        \cdot\big(
        {\ha}^{\rho(\{\psi^1_\sigma,\psi^2_\sigma\})\psi^2_\sigma}
        \{\rho(\{{\ha}^\upsig\psi^1_\tau,{\ha}^\upsig\psi^2_\tau\}) {\ha}^\upsig\psi^2_\tau,\psi^1_\sigma\}\big) \\
        &= \{\psi^1_{\sigma \tau}, \psi^2_{\sigma \tau}\}
        \cdot
        u^2_{\sigma,\tau}
        \cdot {\ha}^{\psi^2_\sigma}({\ha}^{{\ha}^{\upsig} \psi^2_\tau}u^1_{\sigma,\tau}
        \cdot \{{\ha}^\upsig \psi^1_\tau,{\ha}^\upsig \psi^2_\tau\}^{-1}
        \cdot \{\rho(\{{\ha}^\upsig\psi^1_\tau,{\ha}^\upsig\psi^2_\tau\}) {\ha}^\upsig\psi^2_\tau,\psi^1_\sigma\}
        )
        \cdot \{\psi^1_\sigma,\psi^2_\sigma\}^{-1}
        &&\textit{ by } \eqref{e:CM1} \\
        &= \{\psi^1_{\sigma \tau}, \psi^2_{\sigma \tau}\}
        \cdot
        u^2_{\sigma,\tau}
        \cdot {\ha}^{\psi^2_\sigma}({\ha}^{{\ha}^{\upsig} \psi^2_\tau}u^1_{\sigma,\tau}
        \cdot \{{\ha}^\upsig\psi^2_\tau,\psi^1_\sigma\}
        \cdot {\ha}^{\psi^1_\sigma}\{{\ha}^\upsig \psi^1_\tau,{\ha}^\upsig \psi^2_\tau\}^{-1}
        )
        \cdot \{\psi^1_\sigma,\psi^2_\sigma\}^{-1}
        &&\textit{ by } \eqref{e:B.2} \\
        &= \{\psi^1_{\sigma \tau}, \psi^2_{\sigma \tau}\}
        \cdot
        u^2_{\sigma,\tau}
        \cdot {\ha}^{\psi^2_\sigma}({\ha}^{{\ha}^{\upsig} \psi^2_\tau}u^1_{\sigma,\tau}
        \cdot \{{\ha}^\upsig\psi^2_\tau,\psi^1_\sigma\}
        \cdot {\ha}^{\psi^1_\sigma}\{{\ha}^\upsig \psi^2_\tau,{\ha}^\upsig \psi^1_\tau\}
        )
        \cdot \{\psi^2_\sigma,\psi^1_\sigma\}
        &&\textit{ by } \eqref{e:Sym}
        \\
        &= \{\psi^1_{\sigma \tau}, \psi^2_{\sigma \tau}\}
        \cdot
        u^2_{\sigma,\tau}
        \cdot {\ha}^{\psi^2_\sigma}({\ha}^{{\ha}^{\upsig} \psi^2_\tau}u^1_{\sigma,\tau}
        \cdot \{{\ha}^\upsig\psi^2_\tau,\psi^1_\sigma {\ha}^\upsig \psi^2_\tau\}
        )
        \cdot \{\psi^2_\sigma,\psi^1_\sigma\}
        &&\textit{ by } \eqref{e:Br4} \\
        &= \{\psi^1_{\sigma \tau}, \psi^2_{\sigma \tau}\}
        \cdot
        u^2_{\sigma,\tau}
        \cdot {\ha}^{\psi^2_\sigma}(\{{\ha}^\upsig\psi^2_\tau,\rho(u^1_{\sigma,\tau})\psi^1_\sigma {\ha}^\upsig \psi^2_\tau\}
        \cdot u^1_{\sigma,\tau}
        )
        \cdot \{\psi^2_\sigma,\psi^1_\sigma\}
        &&\textit{ by } \eqref{e:B.1}
        \\
        &= \{\psi^1_{\sigma \tau}, \rho(u^2_{\sigma,\tau}) \psi^2_{\sigma} {\ha}^\upsig \psi^2_\tau\}
        \cdot
        u^2_{\sigma,\tau}
        \cdot {\ha}^{\psi^2_\sigma}(\{{\ha}^\upsig\psi^2_\tau,\psi^1_{\sigma \tau}\}
        \cdot u^1_{\sigma,\tau}
        )
        \cdot \{\psi^2_\sigma,\psi^1_\sigma\}
        &&\textit{ by } \eqref{eq:def1_1-cocycle}
         \\
        &= {\ha}^{\psi^1_{\sigma \tau}} u^2_{\sigma,\tau}
        \cdot \{\psi^1_{\sigma \tau}, \psi^2_{\sigma} {\ha}^\upsig \psi^2_\tau\}
        \cdot {\ha}^{\psi^2_\sigma}(\{{\ha}^\upsig\psi^2_\tau,\psi^1_{\sigma \tau}\}
        \cdot u^1_{\sigma,\tau}
        )
        \cdot \{\psi^2_\sigma,\psi^1_\sigma\}
        &&\textit{ by } \eqref{e:B.1}
        \\
        &= {\ha}^{\psi^1_{\sigma \tau}} u^2_{\sigma,\tau}
        \cdot \{\psi^1_{\sigma \tau}, \psi^2_{\sigma}\}
        \cdot {\ha}^{\psi^2_\sigma} \{\psi^1_{\sigma \tau}, {\ha}^\upsig \psi^2_\tau\}
        \cdot {\ha}^{\psi^2_\sigma}(\{{\ha}^\upsig\psi^2_\tau,\psi^1_{\sigma \tau}\}
        \cdot u^1_{\sigma,\tau}
        )
        \cdot \{\psi^2_\sigma,\psi^1_\sigma\}
        &&\textit{ by } \eqref{e:Br4}
        \\
        &= {\ha}^{\psi^1_{\sigma \tau}} u^2_{\sigma,\tau}
        \cdot \{\psi^1_{\sigma \tau}, \psi^2_{\sigma}\}
        \cdot {\ha}^{\psi^2_\sigma} u^1_{\sigma,\tau}
        \cdot \{\psi^2_\sigma,\psi^1_\sigma\}
        &&\textit{ by } \eqref{e:Sym}
        \\
        &= {\ha}^{\psi^1_{\sigma \tau}} u^2_{\sigma,\tau}
        \cdot \{\rho(u^1_{\sigma,\tau})\psi^1_\sigma {\ha}^\upsig \psi^2_\tau, \psi^2_{\sigma}\}
        \cdot {\ha}^{\psi^2_\sigma} u^1_{\sigma,\tau}
        \cdot \{\psi^2_\sigma,\psi^1_\sigma\}
        &&\textit{ by } \eqref{eq:def1_1-cocycle}
        \\
        &= {\ha}^{\psi^1_{\sigma \tau}} u^2_{\sigma,\tau}
        \cdot u^1_{\sigma,\tau}
        \cdot \{\psi^1_\sigma {\ha}^\upsig \psi^2_\tau, \psi^2_{\sigma}\}
        \cdot \{\psi^2_\sigma,\psi^1_\sigma\}
        &&\textit{ by } \eqref{e:B.2}
        \\
        &= {\ha}^{\psi^1_{\sigma \tau}} u^2_{\sigma,\tau}
        \cdot u^1_{\sigma,\tau}
        \cdot {\ha}^{\psi^1_\sigma} \{{\ha}^\upsig \psi^2_\tau, \psi^2_{\sigma}\}
        \cdot \{\psi^2_\sigma,\psi^1_\sigma\}
        &&\textit{ by } \eqref{e:Br5}, \eqref{e:Sym}
        \\
        &= u^1_{\sigma,\tau} \cdot {\ha}^{\psi^1_{\sigma}{\ha}^\upsig \psi^1_\tau} u^1_{\sigma,\tau}
        \cdot
        \cdot {\ha}^{\psi^1_\sigma} \{{\ha}^\upsig \psi^2_\tau, \psi^2_{\sigma}\}
        \cdot \{\psi^2_\sigma,\psi^1_\sigma\}
        &&\textit{ by } \eqref{e:CM1}, \eqref{eq:def1_1-cocycle}
        \\
        &= u^{1,2}_{\sigma,\tau} &&\textit{ by }\eqref{eq:def_prod_1}.
    \end{align*}
    To prove \eqref{e:Br2}, we wish to show that
    \[
    \{d((\varphi,g)),(u,\psi)\} \cdot {\ha}^{(u,\psi)} (\varphi,g) = (\varphi,g)
    \]
    for any $(\varphi,g) \in C^0(\Gamma,A \to G)$ and $(u,\psi) \in Z^1(\Gamma,A \to G)$.
    We denote the left-hand side by $(\varphi',g') = \{d((\varphi,g)),(u,\psi)\} \cdot {\ha}^{(u,\psi)} (\varphi,g)$.
    The desired equality is proved as follows:
    \begin{align*}
        g' = 1 \cdot g = g \quad \textit{ by } \eqref{e:HM2}, \eqref{eq:act_Z1}, \eqref{eq:br_Z1},
    \end{align*}
    \begin{align*}
        \varphi'_\sigma
        &=\{\rho(\{g^{-1} \rho(\varphi_\sigma) {\ha}^\upsig\hmm g,\psi_\sigma\}),g\}^{-1}
        \cdot \{g^{-1}\rho(\varphi_\sigma){\ha}^\upsig\hmm g,\psi_\sigma\}
        \cdot\{\psi_\sigma,g\}^{-1}
        {\ha\ha}^{\psi_\sigma}\hm \varphi_\sigma
        \{{\ha}^\upsig\hmm g,\psi_\sigma\}^{-1}
        &&\textit{ by } \eqref{e:HM2}, \eqref{eq:product_C0}, \\
        & && \,\,\,\,\quad \eqref{eq:act_Z1}, \eqref{eq:br_Z1} \\
        &=\{\rho(\{g^{-1} \rho(\varphi_\sigma) {\ha}^\upsig\hmm g,\psi_\sigma\}),g\}^{-1}
        \cdot \{g^{-1}\rho(\varphi_\sigma){\ha}^\upsig\hmm g,\psi_\sigma\}
        \cdot\{\psi_\sigma,g\}^{-1}
        {\ha\ha}^{\psi_\sigma}\hm \varphi_\sigma
        \{\psi_\sigma,{\ha}^\upsig\hmm g\}
        &&\textit{ by } \eqref{e:Sym} \\
        &=\{\rho(\{g^{-1} \rho(\varphi_\sigma) {\ha}^\upsig\hmm g,\psi_\sigma\}),g\}^{-1}
        \cdot \{g^{-1}\rho(\varphi_\sigma){\ha}^\upsig\hmm g,\psi_\sigma\}
        \cdot
        {\ha}^\upg\{\psi_\sigma,g^{-1}\rho(\varphi_\sigma){\ha}^\upsig\hmm g\} \varphi_\sigma
        &&\textit{ by } \eqref{eq:lem_tech-1} \\
        &={\ha}^\upg\{g^{-1}\rho(\varphi_\sigma){\ha}^\upsig\hmm g,\psi_\sigma\}
        \cdot
        {\ha}^\upg\{\psi_\sigma,g^{-1}\rho(\varphi_\sigma){\ha}^\upsig\hmm g\} \varphi_\sigma
        &&\textit{ by } \eqref{e:Br2} \\
        &= \varphi_\sigma &&\textit{ by }\eqref{e:Sym}.
    \end{align*}
    To prove $\eqref{e:Br4}$, we wish to show that
    \[
    \{(u^1,\psi^1) , (u^2,\psi^2) \cdot (u^3,\psi^3)\}
    =
    \{(u^1,\psi^1) , (u^2,\psi^2)\} \cdot {\ha}^{(u^2,\psi^2)} \{(u^1,\psi^1) , (u^3,\psi^3)\},
    \]
    which is proved as follows:
    \begin{align*}
        \{(u^1,\psi^1) , (u^2,\psi^2) \cdot (u^3,\psi^3)\}
        &= (\{\psi^1,\psi^2 \psi^3\},1)
        &&\textit{ by } \eqref{eq:br_Z1} \\
        &= (\{\psi^1,\psi^2\} {\ha\ha}^{\psi^2}\!\{\psi^1,\psi^3\},1)
        &&\textit{ by } \eqref{e:Br4} \\
        &= (\{\psi^1,\psi^2\},1) \cdot ({\ha\ha}^{\psi^2}\!\{\psi^1,\psi^3\},1)
        &&\textit{ by } \eqref{eq:product_C0} \\
        &= (\{\psi^1,\psi^2\},1) \cdot {\ha}^{(u^2,\psi^2)}(\{\psi^1,\psi^3\},1)
        &&\textit{ by } \eqref{eq:act_Z1} \\
        &= \{(u^1,\psi^1) , (u^2,\psi^2)\} \cdot {\ha}^{(u^2,\psi^2)} \{(u^1,\psi^1) , (u^3,\psi^3)\}
        && \textit{ by } \eqref{eq:br_Z1}
    \end{align*}
    where ${\ha}^{\psi^2}\!\{\psi^1,\psi^3\} \colon \sigma \mapsto {\ha}^{\psi^2_\sigma}\{\psi^1_\sigma,\psi^3_\sigma\}$.
    This completes the proof of Lemma \ref{lem:braided_cross_mod_C0_Z1}.
\end{proof}

\begin{remark}
    It is easy to see that the braiding defined in Lemma \ref{lem:braided_cross_mod_C0_Z1} is a Picard braiding.
\end{remark}

\begin{corollary}
    The group structure on $H^1(\Gamma,A \to G)$ is abelian.
\end{corollary}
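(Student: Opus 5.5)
The plan is to use the crossed module structure of Lemma \ref{l:crossed-module} together with the braiding of Lemma \ref{lem:braided_cross_mod_C0_Z1}. By Corollary \ref{cor:H1_coker}, $H^1(\Gamma,A\to G)\cong\coker\bar d$, so it suffices to show that any two elements of $Z^1(\Gamma,A\to G)$ commute modulo the image of $\bar d$. The key input is axiom \eqref{e:Br1} for the braided crossed module $\bar d\colon C^0/B^0\to Z^1$, which is the first identity verified in the proof of Lemma \ref{lem:braided_cross_mod_C0_Z1}:
\[
\bar d\big(\{(u^1,\psi^1),(u^2,\psi^2)\}\big)\cdot(u^2,\psi^2)\cdot(u^1,\psi^1)=(u^1,\psi^1)\cdot(u^2,\psi^2).
\]

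The steps are as follows. First, take classes $[u^1,\psi^1],[u^2,\psi^2]\in\coker\bar d$. Since $\im\bar d$ is normal in $Z^1(\Gamma,A\to G)$ (because $\bar d$ is a crossed module), the product in the cokernel is computed on representatives. Second, apply the displayed identity. It says that $(u^1,\psi^1)(u^2,\psi^2)$ and $(u^2,\psi^2)(u^1,\psi^1)$ differ by left multiplication by an element of $\im\bar d$, namely $\bar d$ of the class of $(\{\psi^1,\psi^2\},1)$. Third, conclude that the two products have equal images in $\coker\bar d$. Hence the group structure transported to $H^1(\Gamma,A\to G)$ through the bijection of Corollary \ref{cor:H1_coker} is commutative.

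In general, a braided crossed module has abelian cokernel by \eqref{e:Br1} alone, so symmetry of $\br$ is not needed for commutativity itself. Symmetry (and \eqref{e:Sym}) was already used inside the proof of Lemma \ref{lem:braided_cross_mod_C0_Z1} to show that the induced pairing on $Z^1$ is a braiding. The only point needing care is therefore that Lemma \ref{lem:braided_cross_mod_C0_Z1} really establishes \eqref{e:Br1} in exactly the orientation displayed above, with $\bar d(\text{braiding})$ on the left. Once that orientation is checked, the corollary follows immediately.
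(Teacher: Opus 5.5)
Your proposal is correct and is essentially the paper's own argument: \eqref{e:Br1} for the braided crossed module $\bar d\colon C^0(\Gamma,A\to G)/B^0(\Gamma,A\to G)\to Z^1(\Gamma,A\to G)$ shows that every commutator in $Z^1(\Gamma,A\to G)$ lies in $\im\bar d$, so $\coker\bar d\cong H^1(\Gamma,A\to G)$ is abelian. The orientation you flag also checks out: it is exactly the identity verified in the proof of Lemma~\ref{lem:braided_cross_mod_C0_Z1}, which gives $\bar d(\{x,y\})=xyx^{-1}y^{-1}$.
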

\begin{proof}
    In the presence of the braiding in Lemma \ref{lem:braided_cross_mod_C0_Z1}, we have
    \[
    \bar{d}\big(\{(u^1,\psi^1),(u^2,\psi^2)\}\big) = \Big[(u^1,\psi^1),(u^2,\psi^2)\Big]
    \]
    for any $(u^1,\psi^1), (u^2,\psi^2) \in Z^1(\Gamma,A \to G)$,
    where the square brackets denote the commutator; see \eqref{e:Br1} in Proposition \ref{p:br}.
    This shows that the image of $\bar{d}$ contains the commutator subgroup of $Z^1(\Gamma,A \to G)$, which yields that $\operatorname{coker}\bar{d}$ is abelian.
    Now Corollary \ref{cor:H1_coker} completes the proof.
\end{proof}

\end{document}